\newcommand{\eps}{\varepsilon}
\newcommand{\Lm}{\mathcal{L}}
\newcommand{\Hm}{\mathcal{H}}
\newcommand{\X}{\mathcal{X}}
\newcommand{\RR}{\mathbb{R}}
\newcommand{\NN}{\mathbb{N}}
\def\d{\,\mathrm{d}}
\newtheorem{theorem}{Theorem}[section]
\newtheorem{lemma}[theorem]{Lemma}
\newtheorem{example}[theorem]{Example}
\newtheorem{proposition}[theorem]{Proposition}
\newtheorem{corollary}[theorem]{Corollary}
\theoremstyle{remark}
\newtheorem{remark}[theorem]{Remark}
\begin{document}

\title[Diffuse Domain Method for Elliptic Equations]{Analysis of the Diffuse Domain Method for second order elliptic boundary value problems}
\author[M. Burger]{Martin Burger$^{\dag,+}$}
\author[O. L. Elvetun]{Ole Løseth Elvetun$^*$}
\author[M. Schlottbom]{Matthias Schlottbom$^{\dag,\circ}$}
\thanks{$^\dag$ Institute for Computational and Applied Mathematics,
University of Münster, Einsteinstr. 62, 48149 M\"unster, Germany.\\
$^+$ Cells in Motion Cluster of Excellence, University of Münster. \\
$^*$ Dept. of Mathematical Sciences and Technology, Norwegian University of Life Sciences\\
$^\circ$ Corresponding author\\
Email: {\tt $\{$burger,schlottbom$\}$@uni-muenster.de, ole.elvetun@nmbu.no}
}
\date{\today}

\begin{abstract}
The diffuse domain method for partial differential equations on complicated geometries recently received strong attention in particular from practitioners, but many fundamental issues in the analysis are still widely open. In this paper we study the diffuse domain method for approximating second order elliptic boundary value problems posed on bounded domains, and show convergence and  rates of the approximations generated by the diffuse domain method to the solution of the original second order problem when complemented by Robin, Dirichlet or Neumann conditions. 

The main idea of the diffuse domain method is to relax these boundary conditions by introducing a family of phase-field functions such that the variational integrals of the original problem are replaced by a weighted average of integrals of perturbed domains. From an functional analytic point of view, the phase-field functions naturally lead to weighted Sobolev spaces for which we present trace and embedding results as well as various type of Poincar\'e inequalities with constants independent of the domain perturbations. Our convergence analysis is carried out in such spaces as well, but allows to draw conclusions also about unweighted norms applied to restrictions on the original domain. 
Our convergence results are supported by numerical examples.
\end{abstract}

\maketitle

{\footnotesize
{\noindent \bf Keywords:} 
diffuse domain method,
weighted Sobolev spaces,
domain perturbations,
elliptic boundary value problems
}

{\footnotesize
\noindent {\bf AMS Subject Classification:}  
35J20 
35J70 
46E35 
65N85 
}

%
%
%
%
%
%

\section{Introduction}
This paper considers the approximation properties of the diffuse domain method (also called diffuse interface method, cf. \cite{LiLowengrubRaetzVoigt2009,ratz2006pde}) when applied to linear second order elliptic equations of the form
\begin{align}\label{eq:elliptic}
 -{\rm div}(A\nabla u) + cu = f\quad\text{in } D
\end{align}
complemented by suitable boundary conditions on a sufficiently smooth domain $D\subset\RR^n$.
We focus on Neumann, Robin and Dirichlet boundary conditions, i.e. either
\begin{align}
 n\cdot A \nabla u + b u &= g \quad\text{ on } \partial D \qquad\text{or }\label{eq:bc1}\\
 u &= g \quad\text{ on } \partial D\label{eq:bc2}.
\end{align}
For solving equations of the above type we will employ variational methods.
Roughly speaking, for instance \eqref{eq:elliptic}--\eqref{eq:bc1} is reformulated as follows: Find $u$ such that for all suitable test functions $v$ 

\begin{align}\label{eq:var1}
 \int_D A\nabla u \cdot \nabla v + c u v \d x +\int_{\partial D} b uv \d\sigma
 = \int_D fv \d x + \int_{\partial D} g v \d\sigma.
\end{align}
%
In many applications the domain $D$, the boundary $\partial D$ (or equally well some interface inside the domain) is not known exactly or its geometry is complicated making a proper approximation of the integrals difficult or expensive \cite{AlandLowengrubVoigt2010,EsedogluRaetzRoeger2014,Greer2006,OttoPenzlerRaetzRumpVoigt2004,Raetz2014,TeigenSongLowengrubVoigt2011}.
In addition to the methods used in the aforementioned references, let us point to further literature dealing with methods to handle these type of difficulties;
for instance
the immersed boundary method \cite{Peskin1977}, 
the immersed interface method \cite{LeVequeLin1994}, 
the fictitious domain method \cite{GlowinskiPanPeriaux1994}, 
the unfitted finite element method \cite{BarrettElliot1987}, 
the finite cell method \cite{ParvizianDuesterRank2007},
unfitted discontinuous Galerkin methods \cite{BastianEngwer2009},
composite finite elements \cite{HackbuschSauter1997,LiehrPreusserRumpfSauterSchwen2009}; let us also refer to these papers for further links to literature and applications. 
In this work we will focus on the diffuse domain method, see for instance \cite{LiLowengrubRaetzVoigt2009}.

The diffuse domain method relies on the fact that the domain $D$ can be described by its oriented distance function $d_D(x) =  {\rm dist}(x, D) - {\rm dist}(x,\RR^n\setminus D)$, $x\in\RR^n$.
As one can easily see, we have $D=\{d_D<0\}$. In order to relax the sharp interface condition $d_D<0$, let us introduce 
	$\varphi^\eps = S(-d_D/\eps)$ 
for $\eps > 0$ small and $S$ being a sigmoidal function, i.e. non-decreasing with 
	$S(t) =  t/|t|$ for $|t|\geq 1$.
As $\eps$ tends to zero, $S(\cdot/\eps)$ converges to the sign function, and hence the phase-field function $\omega^\eps=(1+\varphi^\eps)/2$ formally converges to the indicator function $\chi_D$ of $D$.
%
The key idea to approximate the integrals in \eqref{eq:var1} is a weighted averaging of the integrals over $\{ d_D < t \}$ instead of integrating over the original domain 
$\{ d_D < 0 \}$ only (and similar for boundary integrals). Since 
$\frac{1}{2\eps} S'(\frac{\cdot}\eps)$ approximates a concentrated distribution at zero, we expect
\begin{align*}
\int_D h(x) \d x &= \int_{\{ d_D < 0 \}} h(x) \d x \\
&= \int_{-\infty}^\infty \frac{1}{2\eps} S'(-\frac{t}\eps) \int_{\{ d_D < 0 \}} h(x) \d x\d t \\
&\approx \int_{-\infty}^\infty \frac{1}{2\eps} S'(-\frac{t}\eps) \int_{\{ d_D < t \}} h(x)\d x \d t \\
&= \frac{1}2 \int_{-1}^{1} \int_{\{\varphi^\eps > s\}} h(x) \d x \d s, 
\end{align*}
where we have used the substitution $s=S(-\frac{t}\eps)$ in the last step. Now the layer cake-representation can further be used for given integrable $h$  to rewrite 
$$ \int_{-1}^{1} \int_{\{\varphi^\eps > s\}} h(x)\d x\d t= \int_\Omega \int_{-1}^{\varphi^\eps(x)}  \d s\, h(x)\d x =\int_\Omega (1+\varphi^\eps(x)) h(x) \d x. $$

By an analogous computation we obtain for the boundary integral
$$ \int_{\partial D} h(x) \d \sigma(x) \approx  \frac{1}{2} \int_{-1}^{1} \int_{\partial\{\varphi^\eps > s\}} h(x) \d\sigma(x) \d s, $$
which can be simplified via the co-area formula to 
$$  \int_{-1}^1 \int_{\partial \{\varphi^\eps > s\}} h(x) \d\sigma(x) \d t =  \int_\Omega h(x) |\nabla \varphi^\eps(x)| \d x.$$
Here, $\Omega\supset \overline{D}$ is a domain with ``simple'' geometry, i.e. a geometry which can be easily approximated.
Based on this motivation we shall define the following diffuse volume and surface integrals
%
\begin{align}\label{eq:diffuse_integrals}
 \int_D h(x) \d x \approx  \int_\Omega h(x) \omega^\eps(x) \d x\quad \text{and}\quad \int_{\partial D } h(x) \d\sigma(x)\approx \int_\Omega h(x) |\nabla \omega^\eps(x)|\d x.
\end{align}
Using this approximation in \eqref{eq:var1} leads us to the following variational problem:
 Find $u^\eps$ such that for all suitable test functions $v$ 
\begin{align}\label{eq:ddm1}
 \int_\Omega \left(A\nabla u^\eps \cdot \nabla v + c u^\eps v \right)\omega^\eps \d x +\int_{\Omega} b u^\eps v  |\nabla\omega^\eps| \d x
 = \int_\Omega fv \omega^\eps \d x + \int_{\Omega} g v|\nabla \omega^\eps| \d x.
\end{align}
Under the usual assumptions on $A$, $b$ and $c$, the bilinear form on the left-hand side of \eqref{eq:ddm1} is well-defined on the weighted Sobolev space $W^{1,2}(\Omega;\omega^\eps)$, which is the closure of smooth functions $u:\Omega\to\RR$ with finite (semi-) norm
\begin{align*}
 \|u\|_{W^{1,2}(\Omega;\omega^\eps)}^2 = \int_\Omega \left( |\nabla u|^2 + |u|^2\right) \omega^\eps \d x.
\end{align*}
The main point of the present manuscript is to estimate the error between the solution $u$ of \eqref{eq:var1} and the solution $u^\eps$ of \eqref{eq:ddm1}. Our key results are the following two theorems, the first treating the low regularity case and the second giving optimal rates under full regularity:

\begin{theorem}\label{thm:main1}
 Let $\partial D$ be of class $C^{1,1}$ and let $0\leq c\in L^\infty(\Omega)$, $0\leq b\in W^{1,\infty}(\Omega)$ and $A\in L^\infty(\Omega)$ such that  $\kappa\leq A(x) \leq \kappa^{-1}$ for all $x\in\Omega$ and some constant $\kappa>0$. Moreover, assume that $f\in L^2(\Omega;\omega^\eps)$ and $g\in W^{1,2}(\Omega;\omega^\eps)$. Furthermore, let $u\in W^{1,2}(D)$ be a solution to \eqref{eq:var1} and $u^\eps\in W^{1,2}(\Omega;\omega^\eps)$ be a solution to \eqref{eq:ddm1}. Then there exists $p>2$ and a constant $C>0$ independent of $\eps$ such that
 \begin{align*}
  \|u-u^\eps\|_{W^{1,2}(\Omega;\omega^\eps)} \leq C \eps^{\frac{1}{2}-\frac{1}{p}}.
 \end{align*}
\end{theorem}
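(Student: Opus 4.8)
The plan is to follow a Strang-type argument. Since $u$ lives on $D$ while $u^\eps$ lives on $\Omega$, I would first extend $u$ to a function $\tilde u\in W^{1,2}(\Omega)$ by a bounded extension operator, which is available because $\partial D$ is of class $C^{1,1}$. Writing $a^\eps(\cdot,\cdot)$ and $\ell^\eps(\cdot)$ for the bilinear form and right-hand side of \eqref{eq:ddm1}, the uniform ellipticity of $A$ together with a weighted Poincaré inequality (with constant independent of $\eps$, as established earlier) yields coercivity $\alpha\|v\|_{W^{1,2}(\Omega;\omega^\eps)}^2\le a^\eps(v,v)$. Testing the error $e=\tilde u-u^\eps$ against itself and using $a^\eps(u^\eps,e)=\ell^\eps(e)$ gives
\[
\alpha\|e\|_{W^{1,2}(\Omega;\omega^\eps)}^2\le a^\eps(e,e)=a^\eps(\tilde u,e)-\ell^\eps(e),
\]
so everything reduces to bounding the consistency functional $v\mapsto a^\eps(\tilde u,v)-\ell^\eps(v)$ by $C\eps^{1/2-1/p}\|v\|_{W^{1,2}(\Omega;\omega^\eps)}$. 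The regularity input that produces the exponent is Meyers' higher integrability: for a divergence-form problem with bounded measurable coefficients there is $p>2$ with $\nabla u\in L^p(D)$, and the extension can be chosen so that $\nabla\tilde u\in L^p(\Omega)$ as well. I would then rewrite the consistency functional as the difference between the diffuse form at $\tilde u$ and the sharp form \eqref{eq:var1} at $u$ (which vanishes since $u$ solves \eqref{eq:var1}), and split into volume and surface contributions.

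For the volume terms the key observation is that, since $\tilde u=u$ on $D$, each such term is weighted by $\omega^\eps-\chi_D$, which is supported in the tube $\{|d_D|<\eps\}$ and satisfies $|\omega^\eps-\chi_D|\le\omega^\eps$ (it equals $\omega^\eps$ outside $D$, and equals $1-\omega^\eps\le\omega^\eps$ inside, because $\omega^\eps\ge\tfrac12$ there). Hence the weight $\omega^\eps$ is retained and a weighted Cauchy--Schwarz estimate bounds the principal term by
\[
\Big|\int_\Omega A\nabla\tilde u\cdot\nabla v\,(\omega^\eps-\chi_D)\,\d x\Big|\le\|A\|_{L^\infty}\Big(\int_{\{|d_D|<\eps\}}|\nabla\tilde u|^2\,\d x\Big)^{1/2}\|v\|_{W^{1,2}(\Omega;\omega^\eps)}.
\]
Since $\partial D$ is $C^{1,1}$ the tube has measure $|\{|d_D|<\eps\}|\lesssim\eps$, so Hölder with exponents $p/2$ and $p/(p-2)$ and $\nabla\tilde u\in L^p$ produce the factor $\eps^{1/2-1/p}$; the lower-order $c$- and $f$-terms are estimated the same way, using that $u$ lies in $L^q$ for some $q>2$ by Sobolev embedding, and all are dominated by $\eps^{1/2-1/p}\|v\|_{W^{1,2}(\Omega;\omega^\eps)}$.

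The surface terms are the main obstacle. Here one must compare the diffuse boundary integral $\int_\Omega h\,|\nabla\omega^\eps|\,\d x$ with the sharp integral $\int_{\partial D}h\,\d\sigma$ for $h=(b\tilde u-g)v$. Using the coarea formula and $\nabla d_D=n$, the diffuse integral becomes a weighted average of $h$ over the level sets $\{d_D=t\}$, $|t|<\eps$, and the boundary condition $n\cdot A\nabla u+bu-g=0$ on $\partial D$ is precisely what makes the leading-order contribution cancel against the boundary term produced when the interior gradient integral on $D$ is re-expressed through the weak formulation (legitimate since $A\nabla u\in H(\mathrm{div},D)$ with divergence $cu-f$). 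The remaining error measures how far the level-set averages deviate from their value on $\partial D$; splitting $h(y+tn)=h(y)+\int_0^t\partial_n h\,\d s$ isolates a ``data'' part, controlled by $\nabla(b\tilde u-g)\in L^p$ paired against $v$ through the weighted trace inequality $\int_\Omega|v|^2|\nabla\omega^\eps|\,\d x\lesssim\|v\|_{W^{1,2}(\Omega;\omega^\eps)}^2$, and a ``test'' part of the form $\int_{\partial D}(bu-g)(\bar v_\eps-v)\,\d\sigma$ involving only the normal oscillation $\bar v_\eps-v$ of $v$. The delicate point is that this oscillation is governed by $\nabla v$ across the whole tube, where the weight $\omega^\eps$ degenerates to zero in the outer layer $\{0<d_D<\eps\}$; controlling it uniformly in $\eps$ is exactly what the weighted trace, Hardy and Poincaré inequalities of the preceding sections are designed for, and combining them with the $L^p$-integrability again yields a factor $\eps^{1/2-1/p}$. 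Collecting the volume and surface bounds and dividing by $\|e\|_{W^{1,2}(\Omega;\omega^\eps)}$ completes the proof.
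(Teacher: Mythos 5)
Your skeleton — extend $u$, use $\eps$-uniform coercivity from the weighted Poincar\'e--Friedrichs inequality, reduce to the consistency functional, invoke Meyers/Gr\"oger for $p>2$, and estimate the volume defect by weighted Cauchy--Schwarz plus H\"older on the tube of measure $O(\eps)$ — is exactly the paper's route (Lemmas~\ref{lem:rhs1}--\ref{lem:rhs2} and Theorem~\ref{thm:rate_lp_robin}), and your volume-term computation is correct. But your treatment of the boundary consistency term $\int_\Omega (b\tilde u-g)v\,|\nabla\omega^\eps|\d x-\int_{\partial D}(bu-g)v\d\sigma$ has a genuine gap. First, the cancellation you invoke via $n\cdot A\nabla u+bu-g=0$ is not available here and is also not needed: once you have estimated the principal volume term directly (as you did), the identity $a(u,v)=\ell(v)$ has already been consumed in forming the error equation, so re-deriving $\int_{\partial D}(bu-g)v\d\sigma=-\langle n\cdot A\nabla u,v\rangle$ is circular; and exploiting the vanishing of $n\cdot A\nabla u+bu-g$ \emph{quantitatively on the nearby level sets} $\partial D_t$ requires $W^{2,q}$-regularity of $u$ (this is Theorem~\ref{thm:sobolev_diffuse_boundary2} and belongs to the proof of Theorem~\ref{thm:main2} via Theorem~\ref{thm:rate_sobolev_robin_p}, not to the $A\in L^\infty$, $u\in W^{1,p}$ setting). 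Second, the fallback you sketch — bounding the normal oscillation $\bar v_\eps-v$ of the test function across the tube — runs into precisely the obstruction you name without resolving it: that oscillation is controlled by $\int_0^\eps|\partial_n v(y+sn)|\d s$ in an \emph{unweighted} sense, and since $\omega^\eps\sim({\rm dist}(\cdot,\partial D_\eps)/\eps)^\alpha$ degenerates in the outer layer, $\|\nabla v\|_{L^{p'}(\Gamma_\eps)}$ is not controlled by $\|v\|_{W^{1,2}(\Omega;\omega^\eps)}$ for $p'$ close to $2$; the weighted trace inequality controls weighted averages over level sets, not this oscillation. The paper avoids all of this by a structural trick (Lemma~\ref{lem:lp_diffuse_boundary}): writing $\int_{\partial D}h\d\sigma=\int_D{\rm div}(h\nabla d_D)\d x$ and, since $|\nabla\omega^\eps|=-\nabla d_D\cdot\nabla\omega^\eps$ with $\omega^\eps$ compactly supported, $\int_\Omega h|\nabla\omega^\eps|\d x=\int_\Omega{\rm div}(h\nabla d_D)\,\omega^\eps\d x$, which reduces the boundary defect to the volume defect of Theorem~\ref{thm:lp_diffuse_volume} applied to ${\rm div}((b\tilde u-g)v\,\nabla d_D)\in L^q(D_\eps;\omega^\eps)$, $q=2p/(p+2)$, giving the rate $\eps^{1-1/q}=\eps^{1/2-1/p}$ with no cancellation whatsoever.

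A smaller but real point: for the source term $\int_\Omega fv\,(\omega^\eps-\chi_D)\d x$ the higher integrability of $u$ is irrelevant ($u$ does not appear), and with only $f\in L^2(\Omega;\omega^\eps)$ weighted Cauchy--Schwarz yields no power of $\eps$. You need the $\eps$-uniform weighted Sobolev embedding $W^{1,2}(D_\eps;\omega^\eps)\hookrightarrow L^{2^*_\alpha}(D_\eps;\omega^\eps)$ (Theorem~\ref{thm:embdding}) applied to the \emph{test function} $v$, which places $fv$ in $L^q(D_\eps;\omega^\eps)$ with $1-1/q=1/(n+\alpha)>0$; this is the content of Lemma~\ref{lem:rhs1} and is one of the reasons the paper develops the weighted embedding with $\eps$-independent constants in the first place.
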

\begin{theorem}\label{thm:main2}
 In addition to the assumptions of Theorem~\ref{thm:main1} let $\partial D$ be of class $C^{\infty}$, and let $f,g,A,b,c\in C^\infty(\overline{\Omega})$.
 Then there exists a constant $C$ independent of $\eps$ such that
 \begin{align*}
  \|u-u^\eps\|_{W^{1,2}(\Omega;\omega^\eps)} \leq C\eps^{3/2}.
 \end{align*}
\end{theorem}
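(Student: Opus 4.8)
The plan is to combine a uniform coercivity estimate for the diffuse bilinear form with a consistency (Strang-type) bound for the residual of the exact solution. Denote by $a^\eps(u,v)$ and $\ell^\eps(v)$ the bilinear form and the load appearing on the left- and right-hand sides of \eqref{eq:ddm1}. First I would record that, under the hypotheses inherited from Theorem~\ref{thm:main1}, the weighted Poincaré and trace inequalities established above render $a^\eps$ bounded and coercive on $W^{1,2}(\Omega;\omega^\eps)$ with constants independent of $\eps$; write $\alpha>0$ for the coercivity constant.

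Since $\partial D$ and all data are $C^\infty$, elliptic regularity gives $u\in C^\infty(\overline D)$, so I fix a smooth extension $\tilde u\in C^\infty(\overline\Omega)$ with $\tilde u=u$ on $\overline D$. The core of the argument is to estimate the defect functional $R^\eps(v):=\ell^\eps(v)-a^\eps(\tilde u,v)$. Integrating the principal part by parts (the contribution on $\partial\Omega$ vanishes because $\omega^\eps$ is supported in $\{d_D<\eps\}\subset\Omega$) and using $\nabla\omega^\eps=-|\nabla\omega^\eps|\,\nabla d_D$, I obtain
\[
R^\eps(v)=\int_\Omega \omega^\eps\, r\, v \d x+\int_\Omega \rho\, v\,|\nabla\omega^\eps| \d x,
\]
with bulk residual $r=f+\mathrm{div}(A\nabla\tilde u)-c\tilde u$ and surface residual $\rho=g-\nabla d_D\cdot A\nabla\tilde u-b\tilde u$. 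The PDE forces $r\equiv 0$ on $\overline D$, and the Robin condition forces $\rho\equiv 0$ on $\partial D$; smoothness then yields $|r(x)|\le C\,d_D(x)$ on $\{0<d_D<\eps\}$ and $|\rho(x)|\le C|d_D(x)|$ on $\{|d_D|<\eps\}$, so both residuals are $O(\eps)$ on the sets that matter.

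For the bulk term, Cauchy--Schwarz in the weight $\omega^\eps$ together with $|r|\le C\eps$ on $\{0<d_D<\eps\}$ and $\int_{\{0<d_D<\eps\}}\omega^\eps\d x=O(\eps)$ gives $\big|\int_\Omega\omega^\eps r v\d x\big|\le C\eps^{3/2}\|v\|_{W^{1,2}(\Omega;\omega^\eps)}$. The surface term is the main obstacle: the naive bound $|\rho|\le C\eps$ combined with the weighted trace inequality only produces $O(\eps)$, which is exactly the rate of Theorem~\ref{thm:main1} and one half-power short. To recover the missing $\eps^{1/2}$ I would exploit the symmetry of the phase field. In tubular coordinates $x=y+s\,\nabla d_D(y)$ with $s=d_D$, the density $|\nabla\omega^\eps|$ equals $\tfrac{1}{2\eps}S'(s/\eps)$, an even function of $s$ (using that $S$ is odd), while $\rho=s\,a(s,y)$ vanishes to first order. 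Hence $s\,\tfrac{1}{2\eps}S'(s/\eps)=G'(s)$ for a primitive $G(s)=\int_{-\eps}^{s}\sigma\,\tfrac{1}{2\eps}S'(\sigma/\eps)\d\sigma$ which, by the vanishing first moment $\int_{-1}^1\tau S'(\tau)\d\tau=0$, satisfies $G(\pm\eps)=0$ and $|G|\le C\eps$. Integrating by parts in $s$ then transfers one derivative off the oscillatory kernel and pairs $G$ against $v$ and $|\nabla v|$ over the layer. The decisive technical point—where I expect the real work to lie—is to prove the comparison $|G(s)|\le C\eps\,\omega^\eps(s)$ (controlling the primitive by the weight via the behaviour of $S$ near $\pm1$), so that the remaining layer integral genuinely carries the weight $\omega^\eps$; Cauchy--Schwarz with $\int_{\{|d_D|<\eps\}}\omega^\eps\d x=O(\eps)$ then yields $\big|\int_\Omega\rho v\,|\nabla\omega^\eps|\d x\big|\le C\eps^{3/2}\|v\|_{W^{1,2}(\Omega;\omega^\eps)}$.

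Finally I would close by coercivity: choosing $v=u^\eps-\tilde u$ and using that $u^\eps$ solves \eqref{eq:ddm1},
\[
\alpha\,\|u^\eps-\tilde u\|_{W^{1,2}(\Omega;\omega^\eps)}^2\le a^\eps(u^\eps-\tilde u,u^\eps-\tilde u)=R^\eps(u^\eps-\tilde u)\le C\eps^{3/2}\,\|u^\eps-\tilde u\|_{W^{1,2}(\Omega;\omega^\eps)},
\]
so that $\|u^\eps-\tilde u\|_{W^{1,2}(\Omega;\omega^\eps)}\le C\eps^{3/2}$; since $\tilde u=u$ on $D$ and $\omega^\eps\le 1$, this is the asserted bound for $\|u-u^\eps\|_{W^{1,2}(\Omega;\omega^\eps)}$. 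The essential difficulty, and the point at which both the full regularity and the oddness of $S$ are indispensable, is the surface term: without the first-order vanishing of $\rho$ and the evenness of $|\nabla\omega^\eps|$ one cannot improve on the $O(\eps)$ behaviour already present in the low-regularity case.
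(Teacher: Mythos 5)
Your proposal is correct and shares the paper's overall architecture (uniform coercivity plus a Strang-type consistency estimate, with the residual split into a bulk part vanishing on $D$ and a surface part vanishing on $\partial D$), but both key estimates are carried out by a genuinely different mechanism. The paper obtains Theorem~\ref{thm:main2} as Corollary~\ref{cor:rate_sobolev_robin_p2} of Theorem~\ref{thm:rate_sobolev_robin_p} with $p=2$, using $\X_2^\eps=W^{1,2}(D_\eps;\omega^\eps)$: the bulk terms are treated by the general comparison Theorem~\ref{thm:sobolev_diffuse_volume} for $\int_{D_\eps}h\,\d\omega^\eps-\int_D h\,\d x$, which gains the extra half power from cancellation between the inner and outer halves of $\Gamma_\eps$ (oddness of $S$); you instead observe that the combined bulk residual $r$ vanishes identically on $\overline D$, so only the one-sided outer layer contributes, and a crude $L^\infty$ bound together with $\omega^\eps(\Gamma_\eps)=O(\eps)$ already yields $O(\eps^{3/2})$ with no cancellation needed --- simpler and perfectly valid. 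For the surface term the paper invokes Theorem~\ref{thm:sobolev_diffuse_boundary2} (applied to $\rho$ in the role of $u$, with $p=q=2$), whose proof runs through second normal derivatives, the trace theorem and assumption (S3); your integration by parts in the normal variable against the primitive $G(s)=\int_{-\eps}^{s}\sigma\,\tfrac{1}{2\eps}S'(\sigma/\eps)\,\d\sigma$ is a different route. The step you single out as the decisive difficulty, $|G(s)|\le C\eps\,\omega^\eps(s)$, is in fact a two-line consequence of (S1): the vanishing first moment gives $G(\eps)=0$, hence for $s\ge 0$
\[
|G(s)|=\Big|\int_s^\eps \sigma\,\tfrac{1}{2\eps}S'(\sigma/\eps)\,\d\sigma\Big|\le \tfrac{\eps}{2}\bigl(1-S(s/\eps)\bigr)=\eps\,\omega^\eps(s),
\]
while for $s\le 0$ the trivial bound $|G|\le\eps$ suffices since $\omega^\eps\ge\tfrac12$ there. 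A pleasant by-product is that your argument never uses (S3). Two minor points to tidy up: the $\eps$-uniform coercivity constant requires $b\ge b_0>0$ as in (C1) (for $b=0$ one must pass to the mean-zero space exactly as the paper does in Section~\ref{sec:neumann}); and the Jacobian of the tubular change of variables must be carried inside the function being differentiated in $s$, which is harmless because it is smooth with $s$-derivative bounded uniformly in $\eps$ (see \eqref{eq:representation_Jacobian}).
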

Let us mention that the case $b=0$ is allowed here and corresponds to Neumann boundary conditions.
The index $p$ in Theorem~\ref{thm:main1} is related to $L^p$ regularity of $\nabla u$, see \cite{Groeger1989,Meyers1963} and Section~\ref{sec:robin} below.
In order to prove the theorems, we need a few technical ingredients.
As is obvious from the above discussion, we have to deal with a family of weighted Sobolev spaces parametrized by $\eps$.
For certain choices of $S$, 
we observe that the weight $\omega^\eps$ is proportional to a power of  a distance function near $\partial D$, see \eqref{eq:weight_near_boundary} below. 
For this type of weights and fixed $\eps$ many results have been established in literature; see for instance \cite{Horiuchi1989,Kufner1985,Necas2012,OpicKufner1990} and more generally \cite{TriebelFunctionSpaces2006}. 
Since we are dealing with a family of spaces corresponding to a family of weights $\omega^\eps$, we are particularly interested in the behavior of the weighted spaces when $\eps$ changes. 
We will present trace theorems, embedding theorems and Poincar\'e inequalities with constants independent of $\eps$, which turn out to be indispensable tools for the analysis of \eqref{eq:ddm1}, and which we think might be of interest in their own, see Section~\ref{sec:properties_weighted_sobolev}. 
In order to prove these statements, we have to revise and adapt the classical proofs of \cite{OpicKufner1990} and combine them with arguments recently used in the context of shape optimization; see \cite{Arrieta08,BoulkhemairChakib2007,DelfourZolesio2011} for such arguments applied to unweighted Sobolev spaces.

A further necessary ingredient to obtain the error estimates of Theorem~\ref{thm:main1} and Theorem~\ref{thm:main2} 
are rigorous error estimates for the approximations \eqref{eq:diffuse_integrals} in terms of $\eps$ and the regularity of the integrands.
A consequence of our results is that for $h \in L^p(\Omega;\omega^\eps)$ 
\begin{align*}
 \int_D h \d x - \int_\Omega h \omega^\eps \d x = O(\eps^{1-1/p}),\qquad\text{as }\eps\to 0,
\end{align*}
see Theorem~\ref{thm:lp_diffuse_volume}. 
Assuming $h$ and $\nabla h$ in $L^p(\Omega;\omega^\eps)$, we can exploit the special averaging procedure in the derivation of the diffuse integrals in a crucial way to obtain the improved estimate 
\begin{align*}
 \int_D h \d x - \int_\Omega h \omega^\eps \d x = O(\eps^{2-1/p}), \qquad\text{as }\eps\to 0,
\end{align*}
cf. Theorem~\ref{thm:sobolev_diffuse_volume}.
Concerning Robin boundary values, let us mention recent formal result obtained by asymptotic analysis stating an $L^2$-convergence rate for the error $u-u^\eps$ of $O(\eps^2)$ \cite{LervagLowengrub2014}. Using a problem adapted norm we also obtain a rate $O(\eps^2)$. For two-dimensional problems and under reasonable assumptions on this problem adapted norm, which we however cannot verify for our problem, we also arrive at a $L^2$-convergence rate $O(\eps^2)$. The latter is well confirmed by numerical results.

Concerning Dirichlet boundary values, the corresponding convergence rate only yields the half order compared to the Robin boundary values.
Hence in the setting of Theorem~\ref{thm:main1}, but with \eqref{eq:bc2} instead of \eqref{eq:bc1}, we obtain 
\begin{align*}
  \|u-u^\eps\|_{W^{1,2}(D)} = O(\eps^{\frac{1}{4}-\frac{1}{2p}}), \qquad\text{as }\eps\to 0,
\end{align*}
and accordingly, in the setting of Theorem~\ref{thm:main2} with \eqref{eq:bc2} in place of \eqref{eq:bc1}, we obtain
\begin{align*}
  \|u-u^\eps\|_{W^{1,2}(D)} = O(\eps^{\frac{3}{4}}),\qquad\text{as }\eps\to 0.
\end{align*}
In the best case, using the problem adapted norm, we can show a rate $O(\eps)$. 
This complies with recent results in literature, which were obtained for one-dimensional problems or numerically \cite{FranzGaertnerRoosVoigt2012,ReuterHillHarrison2012}.

The outline of the manuscript is as follows. In Section~\ref{sec:prelim} we discuss the geometry of $D$ and certain perturbations of it.
Section~\ref{sec:weighted_sobolev} introduces weighted Lebesgue and Sobolev spaces together with some basic properties. A more detailed study of weighted Sobolev spaces is presented in Section~\ref{sec:properties_weighted_sobolev}. Here we present a trace and an embedding theorem for weighted Sobolev spaces and we show that the corresponding estimates are stable with respect to $\eps$. Moreover, we prove Poincar\'e and Poincar\'e-Friedrichs inequalities for these spaces again with constants independent of $\eps$.
Approximation properties of the diffuse integrals are presented in Section~\ref{sec:conv_diffuse}. The volume integrals are investigated in Section~\ref{sec:conv_diffuse_volume}, and corresponding results for the diffuse boundary integral are subsequently shown in Section~\ref{sec:conv_diffuse_boundary}.
Section~\ref{sec:elliptic_problems} deals with the approximation of elliptic equations by the diffuse domain method for Robin, Dirichlet and Neumann type boundary values, and proofs of Theorem~\ref{thm:main1} and Theorem~\ref{thm:main2} are given.
Our results are supported by numerical results which are presented in Section~\ref{sec:numerics}.
We conclude in Section~\ref{sec:conclusion} and discuss briefly some open questions.

%
%
%
%
%
%
%

\section{Some geometric preliminaries}\label{sec:prelim}

\subsection{Domain}
Throughout the manuscript we assume that $D\subset \RR^n$ is a domain with $C^{1,1}$ boundary.
Associated to $D$ we define its oriented distance function $d_D$ by 
$$d_D(x)={\rm dist}(x,D)-{\rm dist}(x,\RR^n\setminus D) \quad\text{ for  } x\in\RR^n.$$ 
Since $\partial D$ is of class $C^{1,1}$ we have that $d_D$ is $C^{1,1}$ in a neighborhood of $\partial D$ \cite{DelfourZolesio2011}.
For $t\in\RR$ let us define the sublevel sets of $d_D$ as follows
\begin{align*}
 D_t = \{x\in \RR^n:\ d_D(x)<t\}.
\end{align*}
We clearly have the inclusions $D_{t_1}\subset D_0=D \subset D_{t_2}$ for all $t_1<0<t_2$.
Moreover, we fix a domain $\Omega\subset\RR^n$ such that $\overline{D}\subset \Omega$. 
In applications $\Omega$ is a domain with a ``simple'' geometry, for instance a ball or a bounding box.

\subsection{The tubular neighborhood \texorpdfstring{$\Gamma_\eps$}{Ge}}\label{sec:tubular}
Let us define the $\eps$-tubular neighborhood of $\partial D$ by
\begin{align}\label{eq:def_gamma_eps}
 \Gamma_\eps = D_\eps\setminus \overline{D_{-\eps}} .
\end{align}
Due to $C^{1,1}$ regularity of $\partial D$, the projection of $z\in \Gamma_\eps$ onto $\partial D$ is unique for $\eps$ sufficiently small, i.e., for each $z\in\Gamma_\eps$ there exists a unique $x\in \partial D$ such that $z=x+d_D(z)n(x)$ \cite[Chapter~7, Theorem~3.1, Theorem~8.4]{DelfourZolesio2011}.
Here, $n(x)$, $ x\in\partial D$, denotes the outward unit normal field which is related to the oriented distance function via the formula $n(x)=\nabla d_D(x)$.
This shows 
$$\Gamma_\eps = \{z\in \Omega: \exists x\in\partial D, |t|<\eps, z= x + t n(x)\}.$$
In the whole manuscript we fix $\eps_0$ so small such that the just described projection $\Gamma_{2\eps_0} \to \partial D$ is single-valued.
Thus, for each $\eps\leq \eps_0$, $D_\eps = \Gamma_\eps\cup D$, and for every $x\in\Gamma_\eps$, there holds ${\rm dist}(x,\partial D)\leq \eps$ and for every $x\in \Omega\setminus \Gamma_\eps$, there holds ${\rm dist}(x,\partial D)\geq \eps$.
Moreover, for some constant $C>0$ independent of $\eps$
\begin{align}\label{eq:boundary_measure}
 |\Gamma_\eps| \leq C \Hm^{n-1}(\partial D) \eps.
\end{align}
Here, $|\Gamma_\eps|=\Lm^{n}(\Gamma_\eps)$ is the $n$-dimensional Lebesgue-measure of $\Gamma_\eps$ and $\Hm^{n-1}(\partial D)$ is the $n-1$-dimensional Hausdorff-measure of $\partial D$.

\subsection{Transformations of the geometry}\label{sec:boundary_transformation}

Let $t\in(-\eps_0,\eps_0)$. 
Let us first consider transformations of boundaries $\partial D \to \partial D_t$.
To do so, we introduce a family of transformations $\Phi_t:\partial D\to \partial D_t$ defined by $\Phi_t(x)=x+tn(x)$. 
The Jacobian satisfies $D\Phi_t(x) = I + t D^2 d_D(x)$, with $I$ being the identity matrix on $\RR^{n\times n}$ and $D^2d_D$ being the Hessian of $d_D$, and thus,
\begin{align}\label{eq:representation_Jacobian}
 |\det D\Phi_t(x)- (1 + t \Delta d_D(x))| \leq C \|D^2 d_D\|_{L^\infty(\partial D)} t^2 \quad\text{for } |t|\leq \eps\leq \eps_0.
\end{align}
Decreasing $\eps_0$ if necessary, there holds $\frac{1}{2}\leq \det D\Phi_t(x)\leq 2$
and
\begin{align}\label{eq:det_estimate}
 |\det D\Phi_t(x)- \det D\Phi_{-t}(x)|\leq  C |t|\|D^2 d_D\|_{L^\infty(\partial D)}.
\end{align}
Denoting by $n_t$ the unit outer normal vector field of $\partial D_t$, we see that $n_t(\Phi_t(x))=n(x)$ for all $x\in \partial D$ by the choice of the tubular neighborhood $\Gamma_{\eps_0}$. Thus, here and in the following, we will just write $n(x)$ for the unit outer normal at some $x\in\partial D_t$.
In particular, $\Phi_t$ can be extended to the whole of $\Gamma_{\eps_0}$, and for this extension we have that
\begin{align*}
 \Phi_t(\Phi_{s}(x)) = \Phi_{s}(x) +t n(\Phi_{s}(x)) = x+(s+t)n(x) = \Phi_{s+t}(x),\quad s,t\in (-\eps_0,\eps_0),
\end{align*}
particularly, $\Phi_t(\Phi_{-t}(x))=x$.
For $h\in L^1(\Gamma_\eps)$, $-\eps<a<b<\eps$ and $s\in (a,b)$ we then have
\begin{align}\label{eq:boundary_transformation_formula}
 \int_{\{a<d_D<b\}} h(x) \d x
 =\int_{a}^b \int_{\partial D_s} h(x+(t-s)n(x)) |\det D\Phi_{t-s}(x)|\d\sigma_s(x) \d t.
\end{align}
Note that $x-sn(x)\in \partial D$ for $x\in\partial D_s$, and hence, $x+(t-s)n(x)\in \partial D_t$ for $x\in \partial D_s$.
Here, $\sigma_s = \Hm^{n-1} \llcorner \partial D_s$ is the surface element of $\partial D_s$, i.e.
\begin{align*}
 \sigma_s(\widetilde \Omega) = \Hm^{n-1}(\widetilde\Omega\cap \partial D_s) \quad\text{for } \widetilde\Omega\subset\RR^n.
\end{align*}

For the volume transformation $D\to D_t$, 
we define $\psi_t:[0,\infty)\to\RR$ by $\psi_t(s)=0$ for $s\geq \eps_0$, and $\psi_t(s)=\frac{t}{\eps_0^2} s^2 - \frac{2t}{\eps_0}s+t$ for $s<\eps_0$.
Then $\psi_t\in C^1([0,\infty))$, and $\|\psi_t\|_{C^1([0,\infty))}\to 0$ as $t\to 0$.
Moreover, $\psi_t$ maps $[0,\eps_0]$ one-to-one onto $[0,t]$.
We then define the diffeomorphism
\begin{align*}
\Psi_t: D\to D_t,\quad
 \Psi_t(x) = \begin{cases} x + \psi_t(|d_D(x)|) n(x), &x \in  D\cap\Gamma_{\eps_0},\\
           x, & x\in D\setminus \Gamma_{\eps_0}.
          \end{cases}
\end{align*}
Since $\nabla d_D(x)=n(x)$, the Jacobian of $\Psi_t$ is given by
\begin{align*}
 D\Psi_t(x) = I -\psi_t'(|d_D(x)|) n(x)\otimes n(x)
	     + \psi_t(|d_D(x)|) D^2 d_D(x),
\end{align*}
and $\sup_{x\in D_{\eps_0}}\|D\Psi_t(x)-I\| \to 0$ as $t\to 0$ by construction of $\psi_t$.
We note that $\Psi_{t\mid \partial D}= \Phi_t$.
%

\section{Weighted Sobolev spaces}\label{sec:weighted_sobolev}

In order to construct the weighted spaces mentioned in the introduction let us begin with defining another level set function for the domain $D$ resembling a sign function smoothed in $\Gamma_\eps$, namely
\begin{align*}
 \varphi^\eps(x)= S\left(\frac{-d_D(x)}{\eps}\right),
\end{align*}
where the function $S$ is a regularization of the sign function. 
Hence, $\varphi^\eps(x)>0$ if and only if $x\in D$.
To be precise, we assume that $S$ verifies the following assumptions.

\begin{enumerate}
 \item[(S1)] $S:\RR\to\RR$ is Lipschitz continuous, $S(t)=t/|t|$ for $|t|\geq 1$, and $S'(t)>0$ for $|t|<1$. Moreover, $S(t)=-S(-t)$ for all $t\in\RR$.
  \item[(S2)] There exist  $\zeta_1,\zeta_2>0$ and $\alpha>0$ such that for all $t\in (0,2)$
 \begin{align*}
  \zeta_1 t^\alpha \leq (1+S(t-1))/2\leq \zeta_2 t^\alpha.
 \end{align*}
 \item[(S3)] $S'(t)\leq S'(s)$ for all $0\leq s\leq t<1$.
\end{enumerate}
(S3) asserts concavity of $S$ on $(0,1)$ and this assumption is only needed in Theorem~\ref{thm:sobolev_diffuse_boundary2} below.
Assumption (S2) ensures that that the phase-field function $\omega^\eps$ is proportional to ${\rm dist}(\cdot,\partial D_\eps)^\alpha$ on $\Gamma_\eps$,
where $\omega^\eps$ is defined as a regularization of the indicator function $\chi_D$ of $D$ as follows
\begin{align*}
 \omega^\eps(x) = \frac{1}{2}\left(1+\varphi^\eps(x)\right).
\end{align*}

Obviously, we have that $D_\eps=\{x\in\Omega:\ \omega^\eps(x)>0\}$, and $\omega^\eps(x)=1$ for $x\in D$ with ${\rm dist}(x,\partial D)>\eps$.
Let us clarify (S2) in the following. We observe that ${\rm dist}(x,\partial D_\eps)=\eps-d_D(x)$ for $x\in\Gamma_\eps$. Thus, by (S2) with $t={\rm dist}(x,\partial D_\eps)/\eps$
\begin{align}\label{eq:weight_near_boundary}
\zeta_1  \left(\frac{{\rm dist}(x,\partial D_\eps)}{\eps}\right)^\alpha\leq \omega^{\eps}(x)\leq \zeta_2 \left(\frac{{\rm dist}(x,\partial D_\eps)}{\eps}\right)^\alpha
\end{align}

%
Before proceeding, let us give a few examples which may serve as prototypes for $S$.

\begin{example}\label{ex:S}

 (i) Let $S(t)=t$ for $|t|<1$. Obviously (S1) and (S3) are satisfied. Moreover, $1+S(t-1)=t$,
  and we can choose $\alpha=1$, and $\zeta_1=\zeta_2=1/2$ in (S2).
  
  (ii) Let $S(t)= (3t-t^3)/2$. Thus, $S\in C^1(\RR)$ and $S'(t)=3(1-t^2)/2>0$ for $|t|<1$. Since $S''(t)=-3t<0$ for $t>0$, (S3) is satisfied. Moreover, $1+S(t-1)=(3t^2-t^3)/2$, and (S2) is satisfied for $\alpha=2$ and $\zeta_1=1/4$ and $\zeta_2=3/4$.
  
  (iii) Let $S(t)=15 t/8-5t^3/4+3t^5/8$. Thus, $S\in C^2(\RR)$ and $S'(t)=15/8-15t^2/4+15t^4/8>0$ for $|t|<1$. Since $S''(t)=-15t/2+15t^3/2<0$ for $t>0$, (S3) is satisfied. Moreover, $1+S(t-1)=5 t^3/2-15 t^4/8+3t^5/8$, and (S2) is satisfied for $\alpha=3$ and $\zeta_1=1/8$ and $\zeta_2=5/2$.
\end{example}

Proceeding with the construction of weighted Lebesgue spaces, let us introduce the measure
\begin{align*}
 \omega^\eps(\widetilde\Omega) = \int_{\widetilde\Omega} \omega^\eps(x) \d x, \quad \widetilde\Omega\subset\RR^n \text{ measurable},
\end{align*}
which is absolutely continuous with respect to the Lebesgue measure.
Associated to $\omega^\eps$ let us further introduce for $1\leq p <\infty$ the weighted $L^p$-spaces
\begin{align*}
 L^p(D_\eps;\omega^\eps) = \{v:D_\eps\to \RR:\ |v|^p\omega^\eps \in L^1(D_\eps)\}
 \quad\text{with norm } 
 \|v\|_{L^p(D_\eps;\omega^\eps)}^p =\int_{D_\eps} |v|^p \d\omega^\eps.
\end{align*}
For $p=\infty$, we set $L^\infty(D_\eps;\omega^\eps)=L^\infty(D_\eps)$, i.e. $L^\infty(D_\eps;\omega^\eps)$ is the class of Lebesgue-measurable functions being essentially bounded.
In the following we will use also the notation $L^p(D_\eps;\delta)$ where $\delta$ is an appropriate weight function.
The following statement provides some basic relations between the weighted $L^p$-spaces.

\begin{lemma}\label{lem:inclusion}
 Let $1\leq p\leq \infty$ and let $\eps>0$. Then $L^p(D_\eps)\subset L^p(D_\eps;\omega^\eps)$, and for every $v\in L^p(D_\eps;\omega^\eps)$ there holds $v_{\mid D}\in L^p(D)$. Moreover,
 
 (i) for $\eps>\tilde \eps\geq 0$ we have
 \begin{align*}
  \|v\|_{L^p(D_{\tilde\eps};\omega^{\tilde\eps})} \leq 2^{1/p} \|v\|_{L^p(D_\eps;\omega^\eps)}\quad\text{for all } v\in L^p(D_\eps;\omega^\eps).
 \end{align*}
 (ii) for $0<\gamma<\eps/2$ we have
 \begin{align*}
  \|v\|_{L^p(D_{\eps-2\gamma};\omega^{\eps+\gamma})} \leq \left(\frac{3^{\alpha}\zeta_2}{\zeta_1}\right)^{1/p} \|v\|_{L^p(D_{\eps-2\gamma};\omega^{\eps-\gamma})}\quad\text{for all } v\in L^p(D_\eps;\omega^\eps).
 \end{align*}
\end{lemma}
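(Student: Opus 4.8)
The plan is to prove both inequalities directly from the definition of the weighted $L^p$-norm by comparing the weights $\omega^{\eps}$ pointwise, since both statements amount to a pointwise domination of one weight by a constant multiple of another on the relevant domain. I would first dispose of the two inclusion claims, then treat (i), then (ii).

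\medskip

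\textbf{The inclusions.} Since $0\leq \omega^\eps\leq 1$ everywhere (because $\varphi^\eps=S(-d_D/\eps)\in[-1,1]$ by (S1)), for $v\in L^p(D_\eps)$ we have $\int_{D_\eps}|v|^p\omega^\eps\,\d x\leq \int_{D_\eps}|v|^p\,\d x<\infty$, giving $L^p(D_\eps)\subset L^p(D_\eps;\omega^\eps)$; the case $p=\infty$ is the definitional identity. For the restriction claim, note that $\omega^\eps\equiv 1$ on $\{x\in D:\operatorname{dist}(x,\partial D)>\eps\}$ and more to the point $\omega^\eps=1$ on $D\setminus\Gamma_\eps$ and $\omega^\eps\geq \tfrac12$ on $D$ in general, since $d_D\leq 0$ on $D$ forces $\varphi^\eps=S(-d_D/\eps)\geq 0$, whence $\omega^\eps=(1+\varphi^\eps)/2\geq \tfrac12$ on $D$. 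Therefore $\int_D|v|^p\,\d x\leq 2\int_D|v|^p\omega^\eps\,\d x\leq 2\|v\|_{L^p(D_\eps;\omega^\eps)}^p<\infty$, so $v_{\mid D}\in L^p(D)$.

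\medskip

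\textbf{Part (i).} Here the key is a pointwise comparison $\omega^{\tilde\eps}(x)\leq 2\,\omega^{\eps}(x)$ for all $x\in D_{\tilde\eps}$ when $\tilde\eps<\eps$. For $\tilde\eps=0$ we interpret $\omega^0=\chi_D$ and the claim reduces to the restriction estimate just proved. For $0<\tilde\eps<\eps$ I would argue as follows. On $D_{\tilde\eps}$ we have $d_D(x)<\tilde\eps$. Where $d_D(x)\leq 0$, both weights are $\geq\tfrac12$ and $\leq 1$, so the ratio is at most $2$. Where $0<d_D(x)<\tilde\eps$, monotonicity of $S$ under (S1) and the fact that $-d_D/\tilde\eps<-d_D/\eps$ (both negative) combined with $S$ nondecreasing gives $\varphi^{\tilde\eps}(x)=S(-d_D/\tilde\eps)\leq S(-d_D/\eps)=\varphi^\eps(x)$; hence $\omega^{\tilde\eps}(x)\leq\omega^\eps(x)$ there, which is even better than the factor $2$. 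Thus $\omega^{\tilde\eps}\leq 2\omega^\eps$ on all of $D_{\tilde\eps}$, and integrating $|v|^p$ against this inequality yields $\|v\|_{L^p(D_{\tilde\eps};\omega^{\tilde\eps})}^p\leq 2\|v\|_{L^p(D_\eps;\omega^\eps)}^p$, i.e. the stated bound after taking $p$-th roots.

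\medskip

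\textbf{Part (ii).} This is where the real estimate lies, and the tool is the two-sided bound \eqref{eq:weight_near_boundary}. I would fix $0<\gamma<\eps/2$ and compare $\omega^{\eps+\gamma}$ with $\omega^{\eps-\gamma}$ pointwise on $D_{\eps-2\gamma}=\{d_D<\eps-2\gamma\}$. On the part where $\omega^{\eps-\gamma}=1$ (that is, $\operatorname{dist}$ to $\partial D$ large) both weights are comparable to $1$ and the constant is harmless, so the essential region is the tubular part where $0<d_D<\eps-2\gamma$. There, using \eqref{eq:weight_near_boundary} applied with perturbation parameter $\eps+\gamma$ in the numerator and $\eps-\gamma$ in the denominator, and writing $\delta(x)=\operatorname{dist}(x,\partial D_\eps)$-type quantities, we get
\begin{align*}
 \frac{\omega^{\eps+\gamma}(x)}{\omega^{\eps-\gamma}(x)} \leq \frac{\zeta_2\big((\eps+\gamma-d_D)/(\eps+\gamma)\big)^\alpha}{\zeta_1\big((\eps-\gamma-d_D)/(\eps-\gamma)\big)^\alpha} \leq \frac{\zeta_2}{\zeta_1}\left(\frac{\eps+\gamma-d_D}{\eps-\gamma-d_D}\cdot\frac{\eps-\gamma}{\eps+\gamma}\right)^\alpha.
\end{align*}
The hard part is bounding the distance-ratio $\frac{\eps+\gamma-d_D}{\eps-\gamma-d_D}$ uniformly on $d_D<\eps-2\gamma$. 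The worst case is $d_D\uparrow\eps-2\gamma$, where numerator $\to 3\gamma$ and denominator $\to\gamma$, giving ratio $\leq 3$; combined with $\frac{\eps-\gamma}{\eps+\gamma}\leq 1$ this yields $\omega^{\eps+\gamma}/\omega^{\eps-\gamma}\leq 3^\alpha\zeta_2/\zeta_1$. Integrating $|v|^p$ against this pointwise inequality over $D_{\eps-2\gamma}$ and taking $p$-th roots gives exactly the claimed constant $(3^\alpha\zeta_2/\zeta_1)^{1/p}$. The one point requiring care is confirming the supremum of the distance-ratio is attained in the limit $d_D\to\eps-2\gamma$ (the function $s\mapsto (\eps+\gamma-s)/(\eps-\gamma-s)$ is increasing in $s$ on the admissible range, since its derivative has sign of $2\gamma>0$), which pins the constant at $3$ rather than something $\eps$-dependent — this uniformity in $\eps$ is the whole point of the lemma.
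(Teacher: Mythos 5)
Your proof is correct and follows essentially the same route as the paper: pointwise comparison of the weights, monotonicity of $S$ on the outer region for (i), and the two-sided power bound \eqref{eq:weight_near_boundary} yielding the ratio $\left(1+\frac{2\gamma}{t+\gamma}\right)\frac{\eps-\gamma}{\eps+\gamma}\leq 3$ for (ii), which is exactly the paper's computation in a different parametrization. The only cosmetic difference is that in (ii) the paper disposes of all of $\overline{D}$ (including the band $-(\eps-\gamma)<d_D\leq 0$, where neither weight equals $1$) via the monotonicity $\omega^{\eps+\gamma}\leq\omega^{\eps-\gamma}$ on $D$, which is a slightly cleaner way to make your ``comparable to $1$'' remark airtight.
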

\begin{proof}
 (i) For $v\in L^p(D_\eps;\omega^\eps)$ we obtain
 \begin{align*}
  \int_{D_{\tilde\eps}} |v|^p  \d \omega^{\tilde\eps} = \int_{D_{\tilde\eps\setminus D}} |v|^p \omega^{\tilde\eps} \d x + \int_{D} |v|^p \omega^{\tilde\eps} \d x.
 \end{align*}
 For $x\in D_{\tilde\eps}\setminus D$ we have $\omega^{\tilde\eps}(x)\leq \omega^\eps(x)$, and on $D$ there holds $1/2\leq \omega^{\tilde\eps}\leq 1 \leq 2\omega^\eps$. The fact that $D_{\tilde\eps}\subset D_\eps$ yields the assertion.
 
 (ii) Similar to (i) we have $\omega^{\eps+\gamma}\leq \omega^{\eps-\gamma}$ on $D$, whence
 \begin{align*}
  \|v\|_{L^p(D_{\eps-2\gamma};\omega^{\eps+\gamma})}^p \leq \int_{D} |v|^p \d\omega^{\eps-\gamma} +  \int_{D_{\eps-2\gamma}\setminus D} |v|^p \d\omega^{\eps+\gamma}.
 \end{align*}
 For $z\in D_{\eps-2\gamma}\setminus D$, we can write $z=x-tn(x)$ with $x\in\partial D_{\eps-2\gamma}$ and $0\leq t\leq 2\eps$, see Section~\ref{sec:tubular}.
 Then, using \eqref{eq:weight_near_boundary}, we have
 \begin{align*}
    \omega^{\eps+\gamma}(z) &\leq \zeta_2 \left(\frac{{\rm dist}(z,\partial D_{\eps+\gamma})}{\eps+\gamma}\right)^\alpha = \zeta_2\left(\frac{t+3\gamma}{\eps+\gamma}\right)^\alpha,\quad\text{and}\\
    \omega^{\eps-\gamma}(z) &\geq \zeta_1 \left(\frac{{\rm dist}(z,\partial D_{\eps-\gamma})}{\eps-\gamma}\right)^\alpha = \zeta_1\left(\frac{t+\gamma}{\eps-\gamma}\right)^\alpha.
 \end{align*}
 This implies
 \begin{align*}
  \frac{\omega^{\eps+\gamma}(z)}{\omega^{\eps-\gamma}(z)} \leq \frac{\zeta_2}{\zeta_1} \left(\frac{t+3\gamma}{t+\gamma}\, \frac{\eps-\gamma}{\eps+\gamma}\right)^\alpha = \frac{\zeta_2}{\zeta_1}\left(\left(1+\frac{2\gamma}{t+\gamma}\right) \frac{\eps-\gamma}{\eps+\gamma}\right)^\alpha \leq \frac{\zeta_23^\alpha}{\zeta_1} ,
 \end{align*}
 from which we easily obtain the assertion. 
\end{proof}

Associated to the weighted spaces $L^p(D_\eps;\omega^\eps)$, let us define the weighted Sobolev spaces
\begin{align*}
 W^{1,p}(D_\eps;\omega^\eps) = \{ v\in L^p(D_\eps;\omega^\eps):\ \partial_{x_i} v \in L^p(D_\eps;\omega^\eps),\ 1\leq i\leq n\}
\end{align*}
with norm
\begin{align*}
 \|v\|_{W^{1,p}(D_\eps;\omega^\eps)}^p = \int_{D_\eps} |v|^p + |\nabla v|^p \d\omega^\eps.
\end{align*}

In view of \eqref{eq:weight_near_boundary}, several results of Kufner \cite{Kufner1985} concerning power-type weights can be applied. This is due to the fact that the proofs of \cite{Kufner1985} need the power-type property of the weight only in a vicinity of the boundary; in the remaining subset of $D_\eps$, say $D$, uniform boundedness away from zero and boundedness of the weight is used, which in turn allows to use results for unweighted spaces. We will employ similar techniques in the next section.
We have the following.
For $1\leq p <\infty$, the spaces $L^p(D_\eps;\omega^\eps)$ and $W^{1,p}(D_\eps;\omega_\eps)$ are separable Banach spaces \cite[Theorem~3.6]{Kufner1985}, and $C^\infty(\overline{D_\eps})$ is dense in $W^{1,p}(D_\eps;\omega^\eps)$ \cite[Theorem~7.2]{Kufner1985}.
In case $p=2$, the spaces $L^2(D_\eps;\omega^\eps)$ and $W^{1,2}(D_\eps;\omega^\eps)$ are Hilbert spaces with obvious definition of the inner product.


\section{Properties of Sobolev spaces for diffuse interfaces}\label{sec:properties_weighted_sobolev}

In this section we establish basic results for weighted Sobolev spaces for diffuse interfaces which are crucial for the analysis of variational boundary value problems. We particularly investigate the dependence on the parameter $\eps$. As shown below, our results yield constants independent of $\eps$, for instance the trace constant or the Poincar\'e constant in weighted Sobolev spaces, see Theorem~\ref{thm:trace} or Theorem~\ref{thm:poincare_general}. These results may be of interest in their own.

\subsection{Trace lemma}
The following auxiliary lemma is a slight adaptation of \cite[Lemma~2.1]{Arrieta08}. It states that the trace operator for unweighted Sobolev spaces is uniformly bounded for certain perturbations of the domain, and it is the key observation in proving a similar statement also for weighted Sobolev spaces. For convenience of the reader, we sketch a proof.
\begin{lemma}\label{thm:trace_unweighted}
 Let $\eps_0$ be sufficiently small. Then there is a constant $C>0$ such that for each $t\in (-\eps_0,\eps_0)$ and $v\in W^{1,p}(D_t)$, $1\leq p<\infty$,
 \begin{align}\label{eq:uniform_trace}
 \int_{\partial D_t} |v|^p \d\sigma \leq C \int_{D_t} |\nabla v|^p + |v|^p \d x.
\end{align}
\end{lemma}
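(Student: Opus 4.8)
The statement is a uniform (in $t$) trace inequality on the family of nearby domains $D_t$. The natural strategy is to reduce the estimate on $\partial D_t$ to the fixed reference boundary $\partial D$ by pulling everything back along the normal transport map $\Phi_t$, and then to use a single trace estimate on $\partial D$ together with the fact that the transformation Jacobians and their derivatives are uniformly controlled for $|t|<\eps_0$. The key point that makes the constant independent of $t$ is precisely the uniform bounds on $D\Phi_t$ and $\det D\Phi_t$ established in Section~\ref{sec:boundary_transformation}, in particular $\tfrac12\le \det D\Phi_t\le 2$.

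First I would fix a boundary point $x\in\partial D$ and its normal fiber, and use the fundamental theorem of calculus along the normal direction to compare the boundary value with an interior average. Concretely, for $v\in C^1(\overline{D_t})$ and $x\in\partial D$, writing points as $x+s\,n(x)$, one has for $0<\rho<\eps_0$
\begin{align*}
 |v(\Phi_t(x))|^p
 = |v(x+t\,n(x))|^p
 \le C\Bigl(\rho^{-1}\!\int_{t-\rho}^{t} |v(x+s\,n(x))|^p \d s
 + \rho^{p-1}\!\int_{t-\rho}^{t} |\partial_s v(x+s\,n(x))|^p \d s\Bigr),
\end{align*}
which is the standard one-dimensional trace estimate obtained by writing $|v(x+t n)|^p$ minus its average and bounding the remainder by $\int |\partial_s(|v|^p)|\d s$ and Hölder. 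Here $\partial_s v(x+s\,n(x)) = n(x)\cdot\nabla v(x+s\,n(x))$, so the $s$-derivative is dominated by $|\nabla v|$. Choosing $\rho$ comparable to $\eps_0$ (a fixed number, not depending on $t$) keeps all prefactors bounded uniformly in $t$.

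Next I would integrate this pointwise bound over $x\in\partial D$ against the surface measure. On the left, the change of variables $\partial D\to\partial D_t$ via $\Phi_t$ converts $\int_{\partial D}|v(\Phi_t(x))|^p\d\sigma(x)$ into $\int_{\partial D_t}|v|^p|\det D\Phi_t|^{-1}\d\sigma_t$, and the lower bound $\det D\Phi_t\ge \tfrac12$ lets me pass from $\partial D_t$ to $\partial D$ with a uniform constant (so it suffices to bound the $\partial D$ integral, which is what the FTC step produces). On the right, I would rewrite the iterated integral over $x\in\partial D$ and $s\in(t-\rho,t)$ as a volume integral over the tubular region $\{t-\rho<d_D<t\}\subset D_t$ using the coarea/transformation formula \eqref{eq:boundary_transformation_formula}; the Jacobian factors appearing there are again bounded above and below uniformly for $|s|<\eps_0$, so the right-hand side is controlled by $C\int_{D_t}(|\nabla v|^p+|v|^p)\d x$. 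Finally I would extend from $C^1(\overline{D_t})$ to all of $W^{1,p}(D_t)$ by density.

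\textbf{Main obstacle.} The delicate part is ensuring that every constant produced in the transport-and-integrate procedure is genuinely independent of $t$, and that the tubular slab $\{t-\rho<d_D<t\}$ stays inside $D_t$ for the whole range $|t|<\eps_0$; this forces $\rho$ to be chosen as a fixed fraction of $\eps_0$ and requires $\eps_0$ small enough that $\Phi_s$ is a diffeomorphism with the two-sided Jacobian bounds for all $|s|<\eps_0$. I expect the bookkeeping of the Jacobian factors in \eqref{eq:boundary_transformation_formula} — keeping them uniformly pinched between two positive constants while converting the fiber integral into the volume integral — to be the step requiring the most care, but it is exactly the uniform estimates of Section~\ref{sec:boundary_transformation} (and the citation to \cite[Lemma~2.1]{Arrieta08}) that supply this control.
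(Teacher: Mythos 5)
Your proof is correct, but it follows a genuinely different route from the paper's. The paper pulls $v$ back to the fixed reference domain $D$ via the diffeomorphisms $\Psi_t:D\to D_t$ and $\Phi_t:\partial D\to\partial D_t$, invokes the known trace inequality $W^{1,1}(D)\to L^1(\partial D)$ on $D$ as a black box, and transfers the estimate back using the fact that the Jacobian bounds $C_1(t),C_2(t)\to 1$; the case $p>1$ is then reduced to $p=1$ by applying the estimate to $\tilde v=|v|^p$ together with Young's inequality. You instead reprove the trace inequality from scratch: a one-dimensional fundamental-theorem-of-calculus average along each normal fiber of fixed length $\rho\sim\eps_0$, integrated over $\partial D$ and converted into a volume integral over the collar $\{t-\rho<d_D<t\}$ via \eqref{eq:boundary_transformation_formula}, with the Young step absorbed into the pointwise fiber estimate. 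Both arguments are sound and rest on the same uniform Jacobian control from Section~\ref{sec:boundary_transformation}. Your version is self-contained (it does not presuppose the trace theorem on $D$), dispenses with the volume map $\Psi_t$ entirely, and yields the slightly stronger conclusion that the right-hand side can be localized to a boundary collar; the paper's version is shorter because it recycles the trace constant of the fixed domain. One small point to make explicit if you write this up: the fiber $\{x+sn(x): t-\rho\le s\le t\}$ must stay inside the region where the normal projection is single-valued, which holds since $|t|<\eps_0$, $\rho\le\eps_0$ and the projection is single-valued on $\Gamma_{2\eps_0}$.
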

\begin{proof}
Let $\eps_0>0$ be sufficiently small and let $t\in(-\eps_0,\eps_0)$. 
Let us first consider the case $p=1$. Denote by $\Psi_t:D\to D_t$ and $\Phi_t:\partial D\to \partial D_t$ the transformations defined in Section~\ref{sec:boundary_transformation}.
By a change of variables $u(x)=v(\Psi_t(x))$ for $x\in D$ and $u(x)=v(\Phi_t(x))$ for $x\in\partial D$, it follows that
\begin{align*}
 \inf_{x\in D} \det(D\Psi_t(x)) \int_D |\nabla u| + |u| \d x &\leq \int_{D_t} |\nabla v| + |v| \d x \leq \sup_{x\in D} \det(D\Psi_t(x)) \int_D |\nabla u| + |u| \d x,\\
 \inf_{x\in \partial D} \det(D\Phi_t(x))\int_{\partial D} |u| \d\sigma &\leq \int_{\partial D_t} |v| \d\sigma \leq \sup_{x\in \partial D} \det(D\Phi_t(x)) \int_{\partial D}  |u| \d\sigma.
\end{align*}
In view of Section~\ref{sec:boundary_transformation}, as $t\to 0$
\begin{align*}
 C_1(t)&=\min\{ \inf_{x\in D} \det(D\Psi_t(x)),\inf_{x\in \partial D} \det(D\Phi_t(x))\} \to 1,\quad\text{and}\\
 C_2(t)&=\max\{\sup_{x\in D} \det(D\Psi_t(x)),\sup_{x\in \partial D} \det(D\Phi_t(x))\}\to 1.
\end{align*}
Denote by $C$ a bound for the norm of the trace operator $W^{1,1}(D)\to L^1(\partial D)$. We then obtain
\begin{align*}
  \int_{\partial D_t} |v| \d\sigma \leq C \frac{C_2(t)}{C_1(t)} \int_{\partial D_t}  |\nabla v| + |v| \d x.
\end{align*}
For the general case $p>1$, we apply the latter estimate to $\tilde v = |v|^{p}$. We observe that $|\nabla \tilde v| = p |v|^{p-1} |\nabla v|$, and, using Young's inequality, $p|v|^{p-1}|\nabla v| + |v|^p \leq |\nabla v|^p + |v|^p$,
which concludes the proof.
\end{proof}
\begin{theorem}[Trace lemma]\label{thm:trace}
 Let $\eps_0>0$ be sufficiently small. Then there exists a constant $C>0$ such that for $\eps \in (0,\eps_0)$ and for $ v \in W^{1,p}(D_\eps;\omega^\eps)$, $1\leq p<\infty$,
 \begin{align*}
  \int_{D_\eps} |v|^p |\nabla \omega^\eps|\d x \leq C \|v\|^p_{W^{1,p}(D_\eps;\omega^\eps)}.
 \end{align*}
\end{theorem}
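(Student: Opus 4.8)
~$\textbf{Proof plan.}$ The plan is to transfer the weighted trace estimate to an \emph{unweighted} one on a suitable slice of the tubular neighborhood, apply Lemma~\ref{thm:trace_unweighted} there, and then translate back. The left-hand side is a weighted boundary integral in disguise: recall from Section~\ref{sec:conv_diffuse_boundary}-type identities that $|\nabla\omega^\eps|$ concentrates on $\Gamma_\eps$, and by the co-area formula $\int_{D_\eps}|v|^p|\nabla\omega^\eps|\d x = \int_{-\eps}^{\eps}\big(\tfrac{1}{2\eps}|S'(-s/\eps)|\big)\int_{\partial D_s}|v|^p\d\sigma\d s$, since $\omega^\eps$ is a function of $d_D$ alone on $\Gamma_\eps$ and its level sets are exactly the $\partial D_s$. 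So the target quantity is an average over $s\in(-\eps,\eps)$ of the unweighted surface integrals $\int_{\partial D_s}|v|^p\d\sigma$, weighted by the mollifier $\tfrac{1}{2\eps}|S'(-s/\eps)|$, which integrates to $1$ over $(-\eps,\eps)$.

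First I would fix $s\in(-\eps,\eps)$ and apply Lemma~\ref{thm:trace_unweighted} on the domain $D_s$ (with $|s|<\eps_0$), obtaining
\begin{align*}
 \int_{\partial D_s} |v|^p \d\sigma \leq C \int_{D_s} |\nabla v|^p + |v|^p \d x,
\end{align*}
with $C$ independent of $s$. Integrating this against the mollifier $\tfrac{1}{2\eps}|S'(-s/\eps)|$ over $s\in(-\eps,\eps)$ and using the co-area identity above yields
\begin{align*}
 \int_{D_\eps} |v|^p |\nabla\omega^\eps|\d x \leq C \int_{-\eps}^{\eps} \frac{1}{2\eps}\Big|S'\big(-\tfrac{s}\eps\big)\Big| \Big(\int_{D_s} |\nabla v|^p + |v|^p \d x\Big)\d s.
\end{align*}
Since $D_s\subset D_\eps$ for every $s<\eps$, the inner integral is bounded by $\int_{D_\eps}(|\nabla v|^p+|v|^p)\d x$, but this is \emph{not} yet the weighted norm on the right-hand side, so a further step is needed to insert the weight $\omega^\eps$.

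The main obstacle is precisely this last passage from the unweighted integral $\int_{D_\eps}(|\nabla v|^p+|v|^p)\d x$ to the weighted one $\|v\|_{W^{1,p}(D_\eps;\omega^\eps)}^p = \int_{D_\eps}(|\nabla v|^p+|v|^p)\omega^\eps\d x$, because $\omega^\eps$ degenerates to zero near $\partial D_\eps$ and so the unweighted integral can be much larger. To handle this I would not bound $\int_{D_s}$ by $\int_{D_\eps}$ crudely, but rather exploit the slicing more carefully: restrict the inner domain of integration to $D_s$ and observe that on $D_s$ the weight $\omega^\eps$ is bounded below by a positive constant \emph{depending on the distance from $\partial D_\eps$}, which for points in $D_s$ is at least $\eps - s$. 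Concretely, using \eqref{eq:weight_near_boundary}, for $x\in D_s$ one has $\omega^\eps(x)\geq \zeta_1\big((\eps-s)/\eps\big)^\alpha$ when $x\in\Gamma_\eps$, and $\omega^\eps(x)=1$ otherwise; hence $\int_{D_s}(|\nabla v|^p+|v|^p)\d x \leq \big(\zeta_1((\eps-s)/\eps)^\alpha\big)^{-1}\int_{D_\eps}(|\nabla v|^p+|v|^p)\omega^\eps\d x$. Substituting this bound and factoring out the $\eps$-independent weighted norm reduces everything to checking that the scalar integral
\begin{align*}
 \int_{-\eps}^{\eps} \frac{1}{2\eps}\Big|S'\big(-\tfrac{s}\eps\big)\Big| \Big(\frac{\eps}{\eps-s}\Big)^\alpha \d s
\end{align*}
is bounded by a constant independent of $\eps$; after the substitution $\tau=s/\eps$ this becomes $\tfrac12\int_{-1}^{1}|S'(-\tau)|(1-\tau)^{-\alpha}\d\tau$, whose finiteness follows from assumption (S2), which forces $S'(\tau)$ to vanish like $(1-|\tau|)^{\alpha-1}$ as $|\tau|\to1$ and thus exactly compensates the singularity. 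This is where the power-type behavior of the weight is essential, and it is the delicate point to verify; the remainder is bookkeeping.
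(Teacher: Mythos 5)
Your first two steps are exactly the paper's: the co-area identity $\int_{D_\eps}|v|^p|\nabla\omega^\eps|\d x=\int_{-\eps}^{\eps}\frac{1}{2\eps}S'(-\tfrac{s}{\eps})\int_{\partial D_s}|v|^p\d\sigma\d s$ and the slice-wise application of Lemma~\ref{thm:trace_unweighted} with a constant independent of $s$ are both correct. The gap is in the final recombination. You bound $\int_{D_s}(|\nabla v|^p+|v|^p)\d x$ by replacing $\omega^\eps$ with its \emph{infimum} over $D_s$, namely $\zeta_1((\eps-s)/\eps)^\alpha$, which reduces matters to the scalar integral $\tfrac12\int_{-1}^{1}|S'(-\tau)|(1-\tau)^{-\alpha}\d\tau$. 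This integral is \emph{not} finite in general: for Example~\ref{ex:S}~(i) one has $S'\equiv 1$ and $\alpha=1$, so it equals $\tfrac12\int_{-1}^{1}(1-\tau)^{-1}\d\tau=\infty$; for Examples (ii) and (iii) one has $S'(-\tau)\sim c(1-\tau)^{\alpha-1}$ near $\tau=1$, so the integrand behaves like $(1-\tau)^{-1}$ and again diverges logarithmically. Your claim that the decay of $S'$ ``exactly compensates the singularity'' is off by precisely this factor $(1-\tau)^{-1}$ (and, as a side remark, (S2) constrains the values of $S$, not $S'$ pointwise, so even the decay $S'(\tau)\lesssim(1-|\tau|)^{\alpha-1}$ is not justified as stated). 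The infimum bound is simply too lossy: most of $D_s$ carries a weight of order one, not of order $((\eps-s)/\eps)^\alpha$.

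The repair requires no new idea, only that you not give anything away at this step: since $x\in D_s$ if and only if $s>d_D(x)$, Fubini's theorem gives
\begin{align*}
\int_{-\eps}^{\eps}\frac{1}{2\eps}S'\big(-\tfrac{s}{\eps}\big)\int_{D_s}F(x)\d x\d s
=\int_{D_\eps}F(x)\int_{d_D(x)}^{\eps}\frac{1}{2\eps}S'\big(-\tfrac{s}{\eps}\big)\d s\,\d x
=\int_{D_\eps}F(x)\,\omega^\eps(x)\d x,
\end{align*}
because the inner integral evaluates exactly to $\tfrac12\big(1+S(-d_D(x)/\eps)\big)=\omega^\eps(x)$. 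Applied with $F=|\nabla v|^p+|v|^p$ this yields the weighted norm on the right-hand side with no divergent factor. This is precisely the paper's proof, written there in the variable $s=\varphi^\eps(x)$ (level sets of the phase field, with the layer-cake identity $\int_{-1}^{\varphi^\eps(x)}\d s=2\omega^\eps(x)$) rather than in the distance variable; the two parametrizations are related by the substitution $s=S(-t/\eps)$, which is exactly where the mollifier factor $\tfrac{1}{2\eps}S'$ gets absorbed.
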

\begin{proof}
 According to the coarea-formula there holds
 \begin{align*}
  \int_{D_\eps} |v|^p |\nabla \varphi^\eps| \d x = \int_{-\infty}^\infty \int_{(\varphi^\eps)^{-1}(s)} |v|^p \d\sigma \d s =\int_{-1}^1 \int_{\{\varphi^\eps=s\}} |v|^p \d\sigma \d s.
 \end{align*}
 Since $\{\varphi^\eps=s\}=\partial \{\varphi^\eps>s\} = \partial D_t$ for $t=-\eps S^{-1}(s)$, we can use \eqref{eq:uniform_trace} to obtain
 \begin{align*}
  \int_{-1}^1 \int_{\{\varphi^\eps=s\}} |v|^p \d\sigma \d s \leq C \int_{-1}^1 \int_{\{\varphi^\eps>s\}} |\nabla v|^p + |v|^p \d x \d s
  =C \int_{D_\eps} \int_{-1}^{\varphi^\eps(x)} \d s (|\nabla v|^p + |v|^p) \d x,
 \end{align*}
 where we used Fubini's theorem in the last step. The assertion follows from $|\nabla \omega^\eps|=\frac{1}{2}|\nabla\varphi^\eps|$.
\end{proof}
 The trace theorem~\ref{thm:trace} shows that for $g\in W^{1,p}(D_\eps;\omega^\eps)$ the diffuse boundary integral introduced in \eqref{eq:diffuse_integrals} actually exists.

\subsection{Embedding theorem}
The following embedding theorem uses the representation \eqref{eq:weight_near_boundary} of the weight near the boundary as a power of the distance function $\delta(x)={\rm dist}(x,\partial D_\eps)$.
Let us define the Sobolev conjugate $p_\alpha^*$ for weighted spaces
\begin{align}\label{eq:conjugate_sobolev}
 p_\alpha^* =   \frac{p(n+\alpha)}{n+\alpha-p}\quad\text{ for } p<n+\alpha,\qquad\text{and}\qquad p_\alpha^*=\infty \quad\text{ for } p\geq n+\alpha.
\end{align}
We observe that $p^*_0$ is the ``usual'' Sobolev conjugate for unweighted Sobolev spaces, see \cite{Adams1975}, and $p^*_\alpha$ is strictly decreasing with respect to $\alpha$ on $(0,\infty)$.

\begin{theorem}[Embedding]\label{thm:embdding}
 Let $0<\eps<\eps_0$, and let $\alpha>0$ be the constant from (S2). Then the following embeddings are continuous
 \begin{align*}
  W^{1,p}(D_\eps,\omega^\eps) \hookrightarrow L^q(D_\eps,\omega^\eps),\qquad 1\leq q \leq p_\alpha^*\text{ and } q<\infty.
 \end{align*}
  Moreover, there exists a constant $C$ independent of $\eps$ such that for $u\in W^{1,p}(D_\eps;\omega^\eps)$
 \begin{align}\label{eq:weighted_sobolev}
  \| u\|_{L^q(D_\eps;\omega^\eps)}  \leq C \| u \|_{ W^{1,p}(D_\eps;\omega^\eps) }.
 \end{align}
\end{theorem}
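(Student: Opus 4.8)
The plan is to exploit the two-scale structure of $D_\eps = D_{-\eps}\cup\Gamma_\eps$: on the bulk $D_{-\eps}=\{d_D<-\eps\}$ one has $S\equiv 1$, hence $\omega^\eps\equiv 1$, so the problem is unweighted there, whereas all the degeneracy of $\omega^\eps$ is confined to the tube $\Gamma_\eps$. First I would dispose of the easy range $1\le q\le p$: since $\omega^\eps(D_\eps)\le |\Omega|$ uniformly in $\eps$, Hölder's inequality gives $\|u\|_{L^q(D_\eps;\omega^\eps)}\le |\Omega|^{1/q-1/p}\|u\|_{L^p(D_\eps;\omega^\eps)}$, which already yields \eqref{eq:weighted_sobolev}. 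Thus the content lies in the range $p<q\le p_\alpha^*$, and I would estimate $\int_{D_{-\eps}}|u|^q\d x$ and $\int_{\Gamma_\eps}|u|^q\omega^\eps\d x$ separately.

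On the bulk the weight is trivial, so $\int_{D_{-\eps}}|u|^q\d x=\|u\|_{L^q(D_{-\eps})}^q$ and the classical Sobolev embedding $W^{1,p}(D_{-\eps})\hookrightarrow L^q(D_{-\eps})$ applies, valid since $q\le p_\alpha^*\le p_0^*$. Its constant is uniform in $\eps$ because the $C^{1,1}$ domains $D_{-\eps}$ converge to $D$ through the near-identity diffeomorphisms $\Psi_{-\eps}$ of Section~\ref{sec:boundary_transformation}; this is exactly the mechanism already used in Lemma~\ref{thm:trace_unweighted}, and it likewise furnishes a uniform extension operator $E\colon W^{1,p}(D_{-\eps})\to W^{1,p}(\Omega)$ with $\|Eu\|_{W^{1,p}(\Omega)}\le C\|u\|_{W^{1,p}(D_{-\eps})}$ and $Eu|_{D_{-\eps}}=u$. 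Writing $u=Eu+d$ on $\Gamma_\eps$, the first piece is controlled by $\int_{\Gamma_\eps}|Eu|^q\omega^\eps\le\int_\Omega |Eu|^q\le C\|Eu\|_{W^{1,p}(\Omega)}^q\le C\|u\|_{W^{1,p}(D_\eps;\omega^\eps)}^q$, using $\omega^\eps\le 1$ and the fixed-domain Sobolev inequality on $\Omega$. Crucially, $d=u-Eu$ has vanishing trace on the inner boundary $\partial D_{-\eps}$, so everything reduces to proving, uniformly in $\eps$,
\begin{align}\label{eq:crux}
 \Big(\int_{\Gamma_\eps}|d|^q\omega^\eps\d x\Big)^{1/q}\le C\Big(\int_{\Gamma_\eps}|\nabla d|^p\omega^\eps\d x\Big)^{1/p},\qquad d|_{\partial D_{-\eps}}=0 .
\end{align}

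This tube estimate is the heart of the matter. I would flatten $\Gamma_\eps$ through the normal coordinates $z=x+tn(x)$, $x\in\partial D$, $t=d_D(z)\in(-\eps,\eps)$, and rescale the normal variable by $t=\eps\tau$, $\tau\in(-1,1)$. By \eqref{eq:weight_near_boundary} the weight becomes $\eps$-independent, $\omega^\eps\simeq(1-\tau)^\alpha$, while $\d x\simeq\eps\,\d\sigma(x)\,\d\tau$ and the normal derivative scales as $\partial_t=\eps^{-1}\partial_\tau$. On the fixed cylinder $\partial D\times(-1,1)$ with the fixed weight $(1-\tau)^\alpha$ one has a weighted Sobolev inequality whose conjugate exponent is precisely $p_\alpha^*$, since the measure $(1-\tau)^\alpha\,\d\sigma\,\d\tau$ raises the effective dimension near the degenerate face $\{\tau=1\}$ from $n$ to $n+\alpha$; this is the Kufner-type statement I would adapt. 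The delicate point is that the anisotropic scaling amplifies the normal derivative by $\eps^{-1}$, so one must \emph{not} bound the physical norm by the isotropic reference norm: doing so wastes this factor and produces a spurious $\eps^{1/q-1/p}$ that blows up for $q>p$. Instead I would run the reference inequality keeping the two directions separate, namely a one-dimensional Hardy inequality in $\tau$ (using $d|_{\tau=-1}=0$ and the admissible weight $(1-\tau)^\alpha$) against $\partial_\tau d$, combined with the $(n-1)$-dimensional Sobolev inequality in $x$ against the tangential gradient, assembled by a Gagliardo–Hölder interpolation up to the exponent $p_\alpha^*$. Tracking the powers of $\eps$ produced by the Jacobian $\eps$, by $\eps^{1/q}$ on the left, and by $\eps^{1-1/p}$ coming from $\partial_\tau=\eps\,\partial_t$, the net exponent of $\eps$ is nonnegative throughout the range $q\le p_\alpha^*$, which together with $\eps<\eps_0$ yields the uniform constant in \eqref{eq:crux}.

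I expect the main obstacle to be exactly this bookkeeping in the tube: showing that the gain of integrability supplied by the degenerate weight is strong enough to absorb the anisotropy of the scaling, so that the vanishing inner trace plus the $\eps^{-1}$-enhanced normal control leave a nonnegative net power of $\eps$ all the way to the critical exponent. The constant-function example on $\Gamma_\eps$, for which the ratio of the two sides of \eqref{eq:crux} degenerates as $\eps\to 0$, shows that \eqref{eq:crux} can only hold because of the boundary condition $d|_{\partial D_{-\eps}}=0$; this is precisely why routing the low-frequency part of $u$ through the bulk extension $Eu$ is indispensable. Finally, assembling the bulk bound, the $Eu$-bound, and \eqref{eq:crux} gives \eqref{eq:weighted_sobolev} with a constant independent of $\eps$, and the continuity of the embeddings follows at once.
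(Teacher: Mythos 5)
Your proposal is correct in architecture but follows a genuinely different route from the paper, and its central estimate is left unproved. The paper also splits $D_\eps$ into a bulk (there: $D$ itself, where $1/2\le\omega^\eps\le 1$, so the classical Sobolev embedding applies directly) and the collar, but it handles the collar without any extension operator and without anisotropic rescaling: Lemma~\ref{lem:sobolev_boundary} covers $D_\eps\setminus D$ by Besicovitch/Whitney balls $B_{\delta(x_k)/3}(x_k)$ on which the weight $\delta^\alpha$ is comparable to a constant, applies the ordinary Sobolev inequality on each ball at the critical exponent $q\le n/(n-1)$, and sums; this produces the lower-order term $\|u\|_{L^1(\Gamma_\eps;\delta^{\alpha-1})}$, which Lemma~\ref{lem:hardy_inequality} then absorbs by a one-dimensional integration by parts along the normal, at the price of a trace term $\eps^\alpha\|u\|_{W^{1,1}(D)}$ on $\partial D_{-\eps}$ controlled by Lemma~\ref{thm:trace_unweighted}. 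The case $p>1$ is obtained from $p=1$ by the substitution $v=|u|^{1+q(p-1)/p}$ and H\"older, not by rerunning the argument in $L^p$. Your decomposition $u=Eu+d$ and the vanishing inner trace of $d$ play exactly the role of the paper's Hardy lemma, which instead keeps the boundary contribution explicit rather than subtracting an extension; what each approach buys is that yours isolates a clean, self-contained Poincar\'e--Sobolev inequality on the tube, while the paper's avoids both the uniform extension operator and any anisotropic interpolation.

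The one real concern is that tube estimate, $\|d\|_{L^q(\Gamma_\eps;\omega^\eps)}\le C\|\nabla d\|_{L^p(\Gamma_\eps;\omega^\eps)}$ for $d$ vanishing on $\partial D_{-\eps}$, uniformly in $\eps$ up to $q=p_\alpha^*$. Your scaling bookkeeping for it is right --- at the critical exponent the normal direction must enter the interpolation with weight at least $1/p-1/q=1/(n+\alpha)$, and the weight it naturally carries in the $(1-\tau)^\alpha$-weighted cylinder is $(1+\alpha)/(n+\alpha)$, so there is room to spare --- and your observation that constants rule out the inequality without the boundary condition is correct. But the estimate is only reduced to an anisotropic weighted Gagliardo--Nirenberg inequality on the fixed cylinder $\partial D\times(-1,1)$, valid up to the critical exponent with the normal and tangential derivatives kept separate, and that reference inequality is not an off-the-shelf Kufner-type statement: it is the entire analytic content of the theorem and would itself need a proof (for instance precisely the covering-plus-Hardy argument the paper uses, or a genuine product-structure Gagliardo argument with one weighted factor). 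As written, the proposal identifies the right estimate and the right mechanism but does not close this step.
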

 The first part of the theorem can be found in \cite[Theorem~3]{Horiuchi1989}, see also \cite[Theorem~19.9]{OpicKufner1990} for the case $q<p_\alpha^*$. To show that the embedding is independent of $\eps$, we will give a proof in the spirit of \cite{OpicKufner1990}. To do so, we employ the following two lemmata. 
 The first of which uses Sobolev's embedding theorem on balls and a covering argument, and is similar to the arguments of \cite{OpicKufner1990}.
 The second is a Hardy-inequality-type argument for diffuse interfaces which seems to be new.
 We let $\delta(x)={\rm dist}(x,\partial D_\eps)$ in the following.

  \begin{lemma}\label{lem:sobolev_boundary}
    Let $\eps>0$ and $\alpha>0$. Furthermore, let $1\leq q<\infty$ such that $n+\alpha\geq (n+\alpha-1)q$. Then there exists a constant $C>0$ independent of $\eps$ such that for every $u\in W^{1,1}(D_\eps;\delta^\alpha)$
  \begin{align*}
   \|u\|_{L^q(D_\eps\setminus D;\delta^{\alpha})} \leq  C\left( \|u\|_{L^1(\Gamma_\eps;\delta^{\alpha-1})}  + \|\nabla u\|_{L^1(\Gamma_\eps; \delta^\alpha)}\right).
  \end{align*}
  \end{lemma}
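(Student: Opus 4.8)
The plan is to establish the estimate first for $u\in C^\infty(\overline{D_\eps})$ and then to invoke the density of $C^\infty(\overline{D_\eps})$ in $W^{1,1}(D_\eps;\delta^\alpha)$ (recorded just after Lemma~\ref{lem:inclusion}) to reach general $u$. The core idea is a \emph{Whitney-type decomposition} of the degenerate collar combined with the \emph{unweighted} Sobolev inequality on balls, on which the weight $\delta^\alpha$ is essentially constant. Concretely, I would tile the outer shell $D_\eps\setminus D=\{0<\delta<\eps\}$ by the dyadic layers $\Sigma_k=\{2^{-k-1}\eps<\delta<2^{-k}\eps\}$, $k\in\NN$, on which $\delta\asymp r_k:=2^{-k}\eps$, and cover each $\Sigma_k$ by finitely overlapping balls $B$ of radius comparable to $r_k$ that are contained in $D_\eps$. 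Since $\partial D$ is $C^{1,1}$ and $\eps\le\eps_0$, such a covering exists with overlap number and radius ratios independent of $\eps$, cf. Section~\ref{sec:tubular}.

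On each such ball the unweighted $L^1$-Sobolev inequality, valid for $1\le q\le \tfrac{n}{n-1}$ (which is guaranteed because the hypothesis gives $q\le\tfrac{n+\alpha}{n+\alpha-1}\le\tfrac{n}{n-1}$ for $\alpha>0$), yields after the standard scaling
\begin{align*}
 \|u\|_{L^q(B)} \le C\, r_k^{\frac{n}{q}-(n-1)}\left(\|\nabla u\|_{L^1(B)} + r_k^{-1}\|u\|_{L^1(B)}\right),
\end{align*}
with $C$ dimensionless. Because $\delta^\alpha\asymp r_k^\alpha$ on the slightly enlarged layer $\widetilde\Sigma_k$ containing these balls, I reinsert the weights via $\|\nabla u\|_{L^1(\widetilde\Sigma_k)}\asymp r_k^{-\alpha}\|\nabla u\|_{L^1(\widetilde\Sigma_k;\delta^\alpha)}$ and $r_k^{-1}\|u\|_{L^1(\widetilde\Sigma_k)}\asymp r_k^{-\alpha}\|u\|_{L^1(\widetilde\Sigma_k;\delta^{\alpha-1})}$; note that the outermost enlarged layers reach into the nondegenerate collar $\Gamma_\eps\cap D$, which is exactly why the right-hand side must be integrated over all of $\Gamma_\eps$. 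Summing the $q$-th powers over the balls of $\Sigma_k$ (using $q\ge1$ so that $\sum a_j^q\le(\sum a_j)^q$, together with bounded overlap) produces the per-layer bound
\begin{align*}
 \int_{\Sigma_k}|u|^q\,\delta^\alpha\,\d x \le C\, r_k^{(n+\alpha)-(n+\alpha-1)q}\Big(\|\nabla u\|_{L^1(\widetilde\Sigma_k;\delta^\alpha)}+\|u\|_{L^1(\widetilde\Sigma_k;\delta^{\alpha-1})}\Big)^q.
\end{align*}

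Finally I would sum over $k$, and here the hypothesis $(n+\alpha-1)q\le n+\alpha$ enters decisively: it makes the exponent $\beta:=(n+\alpha)-(n+\alpha-1)q$ nonnegative, so $r_k^\beta=(2^{-k}\eps)^\beta\le \eps_0^\beta\,2^{-k\beta}\le\eps_0^\beta$. Using $\sum_k a_k^q\le(\sum_k a_k)^q$ for $q\ge1$ together with the bounded overlap of the $\widetilde\Sigma_k$, which keeps $\sum_k\|\nabla u\|_{L^1(\widetilde\Sigma_k;\delta^\alpha)}\lesssim\|\nabla u\|_{L^1(\Gamma_\eps;\delta^\alpha)}$ and likewise for the $\delta^{\alpha-1}$ term, I obtain
\begin{align*}
 \|u\|_{L^q(D_\eps\setminus D;\delta^\alpha)}^q\le C\,\eps_0^\beta\,\Big(\|\nabla u\|_{L^1(\Gamma_\eps;\delta^\alpha)}+\|u\|_{L^1(\Gamma_\eps;\delta^{\alpha-1})}\Big)^q,
\end{align*}
which is the claim with a constant depending only on $\eps_0$ and the geometry, not on $\eps$.

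The main obstacle is precisely the uniformity in $\eps$: every geometric constant (the overlap number, the comparison $\delta\asymp r_k$ on balls, and the admissibility of the covering) must be controlled independently of the shrinking tube width, and the dyadic sum must converge uniformly. Both are secured by the sign condition $\beta\ge0$ and by the $C^{1,1}$ regularity of $\partial D$, which renders the tubular and Whitney geometry of $\Gamma_\eps$ uniform for $\eps\le\eps_0$. A minor point I would treat separately is $n=1$ (the endpoint of the ball-Sobolev inequality), where the embedding into continuous functions replaces the Gagliardo estimate; otherwise the argument is unchanged.
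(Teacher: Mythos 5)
Your proof is correct and follows essentially the same route as the paper's: cover the degenerate shell by balls of radius comparable to $\delta$ (on which the weight is essentially constant), apply the unweighted $L^1$--$L^q$ Sobolev inequality on each ball, reinsert the weights, and use the condition $n+\alpha\geq(n+\alpha-1)q$ to make the resulting power of $\delta$ nonnegative and hence uniformly bounded. The only cosmetic difference is that the paper obtains the covering in one step from the Besicovitch covering theorem with $r(x)=\delta(x)/3$ (following \cite{OpicKufner1990}), rather than organizing it into dyadic layers first.
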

  \begin{proof}
  We proceed as in \cite{OpicKufner1990}. Let $r(x) = \delta(x)/3$. According to the Besicovitch covering theorem, cf.\@ \cite[Lemma~18.3]{OpicKufner1990}, there exists a sequence $\{x_k\}\subset D_\eps\setminus D$ and an integer $\theta$ depending only on $n$ such that
 \begin{align*}
  D_\eps\setminus D \subset \bigcup_{k=1}^\infty B_k \subset \Gamma_\eps,\quad B_k=B_{r(x_k)}(x_k)\text{ and }
  \sum_{k=1}^\infty \chi_{B_k}(x) \leq \theta \text{ for all } x\in \RR^n.
 \end{align*}
 Employing the Sobolev embedding theorem for balls \cite{Adams1975}, we obtain as in \cite[Theorem~18.6]{OpicKufner1990}
 \begin{align}\label{eq:sobolev}
  \|u\|_{L^q(B_k;\delta^{\alpha})} \leq C \delta(x_k)^{\frac{n+\alpha}{q}-n+1-\alpha} 
  \left(\|u\|_{L^1(B_k;\delta^{\alpha-1})} + \|\nabla u\|_{L^1(B_k;\delta^\alpha)}\right)
 \end{align}
  for all $q\leq n/(n-1)$. Note the different powers of the weight on the right-hand side of \eqref{eq:sobolev}. Assuming, without loss of generality, that $\eps_0\leq 1$ and thus $\delta(x_k)\leq 1$, we can bound the right-hand side of \eqref{eq:sobolev} as long as $\alpha$ and $q$ are such that $n+\alpha\geq (n+\alpha-1)q$. Hence, by summation over $k$, we obtain the assertion with
  a constant $C$ depending on $n$ and the Sobolev embedding constant for the unit ball, but not on $\eps$. 
 \end{proof}

 \begin{lemma}[Hardy-type inequality]\label{lem:hardy_inequality}
  Let $0<\eps<\eps_0$ and let $\alpha>0$. Then there exists a constant $C>0$ independent of $\eps$ such that for every $u\in W^{1,1}(D_\eps;\delta^\alpha)$
  \begin{align*}
   \|u\|_{L^1(\Gamma_\eps;\delta^{\alpha-1})} \leq \frac{C}{\alpha}\left(\eps^\alpha\|u\|_{W^{1,1}(D)} + \|\nabla u\|_{L^1(\Gamma_\eps;\delta^\alpha)} \right).
  \end{align*}
 \end{lemma}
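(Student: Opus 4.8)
The plan is to reduce the multi-dimensional weighted estimate to a one-dimensional Hardy inequality along the normal fibres of the tubular neighborhood $\Gamma_\eps$, and then to integrate over $\partial D$. First I would pass to fibre coordinates via the map $(x,t)\mapsto x+tn(x)$, $x\in\partial D$, $t\in(-\eps,\eps)$, using the change of variables underlying \eqref{eq:boundary_transformation_formula}. Writing $f(t)=u(x+tn(x))$ for a fixed $x\in\partial D$, and recalling that $\delta(x+tn(x))=\eps-t$ on $\Gamma_\eps$, the left-hand side becomes
\begin{align*}
 \|u\|_{L^1(\Gamma_\eps;\delta^{\alpha-1})}=\int_{\partial D}\int_{-\eps}^{\eps}|f(t)|(\eps-t)^{\alpha-1}\det D\Phi_t(x)\,\d t\,\d\sigma(x).
\end{align*}
Since by Section~\ref{sec:boundary_transformation} we have $\tfrac12\le \det D\Phi_t\le 2$ uniformly in $|t|<\eps_0$, the factor $\det D\Phi_t$ only costs a harmless constant, and the estimate decouples into the individual fibres. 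This is where the $\eps$-independence of the final constant originates: all Jacobian bounds, as well as the trace constant used below, are uniform in $t\in(-\eps_0,\eps_0)$.

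On each fibre I would substitute $s=\eps-t\in(0,2\eps)$ and set $F(s)=f(\eps-s)$, so that the anchor $s=2\eps$ corresponds to the inner boundary $\partial D_{-\eps}\subset D$, where the weight is harmless, while $s\to0$ is the ``dangerous'' end near $\partial D_\eps$. Assuming first that $u$ is smooth (the general case follows by density of $C^\infty(\overline{D_\eps})$, since for fixed $\eps$ the norms of $W^{1,1}(D_\eps;\delta^\alpha)$ and $W^{1,1}(D_\eps;\omega^\eps)$ are equivalent in view of \eqref{eq:weight_near_boundary} and $\delta\ge\eps$ on $D$), the fundamental theorem of calculus gives $|F(s)|\le |F(2\eps)|+\int_s^{2\eps}|F'(\tau)|\,\d\tau$. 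Multiplying by $s^{\alpha-1}$, integrating over $s\in(0,2\eps)$, and applying Fubini to the resulting double integral yields the one-dimensional Hardy estimate
\begin{align*}
 \int_0^{2\eps}|F(s)|s^{\alpha-1}\,\d s\le \frac{(2\eps)^\alpha}{\alpha}|F(2\eps)|+\frac1\alpha\int_0^{2\eps}|F'(\tau)|\tau^\alpha\,\d\tau,
\end{align*}
in which the explicit factor $1/\alpha$ appears from $\int_0^\tau s^{\alpha-1}\,\d s=\tau^\alpha/\alpha$ (and likewise from $\int_0^{2\eps}s^{\alpha-1}\,\d s$).

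Finally I would integrate this fibrewise estimate over $\partial D$ and undo the substitution. For the gradient term note $|F'(\tau)|=|n(x)\cdot\nabla u(x+(\eps-\tau)n(x))|\le|\nabla u|$ and $\tau^\alpha=\delta^\alpha$ at the corresponding point, so after reinserting the (bounded) Jacobian it is controlled by $\tfrac{C}{\alpha}\|\nabla u\|_{L^1(\Gamma_\eps;\delta^\alpha)}$. For the anchor term, $F(2\eps)=u(x-\eps n(x))$ is the trace of $u$ on $\partial D_{-\eps}$; after the change of variables $\Phi_{-\eps}$ (again with bounded Jacobian) it becomes $\int_{\partial D_{-\eps}}|u|\,\d\sigma$, which by the unweighted trace Lemma~\ref{thm:trace_unweighted} applied at $t=-\eps$ is bounded by $C\|u\|_{W^{1,1}(D_{-\eps})}\le C\|u\|_{W^{1,1}(D)}$; together with the prefactor $(2\eps)^\alpha/\alpha$ this produces the term $\tfrac{C}{\alpha}\eps^\alpha\|u\|_{W^{1,1}(D)}$. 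Collecting the two contributions yields the claim with a constant $C$ depending on $\alpha$, $n$ and the trace constant, but not on $\eps$. I expect the only genuinely delicate points to be the careful bookkeeping of the three distinct powers of $\delta$ (namely $\alpha-1$ on the left, $\alpha$ for the gradient, and the $\eps^\alpha$ produced by integrating $s^{\alpha-1}$ up to the anchor), and the observation that anchoring the fundamental theorem of calculus at the interior boundary $\partial D_{-\eps}$ — rather than at $\partial D_\eps$, where $\delta$ degenerates — is precisely what makes both the Hardy integration and the subsequent use of the trace lemma legitimate.
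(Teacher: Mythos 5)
Your proposal is correct and follows essentially the same route as the paper's proof: a fibrewise one-dimensional Hardy argument on the normal segments of $\Gamma_\eps$ (the paper phrases it as integration by parts, you as the fundamental theorem of calculus plus Fubini, which is the same computation), anchored at the inner boundary $\partial D_{-\eps}$ where the trace Lemma~\ref{thm:trace_unweighted} controls the boundary term, with the uniform Jacobian bounds supplying $\eps$-independence. The only cosmetic difference is that you parametrize the fibres from $\partial D$ rather than from $\partial D_\eps$, which changes nothing of substance.
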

  \begin{proof}
 Let $u\in C^\infty(\overline{D_\eps})\cap W^{1,1}(D_\eps;\delta^\alpha)$. We obtain by using \eqref{eq:boundary_transformation_formula}, $1/2\leq |\det D\Phi_{-t}|\leq 2$, and one-dimensional integration-by-parts
 \begin{align}
  \int_{\Gamma_\eps} |u| \delta^{\alpha-1} \d x 
  &= \int_{\partial D_\eps} \int_0^{2\eps} |u(x-tn(x))| t^{\alpha-1} |\det D\Phi_{-t}|\d t \d\sigma_\eps(x) \notag\\
  &\leq \frac{2}{\alpha}\int_{\partial D_\eps} |u(x-2\eps n(x))| (2\eps)^{\alpha} \d\sigma_\eps(x) + \frac{2}{\alpha} \int_{\partial D_\eps}\int_0^{2\eps} |\nabla u(x-tn(x))| t^{\alpha} \d\sigma_\eps(x).\label{eq:hardy}
 \end{align}
 To treat the first integral in \eqref{eq:hardy}, we employ the transformation $\Phi_{2\eps}:\partial D_{-\eps}\to \partial D_\eps$ defined in Section~\ref{sec:boundary_transformation}, i.e.
 \begin{align*}
  \int_{\partial D_\eps} |u(x-2\eps n(x))| \d\sigma_\eps(x) \leq 4 \int_{\partial D_{-\eps}} |u(x)| \d\sigma_{-\eps}(x).
 \end{align*}
 From $u_{\mid D}\in W^{1,1}(D)$, and $D_{-\eps}\subset D$, and the trace lemma~\ref{thm:trace_unweighted} we deduce that there exists a constant $C>0$ independent of $\eps$ such that
 \begin{align*}
   \int_{\partial D_{-\eps}} |u(x)| \d\sigma_{-\eps}(x) \leq C \|u\|_{W^{1,1}(D)},
 \end{align*}
 i.e. $\int_{\partial D_\eps} |u(x-2\eps n(x))| \d\sigma_\eps(x) \leq C\|u\|_{W^{1,1}(D)}$.
 The assertion follows from
  \begin{align*}
   \int_{\partial D_\eps}\int_0^{2\eps} |\nabla u(x-tn(x))| t^{\alpha} \d t \d\sigma_\eps(x)\leq 2 \int_{\Gamma_\eps} |\nabla u| \delta^\alpha \d x,
  \end{align*}
  and a density argument.
  \end{proof}
  \begin{remark}
   Let us state that the arguments of \cite{Kufner1985} are based on partition of unity $\{\varphi_i\}$ subordinate to $\{B_j\}\cup \{D\}$ where $\{B_j\}$ is a finite cover of $\Gamma_\eps$. Then the Hardy-type argument of the latter proof is applied to $v_i = u \varphi_i$ which is zero on the boundary of $B_i$. Hence, the first term in \eqref{eq:hardy} vanishes. However, $\nabla v_i = \varphi_i \nabla u + u \nabla \varphi_i$, and $|\nabla\varphi_i|\sim 1/\eps$. Thus, the techniques of \cite{Kufner1985} are not directly applicable as we strive for constants uniformly bounded in terms of $\eps$.
  \end{remark}

 \begin{proof}[Proof of Theorem~\ref{thm:embdding}]
 We split the norm into the diffuse interface part and the interior part
 \begin{align*}
  \|u\|_{L^q(D_\eps;\omega^\eps)}^q = \int_{D_\eps\setminus D} |u|^q \omega^\eps\d x + \int_D |u|^q \omega^\eps \d x.
 \end{align*}
 From the Sobolev embedding theorem \cite{Adams1975}, we have that $W^{1,p}(D;\omega^\eps)\hookrightarrow L^q(D;\omega^\eps)$ is continuous for each $q\leq p^*_0= np/(n-p)$ if $p<n$, and for $q<\infty$ if $p\geq n$. Since $p_\alpha^* \leq p_0^*$ and $D$ is bounded, we only have to estimate the $L^q$-norm of $u$ on $D_\eps\setminus D$.
  
 First consider the case $p=1$, $q\leq p_\alpha^*=(n+\alpha)/(n+\alpha-1)$. For this choice, the condition $n+\alpha\geq (n+\alpha-1)q$ is obviously satisfied.
 Combining Lemma~\ref{lem:hardy_inequality} and Lemma~\ref{lem:sobolev_boundary} yields
 \begin{align*}
  \int_{D_\eps\setminus D} |u|^q \delta^\alpha \d x
  \leq C\left( \int_{\Gamma_\eps} |u|\delta^{\alpha-1} + |\nabla u| \delta^\alpha \d x \right)^{q}
  \leq 
  C\left(\eps^\alpha \|u\|_{W^{1,1}(D)} + \|\nabla u\|_{L^1(\Gamma_\eps;\delta^\alpha)} \right)^{q}.
 \end{align*}
 Multiplication of the latter inequality with $1/\eps^\alpha$, taking the $q$th root and using \eqref{eq:weight_near_boundary}, i.e. $\delta^\alpha/\eps^\alpha\approx \omega^\eps$, further gives, 
 \begin{align*}
  \|u\|_{L^q(D_\eps\setminus D;\omega^\eps)}\leq C \eps^{\alpha(1-1/q)} \left( \|u\|_{W^{1,1}(D)} + \|\nabla u\|_{L^1(\Gamma_\eps;\omega^\eps)} \right).
 \end{align*}
 Summarizing, we have shown that for each $1\leq q \leq (n+\alpha)/(n+\alpha-1)$ there holds
 \begin{align*}
  \|u\|_{L^q(D_\eps;\omega^\eps)} \leq C \|u\|_{W^{1,1}(D_\eps;\omega^\eps)}.
 \end{align*}
 For the general case $p>1$, we apply the previous results to $v=|u|^{1+q (p-1)/p}$ and $\tilde q=(1-\frac{1}{p}+\frac{1}{q})^{-1}$. 
 One easily verifies that $q\leq p(n+\alpha)/(n+\alpha-p)$ is equivalent to $\tilde q\leq (n+\alpha)/(n+\alpha-1)$.
 Moreover,
 $|v|^{\tilde q} = |u|^q$ and
  $|\nabla v| = (1+\frac{q (p-1)}{p}) |u|^{q(p-1)/p} |\nabla u|$.
 Whence, H\"older's inequality yields
 \begin{align*}
  \|v\|_{W^{1,1}(D_\eps;\omega^\eps)} &= \int_{D_\eps} |u|^{1+\frac{q(p-1)}{p}} + \big(1+\frac{q (p-1)}{p}\big) |u|^{\frac{q(p-1)}{p}} |\nabla u| \d\omega^\eps\\
  &\leq \left(\|u\|_{L^p(D_\eps;\omega^\eps)} + \big(1+\frac{q(p-1)}{p}\big) \|\nabla u\|_{L^p(D_\eps;\omega^\eps)} \right) \|u\|_{L^q(D_\eps;\omega^\eps)}^{\frac{q(p-1)}{p}}.
 \end{align*}
 This together with the identity
 \begin{align*}
  \|v\|_{L^{\tilde q}(D_\eps;\omega^\eps)} = \|u\|_{L^q(D_\eps;\omega^\eps)}^{1+\frac{q(p-1)}{p}}
 \end{align*}
  yields the assertion.
 \end{proof}

 \begin{remark}
  As already noted, $p^*_\alpha$ is strictly decreasing with respect to $\alpha$ on $(0,\infty)$. Loosely speaking, compared to the unweighted Sobolev embedding, we loose $\alpha$ spatial dimensions. For instance, if $\alpha=1$, we have that $2_1^*=6$ for $n=2$, and $2_1^*=4$ for $n=3$.
  This fact is intimately related to the Hardy inequality and isoperimetric inequalities, cf.\@ \cite{Horiuchi1989} where also counterexamples are given showing that the restriction $q\leq p_\alpha^*$ cannot be improved in general.
  However, embedding in certain H\"older spaces is possible  \cite{Horiuchi1989,OpicKufner1990}. Adapting the above proofs it should be possible to show that even in this situation the embedding constants are independent of $\eps$.
 \end{remark}

\begin{proposition}[Compactness]\label{prop:compact}
 Let $0<\eps<\eps_0$, $\alpha$ be the constant in (S2), and let $1\leq p <\infty$. Then the following embeddings are compact
 \begin{align*}
  W^{1,p}(D_\eps,\omega^\eps) \hookrightarrow L^q(D_\eps,\omega^\eps),\qquad 1\leq q < p_\alpha^*.
 \end{align*}
\end{proposition}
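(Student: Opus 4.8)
The plan is to combine the continuous embedding into a higher Lebesgue exponent furnished by Theorem~\ref{thm:embdding} with classical interior compactness, organized around a collar decomposition of $D_\eps$ with respect to the distance $\delta(x)=\mathrm{dist}(x,\partial D_\eps)$ to the outer boundary. Since $q<p_\alpha^*$, I first fix an exponent $r$ with $q<r<\infty$ for which $W^{1,p}(D_\eps;\omega^\eps)\hookrightarrow L^r(D_\eps;\omega^\eps)$ is continuous; such an $r$ exists in both cases $p_\alpha^*<\infty$ (take $r=p_\alpha^*$) and $p_\alpha^*=\infty$ (take any finite $r>q$), by Theorem~\ref{thm:embdding}. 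Consequently, any sequence $\{u_j\}$ bounded in $W^{1,p}(D_\eps;\omega^\eps)$, say $\|u_j\|_{W^{1,p}(D_\eps;\omega^\eps)}\le M$, is also bounded in $L^r(D_\eps;\omega^\eps)$ by $CM$, where $C$ is the embedding constant.

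For a threshold $\eta\in(0,\eps)$ I split $D_\eps$ into the collar $C_\eta=\{\delta\le\eta\}$ near $\partial D_\eps$ and the interior $I_\eta=\{\delta>\eta\}$. The degeneracy of the weight is confined to $C_\eta$, and there the tail can be made uniformly small: H\"older's inequality with exponents $r/q$ and $r/(r-q)$ gives
\begin{align*}
 \int_{C_\eta} |u_j|^q \d\omega^\eps
 \leq \left(\int_{C_\eta} |u_j|^r \d\omega^\eps\right)^{q/r}\,
 \omega^\eps(C_\eta)^{(r-q)/r}
 \leq (CM)^q\, \omega^\eps(C_\eta)^{(r-q)/r}.
\end{align*}
Since $\omega^\eps\le 1$ we have $\omega^\eps(C_\eta)\le |C_\eta|\to 0$ as $\eta\to 0$; hence for any tolerance $\tau>0$ there is $\eta$ making $\int_{C_\eta}|u_j|^q\d\omega^\eps<\tau$ for every $j$ simultaneously.

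On the interior $I_\eta$ the weight is bounded below: \eqref{eq:weight_near_boundary} yields $\omega^\eps\ge\zeta_1(\eta/\eps)^\alpha$ on the outer part of $I_\eta$ and $\omega^\eps\ge 1/2$ deeper inside, so the weighted and unweighted $W^{1,p}$- and $L^q$-norms are equivalent on $I_\eta$, with constants depending on $\eta$ and $\eps$ (harmless, as both are fixed). Thus $\{u_j\}$ is bounded in the \emph{unweighted} $W^{1,p}(I_\eta)$. Because $\overline{I_\eta}$ is a compact subset of the open, bounded set $D_\eps$, classical interior Rellich--Kondrachov compactness applies: covering $\overline{I_\eta}$ by finitely many balls compactly contained in $D_\eps$ and invoking \cite{Adams1975} on each (valid for $q<p_0^*$, and in particular for our $q<p_\alpha^*\le p_0^*$) lets me extract a subsequence converging in $L^q(I_\eta)=L^q(I_\eta;\omega^\eps)$.

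Finally I assemble these pieces by a standard diagonal argument: choosing a decreasing sequence $\eta_m\to 0$, I extract nested subsequences converging in $L^q(I_{\eta_m};\omega^\eps)$ for each $m$ and pass to the diagonal sequence. Combining the uniform smallness of the collar contribution with the interior convergence shows that the diagonal subsequence is Cauchy in $L^q(D_\eps;\omega^\eps)$, which proves compactness. The only genuinely delicate point is controlling the degenerate collar; but this is precisely where the strict inequality $q<p_\alpha^*$ is used, as it buys the extra integrability $r>q$ from Theorem~\ref{thm:embdding} needed to absorb, via H\"older's inequality, the vanishing weighted measure of the shrinking collar.
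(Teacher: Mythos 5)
Your proof is correct and follows essentially the same strategy as the paper's: a thin collar near $\partial D_\eps$ is controlled uniformly via H\"older's inequality together with the $\eps$-uniform embedding into a higher Lebesgue exponent from Theorem~\ref{thm:embdding} (this is where $q<p_\alpha^*$ enters), while classical Rellich--Kondrachov compactness handles the interior region where the weight is bounded below, and the pieces are assembled by a Cauchy/diagonal argument. If anything, your write-up is slightly more careful than the paper's on two minor points, namely the explicit choice of a finite exponent $r$ when $p_\alpha^*=\infty$ and the explicit diagonal extraction over $\eta_m\to 0$.
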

\begin{proof}
 Let $q<p_\alpha^*$ and let $\{u_k\}\subset W^{1,p}(D_\eps;\omega^\eps)$ be bounded; say by a constant $C_p>0$. Furthermore denote by $C_e$ the constant of the embedding $W^{1,p}(D_\eps;\omega^\eps)\to L^{p_\alpha^*}(D_\eps;\omega^\eps)$. Since $L^q(D_\eps;\omega^\eps)$ is complete, we have to show that a subsequence of $\{u_k\}$ is Cauchy in $L^q(D_\eps;\omega^\eps)$.
 Therefore, let $\iota>0$ and choose $\gamma=\min\{\eps,(C_p C_e)^{qp/(q-p_\alpha^*)} \iota/2 \}$.
 Since the embedding $W^{1,p}(D_{\eps-\gamma};\omega^\eps) \hookrightarrow L^q(D_{\eps-\gamma};\omega^\eps)$ is compact \cite{Adams1975}, we can extract a subsequence, again denoted by $\{u_k\}$, which is Cauchy in $L^q(D_{\eps-\gamma};\omega^\eps)$.
 Hence, there exists $N=N(\iota)\in\NN$ such that 
 $\|u_k-u_l\|_{L^q(D_{\eps-\gamma};\omega^\eps)}<\iota/2$
 for all $k,l\geq N$.
 Let $k,l\geq N$ in the following.
 Thus, using the triangle inequality, we have that
 \begin{align}\label{eq:help}
  \| u_k-u_l\|_{L^q(D_{\eps};\omega^\eps)} \leq \| u_k-u_l\|_{L^q(D_\eps\setminus D_{\eps-\gamma};\omega^\eps)}  + \frac{\iota}{2}.
 \end{align}
  Since $1\leq q< p_\alpha^*$, we obtain by using H\"older's inequality and the embedding theorem
 \begin{align*}
  \| u_k-u_l\|_{L^q(D_\eps\setminus D_{\eps-\gamma};\omega^\eps)} 
  &\leq \| u_k-u_l\|_{L^{p_\alpha^*}(D_\eps\setminus D_{\eps-\gamma};\omega^\eps)} \gamma^{\frac{1}{q}-\frac{1}{p_\alpha^*}}\\
  &\leq C_e \| u_k-u_l\|_{W^{1,p}(D_\eps;\omega^\eps)} \gamma^{\frac{1}{q}-\frac{1}{p_\alpha^*}}\leq \frac{\iota}{2}
 \end{align*}
 by choice of $\gamma$.
 This in combination with \eqref{eq:help} shows that $\{u_k\}$ is Cauchy in $L^q(D_\eps;\omega^\eps)$.
\end{proof}

The idea of the proof of the previous compactness result can already be found in \cite{OpicKufner1990}.

\subsection{Diffuse Poincar\'e-Friedrichs inequalities}

The last issue concerning basic results in weighted Sobolev spaces are Poincar\'e-Friedrichs inequalities, which we again want to derive with constants independent of $\eps$. We start with a quite general result:

\begin{theorem}[Poincar\'e-type inequality]\label{thm:poincare_general}
 Fix $1\leq p<\infty$. Assume that $D_\eps$ is connected for each $\eps\in [0,\eps_0]$, and let $K_\eps\subset W^{1,p}(D_\eps;\omega^\eps)$, be a family of closed cones, i.e. for $u\in K_\eps$ there holds $\lambda u\in K_\eps$ for all $\lambda>0$, such that $K_\eps$ contains only the zero function as a constant function. Then there exists a constant $C>0$ independent of $\eps$ such that
 \begin{align}\label{eq:poincare}
  \|u\|_{L^p(D_\eps;\omega^\eps)} \leq C \|\nabla u\|_{L^p(D_\eps;\omega^\eps)}\quad\text{for all } u\in K_\eps.
 \end{align}
\end{theorem}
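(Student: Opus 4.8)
The plan is to argue by contradiction using compactness, imitating the classical proof of the Poincar\'e inequality but keeping careful track of the dependence on $\eps$. Suppose no uniform constant existed. Then there would be sequences $\eps_k\in(0,\eps_0]$ and $u_k\in K_{\eps_k}$ with
\[
  \|u_k\|_{L^p(D_{\eps_k};\omega^{\eps_k})}=1 \qquad\text{and}\qquad \|\nabla u_k\|_{L^p(D_{\eps_k};\omega^{\eps_k})}\to 0 ,
\]
so that $\|u_k\|_{W^{1,p}(D_{\eps_k};\omega^{\eps_k})}$ stays bounded. Passing to a subsequence we may assume $\eps_k\to\eps^\ast\in[0,\eps_0]$. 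The genuinely new difficulty is the case $\eps^\ast=0$, where both the domain and the weight degenerate; the case $\eps^\ast>0$ I would handle by transporting everything to the fixed domain $D_{\eps^\ast}$ through the diffeomorphisms $\Psi_t,\Phi_t$ of Section~\ref{sec:boundary_transformation}, whose Jacobians tend to the identity, making the weights uniformly comparable so that a single fixed-domain Rellich argument applies. So I focus on $\eps_k\to 0$.

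First I would show that the mass of $u_k$ cannot escape into the boundary layer $D_{\eps_k}\setminus D$. Since $p<p_\alpha^\ast$, I pick an exponent $q$ with $p<q\le p_\alpha^\ast$ and $q<\infty$; H\"older's inequality on $D_{\eps_k}\setminus D$ together with the $\eps$-uniform embedding $W^{1,p}(D_{\eps_k};\omega^{\eps_k})\hookrightarrow L^{q}(D_{\eps_k};\omega^{\eps_k})$ of Theorem~\ref{thm:embdding} gives
\[
  \|u_k\|_{L^p(D_{\eps_k}\setminus D;\omega^{\eps_k})}\le C\,\|u_k\|_{W^{1,p}(D_{\eps_k};\omega^{\eps_k})}\,\bigl(\omega^{\eps_k}(D_{\eps_k}\setminus D)\bigr)^{\frac1p-\frac1q}.
\]
By \eqref{eq:weight_near_boundary} and \eqref{eq:boundary_measure} one has $\omega^{\eps_k}(D_{\eps_k}\setminus D)\le\zeta_2\,|\Gamma_{\eps_k}|\le C\eps_k\to 0$, so the boundary-layer mass vanishes and therefore $\|u_k\|_{L^p(D;\omega^{\eps_k})}\to 1$.

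Next I would restrict to the fixed interior domain $D$. Because $\tfrac12\le\omega^{\eps_k}\le1$ on $D$, the restrictions $u_k|_D$ are bounded in $W^{1,p}(D)$ with $\|\nabla u_k\|_{L^p(D)}\to 0$, so by the Rellich--Kondrachov theorem a subsequence converges in $L^p(D)$ to some $v$ with $\nabla v=0$; connectedness of $D=D_0$ forces $v$ to equal a constant $\bar v$. Using $u_k|_D\to\bar v$ in $L^p(D)$, the bound $\tfrac12\le\omega^{\eps_k}\le1$, and $\omega^{\eps_k}\to1$ a.e.\ on $D$, dominated convergence yields $\|u_k\|_{L^p(D;\omega^{\eps_k})}\to\|\bar v\|_{L^p(D)}$, whence $\|\bar v\|_{L^p(D)}=1$ and $\bar v\neq 0$. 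Combining this with the boundary-layer estimate and $\omega^{\eps_k}(D_{\eps_k}\setminus D)\to 0$, one even obtains $\|u_k-\bar v\|_{W^{1,p}(D_{\eps_k};\omega^{\eps_k})}\to 0$, i.e.\ the $u_k$ converge to a nonzero constant in the weighted graph norm.

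The final step is to contradict $u_k\in K_{\eps_k}$ together with the hypothesis that no $K_\eps$ contains a nonzero constant, and this is the step I expect to be the main obstacle. For a single fixed cone, closedness would at once give $\bar v\in K$ and hence $\bar v=0$; here, however, each $u_k$ lies in a \emph{different} cone $K_{\eps_k}$, so closedness of the individual cones does not transfer membership to the limit. The crux is therefore to exploit the behaviour of the family $\{K_\eps\}_{\eps\in[0,\eps_0]}$ as $\eps\to 0$, namely that $L^p(D)$-limits of sequences $u_k\in K_{\eps_k}$ lie in a limiting cone $K_0\subset W^{1,p}(D)$ which still excludes nonzero constants; then $\bar v\in K_0$ together with $\|\bar v\|_{L^p(D)}=1$ is the desired contradiction. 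Controlling this interaction between the varying cones and the degenerating geometry—rather than the two compactness inputs above—is where the real work lies.
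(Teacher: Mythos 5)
Your overall strategy---contradiction, normalisation, splitting off a thin boundary layer whose weighted mass is $O(\eps_k^{1/p-1/q})$ via Theorem~\ref{thm:embdding} and H\"older, and then compactness on a fixed interior domain to extract a constant limit---is exactly the skeleton of the paper's proof. The paper runs one uniform argument for every limit point $\tilde\eps\in[0,\eps_0]$ of $\{\eps_k\}$: it peels off the annulus $D_{\eps_k}\setminus D_{\tilde\eps-2\gamma}$, compares the weights $\omega^{\eps_k}$ and $\omega^{\tilde\eps\pm\gamma}$ via Lemma~\ref{lem:inclusion}, and applies the \emph{weighted} compactness result (Proposition~\ref{prop:compact}) on $D_{\tilde\eps-\gamma}$, so it never needs your separate diffeomorphism-based treatment of the case $\eps^\ast>0$; that part of your plan is only sketched and would still require comparing $\omega^{\eps_k}$ with $\omega^{\eps^\ast}$ near the degenerate set, which is precisely what Lemma~\ref{lem:inclusion}~(ii) is for.

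The genuine gap is that you do not finish the proof: you stop at the point where the limit $\bar v$ must be ruled out by the cone hypothesis, and you yourself flag this as ``where the real work lies.'' Introducing an undefined ``limiting cone $K_0$'' and assuming that $L^p(D)$-limits of $u_k\in K_{\eps_k}$ land in it is not something the hypotheses of Theorem~\ref{thm:poincare_general} provide, so as written the contradiction is never derived and the argument is incomplete. The paper closes this step by observing that $u_k\to u$ in $W^{1,p}(D_{\tilde\eps-\gamma};\omega^{\tilde\eps-\gamma})$ with $\nabla u=0$ and then invoking closedness of $K_{\tilde\eps-\gamma}$ together with connectedness to conclude $u=0$ on $D_{\tilde\eps-\gamma}$, after which $\|u_k\|_{L^p(D_{\eps_k};\omega^{\eps_k})}\le C\gamma^{1/(n+\alpha)}+o(1)$ contradicts the normalisation since $\gamma$ is arbitrary. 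You are right that this step silently uses a compatibility of the cones across different $\eps$ (membership of the restricted limit in $K_{\tilde\eps-\gamma}$), which holds for the concrete cones in Corollaries~\ref{cor:poincare} and~\ref{cor:friedrichs} but is not literally implied by the stated hypotheses; identifying that subtlety is a genuine insight, but diagnosing an obstacle is not the same as overcoming it, and your proposal leaves the theorem unproved. Note also that once $u=0$ is available one does not need your stronger claim $\|\bar v\|_{L^p(D)}=1$; the contradiction comes more cheaply from the two-term splitting \eqref{eq:help1}.
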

\begin{proof}
 Assume \eqref{eq:poincare} is not true. Then there exist sequences $\{u_k\}\subset W^{1,p}(D_{\eps_k};\omega^{\eps_k})\cap K_{\eps_k}$ with $\|u_k\|_{L^p(D_{\eps_k};\omega^{\eps_k})}=1$ and $\{\eps_k\}\subset [0,\eps_0]$ such that
 \begin{align}\label{eq:help0}
  \| \nabla u_{k}\|_{L^p(D_{\eps_k};\omega^{\eps_k})}\leq \frac{1}{k}.
 \end{align}
 Since $\eps_k\in [0,\eps_0]$ the Bolzano-Weierstra\ss{} theorem implies the existence of a $\tilde \eps\in[0,\eps_0]$ such that for a subsequence, relabeled if necessary, $\eps_k\to \tilde\eps$ as $k\to \infty$.
 Hence, for all $\gamma>0$ there exists $N(\gamma)\in\NN$ such that $\eps_k\in (\tilde\eps-\gamma,\tilde\eps+\gamma)$ for all $k\geq N(\gamma)$. In the following let $0<\gamma<\tilde\eps/2$ and $k\geq N(\gamma)$.
 By H\"older's inequality and the embedding theorem for $q=p_\alpha^*$, we have
 \begin{align}\label{eq:help1}
  \|u_k\|_{L^p(D_{\eps_k}\setminus D_{\tilde\eps-2\gamma};\omega^{\eps_k})}
  \leq \|u_k\|_{L^q(D_{\eps_k}\setminus D_{\tilde\eps-2\gamma}D;\omega^{\eps_k})}  (2\gamma)^{\frac{1}{n+\alpha}}
  \leq C \|u_k\|_{W^{1,p}(D_{\eps_k};\omega^{\eps_k})} \gamma^{\frac{1}{n+\alpha}}.
 \end{align}
Furthermore, since $\tilde\eps-\gamma \leq \eps_k$, by Lemma~\ref{lem:inclusion} (i)
 \begin{align*}
  \|u_k\|_{W^{1,p}(D_{\tilde\eps-\gamma};\omega^{\tilde\eps-\gamma})} \leq \|u_k\|_{W^{1,p}(D_{\eps_k};\omega^{\eps_k})} \leq C.
 \end{align*}
 In view of Proposition~\ref{prop:compact}, we can therefore extract a subsequence, relabeled if necessary, such that $u_k\to u$ in $L^p(D_{\tilde\eps-\gamma};\omega^{\tilde\eps-\gamma})$. Moreover, we deduce from \eqref{eq:help0} and Lemma~\ref{lem:inclusion} (i) that
 \begin{align*}
  \|\nabla u_k\|_{L^{p}(D_{\tilde\eps-\gamma};\omega^{\tilde\eps-\gamma})} \leq \|\nabla u_k\|_{L^{p}(D_{\eps_k};\omega^{\eps_k})} \leq \frac{1}{k},
 \end{align*}
 and hence $u_k\to u$ in $W^{1,p}(D_{\tilde\eps-\gamma};\omega^{\tilde\eps-\gamma})$ and $\nabla u=0$. Since $K_{\tilde\eps-\gamma}$ is closed and $D_{\tilde\eps-\gamma}$ is connected, $u\in K_{\tilde\eps-\gamma}$ and $u=0$ on $D_{\tilde\eps-\gamma}$.
 In view of Lemma~\ref{lem:inclusion} (ii), we further obtain
 \begin{align*}
  \|u_k\|_{L^p(D_{\tilde\eps-2\gamma};\omega^{\eps_k})} \leq 2 \|u_k\|_{L^p(D_{\tilde\eps-2\gamma};\omega^{\tilde\eps+\gamma})}
  \leq C\|u_k\|_{L^p(D_{\tilde\eps-2\gamma};\omega^{\tilde\eps-\gamma})}
  \leq C \|u_k\|_{L^p(D_{\tilde\eps-\gamma};\omega^{\tilde\eps-\gamma})}.
 \end{align*}
 This in combination with \eqref{eq:help1} implies
 \begin{align*}
  1=\lim_{k\to \infty} \|u_k\|_{L^p(D_{\eps_k};\omega^{\eps_k})} \leq C\gamma^{\frac{1}{n+\alpha}} + \lim_{k\to \infty}\|u_k\|_{L^p(D_{\tilde\eps-2\gamma};\omega^{\eps_k})}= C\gamma^{\frac{1}{n+\alpha}}.
 \end{align*}
 Since $\gamma>0$ was arbitrary, this is the desired contradiction.
 %
\end{proof}

Let us remark that in \cite{BoulkhemairChakib2007} a similar result has been obtained for unweighted spaces, i.e. a Poincar\'e inequality with a constant which is independent of certain perturbations of $\partial D$.
For illustration of the previous result let us state the ``usual'' Poincar\'e and Friedrichs inequality in their weighted form.
\begin{corollary}\label{cor:poincare}
 Let $\eps\in [0,\eps_0]$, $1 \leq p<\infty$, and let $D_\eps$ be connected. Then there exists a constant $C$ independent of $\eps$ such that 
 \begin{align*}
  \|u-\bar u_{D_\eps}\|_{L^p(D_\eps;\omega^\eps)} \leq C \|\nabla u\|_{L^p(D_\eps;\omega^\eps)} \quad\text{for all } u\in W^{1,p}(D_\eps;\omega^\eps).
 \end{align*}
 Here $\bar u_{D_\eps}= \int_{D_\eps} u \d\omega^\eps/\|1\|_{L^1(D_\eps;\omega^\eps)}$ is the weighted mean value.
\end{corollary}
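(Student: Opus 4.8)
The plan is to read the corollary as the special case of Theorem~\ref{thm:poincare_general} in which the family of cones consists of the functions of vanishing weighted mean,
\begin{align*}
 K_\eps = \Big\{\, u\in W^{1,p}(D_\eps;\omega^\eps):\ \int_{D_\eps} u\,\d\omega^\eps = 0 \,\Big\}.
\end{align*}
The first thing I would check is that the weighted mean $\bar u_{D_\eps}$ makes sense. Since $\overline{D}\subset\Omega$ is bounded and $0\le\omega^\eps\le 1$, the measure $\omega^\eps$ is finite on $D_\eps$, so by H\"older's inequality $L^p(D_\eps;\omega^\eps)\hookrightarrow L^1(D_\eps;\omega^\eps)$ and the functional $u\mapsto \int_{D_\eps} u\,\d\omega^\eps$ is bounded on $W^{1,p}(D_\eps;\omega^\eps)$. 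Moreover $\|1\|_{L^1(D_\eps;\omega^\eps)}=\omega^\eps(D_\eps)\ge \tfrac12|D|>0$, because $\varphi^\eps>0$ on $D$ gives $\omega^\eps\ge\tfrac12$ there. Hence $\bar u_{D_\eps}$ is a well-defined real number for every $u\in W^{1,p}(D_\eps;\omega^\eps)$.

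Next I would verify that each $K_\eps$ meets the hypotheses of Theorem~\ref{thm:poincare_general}. As the kernel of the bounded linear functional above, $K_\eps$ is a closed linear subspace; in particular it is a closed cone, since $u\in K_\eps$ implies $\lambda u\in K_\eps$ for every $\lambda>0$. The only constant function it contains is zero: a constant $u\equiv c$ satisfies $\int_{D_\eps} c\,\d\omega^\eps = c\,\omega^\eps(D_\eps)$, which vanishes only for $c=0$ as $\omega^\eps(D_\eps)>0$. Because $D_\eps$ is assumed connected for each $\eps\in[0,\eps_0]$, Theorem~\ref{thm:poincare_general} applies and yields a constant $C>0$, independent of $\eps$, with $\|u\|_{L^p(D_\eps;\omega^\eps)}\le C\|\nabla u\|_{L^p(D_\eps;\omega^\eps)}$ for all $u\in K_\eps$.

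Finally, for an arbitrary $u\in W^{1,p}(D_\eps;\omega^\eps)$ I would apply this estimate to $w=u-\bar u_{D_\eps}$. By construction $\bar w_{D_\eps}=\bar u_{D_\eps}-\bar u_{D_\eps}=0$, so $w\in K_\eps$, while $\nabla w=\nabla u$ because $\bar u_{D_\eps}$ is constant. The Poincar\'e inequality on the cone then reads $\|u-\bar u_{D_\eps}\|_{L^p(D_\eps;\omega^\eps)}\le C\|\nabla u\|_{L^p(D_\eps;\omega^\eps)}$, which is the claim. The argument is essentially a verification of hypotheses, so the derivation itself presents no real difficulty; the point deserving the most care — and the one I would regard as the crux — is that all the $\eps$-dependence has been pushed into the normalisation $\omega^\eps(D_\eps)$ hidden in $\bar u_{D_\eps}$, while the constant $C$ is supplied \emph{uniformly} by Theorem~\ref{thm:poincare_general}. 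The only genuinely non-automatic ingredient is therefore the admissibility of this family of cones in the contradiction argument behind that theorem, namely that $L^p$-limits of zero-mean functions along $\eps_k\to\tilde\eps$ remain admissible on the limiting subdomain.
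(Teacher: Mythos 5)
Your proposal is correct and is exactly the paper's argument: the paper's proof consists of the single line ``Define $K_\eps=\{u\in W^{1,p}(D_\eps;\omega^\eps):\ \bar u_{D_\eps}=0\}$ and use Theorem~\ref{thm:poincare_general}'', and your verification of the cone hypotheses and the reduction via $w=u-\bar u_{D_\eps}$ fills in precisely the routine details the paper leaves implicit.
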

\begin{proof}
 Define $K_\eps=\{u\in W^{1,p}(D_\eps;\omega^\eps):\ \bar u_{D_\eps}=0\}$ and use Theorem~\ref{thm:poincare_general}.
\end{proof}

\begin{corollary}[Poincar\'e-Friedrichs-type inequality]\label{cor:friedrichs}
 Let $\eps\in [0,\eps_0]$, $1 \leq p<\infty$, and let $D_\eps$ be connected. Then there exists a constant $C$ independent of $\eps$ such that 
 for every $\eps\in (0,\eps_0)$ and $v\in W^{1,p}(D_\eps;\omega^\eps)$ there holds
 \begin{align*}
  \|v\|^p_{L^p(D_\eps;\omega^\eps)}\leq C_P \left( \|\nabla v\|^p_{L^p(D_\eps;\omega^\eps)} + \int_{D_\eps} |v|^p |\nabla \omega^\eps| \d x \right).
 \end{align*}
\end{corollary}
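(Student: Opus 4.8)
The plan is to avoid the compactness/contradiction scheme used for Theorem~\ref{thm:poincare_general} and argue directly, reducing everything to the weighted Poincaré inequality of Corollary~\ref{cor:poincare} together with the trace lemma, Theorem~\ref{thm:trace}. First I would split $v$ into its weighted mean value $\bar v := \bar v_{D_\eps}$ and the remainder $v-\bar v$. Using the elementary inequality $|a+b|^p\leq 2^{p-1}(|a|^p+|b|^p)$ and Corollary~\ref{cor:poincare},
\begin{align*}
 \|v\|^p_{L^p(D_\eps;\omega^\eps)} \leq 2^{p-1}\left(\|v-\bar v\|^p_{L^p(D_\eps;\omega^\eps)} + |\bar v|^p \|1\|^p_{L^p(D_\eps;\omega^\eps)}\right) \leq C\|\nabla v\|^p_{L^p(D_\eps;\omega^\eps)} + C|\bar v|^p,
\end{align*}
where I use that $\|1\|^p_{L^p(D_\eps;\omega^\eps)}=\int_{D_\eps}\omega^\eps\d x\leq |D_{\eps_0}|$ is bounded uniformly in $\eps$ since $0\leq\omega^\eps\leq 1$ and $D_\eps\subset D_{\eps_0}$. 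It therefore remains to estimate the constant $|\bar v|^p$ by the right-hand side of the asserted inequality.

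The crucial ingredient is a lower bound, uniform in $\eps$, on the total surface mass $m_\eps:=\int_{D_\eps}|\nabla\omega^\eps|\d x$. Since $|\nabla\omega^\eps|=\tfrac12|\nabla\varphi^\eps|$, the coarea formula (exactly as in the proof of Theorem~\ref{thm:trace}) gives $m_\eps=\tfrac12\int_{-1}^1 \Hm^{n-1}(\{\varphi^\eps=s\})\d s$, where $\{\varphi^\eps=s\}=\partial D_{t(s)}$ with $|t(s)|<\eps\leq\eps_0$. Writing each level surface via the transformation $\Phi_t:\partial D\to\partial D_t$ and invoking the uniform Jacobian bound $\tfrac12\leq\det D\Phi_t\leq 2$ valid for $|t|\leq\eps_0$ (see Section~\ref{sec:boundary_transformation}), one gets $\Hm^{n-1}(\partial D_t)\geq\tfrac12\Hm^{n-1}(\partial D)$, and hence $m_\eps\geq c_0:=\tfrac12\Hm^{n-1}(\partial D)>0$ independently of $\eps$. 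I expect this uniform non-degeneracy of the surface mass to be the main point of the argument: it is precisely what prevents the boundary term from losing control of the constant as $\eps\to 0$, and it is the one step requiring genuine geometric input rather than a citation.

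With $m_\eps\geq c_0$ in hand, I would estimate
\begin{align*}
 c_0|\bar v|^p \leq |\bar v|^p m_\eps = \int_{D_\eps}|\bar v|^p|\nabla\omega^\eps|\d x \leq 2^{p-1}\left(\int_{D_\eps}|v-\bar v|^p|\nabla\omega^\eps|\d x + \int_{D_\eps}|v|^p|\nabla\omega^\eps|\d x\right).
\end{align*}
The last integral is exactly the boundary term appearing on the right-hand side of the claim. For the first integral I would apply the trace lemma, Theorem~\ref{thm:trace}, to $w=v-\bar v$, yielding $\int_{D_\eps}|v-\bar v|^p|\nabla\omega^\eps|\d x\leq C\|v-\bar v\|^p_{W^{1,p}(D_\eps;\omega^\eps)}$, and then Corollary~\ref{cor:poincare} once more to replace $\|v-\bar v\|^p_{L^p(D_\eps;\omega^\eps)}$ by $C\|\nabla v\|^p_{L^p(D_\eps;\omega^\eps)}$ (note $\nabla(v-\bar v)=\nabla v$). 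Combining gives $|\bar v|^p\leq C(\|\nabla v\|^p_{L^p(D_\eps;\omega^\eps)}+\int_{D_\eps}|v|^p|\nabla\omega^\eps|\d x)$ with $C$ independent of $\eps$, and substituting into the first display completes the proof. All constants produced are $\eps$-independent because each cited estimate — Corollary~\ref{cor:poincare}, Theorem~\ref{thm:trace}, and the lower bound $m_\eps\geq c_0$ — is uniform in $\eps$; a density argument reduces the manipulations to $v\in C^\infty(\overline{D_\eps})$ should one wish to justify them rigorously.
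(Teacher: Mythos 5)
Your proof is correct, but it takes a genuinely different route from the paper. The paper proves this corollary in one line by applying the abstract cone version of the Poincar\'e inequality (Theorem~\ref{thm:poincare_general}) to the closed cone $K_\eps=\{v:\int_{D_\eps}|v|^p|\nabla\omega^\eps|\d x=0\}$, i.e.\ it reuses the compactness/contradiction machinery; strictly speaking that reduction yields the estimate only on the cone, and extending it to all of $W^{1,p}(D_\eps;\omega^\eps)$ implicitly requires rerunning the contradiction argument with the perturbed functional. You instead argue constructively: decompose $v$ into its weighted mean and remainder, control the remainder by Corollary~\ref{cor:poincare}, and control the mean via the boundary term using the uniform lower bound $\int_{D_\eps}|\nabla\omega^\eps|\d x\geq\tfrac12\Hm^{n-1}(\partial D)$, which you correctly extract from the coarea formula and the Jacobian bounds $\tfrac12\leq\det D\Phi_t\leq 2$, together with the trace lemma (Theorem~\ref{thm:trace}) applied to $v-\bar v$. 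Every step checks out and every constant is visibly $\eps$-independent. What your approach buys is that it isolates the genuine geometric input --- the non-degeneracy of the total surface mass as $\eps\to0$ --- and yields a constant that is in principle trackable through Corollary~\ref{cor:poincare} and Theorem~\ref{thm:trace}; what it does not buy is full independence from compactness, since Corollary~\ref{cor:poincare} itself rests on Theorem~\ref{thm:poincare_general}. Both routes are legitimate; yours is longer but more transparent about why the boundary term suffices to pin down the constant mode uniformly in $\eps$.
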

\begin{proof}
Define $K_\eps = \{ v \in W^{1,p}(D_\eps;\omega^\eps):\ \int_{D_\eps} |v|^p
|\nabla \omega^\eps| \d x=0\}$ and use Theorem~\ref{thm:poincare_general}.
\end{proof}
\begin{remark}
 In Corollary~\ref{cor:friedrichs} one can make the constant explicit if one applies the ``classical'' Poincar\'e-Friedrichs inequality \cite[6.26]{Adams1975} to $|v|^p \omega^\eps\in W^{1,1}_0(\Omega)$. 
\end{remark}
\begin{remark}
 Theorem~\ref{thm:poincare_general} also holds for the case $p=\infty$: By Rellich's theorem \cite{Adams1975} the embedding $W^{1,\infty}(D_{\tilde\eps-\gamma})\hookrightarrow C^{0,1}(\overline{D_{\tilde \eps-\gamma}})  \hookrightarrow L^\infty(D_{\tilde\eps-\gamma})$ is compact. Then, with similar arguments as above, the assumption \eqref{eq:help0} with $p=\infty$ leads to $\|u_k\|_{L^\infty(D_{\tilde\eps-\gamma})} \to 0$. Then, for $\tilde x= x+tn(x)\in D_{\eps_k}\setminus D_{\tilde\eps-\gamma}$ with $x\in \partial D_{\tilde\eps-\gamma}$ and $t\leq \gamma$, we obtain as $k\to \infty$
 \begin{align*}
  |u_k(\tilde x)| 
 \leq |u_k(x)| + \gamma \|\nabla u_k\|_{L^p(D_{\eps_k})} \leq |u_k(x)| + \gamma/k \to 0,
 \end{align*}
 where we have chosen a Lipschitz continuous representative of $u_k$.
 Hence, $\|u_k\|_{L^\infty(D_{\eps_k})} \to 0$ which contradicts $\|u_k\|_{L^\infty(D_{\eps_k})}=1$.
\end{remark}

\section{Convergence of diffuse integrals}\label{sec:conv_diffuse}

In the following two subsections we investigate the approximation properties of the diffuse integrals introduced in \eqref{eq:diffuse_integrals}.

\subsection{Convergence of diffuse volume integrals}\label{sec:conv_diffuse_volume}
We start with the case of volume integrals, for which we want to estimate the error
\begin{align*}
 E_V=\int_\Omega h(x) \d\omega^\eps(x) - \int_D h(x) \d x
\end{align*}
between the volume integral and the diffuse volume integral
in terms of $\eps$ and $h$. 
We will provide estimates for the cases $h\in L^p(D_\eps;\omega^\eps)$ and $h\in W^{1,p}(D_\eps;\omega^\eps)$ which gives stronger results improved by one order of $\eps$.

\begin{theorem}\label{thm:lp_diffuse_volume}
 Let $1< p\leq \infty$ and $h\in L^p(D_\eps;\omega^\eps)$. Then there exists a constant $C>0$ independent of $\eps$ such that
 \begin{align*}
  |E_V| \leq C \eps^{1-\frac{1}{p}} \|h\|_{L^p(\Gamma_\eps;\omega^\eps)}.
 \end{align*}
 Moreover, if $p=1$ and $h\in L^1(\Omega)$, then $E_V\to 0$ as $\eps\to 0^+$.
\end{theorem}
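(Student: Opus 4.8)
The plan is to confine the whole error $E_V$ to the thin tube $\Gamma_\eps$, whose measure is of order $\eps$ by \eqref{eq:boundary_measure}, and then to separate the size of $h$ from the smallness of $\Gamma_\eps$ by Hölder's inequality. The decisive structural facts are that $\omega^\eps\equiv 1$ on $D_{-\eps}$ and $\omega^\eps\equiv 0$ outside $D_\eps$, so that the integrals over the ``bulk'' $D_{-\eps}$ coincide and cancel.

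Concretely, I would first write $\int_\Omega h\,\d\omega^\eps=\int_{D_\eps}h\,\omega^\eps\,\d x$ and split both $D_\eps$ and $D$ into $D_{-\eps}$ and the respective halves of the tube. Using $\omega^\eps\equiv 1$ on $D_{-\eps}$, the bulk contributions drop out and one is left with
\[
 E_V=\int_{D_\eps\setminus D}h\,\omega^\eps\,\d x-\int_{D\cap\Gamma_\eps}h\,(1-\omega^\eps)\,\d x .
\]
On the outer half $D_\eps\setminus D$ one has $\omega^\eps\le 1$, while on the inner half $D\cap\Gamma_\eps$ one has $\omega^\eps>\tfrac12$, hence $1-\omega^\eps\le\omega^\eps$. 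Since $\Gamma_\eps=(D\cap\Gamma_\eps)\cup(D_\eps\setminus D)$ up to a null set, these two observations combine to the clean bound $|E_V|\le\int_{\Gamma_\eps}|h|\,\omega^\eps\,\d x$, in which the full weight is retained on the right-hand side.

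For $1<p\le\infty$ I would then apply Hölder's inequality with respect to the measure $\d\omega^\eps$, writing $|h|=|h|\cdot 1$, to get $|E_V|\le\|h\|_{L^p(\Gamma_\eps;\omega^\eps)}\,\omega^\eps(\Gamma_\eps)^{1-1/p}$. Because $\omega^\eps\le 1$, estimate \eqref{eq:boundary_measure} gives $\omega^\eps(\Gamma_\eps)\le|\Gamma_\eps|\le C\,\Hm^{n-1}(\partial D)\,\eps$, so the factor $\omega^\eps(\Gamma_\eps)^{1-1/p}$ produces exactly $\eps^{1-1/p}$ with a constant independent of $\eps$; for $p=\infty$ the same chain yields the factor $\eps$.

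For the case $p=1$ with $h\in L^1(\Omega)$ the same localization gives $|E_V|\le\int_{\Gamma_\eps}|h|\,\omega^\eps\,\d x\le\int_{\Gamma_\eps}|h|\,\d x$. As $\eps\to0^+$ the sets $\Gamma_\eps$ decrease to the null set $\partial D$, and $|h|\in L^1(\Omega)$ is a fixed majorant, so dominated convergence forces this to tend to $0$. I do not expect a genuine obstacle at this level of generality: the only real content is the exact cancellation of the bulk contributions, which localizes the error to $\Gamma_\eps$, after which Hölder and the linear measure bound \eqref{eq:boundary_measure} finish the argument. The sharper antisymmetry $\omega^\eps(x+t\,n(x))+\omega^\eps(x-t\,n(x))=1$ across $\partial D$, which would be needed to gain an extra power of $\eps$ under additional regularity of $h$, plays no role here.
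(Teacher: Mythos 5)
Your proof is correct and follows essentially the same route as the paper's: cancel the bulk where $\omega^\eps\equiv 1$, absorb the unweighted integral over $D\cap\Gamma_\eps$ into a weighted one via $\omega^\eps\geq\tfrac12$ on $D$, apply H\"older in the measure $\d\omega^\eps$, and invoke $|\Gamma_\eps|\leq C\eps$ from \eqref{eq:boundary_measure}. The only cosmetic difference is that you merge the two tube contributions into the single bound $|E_V|\leq\int_{\Gamma_\eps}|h|\,\omega^\eps\,\d x$ before applying H\"older once, whereas the paper estimates the two pieces separately.
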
 
\begin{proof}
\textit{$L^1$-regularity:}
Let $h\in L^1(\Omega)$. Using dominated convergence, we infer from $\omega^\eps(x)  \to \chi_D(x)$ as $\eps\to 0^+$ a.e. $x\in\Omega$ and $h \omega^\eps\leq  h$ that
\begin{align*}
 \lim_{\eps\to 0^+} E_V = 0.
\end{align*}
%
Let $h\in L^p(D_\eps;\omega^\eps)$ for fixed but arbitrary $1<p\leq \infty$. 
Using \eqref{eq:def_gamma_eps} and $\omega^\eps=1$ on $D\setminus\Gamma_\eps$, we obtain the representation
\begin{align*}
 E_V =\int_{\Gamma_\eps} h(x) \d\omega^\eps(x) - \int_{D\cap \Gamma_\eps} h(x) \d x.
\end{align*}
Using H\"olders inequality and $1\leq 2\omega^\eps$ on $D\cap\Gamma_\eps$ we can estimate the two terms as follows
\begin{align*}
 \int_{\Gamma_\eps} h(x) \d\omega^\eps(x) &\leq \|h\|_{L^p(\Gamma_\eps;\omega^\eps)} \|\omega^\eps\|_{L^1(\Gamma_\eps)}^{1-\frac{1}{p}},\\
  \int_{D\cap \Gamma_\eps} |h(x)| \d x &\leq 2\int_{D\cap \Gamma_\eps} |h(x)| \omega^\eps(x)\d x\leq  2 \|h\|_{L^p(\Gamma_\eps;\omega^\eps)} \|\omega^\eps\|_{L^1(D\cap \Gamma_\eps)}^{1-\frac{1}{p}}.
\end{align*}
Since $|\omega^\eps|\leq 1$, we deduce from \eqref{eq:boundary_measure} that
\begin{align*}
 \|\omega^\eps\|_{L^1(\Gamma_\eps)}^{1-\frac{1}{p}} \leq C\eps^{1-\frac{1}{p}},
\end{align*}
which concludes the proof.
\end{proof}

Theorem~\ref{thm:lp_diffuse_volume} for $L^p$-functions relies basically on the fact that $|\Gamma_\eps|\leq C\eps$. 
This is due to the fact that $L^p(D_\eps;\omega^\eps)$-functions can have singularities in $\Gamma_\eps$.
Note that in the case $p=1$ we expect no rate of convergence in terms of $\eps$, and the assumption $h\in L^1(\Omega)$ is stronger than those for $p>1$.
Resorting to $W^{1,p}$-functions we can exploit extensively symmetry of the phase-field function $\omega^\eps$ leading to a much stronger result.

\begin{theorem}\label{thm:sobolev_diffuse_volume}
Let $0<\eps\leq \eps_0$, and 
let $h\in W^{1,p}(D_\eps;\omega^\eps)$ for some $1\leq p\leq \infty$.
Then there exists $C>0$ independent of $\eps$ such that
\begin{align*}
 |E_V|\leq C \eps^{2-\frac{1}{p}} \| h\|_{W^{1,p}(\Gamma_\eps;\omega^\eps)}.
\end{align*}
\end{theorem}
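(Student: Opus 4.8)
The plan is to exploit the oddness of $S$ from (S1), which is the symmetry alluded to after Theorem~\ref{thm:lp_diffuse_volume}. As in the proof of Theorem~\ref{thm:lp_diffuse_volume}, since $\omega^\eps\equiv 1$ on $D\setminus\Gamma_\eps$ and $\omega^\eps\equiv 0$ outside $D_\eps$, the error localizes to the tube, $E_V=\int_{\Gamma_\eps}h\,\d\omega^\eps-\int_{D\cap\Gamma_\eps}h\,\d x$. Because the bound we aim for controls $E_V$ by $\|h\|_{W^{1,p}(\Gamma_\eps;\omega^\eps)}$, and $E_V$ is linear and continuous on $W^{1,p}(D_\eps;\omega^\eps)$, by density of $C^\infty(\overline{D_\eps})$ it suffices to argue for smooth $h$. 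Writing $h_x(t)=h(x+tn(x))$, $J_t(x)=|\det D\Phi_t(x)|$ and $\omega(t)=\tfrac12(1+S(-t/\eps))$, I would pass to boundary-fitted coordinates over $\partial D$ via \eqref{eq:boundary_transformation_formula} (base surface $\partial D$, $a=-\eps$, $b=\eps$), so that both integrals in $E_V$ become integrals of $h_x(t)(\cdots)J_t(x)$ over $\partial D$ and the appropriate $t$-range.

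The crucial step is the substitution $t\mapsto -t$ on the inner half $t\in(-\eps,0)$ combined with the identity $\omega(-t)=1-\omega(t)$, which follows from $S(-s)=-S(s)$. After cancelling the two contributions on $(-\eps,0)$, the error collapses to the single antisymmetric expression
\begin{align*}
 E_V=\int_{\partial D}\int_0^\eps \omega(t)\big[h_x(t)J_t(x)-h_x(-t)J_{-t}(x)\big]\d t\,\d\sigma(x).
\end{align*}
I would then split the bracket as $J_t(x)[h_x(t)-h_x(-t)]+h_x(-t)[J_t(x)-J_{-t}(x)]$, so that the first piece sees only the increment of $h$ across the interface and the second only the increment of the Jacobian; each increment carries an extra factor of order $t\le\eps$, which is precisely the source of the gain of one full power of $\eps$ over Theorem~\ref{thm:lp_diffuse_volume}.

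For the Jacobian piece I would use \eqref{eq:det_estimate}, $|J_t(x)-J_{-t}(x)|\le Ct$, note that $x-tn(x)\in D\cap\Gamma_\eps$ carries weight $\omega(-t)\ge\tfrac12$ (so the unweighted integral there is comparable to the weighted one), transform back to a volume integral over $D\cap\Gamma_\eps$, and apply Hölder with respect to $\d\omega^\eps$: the factor $(-d_D)\le\eps$ contributes $\|(-d_D)\|_{L^{p'}(D\cap\Gamma_\eps;\omega^\eps)}\le C\eps\,|\Gamma_\eps|^{1/p'}\le C\eps^{1+1/p'}=C\eps^{2-1/p}$, leaving $\|h\|_{L^p(\Gamma_\eps;\omega^\eps)}$ and using \eqref{eq:boundary_measure}. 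For the gradient piece I would write $h_x(t)-h_x(-t)=\int_{-t}^t\partial_s h_x(s)\,\d s$ (valid for smooth $h$, the only place differentiability enters), bound $|\partial_s h_x|\le|\nabla h|$, and interchange the $s$- and $t$-integrations so that the $t$-weight becomes $W(s):=\int_{|s|}^\eps\omega(t)\,\d t$. The key auxiliary estimate is the pointwise comparison $W(s)\le C\eps\,\omega^\eps(x+sn(x))$, uniform in $\eps$: on the inner part $\omega^\eps\ge\tfrac12$ and $W(s)\le\tfrac\eps2$, while on the outer part \eqref{eq:weight_near_boundary} gives $\omega^\eps\approx((\eps-s)/\eps)^\alpha$ and a direct evaluation of $\int_s^\eps((\eps-t)/\eps)^\alpha\,\d t$ yields $W(s)\le C(\eps-s)\,\omega^\eps(x+sn(x))\le C\eps\,\omega^\eps(x+sn(x))$. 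Transforming back to $\Gamma_\eps$ and applying Hölder then gives $C\eps\,\|\nabla h\|_{L^1(\Gamma_\eps;\omega^\eps)}\le C\eps\,|\Gamma_\eps|^{1/p'}\|\nabla h\|_{L^p(\Gamma_\eps;\omega^\eps)}\le C\eps^{2-1/p}\|\nabla h\|_{L^p(\Gamma_\eps;\omega^\eps)}$. Summing the two pieces yields the claim, with the endpoints $p=1,\infty$ handled by the obvious modifications of Hölder's inequality.

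I expect the main obstacle to be making the weight work in the gradient piece. The naive bound $\omega(t)\le\tfrac12$ discards the decay of $\omega^\eps$ near $\partial D_\eps$ and forces a constraint of the type $\alpha<2p-1$, which fails for the admissible $\alpha\ge 1$ when $p$ is close to $1$. Retaining the full weight through $W(s)$ and establishing the comparison $W(s)\le C\eps\,\omega^\eps$ is exactly what removes this restriction and makes the estimate uniform in $\eps$ for all admissible $\alpha$ and all $1\le p\le\infty$.
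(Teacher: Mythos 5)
Your proof is correct and rests on the same two pillars as the paper's: the oddness of $S$ from (S1), which folds the inner and outer halves of $\Gamma_\eps$ onto each other, and the splitting of the resulting antisymmetric integrand into an increment of $h$ (handled by the fundamental theorem of calculus along normal lines) plus an increment of the Jacobian (handled by \eqref{eq:det_estimate}), each contributing the extra factor $t\leq\eps$ that upgrades Theorem~\ref{thm:lp_diffuse_volume} by one power of $\eps$. The routes diverge in two places. First, the paper reaches the folded representation \eqref{eq:representation_EV} by averaging over level sets against the density $\frac{1}{2\eps}S'(-\frac{t}{\eps})$ and only later integrates out the level-set parameter; your direct change of variables in the tube produces the Fubini-equivalent formula weighted by the primitive $\omega(t)=\frac12(1+S(-t/\eps))$. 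Second, and more substantially, the endgames differ: the paper applies H\"older first in $t$ against $S'(-\frac{t}{\eps})\d t$ and then in space, recovering the weighted norm through the exact layer-cake identity $\frac1\eps\int_{|d_D(x)|}^{\eps}S'(-\frac{t}{\eps})\d t = 1-|\varphi^\eps(x)|\le 2\omega^\eps(x)$, which uses only (S1); you instead prove the pointwise comparison $W(s)=\int_{|s|}^\eps\omega(t)\d t\le C\eps\,\omega^\eps$, whose outer half genuinely needs the power behaviour (S2). Since (S2) is a standing assumption this costs nothing, and your closing diagnosis --- that retaining the decay of the weight near $\partial D_\eps$ rather than bounding $\omega(t)\le\frac12$ is what makes the estimate uniform for all admissible $\alpha$ and all $p$ down to $1$ --- is precisely the role the layer-cake identity plays in the paper's version. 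Both arguments yield the same rate; yours is marginally more careful on the measure-theoretic side (explicit reduction to smooth $h$ by density), the paper's marginally lighter in its assumptions on $S$.
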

\begin{proof}
Using a change of variables $s=S(-t/\eps)$ and Fubini's theorem, we observe that
\begin{align*}
  \int_{-\eps}^\eps \frac{1}{2\eps} S'(-\frac{t}{\eps}) \int_{\{d_D(x)<t\}} h(x)\d x \d t = \int_\Omega h(x) \frac{1+\varphi^\eps}{2} \d x.
\end{align*}
Since $\int_{-\eps}^\eps \frac{1}{2\eps} S'(-\frac{t}{\eps})\d t =1$, we further obtain
\begin{align*}
 E_V=\int_{-\eps}^\eps \frac{1}{2\eps} S'(-\frac{t}{\eps})\left( \int_{\{d_D(x)<t\}} h(x)\d x-\int_{\{d_D(x)<0\}} h(x)\d x\right) \d t.
\end{align*}
Observing that
\begin{align*}
  \int_{\{d_D(x)<t\}} h(x)\d x-\int_{\{d_D(x)<0\}} h(x)\d x &=  -\int_{\{t<d_D(x)<0\}} h(x)\d x\quad\text{for } t<0 \text{ and}\\
  \int_{\{d_D(x)<t\}} h(x)\d x-\int_{\{d_D(x)<0\}} h(x)\d x &= \phantom{-} \int_{\{0<d_D(x)<t\}} h(x)\d x\quad\text{for } t>0,
\end{align*}
and splitting the integration over $(-\eps,\eps)$ to $(-\eps,0)$ and $(0,\eps)$ and employing a change of variables $t\mapsto -t$ for the integral over $(-\eps,0)$, we further obtain
\begin{align}\label{eq:representation_EV}
 E_V=\int_{0}^\eps \frac{1}{2\eps} S'(-\frac{t}{\eps}) \left(\int_{\{0<d_D(x)<t\}} h(x)\d x-\int_{\{-t<d_D(x)<0\}} h(x)\d x\right) \d t.
\end{align}
For the last computation, we used $S(-t)=-S(t)$, i.e. $S'(-t)=S'(t)$.
To compare the difference on the right-hand side of the latter equation we use
the transformations $\Phi_s$ introduced in Section~\ref{sec:boundary_transformation}, and the transformation formula, namely
\begin{align*}
 &\phantom{=}\int_{\{0<d_D(x)<t\}} h(x)\d x-\int_{\{-t<d_D(x)<0\}} h(x)\d x\\
 &= \int_0^t \int_{\partial D} \big(h(x+s n(x))-h(x-sn(x)) \big) |\det D\Phi_s(x)|  \d\sigma(x)\d s\\
 &\phantom{=}+\int_0^t \int_{\partial D} h(x-sn(x)) (|\det D\Phi_s(x)|-|\det D\Phi_{-s}(x)|)  \d\sigma(x)\d s.
\end{align*}
The two integrals can be treated separately.
%
%
%
Using $h(x+s n(x))-h(x-sn(x))=  \int_{-s}^s \nabla h(x+\tau n(x))\cdot n(x)d\tau$, and $\frac{1}{2}\leq  |\det D\Phi_s(x)| \leq 2$ we obtain using Fubini's theorem and the transformation formula
\begin{align*}
 &\phantom{=}\int_0^t \int_{\partial D} \big(h(x+s n(x))-h(x-sn(x)) \big) |\det D\Phi_s(x)|  \d\sigma(x)\d s\\
 &\leq 2 \int_0^t \int_{-s}^s  \int_{\partial D} |\nabla h(x+\tau n(x))\cdot n(x)|\d\sigma(x) d\tau  \d s\\
 &\leq 4 t  \int_{-t}^t \int_{\partial D}  |\nabla h(x+\tau n(x))|  |\det D\Phi_\tau(x)|\d\sigma(x) d\tau \\
 &= 4 t  \int_{\Gamma_t} |\nabla h(x)|\d x \d t.
\end{align*}
For the second integral we obtain
\begin{align}\label{eq:EV_claim2}
  \int_0^t \int_{\partial D} h(x-sn(x)) (|\det D\Phi_s(x)|-|\det D\Phi_{-s}(x)|)  \d\sigma(x)\d s \leq C t \int_{\Gamma_t}|h(x)|\d x
\end{align}
which can be seeen with 
\eqref{eq:det_estimate} as
\begin{align*}
 &\phantom{\leq}\int_0^t \int_{\partial D} h(x-sn(x)) (|\det D\Phi_s(x)|-|\det D\Phi_{-s}(x)|)  \d\sigma(x)\d s\\
 &\leq C \|D^2 d_D\|_{L^\infty(\partial D)} \int_0^t  s\int_{\partial D} |h(x-sn(x))|  \d\sigma(x)\d s\\
 &\leq 2 C \|D^2 d_D\|_{L^\infty(\partial D)} t \int_0^t  \int_{\partial D} |h(x-sn(x))|  |\det D\Phi_{-s}(x)|\d\sigma(x)\d s\\
 &\leq 2 C \|D^2 d_D\|_{L^\infty(\partial D)} t \int_{\Gamma_t}|h(x)|\d x.
\end{align*}

Using these estimates we obtain from \eqref{eq:representation_EV}
\begin{align*}
 |E_V| \leq C \int_{0}^\eps \frac{1}{2\eps} S'(-\frac{t}{\eps}) t \int_{\Gamma_t} |h(x)| + |\nabla h(x)|\d x \d t.
\end{align*}
Setting $p'=p/(p-1)$, an application of H\"older's inequality thus yields
\begin{align*}
|E_V| \leq \frac{C}{2\eps} \left(\int_{0}^\eps  S'(-\frac{t}{\eps}) t^{p'} \d t\right)^\frac{1}{p'} \left(\int_0^\eps S'(-\frac{t}{\eps}) \left(\int_{\Gamma_t} |h(x)|+ |\nabla h(x)|\d x\right)^p \d t\right)^{\frac{1}{p}}.
\end{align*}
Using boundedness of $S'$ the first integral can be computed explicitly.
To treat the second integral, we use H\"older's inequality for the inner integral which gives
\begin{align*}
 |E_V| \leq \frac{C}{\eps}\eps^{2-\frac{1}{p}}\left(\int_0^\eps S'(-\frac{t}{\eps}) |\Gamma_t|^{\frac{p}{p'}}\int_{\Gamma_t} |h(x)|^p+ |\nabla h(x)|^p \d x \d t\right)^{\frac{1}{p}}.
\end{align*}
Note, that $C$ is a universal constant depending only on $S$, $D$ and $p$ but not on $\eps$ or $h$ which may change from to line.
Using $|\Gamma_t|\leq C t |\partial D|$, $t\leq \eps$ and $\frac{1}{p}+\frac{1}{p'}=1$, we therefore have
\begin{align*}
 |E_V| \leq C \eps^{2-\frac{1}{p}}\left(\frac{1}{\eps}\int_0^\eps S'(-\frac{t}{\eps}) \int_{\Gamma_t} |h(x)|^p+ |\nabla h(x)|^p \d x \d t\right)^{\frac{1}{p}}.
\end{align*}
Since $\Gamma_t=\{x\in D_\eps:\ -t<d_D(x)<t\} = \{x\in D_\eps: -s<\varphi^\eps(x)<s\}$ for $s=-S(-\frac{t}{\eps})$, a corresponding transformation yields
\begin{align*}
 &\frac{1}{\eps}\int_0^\eps S'(-\frac{t}{\eps}) \int_{\Gamma_t} |h(x)|^p+|\nabla h(x)|^p \d x \d t = \int_0^1 \int_{\{-s<\varphi^\eps<s\}} |h(x)|^p+|\nabla h(x)|^p \d x \d s\\
 &=\int_{\Gamma_\eps} \int_{|\varphi^\eps(x)|}^1 \d s \big( |h(x)|^p+ |\nabla h(x)|^p \big)\d x \leq 2\| h\|_{W^{1,p}(\Gamma_\eps;\omega^\eps)}^p
\end{align*}
where we used that $1-|\varphi^\eps|\leq 2 \omega^\eps$ on $\Gamma_\eps$, and
\begin{align*}
 \{(x,s)\in \RR^{n+1}: 0<s<1,\ -s<\varphi^\eps(x)<s\} = \{(x,s)\in\RR^{n+1}: |\varphi^\eps(x)|<s<1\}. 
\end{align*}
This yields the assertion.
\end{proof}

For sake of completeness, let us state a corresponding approximation result for H\"older continuous function, i.e. we say that $h\in C^{0,\nu}(\overline{\Omega})$, if $h$ is continuous on $\overline{\Omega}$ and if
\begin{align*}
 |h|_\nu = \sup_{x\neq y} \frac{h(x)-h(y)}{|x-y|^\nu} <\infty.
\end{align*}
We write $\| h\|_{C^{0,\nu}(\overline{\Omega})} = \sup_{x\in\Omega} |h(x)| + |h|_{\nu}$.

\begin{lemma}\label{lem:hoelder_diffuse_volume}
Let $0<\eps\leq \eps_0$, and 
let $h\in C^{0,\nu}(\overline{\Gamma_\eps})$ for some $0<\nu \leq 1$.
Then there exists $C>0$ independent of $\eps$ such that
\begin{align*}
 |E_V| \leq C \|h\|_{C^{0,\nu}(\overline{\Gamma_\eps})}\eps^{\nu+1}.
\end{align*}
\end{lemma}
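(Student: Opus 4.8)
The plan is to mirror the proof of Theorem~\ref{thm:sobolev_diffuse_volume} but to replace the gradient estimate by a direct use of the Hölder modulus of continuity, which makes the argument somewhat simpler. The starting point is the symmetric representation \eqref{eq:representation_EV} of the error, which comes from the antisymmetry $S(-t)=-S(t)$ of the regularized sign function:
\begin{align*}
 E_V=\int_{0}^\eps \frac{1}{2\eps} S'(-\tfrac{t}{\eps}) \left(\int_{\{0<d_D<t\}} h\,\d x-\int_{\{-t<d_D<0\}} h\,\d x\right) \d t.
\end{align*}
As in that proof, I would rewrite the inner difference via the boundary transformations $\Phi_s$ from Section~\ref{sec:boundary_transformation} and split it into two pieces: one involving the symmetric difference $h(x+sn(x))-h(x-sn(x))$ weighted by $|\det D\Phi_s|$, and one involving $h(x-sn(x))$ weighted by the Jacobian difference $|\det D\Phi_s|-|\det D\Phi_{-s}|$.

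For the first piece, the key observation is that the two evaluation points lie at distance $|(x+sn(x))-(x-sn(x))|=2s$, so Hölder continuity gives $|h(x+sn(x))-h(x-sn(x))|\leq |h|_\nu (2s)^\nu$. Combined with the bound $\tfrac12\leq|\det D\Phi_s|\leq 2$ and $\Hm^{n-1}(\partial D)<\infty$, integrating in $s$ over $(0,t)$ produces the factor $\int_0^t s^\nu\,\d s=t^{\nu+1}/(\nu+1)$, so this piece is bounded by $C\,|h|_\nu\, t^{\nu+1}$. For the second piece, I would use \eqref{eq:det_estimate}, namely $|\det D\Phi_s-\det D\Phi_{-s}|\leq C|s|\,\|D^2 d_D\|_{L^\infty(\partial D)}$, together with the trivial bound $|h(x-sn(x))|\leq \sup_{\Gamma_\eps}|h|$; integrating the factor $s$ over $(0,t)$ gives $t^2/2$, so this piece is bounded by $C\,\sup_{\Gamma_\eps}|h|\, t^2$.

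Combining the two and using the standing normalization $\eps_0\leq 1$ together with the hypothesis $\nu\leq 1$ (so that $t^2\leq t^{\nu+1}$ for $t\leq\eps\leq\eps_0\leq 1$), the inner difference is bounded by $C\,\|h\|_{C^{0,\nu}(\overline{\Gamma_\eps})}\, t^{\nu+1}$. Substituting this back into the representation above and using boundedness of $S'$, the estimate collapses to an elementary one-dimensional integral:
\begin{align*}
 |E_V|\leq \frac{C}{2\eps}\|h\|_{C^{0,\nu}(\overline{\Gamma_\eps})}\int_0^\eps S'(-\tfrac{t}{\eps})\, t^{\nu+1}\,\d t \leq \frac{C'}{\eps}\|h\|_{C^{0,\nu}(\overline{\Gamma_\eps})}\,\eps^{\nu+2},
\end{align*}
which is exactly $C\,\|h\|_{C^{0,\nu}(\overline{\Gamma_\eps})}\,\eps^{\nu+1}$, as claimed.

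I do not anticipate a serious obstacle. The entire force of the argument is already contained in the symmetric representation \eqref{eq:representation_EV}, which converts the naive first-order-in-$\eps$ error into a cancellation that gains one extra power of $t$ inside the integral; once that is in hand, the Hölder bound does the rest with no fractional integrability subtleties. The only point requiring care is merging the two error contributions, of orders $t^{\nu+1}$ and $t^2$, into a single term, which is where the normalization $\eps_0\leq 1$ and the restriction $\nu\leq 1$ enter.
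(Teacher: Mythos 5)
Your proof is correct and follows essentially the same route as the paper's (sketched) proof: the symmetric representation \eqref{eq:representation_EV}, the splitting via $\Phi_s$ into a H\"older-difference term and a Jacobian-difference term, and a final integration in $t$. In fact your intermediate bounds $C t^{\nu+1}$ and $C t^{2}$ are exactly what is needed to produce the rate $\eps^{\nu+1}$; the corresponding displays in the paper read $C t^{\nu}$ and $C t$, which appear to be missing one power of $t$ each, so your version supplies the correct exponents.
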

\begin{proof}
It is easy to show the estimates
\begin{align*}
  \int_0^t \int_{\partial D} \big(h(x+s n(x))-h(x-sn(x)) \big) |\det D\Phi_s(x)|  \d\sigma(x)\d s \leq  C t^{\nu} \|h\|_{C^{0,\nu}(\overline{\Gamma_\eps})}
\end{align*}
and
\begin{align*}
  \int_0^t \int_{\partial D} h(x-sn(x)) (|\det D\Phi_s(x)|-|\det D\Phi_{-s}(x)|)  \d\sigma(x)\d s \leq C t \|h\|_{C^{0}(\overline{\Gamma_\eps})}.
\end{align*}
The proof is completed by integration over $t$ with similar arguments as in the proof of Theorem~\ref{thm:sobolev_diffuse_volume}.
\end{proof}

\subsection{Convergence of diffuse boundary integrals}\label{sec:conv_diffuse_boundary}
In this section we investigate the accuracy of the diffuse boundary integral approximation. For this sake consider
\begin{align*}
 E_B =\int_\Omega g(x) |\nabla \omega^\eps(x)| \d x - \int_{\partial D} g(x) \d\sigma(x).
\end{align*}
In the following we reduce the treatment of $E_B$ to that of $E_V$ from the previous section.
Using $\nabla d_D(x)=n(x)$ and the divergence theorem, we see that
\begin{align*}
 \int_{\partial D} g(x) \d\sigma(x) &= \int_{\partial D} g(x) \nabla d_D(x)\cdot n(x) \d\sigma(x)
 =\int_D {\rm div}(g(x) \nabla d_D(x)) \d x.
\end{align*}
Note that, $g_{\mid D} \in W^{1,p}(D)$ for any $g\in W^{1,p}(D_\eps;\omega^\eps)$, and thus $g$ has a trace on $\partial D$.
To treat the diffuse boundary integral, we first observe that $|\nabla \omega^\eps|=-\nabla d_D\cdot \nabla \omega^\eps$ on $\Gamma_\eps$. For the definition of $\Gamma_\eps$ see \eqref{eq:def_gamma_eps}.
Therefore, integration-by-parts shows that
\begin{align*}
 \int_\Omega g(x) |\nabla \omega^\eps(x)| \d x= -\int_\Omega g(x)\nabla d_D(x) \nabla \omega^\eps(x) \d x
 &=\int_\Omega {\rm div}(g(x)\nabla d_D(x)) \omega^\eps(x) \d x.
\end{align*}
Notice, that due to ${\rm supp}(\omega^\eps) \subset \Omega$ there are no boundary integrals.
Thus, we have that
\begin{align*}
 E_B = \int_\Omega {\rm div}(g(x)\nabla d_D(x)) \d\omega^\eps(x) - \int_D {\rm div}(g(x) \nabla d_D(x)) \d x.
\end{align*}
Setting $h={\rm div}(g\nabla d_D)$, we can use Theorem~\ref{thm:lp_diffuse_volume}, Theorem~\ref{thm:sobolev_diffuse_volume} and Lemma~\ref{lem:hoelder_diffuse_volume} of the previous section. 

\begin{lemma}\label{lem:lp_diffuse_boundary}
 Let $\partial D$ be of class $C^{1,1}$ and let $1\leq p\leq \infty$. Moreover, let $g\in W^{1,p}(D_\eps;\omega^\eps)$ for some $0<\eps<\eps_0$. Then there exists a constant $C>0$ independent of $\eps$ such that
 \begin{align*}
  |E_B| 
  \leq C\|g\|_{W^{1,p}(D_\eps;\omega^\eps)} \eps^{1-\frac{1}{p}}.
 \end{align*}
\end{lemma}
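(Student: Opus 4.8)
The plan is to build directly on the reduction already carried out above, which rewrites $E_B$ as a diffuse volume error $E_V$ for the specific integrand $h={\rm div}(g\nabla d_D)$, and then to invoke Theorem~\ref{thm:lp_diffuse_volume}. I would split the argument according to whether $p>1$ or $p=1$, since Theorem~\ref{thm:lp_diffuse_volume} only produces a rate in $\eps$ in the former case.

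For $1<p\leq\infty$, first I would expand $h={\rm div}(g\nabla d_D)=\nabla g\cdot\nabla d_D + g\,\Delta d_D$. Because $\partial D$ is of class $C^{1,1}$, the oriented distance function $d_D$ is $C^{1,1}$ on the fixed tubular neighborhood $\Gamma_{\eps_0}$, so that $|\nabla d_D|=1$ and $\Delta d_D={\rm tr}(D^2 d_D)\in L^\infty(\Gamma_{\eps_0})$ with a bound $\|\Delta d_D\|_{L^\infty(\Gamma_{\eps_0})}\leq C$ independent of $\eps$ (this is exactly the control on $\|D^2 d_D\|_{L^\infty}$ from Section~\ref{sec:prelim}). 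Consequently $|h|\leq |\nabla g|+C|g|$ pointwise on $\Gamma_\eps$, whence $\|h\|_{L^p(\Gamma_\eps;\omega^\eps)}\leq C\|g\|_{W^{1,p}(\Gamma_\eps;\omega^\eps)}\leq C\|g\|_{W^{1,p}(D_\eps;\omega^\eps)}$. Feeding this into Theorem~\ref{thm:lp_diffuse_volume} gives $|E_B|=|E_V|\leq C\eps^{1-1/p}\|g\|_{W^{1,p}(D_\eps;\omega^\eps)}$, which covers $1<p\leq\infty$ at once.

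For $p=1$ the claimed rate is $\eps^{0}=1$, so no cancellation is needed and I would bound the two terms of $E_B$ separately rather than pass through $E_V$; note that $h={\rm div}(g\nabla d_D)$ need not lie in the unweighted space $L^1(\Omega)$ (the weight $\omega^\eps$ degenerates near $\partial D_\eps$), so the $p=1$ branch of Theorem~\ref{thm:lp_diffuse_volume} is not directly available. The diffuse boundary integral is controlled by the trace Theorem~\ref{thm:trace} with $p=1$, giving $\int_{D_\eps}|g|\,|\nabla\omega^\eps|\d x\leq C\|g\|_{W^{1,1}(D_\eps;\omega^\eps)}$. For the sharp boundary integral I would use that $\omega^\eps\geq 1/2$ on $D$, so that $g_{\mid D}\in W^{1,1}(D)$ with $\|g\|_{W^{1,1}(D)}\leq 2\|g\|_{W^{1,1}(D_\eps;\omega^\eps)}$, and then apply the unweighted trace Lemma~\ref{thm:trace_unweighted} with $t=0$ to get $\int_{\partial D}|g|\d\sigma\leq C\|g\|_{W^{1,1}(D)}$. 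Adding the two estimates yields $|E_B|\leq C\|g\|_{W^{1,1}(D_\eps;\omega^\eps)}$.

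The only genuinely delicate point is the regularity bookkeeping for $h$: since $d_D$ is merely $C^{1,1}$, the term $\Delta d_D$ is only an $L^\infty$ function and not continuous, so $h$ must be handled as an honest $L^p$ object and its coefficient $D^2 d_D$ controlled uniformly in $\eps$, which the fixed tubular neighborhood $\Gamma_{\eps_0}$ guarantees. The separate treatment of the endpoint $p=1$ is the other thing one must not overlook, but it becomes routine once the weighted and unweighted trace theorems are in hand.
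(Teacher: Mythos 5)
Your argument for $1<p\leq\infty$ is essentially the paper's own proof: the paper likewise invokes the reduction $E_B=E_V$ with $h=\mathrm{div}(g\nabla d_D)$, notes that $d_D\in C^{1,1}$ makes $h$ an $L^p(D_\eps;\omega^\eps)$ function controlled by $\|g\|_{W^{1,p}(D_\eps;\omega^\eps)}$, and applies Theorem~\ref{thm:lp_diffuse_volume}; your expansion $h=\nabla g\cdot\nabla d_D+g\,\Delta d_D$ just makes the bookkeeping explicit. Your separate treatment of $p=1$ is a genuine (if minor) improvement: the paper's one-line proof tacitly applies Theorem~\ref{thm:lp_diffuse_volume} for all $1\leq p\leq\infty$, but that theorem gives no rate at $p=1$ and requires $h\in L^1(\Omega)$ unweighted, so your direct bound of the two boundary terms via Theorem~\ref{thm:trace} and Lemma~\ref{thm:trace_unweighted} is what actually justifies the (trivial) $\eps^0$ estimate at that endpoint.
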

\begin{proof}
If $\partial D\in C^{1,1}$, then $d_D\in C^{1,1}$ \cite{DelfourZolesio2011} and, in this case, $g\in W^{1,p}(D_\eps;\omega^\eps)$ implies $h\in L^p(D_\eps;\omega^\eps)$ for $1\leq p\leq \infty$, which in turn implies $E_B=O(\eps^{1-1/p})$ by Theorem~\ref{thm:lp_diffuse_volume}.
\end{proof}

\begin{lemma}\label{lem:sobolev_diffuse_boundary}
 Let $\partial D$ be of class $C^{2,1}$. Moreover, let $g\in W^{2,p}(D_\eps;\omega^\eps)$ for some $0<\eps<\eps_0$ and $1\leq p\leq \infty$. Then there exists a constant $C>0$ independent of $\eps$ such that
 \begin{align*}
  |E_B|
\leq C\|g\|_{W^{2,p}(D_\eps;\omega^\eps)} \eps^{2-\frac{1}{p}}.
 \end{align*}
\end{lemma}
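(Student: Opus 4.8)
The plan is to reduce Lemma~\ref{lem:sobolev_diffuse_boundary} to the volume estimate of Theorem~\ref{thm:sobolev_diffuse_volume} by exactly the same integration-by-parts trick already used to set up $E_B$. Recall from the discussion preceding Lemma~\ref{lem:lp_diffuse_boundary} that, writing $h = {\rm div}(g\nabla d_D)$, one has the identity $E_B = E_V$ with this particular integrand $h$. Thus it suffices to verify that $h \in W^{1,p}(D_\eps;\omega^\eps)$ with $\|h\|_{W^{1,p}(\Gamma_\eps;\omega^\eps)} \leq C\|g\|_{W^{2,p}(D_\eps;\omega^\eps)}$, and then to invoke Theorem~\ref{thm:sobolev_diffuse_volume} to obtain the claimed rate $\eps^{2-1/p}$.

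The first step is therefore the regularity bookkeeping. Expanding, $h = \nabla g\cdot\nabla d_D + g\,\Delta d_D$. Because $\partial D$ is now assumed of class $C^{2,1}$, the oriented distance function $d_D$ is $C^{2,1}$ in a neighborhood of $\partial D$, so that $\nabla d_D \in C^{1,1}$ and $\Delta d_D \in C^{0,1}$ on $\Gamma_{\eps_0}$; in particular $\nabla d_D$, $\Delta d_D$ and their first derivatives are all bounded on the tubular neighborhood. Consequently $h$ involves $g$ and $\nabla g$ multiplied by bounded $C^{0,1}$ (respectively $C^{1,1}$) coefficients. Computing $\nabla h$ brings in one more derivative of $g$, i.e. $\nabla^2 g$, together with first derivatives of the geometric coefficients (which are bounded since $d_D\in C^{2,1}$). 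Collecting terms, both $h$ and $\nabla h$ are pointwise controlled by $|g| + |\nabla g| + |\nabla^2 g|$ times $\eps$-independent constants determined by $\|d_D\|_{C^{2,1}(\Gamma_{\eps_0})}$. Integrating the $p$-th powers against the weight $\omega^\eps$ over $\Gamma_\eps$ then yields $\|h\|_{W^{1,p}(\Gamma_\eps;\omega^\eps)} \leq C\|g\|_{W^{2,p}(\Gamma_\eps;\omega^\eps)} \leq C\|g\|_{W^{2,p}(D_\eps;\omega^\eps)}$, with $C$ independent of $\eps$. This explains precisely why the hypotheses are upgraded from $W^{1,p}$ and $C^{1,1}$ in Lemma~\ref{lem:lp_diffuse_boundary} to $W^{2,p}$ and $C^{2,1}$ here: one derivative of $g$ is consumed by forming $h={\rm div}(g\nabla d_D)$, and applying the sharper volume estimate costs a further derivative.

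The second and final step is to apply Theorem~\ref{thm:sobolev_diffuse_volume} to $h$, which immediately gives $|E_V| \leq C\eps^{2-1/p}\|h\|_{W^{1,p}(\Gamma_\eps;\omega^\eps)}$, and then substitute the bound from the previous paragraph together with $E_B = E_V$. I do not anticipate a genuinely hard obstacle here: the content is essentially a chain rule plus a weighted norm estimate on the coefficient multipliers. The one point requiring mild care is ensuring the geometric coefficients' derivatives are truly bounded uniformly in $\eps$ on $\Gamma_\eps \subset \Gamma_{\eps_0}$ — this is exactly where $C^{2,1}$ regularity of $\partial D$ is used, guaranteeing $d_D\in C^{2,1}$ near the boundary so that the constant $C$ depends only on $\|d_D\|_{C^{2,1}(\Gamma_{\eps_0})}$, $p$, and the dimension, and not on the shrinking parameter $\eps$. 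A routine density argument (smooth functions are dense in $W^{2,p}(D_\eps;\omega^\eps)$ by the remarks in Section~\ref{sec:weighted_sobolev}) justifies the manipulations for general $g$.
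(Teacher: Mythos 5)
Your proposal is correct and follows exactly the paper's own argument: the paper's proof is the one-line observation that $\partial D\in C^{2,1}$ gives $d_D\in C^{2,1}$, so $h=\mathrm{div}(g\nabla d_D)$ lies in $W^{1,p}(D_\eps;\omega^\eps)$ whenever $g\in W^{2,p}(D_\eps;\omega^\eps)$, and Theorem~\ref{thm:sobolev_diffuse_volume} then yields the rate $\eps^{2-1/p}$. Your write-up merely spells out the product-rule bookkeeping and the uniformity of the geometric constants, which the paper leaves implicit.
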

\begin{proof}
If $\partial D\in C^{2,1}$, then $d_D\in C^{2,1}$ \cite{DelfourZolesio2011} and, in this case, $g\in W^{2,p}(D_\eps;\omega^\eps)$ implies $h\in W^{1,p}(D_\eps;\omega^\eps)$, which in turn implies $E_B=O(\eps^{2-1/p})$ by Theorem~\ref{thm:sobolev_diffuse_volume}.
\end{proof}

The estimate of Lemma~\ref{lem:sobolev_diffuse_boundary} assumes $W^{2,p}$-regularity of the whole integrand. For our analysis we will also need a slightly different statement:

\begin{theorem}\label{thm:sobolev_diffuse_boundary2}
 Assume $\partial D$ is of class $C^{1,1}$, let (S3) hold and let $1\leq p\leq q \leq\infty$. Furthermore, let $u\in W^{2,q}(D_\eps;\omega^\eps)$ satisfy $u=0$ on $\partial D$ and let $v\in W^{1,p'}(D_\eps;\omega^\eps)$ with $p'=p/(p-1)$.
 Then there exists a constant $C$ independent of $\eps$, $u$ and $v$ such that for $q'=q/(q-1)$
 \begin{align*}
  \int_{\Gamma_\eps} uv |\nabla \omega^\eps| \d x\leq  C (\eps^{1+\frac{1}{q'}} \|u\|_{W^{2,q}(D_\eps;\omega^\eps)} +\eps^{1+\frac{1}{p}} \|u\|_{W^{2,p}(D_\eps;\omega^\eps)})  \|v\|_{W^{1,p'}(D_\eps;\omega^\eps)}.
 \end{align*}
\end{theorem}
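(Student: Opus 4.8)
The plan is to estimate the quantity $\int_{\Gamma_\eps} uv|\nabla\omega^\eps|\d x$ by exploiting the vanishing boundary condition $u=0$ on $\partial D$ together with the trace-type structure already developed for the volume integrals. The key idea is that since $u$ vanishes on $\partial D$ and is $W^{2,q}$-regular, along each normal fiber $s\mapsto u(x+sn(x))$ we can write $u(x+sn(x)) = \int_0^s \nabla u(x+\tau n(x))\cdot n(x)\,\d\tau$, so that $u$ is \emph{small} in the tubular neighborhood, of order $\eps$ times a derivative. This is exactly the mechanism that produced the extra power of $\eps$ in Theorem~\ref{thm:sobolev_diffuse_volume}, and I expect to reuse the coarea/transformation machinery of Section~\ref{sec:boundary_transformation} in the same spirit.

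**First I would** rewrite the diffuse boundary integral using the coarea formula as in the proof of Theorem~\ref{thm:trace}, namely
\begin{align*}
 \int_{\Gamma_\eps} uv\,|\nabla\omega^\eps|\d x = \frac{1}{2}\int_{-1}^1 \int_{\{\varphi^\eps=s\}} uv \,\d\sigma\,\d s = \frac{1}{2}\int_{-\eps}^{\eps}\frac{1}{\eps}S'\!\left(-\frac{t}{\eps}\right)\int_{\partial D_t} uv\,\d\sigma\,\d t.
\end{align*}
On each level set $\partial D_t$ I would transport back to $\partial D$ via $\Phi_t$ and use $u(\Phi_t(x)) = \int_0^t \nabla u(x+\tau n(x))\cdot n(x)\,\d\tau$ (valid because $u=0$ on $\partial D$) to extract a factor controlled by $\|\nabla u\|$ on $\Gamma_{|t|}$. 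This converts the boundary integral over $\partial D_t$ into a fiber integral of $\nabla u \cdot n$ times $v$, and after a further application of the trace lemma~\ref{thm:trace_unweighted} (or the transformation formula \eqref{eq:boundary_transformation_formula}) I would bound the $\partial D_t$-integral of $|\nabla u|\,|v|$ by a weighted volume integral over $\Gamma_{|t|}$.

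**The splitting of the exponents** $q$ and $p$ in the statement is what dictates where Hölder's inequality should be applied. I would pair $\nabla u$ (in $L^q$) against $v$ (in $L^{q'}\subset L^{p'}$ after accounting for the measure of the thin set via the embedding/Hölder estimate of the type used in \eqref{eq:help1}), and separately pair the second-derivative contribution $\|u\|_{W^{2,p}}$ against $v\in W^{1,p'}$. The two terms in the asserted bound, with their distinct powers $\eps^{1+1/q'}$ and $\eps^{1+1/p}$, should emerge precisely from these two Hölder pairings: the first from directly estimating $u$ via its normal gradient and the second from a finer expansion of $u$ to second order (writing $u(x+sn) = s\,\partial_n u + \int$ of the Hessian), where assumption (S3) — concavity of $S$, hence monotonicity of $S'$ — is invoked to control the weighting along the fibers, exactly as flagged in the remark following (S3).

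**The main obstacle** I anticipate is the careful bookkeeping of weights: one must show that the factors $\delta^\alpha \sim \eps^\alpha\omega^\eps$ appearing from the layer-cake/coarea representation combine with the powers of $t$ coming from the Taylor expansion of $u$ to give clean powers of $\eps$, and simultaneously that the correct $W^{2,\cdot}$ and $W^{1,\cdot}$ norms on the full domain $D_\eps$ (not just on $\Gamma_\eps$) appear on the right-hand side. In particular, getting the \emph{two separate} exponents rather than a single symmetric one requires treating the first-order term $s\,\partial_n u$ and the Hessian remainder asymmetrically, and it is here that condition (S3) must be used to ensure the fiber integrals $\int_0^\eps S'(-t/\eps)\,t^\beta\,\d t$ are dominated by $\eps^{\beta}$ uniformly; the density argument reducing from smooth $u$ to general $u\in W^{2,q}(D_\eps;\omega^\eps)$ then finishes the proof.
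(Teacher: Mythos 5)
Your overall strategy (push the diffuse boundary integral onto $\partial D$ via $\Phi_t$, exploit $u=0$ on $\partial D$ through the fiber representation $u(x+tn(x))=\int_0^t\nabla u(x+sn(x))\cdot n(x)\,\d s$, then apply H\"older) is the right starting point, but as written it does not produce the two terms of the statement, and the step that actually generates the extra power of $\eps$ is missing. If you only use $|u(x+tn)|\le\int_0^{|t|}|\nabla u|$ and pair $\nabla u\in L^q$ against $v$, you obtain at best $O(\eps^{1/q'})\,\|u\|_{W^{1,q}}\|v\|$ --- one power of $\eps$ short and with the wrong Sobolev index ($W^{1,q}$ instead of $W^{2,q}$). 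The paper's proof gets $\eps^{1+1/q'}\|u\|_{W^{2,q}}$ by first splitting $v(x+tn)=v(x)+\int_0^t\nabla v(x+sn)\cdot n\,\d s$, so that $uv$ decomposes into $v(x)\int_0^t\nabla u\cdot n\,\d s$ plus the product of the two fiber integrals. For the first piece, $v(x)$ is constant along the fiber, and the evenness $S'(-t/\eps)=S'(t/\eps)$ makes the odd first-order contribution cancel: $\int_{-\eps}^{\eps}S'(-t/\eps)\int_0^t w'(s)\,\d s\,\d t=\int_0^{\eps}S'(-t/\eps)\int_0^t\int_{-s}^{s}w''(r)\,\d r\,\d s\,\d t$, which is where the Hessian of $u$ in $L^q$ and the exponent $1+1/q'=2-1/q$ come from. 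Your proposal assigns the second-order expansion to the \emph{other} term and never isolates $v(x)$, so this cancellation cannot be invoked; the exponents $\eps^{1+1/q'}$ and $\eps^{1+1/p}$ do not emerge from the pairings you describe.

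The second concrete issue is the role of (S3). You claim it is needed to ensure $\int_0^\eps S'(-t/\eps)\,t^\beta\,\d t$ is dominated by $\eps^{\beta}$; that kind of estimate (with the correct power $\eps^{\beta+1}$) follows from mere boundedness of $S'$ and is not where (S3) enters. In the paper, (S3) --- monotonicity of $S'$ on $[0,1)$ --- is used on the cross term to dominate $\frac1\eps\int_0^\eps S'(-t/\eps)\int_0^t|\nabla u(x+sn)|^{p}\,\d s\,\d t$ by $\eps\int_0^\eps\frac1\eps S'(-s/\eps)|\nabla u(x+sn)|^{p}\,\d s$, i.e.\ to turn the iterated fiber integral back into a diffuse boundary integral of $|\nabla u|^p$ to which Theorem~\ref{thm:trace} applies (this is also why $\|u\|_{W^{2,p}}$, rather than $\|u\|_{W^{1,p}}$, appears in the second term). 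Without this step, your final paragraph on ``bookkeeping of weights'' has no mechanism to recover the weighted norms $\|v\|_{W^{1,p'}(D_\eps;\omega^\eps)}$ and $\|u\|_{W^{2,p}(D_\eps;\omega^\eps)}$ from the unweighted fiber integrals, so the proof as proposed does not close.
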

The higher integrability of $u$ improves the first part of the estimate whereas the higher integrability of $v$ improves the second part. For $q=p$ the rate is $O(\eps^{1+\frac{1}{p'}}+\eps^{1+\frac{1}{p}})$ which is optimal for $p=2$. For $q=p'$ we obtain the best possible rate $O(\eps^{1+\frac{1}{p}})$.
\begin{proof}
We start with an inequality for $w\in W^{1,1}(D_\eps;\omega^\eps)$.
An application of \eqref{eq:boundary_transformation_formula}, \eqref{eq:representation_Jacobian} and Theorem~\ref{thm:trace} yields
\begin{align}
 \big| \int_{\Gamma_\eps} w |\nabla \omega^\eps| \d x &- \int_{-\eps}^\eps\frac{1}{2\eps}S'(-\frac{t}{\eps})\int_{\partial D} w(x+tn(x)) (1+t\Delta d_D(x))\d\sigma(x) \d t\big| \label{newclaim1} \\
 &\leq  C \int_{-\eps}^\eps\frac{1}{2\eps}S'(-\frac{t}{\eps})\int_{\partial D} |w(x+tn(x))| \eps^2 \d\sigma(x)\d t \nonumber \\
 &\leq  C\eps^2 \int_{\Gamma_\eps} |w| |\nabla\omega^\eps| \d x\nonumber \\
 &\leq   C\eps^2 \|w\|_{W^{1,1}(D_\eps;\omega^\eps)}, \nonumber 
\end{align}
with $C$ independent of $\eps$.
 
 
 Now let $w\in W^{2,q}((-\eps,\eps))$, then there exists a constant C such that
\begin{align}
 |\int_{-\eps}^\eps S'(-\frac{t}{\eps})\int_0^t w'(s) \d s \d t | \leq \eps^{3-\frac{3}{q}} C  \left(\int_0^\eps S'(-\frac{t}{\eps})\int_0^t \int_{-s}^s |w''(r)|^q \d r\d s \d t\right)^{\frac{1}{q}}.  \label{newclaim2}
\end{align}
This can be seen as follows: 
By change of variables and application of the fundamental theorem of calculus, we obtain
\begin{align*}
 \int_{-\eps}^\eps S'(-\frac{t}{\eps})\int_0^t w'(s) \d s \d t &= \int_{0}^\eps S'(-\frac{t}{\eps}) \int_0^t w'(s)-w'(-s) \d s \d t\\
 &=  \int_{0}^\eps S'(-\frac{t}{\eps})\int_0^t \int_{-s}^s w''(r) \d r \d s \d t.
\end{align*}
Repeated application of H\"older's inequality gives
\begin{align*}
 &|  \int_{0}^\eps S'(-\frac{t}{\eps})\int_0^t \int_{-s}^s w''(r) \d r \d s \d t| \\
 &\leq 2^{\frac{1}{q'}}  \left(\int_{0}^\eps S'(-\frac{t}{\eps})\int_0^t s \d s\d t\right)^{\frac{1}{q'}} \left(\int_0^\eps S'(-\frac{t}{\eps})\int_0^t\int_{-s}^s |w''(r)|^q \d r \d s\d t\right)^{\frac{1}{q}}.
\end{align*}
Using boundedness of $S'$ and calculating $\int_{0}^\eps \int_0^t s \d s\d t=\eps^3/6$ yields the assertion.

  
We are now in the position to give a proof of the theorem.
By setting $w=uv$ in \eqref{newclaim1}, we have that
 \begin{align*}
  \int_{\Gamma_\eps} uv |\nabla \omega^\eps| \d x- \int_{-\eps}^\eps\frac{1}{2\eps}S'(-\frac{t}{\eps}) \int_{\partial D}  u(x+tn(x))v(x+tn(x)) (1+t\Delta d_D(x)) \d\sigma(x)\d t\\
  \leq   C\eps^2 \|u\|_{W^{1,p}(D_\eps;\omega^\eps)}\|v\|_{W^{1,p'}(D_\eps;\omega^\eps)}.
 \end{align*}

Thus, to prove the theorem, it is sufficient to estimate the second integral on the left-hand side of the latter inequality.
Using $v(x+tn(x))=v(x) + \int_0^t \nabla v(x+sn(x))\cdot n(x) \d s$ and $u(x+tn(x))= \int_0^t \nabla u(x+sn(x))\cdot n(x) \d s$, we see that
\begin{align*}
  u(x+tn(x))v(x+tn(x))&= v(x)\int_0^t \nabla u(x+sn(x))\cdot n(x) \d s \\
                      &+ \int_0^t \nabla v(x+sn(x))\cdot n(x) \d s\int_0^t \nabla u(x+sn(x))\cdot n(x) \d s.
\end{align*}
We treat the two terms on the right-hand side separately.
For the first one, we will use \eqref{newclaim2} with $w(s)= u(x+sn(x))$, H\"older's inequality and $q\geq p$ which yields
\begin{align*}
 &\left| \frac{1}{2\eps} \int_{\partial D}  v(x)\int_{-\eps}^\eps S'(-\frac{t}{\eps}) \int_0^t \nabla u(x+sn(x))\cdot n(x) \d s \d t \d\sigma(x)\right| \\
 &\leq C\frac{ \eps^{3-\frac{3}{q}} }{2\eps} \int_{\partial D}  v(x) \left( \int_0^\eps S'(-\frac{t}{\eps})\int_0^t \int_{-s}^s |n(x)\cdot D^2 u(x+r n(x))\cdot n(x)|^q dr\d s\d t\right)^{\frac{1}{q}} \d\sigma(x) \\
 &\leq  C \eps^{2-\frac{3}{q}} \left(\int_{\partial D}  |v|^{p'}\d\sigma)\right)^{\frac{1}{p'}} \left(  \int_0^\eps S'(-\frac{t}{\eps})\int_0^t \int_{\Gamma_s} |D^2 u|^q \d x\d s\d t \right)^{\frac{1}{q}}\\
 &\leq  C \eps^{2-\frac{1}{q}} \left(\int_{\partial D}  |v|^{p'}\d\sigma)\right)^{\frac{1}{p'}} \left(  \int_0^\eps \frac{1}{2\eps} S'(-\frac{t}{\eps}) \int_{\Gamma_t} |D^2 u|^q \d x\d t \right)^{\frac{1}{q}}\\
 &\leq  C\eps^{2-\frac{1}{q} }  \|v\|_{W^{1,p'}(D_\eps;\omega^\eps)} \|u\|_{W^{2,q}(D_\eps;\omega^\eps)},
\end{align*}
where we have used Lemma~\ref{thm:trace_unweighted} to treat the term involving $v$ and the transformation formula to treat the term involving $u$, see the last lines of the proof of Theorem~\ref{thm:sobolev_diffuse_volume}.
For the second term we
first use H\"older's inequality twice
\begin{align*}
 &\big| \frac{1}{2\eps} \int_{\partial D} \int_{0}^\eps S'(-\frac{t}{\eps}) \left(\int_0^t \nabla v(x+sn(x))\cdot n(x)\d s\right) \left(\int_0^t \nabla u(x+sn(x))\cdot n(x) \d s\right) \d t\d\sigma(x)\big| \\
 &\leq \frac{1}{2\eps} \int_{\partial D} \int_{0}^\eps t S'(-\frac{t}{\eps}) \left(\int_0^t |\nabla v(x+sn(x))|^{p'}\d s\right)^{\frac{1}{p'}} \left(\int_0^t |\nabla u(x+sn(x))|^p \d s\right)^{\frac{1}{p}} \d t\d\sigma(x) \\
 &\leq \frac{\eps}{2} \left( \int_{\partial D}  \frac{1}{\eps}\int_{0}^\eps S'(-\frac{t}{\eps})\int_0^t |\nabla v(x+sn(x))|^{p'} \d s\d t\d\sigma(x)\right)^{\frac{1}{p'}}\\
 & \quad\qquad
 \left( \int_{\partial D}  \frac{1}{\eps} \int_{0}^\eps S'(-\frac{t}{\eps})\int_0^t |\nabla u(x+sn(x))|^{p} \d s  \d t\d\sigma(x)\right)^{\frac{1}{p}}.
\end{align*}
Then, using \eqref{eq:boundary_transformation_formula}, we obtain similarly as in the proof of Theorem~\ref{thm:sobolev_diffuse_volume}
\begin{align*}
  \int_{\partial D}  \frac{1}{\eps} \int_{0}^\eps S'(-\frac{t}{\eps})\int_0^t |\nabla v(x+sn(x))|^{p'} \d s\d t\d\sigma(x) 
\leq C \|v\|_{W^{1,p'}(D_\eps;\omega^\eps)}^{p'},
\end{align*}
and by \eqref{eq:boundary_transformation_formula}, (S3), and by Theorem~\ref{thm:trace}
\begin{align*}
 &       \int_{\partial D} \frac{1}{\eps} \int_{0}^\eps S'(-\frac{t}{\eps}) \int_0^t |\nabla u(x+sn(x))|^{p} \d s  \d t\d\sigma(x)\\
 &\leq   \eps \int_{\partial D} \int_0^\eps \frac{1}{\eps} S'(-\frac{s}{\eps}) |\nabla u(x+sn(x))|^{p} \d s  \d\sigma(x)\\
 &\leq C \eps \int_{\Gamma_{\eps}}|\nabla u|^{p} |\nabla \omega^\eps| \d x\\
 &\leq C \eps \|u\|_{W^{2,p}(D_\eps;\omega^\eps)}^p.
\end{align*}
The integrals over $(-\eps,0)$ as well as the ones involving $t\Delta d_D$ can be treated similarly.
Collecting all terms yields the assertion.
\end{proof}

Up to now, we have always assumed the boundary data $g$ to be regular. For completeness, let us also consider the case $g\in L^p(\partial D)$ only. Then $g$ is defined a.e. on $\partial D$, and we can define an extension a.e. on $\Gamma_\eps$ by
\begin{align}\label{eq:extension}
 \tilde g(x+tn(x)) = g(x),\qquad -\eps\leq t \leq \eps,\ x\in\partial D.
\end{align}
\begin{lemma}\label{lem:diffuse_boundary_lp}
 Let $g\in L^p(\partial \Omega)$ and let $v\in W^{1,p'}(D_\eps;\omega^\eps)$ with $1\leq p\leq\infty$, $p'=p/(p-1)$, and $0<\eps\leq\eps_0$. Then there exists a constant $C$ independent of $\eps$ such that
 \begin{align*}
  |\int_{\Gamma_\eps} \tilde g v |\nabla\omega^\eps|\d x - \int_{\partial D} gv \d\sigma|\leq C\eps^{1/p}\|g\|_{L^p(\partial D)} \|v\|_{W^{1,p'}(D_\eps;\omega^\eps)}
 \end{align*}
 with $\tilde g$ being the extension defined in \eqref{eq:extension}.
\end{lemma}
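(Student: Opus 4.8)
The plan is to rewrite both the diffuse boundary integral and the sharp boundary integral as weighted averages of integrals over $\partial D$ and then to compare their integrands. First I would use the transformation formula \eqref{eq:boundary_transformation_formula} with base surface $\partial D$, together with $|\nabla\omega^\eps|=\tfrac{1}{2\eps}S'(-d_D/\eps)$ on $\Gamma_\eps$ and $\tilde g(x+tn(x))=g(x)$, to obtain
\begin{align*}
 \int_{\Gamma_\eps}\tilde g v|\nabla\omega^\eps|\d x=\int_{-\eps}^\eps\frac{1}{2\eps}S'(-\tfrac{t}{\eps})\int_{\partial D}g(x)\,v(x+tn(x))\,|\det D\Phi_t(x)|\d\sigma(x)\d t.
\end{align*}
Since $\int_{-\eps}^\eps\frac{1}{2\eps}S'(-t/\eps)\d t=1$, the sharp integral can be written with the same averaging kernel, namely $\int_{\partial D}gv\d\sigma=\int_{-\eps}^\eps\frac{1}{2\eps}S'(-t/\eps)\int_{\partial D}gv\d\sigma\d t$. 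Subtracting and splitting $v(x+tn(x))|\det D\Phi_t|-v(x)=(v(x+tn(x))-v(x))|\det D\Phi_t|+v(x)(|\det D\Phi_t|-1)$ reduces the claim to estimating the two resulting contributions, which I call $(I)$ and $(II)$.

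The term $(II)$ is the easy one: by \eqref{eq:representation_Jacobian} one has $||\det D\Phi_t|-1|\le C|t|\le C\eps$, so after using $\int_{-\eps}^\eps\frac{1}{2\eps}S'(-t/\eps)\d t=1$, H\"older's inequality on $\partial D$ and the unweighted trace Lemma~\ref{thm:trace_unweighted} (together with $\omega^\eps\ge\tfrac12$ on $D$), $(II)$ is bounded by $C\eps\|g\|_{L^p(\partial D)}\|v\|_{W^{1,p'}(D_\eps;\omega^\eps)}$, and $\eps\le\eps^{1/p}$ absorbs this into the claimed rate. For $(I)$ I would write $v(x+tn(x))-v(x)=\int_0^t\nabla v(x+sn(x))\cdot n(x)\d s$ (valid for smooth $v$, then extended by density of $C^\infty(\overline{D_\eps})$), bound $|\det D\Phi_t|\le 2$, and apply H\"older's inequality with exponents $p,p'$ with respect to the measure $\frac1\eps S'(-t/\eps)\d s\,\d\sigma\,\d t$ on the region $\{x\in\partial D,\ s\in(0,t)\text{ or }(t,0),\ |t|<\eps\}$. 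The factor carrying $g$ then contributes $\eps^{1/p}\|g\|_{L^p(\partial D)}$, using $\int_{-\eps}^\eps|t|S'(-t/\eps)\d t=C\eps^2$.

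The main obstacle is the factor carrying $\nabla v$: handled naively it produces an \emph{unweighted} boundary-layer integral of the form $\int_{\Gamma_\eps}|\nabla v|^{p'}\d x$, which is not controlled by $\|v\|_{W^{1,p'}(D_\eps;\omega^\eps)}$ because $\omega^\eps$ degenerates near $\partial D_\eps$; forcing the weight through the trace Theorem~\ref{thm:trace} would require the unavailable regularity $v\in W^{2,p'}$. The resolution, and the crux of the proof, is to interchange the order of the $s$- and $t$-integrations and to evaluate the inner integral over $t$ \emph{exactly}. For $s>0$ this gives
\begin{align*}
 \int_s^\eps S'(-\tfrac{t}{\eps})\d t=\eps\big(1+S(-\tfrac{s}{\eps})\big)=2\eps\,\omega^\eps(x+sn(x)),
\end{align*}
so the weight $\omega^\eps$ is reconstituted exactly, and the transformation formula \eqref{eq:boundary_transformation_formula} together with $|\det D\Phi_s|\ge\tfrac12$ bounds the $\nabla v$-factor by $C\|v\|_{W^{1,p'}(D_\eps;\omega^\eps)}$. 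For $s<0$ the analogous exact computation yields the reflected weight $\tfrac12(1-\varphi^\eps)$, which is harmless since $\omega^\eps\ge\tfrac12$ on $\{d_D<0\}$ and hence $\tfrac12(1-\varphi^\eps)\le 2\omega^\eps$ there.

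Collecting the estimates, the $\nabla v$-factor is bounded independently of $\eps$ while the $g$-factor carries the full power $\eps^{1/p}$, so that $(I)\le C\eps^{1/p}\|g\|_{L^p(\partial D)}\|v\|_{W^{1,p'}(D_\eps;\omega^\eps)}$, which together with $(II)$ proves the lemma. I note that this argument uses neither assumption (S3) nor second order regularity of $v$; the exact evaluation of the $t$-integral is precisely what replaces them, and it is consistent with the remark that (S3) is only needed for Theorem~\ref{thm:sobolev_diffuse_boundary2}.
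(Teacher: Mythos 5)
Your proposal is correct and follows essentially the same route as the paper: both rewrite the difference as a weighted average over $\partial D$ via \eqref{eq:boundary_transformation_formula}, split off the Jacobian-error term using \eqref{eq:representation_Jacobian}, apply the fundamental theorem of calculus to $v(x+tn(x))-v(x)$, and recover the weight $\omega^\eps$ from the $t$-integral of $S'$ --- your exact evaluation $\int_s^\eps S'(-t/\eps)\,\d t=2\eps\,\omega^\eps(x+sn(x))$ is precisely the layer-cake computation the paper imports from the end of the proof of Theorem~\ref{thm:sobolev_diffuse_volume}. The only cosmetic differences (applying H\"older jointly in $(x,s,t)$ rather than first in $x$, and attaching $|\det D\Phi_t|$ to the other factor in the splitting) do not change the substance.
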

\begin{proof}
Using $\int_{-\eps}^\eps S'(-t/\eps)\d t=2\eps$ and the transformation formula, we obtain 
\begin{align*}
  &\int_{\Gamma_\eps} \tilde g v |\nabla\omega^\eps|\d x - \int_{\partial D} gv \d\sigma \\
  &= \int_{-\eps}^\eps \frac{1}{2\eps}S'(-\frac{t}{\eps})\int_{\partial D} \big(v(x+tn(x)) \det D\Phi_t(x) - v(x)\big) g(x) \d\sigma(x)\d t.
\end{align*}
Thus, using \eqref{eq:representation_Jacobian}, there exists $C>0$ independent of $\eps$ such that
\begin{align*}
  \left|\int_{\Gamma_\eps} \tilde g v |\nabla\omega^\eps|\d x - \int_{\partial D} gv \d\sigma \right| \leq \left| \int_{\partial D} g(x) \int_{-\eps}^\eps \frac{1}{2\eps}S'(-\frac{t}{\eps}) \big(v(x+tn(x)) - v(x)\big)\d t \d\sigma(x) \right|  \\
  + C \left|\int_{\partial D} g(x) \int_{-\eps}^\eps \frac{1}{2\eps}S'(-\frac{t}{\eps}) tv(x+tn(x)) \d t \d\sigma(x) \right|.
\end{align*}
We treat the two integrals on the right-hand side separately.
Repeated use of H\"older's inequality and $\nabla v(x+tn(x))-v(x) =\int_0^t \nabla v(x+sn(x))\cdot n(x)\d s$ yields similarly as in the proof of Theorem~\ref{thm:sobolev_diffuse_volume}
\begin{align*}
 &|\int_{\partial D} g(x) \int_0^\eps \frac{1}{2\eps}S'(-\frac{t}{\eps}) \big(v(x+tn(x)) - v(x)\big) \d t \d\sigma(x)|\\
 &\leq \|g\|_{L^p(\partial D)} \left(\int_{\partial D} \left(\int_0^\eps \frac{1}{2\eps}S'(-\frac{t}{\eps})  \int_0^t |\nabla v(x+sn(x))|d s \d t\right)^{p'} \d\sigma(x)\right)^{\frac{1}{p'}}\\
  &\leq \|g\|_{L^p(\partial D)} |\Gamma_\eps|^{\frac{1}{p}}\left(\int_0^\eps \frac{1}{2\eps}S'(-\frac{t}{\eps}) \int_{\Gamma_t} |\nabla v(x)|^{p'} \d x \d t\right)^{p'}. 
\end{align*}
An analogue estimate hold for the integral over $(-\eps,0)$.
Since, $|\Gamma_\eps|^{\frac{1}{p}}\leq C \eps^{1/p}$ this is the desired estimate for the first integral. The second can be estimated similarly, i.e.
\begin{align*}
 \left|\int_{\partial D} g(x) \int_{-\eps}^\eps \frac{1}{2\eps}S'(-\frac{t}{\eps}) tv(x+tn(x)) \d t \d\sigma(x) \right|
 \leq \eps \|g\|_{L^p(\partial D)} \left( \int_{\Gamma_{\eps}} |v|^{p'} |\nabla \omega^\eps| \d x \right)^{\frac{1}{p'}}
\end{align*}
The last term can be estimated using the trace theorem~\ref{thm:trace}.
\end{proof}

For the sake of completeness, we also state an analog to Lemma~\ref{lem:hoelder_diffuse_volume}.
\begin{lemma}\label{lem:hoelder_diffuse_boundary}
 Let $\partial D$ be of class $C^{2,\nu}$ for some $0<\nu \leq 1$. Moreover, let $g\in C^{1,\nu}(\overline{\Gamma_\eps})$ for some $0<\eps<\eps_0$. Then there exists a constant $C>0$ independent of $\eps$ such that
 \begin{align*}
  |E_B| 
  \leq C\|g\|_{C^{1,\nu}(\overline{\Gamma_\eps})} \eps^{1+\nu}.
 \end{align*}
\end{lemma}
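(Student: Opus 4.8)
The plan is to reduce the boundary error $E_B$ to a volume error $E_V$ exactly as in the derivation preceding Lemma~\ref{lem:lp_diffuse_boundary}, and then to invoke the H\"older volume estimate of Lemma~\ref{lem:hoelder_diffuse_volume}. Setting $h = {\rm div}(g\nabla d_D)$, the divergence theorem together with the integration-by-parts identity established at the beginning of Section~\ref{sec:conv_diffuse_boundary} gives $E_B = \int_\Omega h \,\d\omega^\eps - \int_D h \,\d x = E_V$. Hence it suffices to check that $h$ satisfies the hypotheses of Lemma~\ref{lem:hoelder_diffuse_volume} with a $C^{0,\nu}$-norm controlled by $\|g\|_{C^{1,\nu}(\overline{\Gamma_\eps})}$ uniformly in $\eps$.

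Next I would verify the regularity of $h$. Expanding the divergence, $h = \nabla g\cdot\nabla d_D + g\,\Delta d_D$. Since $\partial D$ is of class $C^{2,\nu}$, the oriented distance function satisfies $d_D\in C^{2,\nu}$ in a neighbourhood of $\partial D$ \cite{DelfourZolesio2011}; in particular $\nabla d_D\in C^{1,\nu}$ and $\Delta d_D\in C^{0,\nu}$ on $\overline{\Gamma_{\eps_0}}$. The assumption $g\in C^{1,\nu}(\overline{\Gamma_\eps})$ gives $g,\nabla g\in C^{0,\nu}$. Applying the product rule for H\"older functions, $\|ab\|_{C^{0,\nu}}\leq C\|a\|_{C^{0,\nu}}\|b\|_{C^{0,\nu}}$ with $C$ not deteriorating on the shrinking set $\overline{\Gamma_\eps}$, I obtain $h\in C^{0,\nu}(\overline{\Gamma_\eps})$ together with $\|h\|_{C^{0,\nu}(\overline{\Gamma_\eps})}\leq C\big(\|\nabla d_D\|_{C^{0,\nu}} + \|\Delta d_D\|_{C^{0,\nu}}\big)\|g\|_{C^{1,\nu}(\overline{\Gamma_\eps})}$.

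Finally, applying Lemma~\ref{lem:hoelder_diffuse_volume} to $h$ yields $|E_B| = |E_V|\leq C\|h\|_{C^{0,\nu}(\overline{\Gamma_\eps})}\,\eps^{1+\nu}$, and combining with the previous bound gives $|E_B|\leq C\|g\|_{C^{1,\nu}(\overline{\Gamma_\eps})}\,\eps^{1+\nu}$, as claimed.

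The only genuinely delicate point is keeping the constant independent of $\eps$. This reduces to bounding $\|\nabla d_D\|_{C^{0,\nu}(\overline{\Gamma_\eps})}$ and $\|\Delta d_D\|_{C^{0,\nu}(\overline{\Gamma_\eps})}$ by their finite, fixed values on $\overline{\Gamma_{\eps_0}}\supset\overline{\Gamma_\eps}$, together with the fact that the multiplicative constant in the H\"older product rule does not grow as $\Gamma_\eps$ shrinks. I would emphasize that this is exactly where the stronger hypothesis $\partial D\in C^{2,\nu}$ (rather than $C^{1,1}$ as in Lemma~\ref{lem:lp_diffuse_boundary}) is used, since the term $\Delta d_D$ in $h$ requires two derivatives of $d_D$.
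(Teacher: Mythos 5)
Your proposal is correct and follows essentially the same route as the paper: reduce $E_B$ to $E_V$ with $h=\operatorname{div}(g\nabla d_D)$, observe that $\partial D\in C^{2,\nu}$ gives $d_D\in C^{2,\nu}$ and hence $g\nabla d_D\in C^{1,\nu}(\overline{\Gamma_\eps})$, and apply Lemma~\ref{lem:hoelder_diffuse_volume}. You merely spell out the H\"older product rule and the $\eps$-uniformity of the constants, which the paper leaves implicit.
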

\begin{proof}
 Since $d_D\in C^{2,\nu}(\overline{\Gamma_\eps})$, we have $ g\nabla d_D\in C^{1,\nu}(\overline{\Gamma_\eps})$. The assertion follows from Lemma~\ref{lem:hoelder_diffuse_volume}.
\end{proof}

\section{Diffuse elliptic problems}\label{sec:elliptic_problems}

In this section we investigate three typical second order elliptic boundary value problems. We start with Robin-type problems, which build the basis for further investigations.
For rather irregular data, we obtain a weak sublinear convergence result in terms of $\eps$. Superlinear convergence is achieved by requiring smooth data.
In Section~\ref{sec:dirichlet} we treat Dirichlet boundary conditions which can be reduced to the analysis of a Robin problem by means of the well-known penalty method.
In Section~\ref{sec:neumann} we consider Neumann boundary conditions and establish well-posedness of the diffuse domain method. Apart from the well-posed the convergence results can be derived as in the Robin case.

\subsection{Robin boundary conditions}\label{sec:robin}

Consider the following second order elliptic equation with Robin-type boundary condition:
Find $u$ such that
\begin{align}
 -{\rm div}(A\nabla u) + cu = f &\quad\text{in } D,\label{eq:elliptic_pde}\\
  n\cdot A\nabla u + b u = g &\quad\text{on } \partial D.\label{eq:bc_robin}
\end{align}

In order to obtain (weak) solutions to \eqref{eq:elliptic_pde}--\eqref{eq:bc_robin}, let us consider the following weak formulation:
Find $u\in W^{1,2}(D)$ such that 
\begin{align}\label{eq:robin_var}
 a(u,v)=\ell(v)\quad\text{for all } v\in W^{1,2}(D),
\end{align}
with bilinear and linear form
\begin{align*}
 a(u,v)  = \int_D A\nabla u \cdot \nabla v + c u v \d x +\int_{\partial D} b uv \d\sigma,\qquad
 \ell(v) = \int_D fv \d x + \int_{\partial D} g v \d\sigma.
\end{align*}
In order to prove well-posedness of the weak form \eqref{eq:robin_var} via the Lax-Milgram lemma we make the following assumptions:
\begin{enumerate}
 \item[(C1)] $0<b_0\leq b \in W^{1,\infty}(\Omega)$, $0\leq c\in L^\infty(\Omega)$.
 \item[(C2)] $A\in L^{\infty}(\Omega)^{n\times n}$ is a symmetric positive definite matrix, i.e. there exists $\kappa>0$ such that for a.e. $x\in\Omega$
 \begin{align*}
  \kappa^{-1} |\xi|^2 \leq \xi\cdot A(x)\xi \leq \kappa |\xi|^2\quad\text{for all } \xi \in \RR^n.
 \end{align*}
\end{enumerate}
\begin{lemma}
 Let (C1)--(C2) hold. Moreover, let $f\in L^2(D)$ and $g\in W^{1,2}(D)$. Then there exists a unique $u\in W^{1,2}(D)$ satisfying \eqref{eq:robin_var}, and there exists $C>0$ such that
 \begin{align*}
  \|u\|_{W^{1,2}(D)}\leq C (\|f\|_{L^2(D)} +\|g\|_{L^2(\partial D)} ).
 \end{align*}
\end{lemma}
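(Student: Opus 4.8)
The plan is to verify the hypotheses of the Lax-Milgram lemma for the bilinear form $a$ and the linear functional $\ell$ on the Hilbert space $H=W^{1,2}(D)$, and then read off the stability bound from the coercivity constant. First I would check that $\ell$ is bounded: the volume term satisfies $|\int_D fv\,\d x|\leq \|f\|_{L^2(D)}\|v\|_{L^2(D)}$ by Cauchy-Schwarz, and the boundary term $|\int_{\partial D} gv\,\d\sigma|\leq \|g\|_{L^2(\partial D)}\|v\|_{L^2(\partial D)}$ is controlled via the (classical, unweighted) trace theorem $\|v\|_{L^2(\partial D)}\leq C\|v\|_{W^{1,2}(D)}$, which applies since $\partial D\in C^{1,1}$. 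Note that although the hypothesis only gives $g\in W^{1,2}(D)$, its trace $g|_{\partial D}$ lies in $L^2(\partial D)$, so $\|g\|_{L^2(\partial D)}\leq C\|g\|_{W^{1,2}(D)}$ and the right-hand side of the claimed estimate is finite.

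Next I would check boundedness (continuity) of $a$. Using (C2) we have $|\int_D A\nabla u\cdot\nabla v\,\d x|\leq \kappa\|\nabla u\|_{L^2(D)}\|\nabla v\|_{L^2(D)}$, the term $|\int_D cuv\,\d x|\leq \|c\|_{L^\infty}\|u\|_{L^2(D)}\|v\|_{L^2(D)}$ uses $c\in L^\infty(\Omega)$ from (C1), and the boundary term $|\int_{\partial D}buv\,\d\sigma|\leq \|b\|_{L^\infty}\|u\|_{L^2(\partial D)}\|v\|_{L^2(\partial D)}$ is again bounded by $C\|u\|_{W^{1,2}(D)}\|v\|_{W^{1,2}(D)}$ via the trace theorem. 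Hence $a(u,v)\leq M\|u\|_{W^{1,2}(D)}\|v\|_{W^{1,2}(D)}$ for some $M>0$.

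The step I expect to be the main obstacle — or at least the only nonroutine one — is coercivity, since $c$ is merely nonnegative and may vanish identically, so the $L^2$-control of $u$ cannot come from the zeroth-order term alone. Here I would exploit the strictly positive Robin coefficient $b\geq b_0>0$ from (C1): using (C2) and nonnegativity of $c$,
\begin{align*}
 a(u,u)\geq \kappa^{-1}\|\nabla u\|_{L^2(D)}^2 + b_0\|u\|_{L^2(\partial D)}^2.
\end{align*}
To upgrade this to control of the full norm $\|u\|_{W^{1,2}(D)}^2=\|\nabla u\|_{L^2(D)}^2+\|u\|_{L^2(D)}^2$, I would invoke a Poincar\'e-Friedrichs-type inequality on $D$ of the form $\|u\|_{L^2(D)}^2\leq C_P(\|\nabla u\|_{L^2(D)}^2+\|u\|_{L^2(\partial D)}^2)$, valid for connected $D$ (this is the unweighted, $\eps=0$ analogue of Corollary~\ref{cor:friedrichs}). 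Combining the two inequalities gives $a(u,u)\geq \mu\|u\|_{W^{1,2}(D)}^2$ with $\mu=\min\{\kappa^{-1},b_0\}/(1+C_P)>0$, which is the required coercivity.

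Finally, the Lax-Milgram lemma yields existence and uniqueness of $u\in W^{1,2}(D)$ solving \eqref{eq:robin_var}, together with the bound $\|u\|_{W^{1,2}(D)}\leq \mu^{-1}\|\ell\|_{(W^{1,2}(D))'}$. Estimating the dual norm of $\ell$ by the boundedness computation above gives $\|\ell\|_{(W^{1,2}(D))'}\leq \|f\|_{L^2(D)}+C\|g\|_{L^2(\partial D)}$, and absorbing constants delivers the stated estimate $\|u\|_{W^{1,2}(D)}\leq C(\|f\|_{L^2(D)}+\|g\|_{L^2(\partial D)})$.
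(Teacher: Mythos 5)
Your proposal is correct and follows essentially the same route the paper intends: the paper states this lemma without printing a proof, but its proof of the analogous diffuse lemma is exactly your argument (Lax--Milgram, continuity via the trace theorem, coercivity from the positivity of $A$ together with a Poincar\'e--Friedrichs inequality, here in its classical unweighted form rather than Corollary~\ref{cor:friedrichs}). Your identification of coercivity as the only nonroutine step, handled through $b\geq b_0>0$ and the boundary-term Friedrichs inequality, matches the paper's reasoning.
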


The diffuse approximation of \eqref{eq:robin_var} is now: Find $u^\eps\in W^{1,2}(D_\eps;\omega^\eps)$ such that 
\begin{align}\label{eq:robin_var_diffuse}
 a^\eps(u^\eps,v)=\ell^\eps(v) \quad\text{for all } v\in W^{1,2}(D_\eps;\omega^\eps),
\end{align}
where the corresponding bilinear and linear form are given by
\begin{align*}
 a^\eps(u^\eps,v)&=\int_\Omega A\nabla u^\eps \cdot \nabla v + c u^\eps v \d\omega^\eps +\int_{\Omega} b u^\eps v  |\nabla \omega^\eps| \d x\\
 \ell^\eps(v) &= \int_\Omega f v  \d\omega^\eps + \int_{\Omega} g v |\nabla\omega^\eps|\d x.
\end{align*}
\begin{lemma}
 Let (C1)--(C2) hold. Moreover, let $f\in L^2(D_\eps;\omega^\eps)$ and $g\in W^{1,2}(D_\eps;\omega^\eps)$. Then there exists a unique $u^\eps\in W^{1,2}(D_\eps;\omega^\eps)$ satisfying \eqref{eq:robin_var_diffuse}, and there exists $C>0$ independent of $\eps$ such that
 \begin{align*}
  \|u^\eps\|_{W^{1,2}(D_\eps;\omega^\eps)}\leq C (\|f\|_{L^2(D_\eps;\omega^\eps)} +\|g\|_{W^{1,2}(D_\eps;\omega^\eps)} ).
 \end{align*}
\end{lemma}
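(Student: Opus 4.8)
The plan is to apply the Lax--Milgram lemma in the Hilbert space $W^{1,2}(D_\eps;\omega^\eps)$, the essential point being that every constant that appears must be traced to be independent of $\eps$. This amounts to verifying three things uniformly in $\eps$: continuity of $a^\eps$, coercivity of $a^\eps$, and boundedness of $\ell^\eps$.

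For continuity of $a^\eps$, I would estimate the volume part $\int_\Omega (A\nabla u\cdot\nabla v + cuv)\,\d\omega^\eps$ directly by the Cauchy--Schwarz inequality in the weighted space, using $\|A\|_\infty\le\kappa$ from (C2) and $\|c\|_\infty$ from (C1) to bound it by $\max(\kappa,\|c\|_\infty)\|u\|_{W^{1,2}(D_\eps;\omega^\eps)}\|v\|_{W^{1,2}(D_\eps;\omega^\eps)}$. The boundary part $\int_\Omega b\,uv\,|\nabla\omega^\eps|\,\d x$ requires more care: after using $b\le\|b\|_\infty$ and Cauchy--Schwarz against the measure $|\nabla\omega^\eps|\,\d x$, I would invoke the trace theorem~\ref{thm:trace} to bound $\int_{D_\eps}|u|^2|\nabla\omega^\eps|\,\d x\le C\|u\|_{W^{1,2}(D_\eps;\omega^\eps)}^2$ (and likewise for $v$) with $C$ independent of $\eps$. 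This gives continuity of $a^\eps$ uniformly in $\eps$. The functional $\ell^\eps$ is handled identically: the $f$-term by weighted Cauchy--Schwarz, the $g$-term by Cauchy--Schwarz against $|\nabla\omega^\eps|\,\d x$ followed by the trace theorem applied to both $g$ and $v$, yielding $|\ell^\eps(v)|\le C(\|f\|_{L^2(D_\eps;\omega^\eps)}+\|g\|_{W^{1,2}(D_\eps;\omega^\eps)})\|v\|_{W^{1,2}(D_\eps;\omega^\eps)}$.

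The coercivity is the crux, and it is where the $\eps$-uniform Poincar\'e--Friedrichs inequality enters decisively. Testing with $v=u$ and using $\xi\cdot A\xi\ge\kappa^{-1}|\xi|^2$, $c\ge 0$ and $b\ge b_0>0$ from (C1)--(C2), I obtain
\[
 a^\eps(u,u)\ge \kappa^{-1}\int_\Omega|\nabla u|^2\,\d\omega^\eps + b_0\int_\Omega |u|^2|\nabla\omega^\eps|\,\d x.
\]
The difficulty is that, since $c$ is only assumed nonnegative, the bilinear form by itself controls only the weighted gradient norm and the boundary term $\int_\Omega|u|^2|\nabla\omega^\eps|\,\d x$, but \emph{not} $\|u\|_{L^2(D_\eps;\omega^\eps)}$. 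This missing zeroth-order control is supplied precisely by Corollary~\ref{cor:friedrichs}, which bounds $\|u\|_{L^2(D_\eps;\omega^\eps)}^2$ by a constant independent of $\eps$ times exactly the right-hand side above. Combining the two lower bounds then gives $a^\eps(u,u)\ge\gamma\|u\|_{W^{1,2}(D_\eps;\omega^\eps)}^2$ with $\gamma>0$ independent of $\eps$.

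With continuity, coercivity, and boundedness of $\ell^\eps$ established with $\eps$-independent constants, the Lax--Milgram lemma yields a unique $u^\eps$, and the stability estimate $\|u^\eps\|_{W^{1,2}(D_\eps;\omega^\eps)}\le\gamma^{-1}\|\ell^\eps\|\le C(\|f\|_{L^2(D_\eps;\omega^\eps)}+\|g\|_{W^{1,2}(D_\eps;\omega^\eps)})$ follows at once. I would note that Corollary~\ref{cor:friedrichs} requires $D_\eps$ to be connected, which holds for $\eps\le\eps_0$ small since $D$ is a domain; this is the only structural hypothesis needed beyond (C1)--(C2). The main obstacle is therefore not the abstract argument but the bookkeeping that isolates the trace constant (Theorem~\ref{thm:trace}) and the Poincar\'e--Friedrichs constant (Corollary~\ref{cor:friedrichs}) as the only nontrivial constants entering --- it is their $\eps$-independence, proved in the preceding section, that makes the resulting bound uniform in $\eps$.
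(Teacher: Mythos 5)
Your proposal is correct and follows essentially the same route as the paper: continuity of $a^\eps$ and $\ell^\eps$ from the boundedness of the coefficients together with the trace theorem~\ref{thm:trace}, coercivity from the positivity of $A$, the lower bound $b\geq b_0>0$ and the $\eps$-uniform Poincar\'e--Friedrichs inequality of Corollary~\ref{cor:friedrichs}, and then the Lax--Milgram lemma. You have merely spelled out the bookkeeping that the paper leaves implicit, and your identification of the trace and Poincar\'e--Friedrichs constants as the only $\eps$-sensitive ingredients is exactly the intended point.
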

\begin{proof}
 Continuity of $a^\eps$ and $\ell^\eps$ with respect to the $W^{1,2}(D_\eps;\omega^\eps)$-topology follows from boundedness of the coefficients and Theorem~\ref{thm:trace}.
 Coercivity of $a^\eps$ on $W^{1,2}(D_\eps;\omega^\eps)$ is a direct consequence of the positivity of $A$ and the Poincar\'e-Friedrichs inequality, see Corollary~\ref{cor:friedrichs}. An application of the Lax-Milgram lemma yields the assertion.
\end{proof}

Denoting by $u$ and $u^\eps$ the corresponding solutions to \eqref{eq:robin_var} and \eqref{eq:robin_var_diffuse}, respectively, we next want to estimate the error $u-u^\eps$ with respect to the $W^{1,2}(D_\eps;\omega^\eps)$-norm which directly implies estimates in the $W^{1,2}(D)$-norm as well.
By regularity of $\partial D$, we can assume that $u:D\to\RR$ is extended to $\Omega$ preserving $W^{1,2}(\Omega)$-regularity.
Hence, the error $u-u^\eps$ satisfies
\begin{align}\label{eq:error}
 a^\eps(u-u^\eps,v)= a^\eps(u,v)-a(u,v)+\ell(v)-\ell^\eps(v)\quad \text{for all }v\in W^{1,2}(D_\eps;\omega^\eps). 
\end{align}

\subsubsection{Sublinear convergence}
In order to obtain a first estimate for the error $u-u^\eps$, we estimate the right-hand side of \eqref{eq:error} by employing the embedding theorem~\ref{thm:embdding}. We recall the definitions $p_\alpha^*=(n+\alpha)p/(n+\alpha-p)$, see \eqref{eq:conjugate_sobolev}, and
\begin{align*}
  \|\ell\|_{W^{1,2}(D_\eps;\omega^\eps)'} = \sup_{v\in W^{1,2}(D_\eps;\omega^\eps)} \frac{\ell(v)}{\|v\|_{W^{1,2}(D_\eps;\omega^\eps)}},
\end{align*}
which is the norm of $\ell$ as an element of the dual space of $W^{1,2}(D_\eps;\omega^\eps)$.

\begin{lemma}\label{lem:rhs1}
 Let $f\in L^2(D_\eps,\omega^\eps)$ and $g\in W^{1,2}(D_\eps;\omega^\eps)$. 
 Then there exists a constant $C$ independent of $\eps$ such that 
 \begin{align*}
  \|\ell^\eps-\ell\|_{W^{1,2}(D_\eps;\omega^\eps)'} \leq C\left(\|f\|_{L^2(D_\eps;\omega^\eps)}+\|g\|_{W^{1,2}(D_\eps;\omega^\eps)} \right) \eps^{\frac{1}{n+\alpha}}. 
 \end{align*}
\end{lemma}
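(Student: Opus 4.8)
The plan is to split the functional difference into a volume contribution and a boundary contribution and to reduce both to the estimate of Theorem~\ref{thm:lp_diffuse_volume} for one carefully chosen exponent. Writing out the two forms gives
$$(\ell^\eps-\ell)(v) = \Big(\int_\Omega fv\,\d\omega^\eps - \int_D fv\,\d x\Big) + \Big(\int_\Omega gv\,|\nabla\omega^\eps|\,\d x - \int_{\partial D} gv\,\d\sigma\Big).$$
The first bracket is exactly $E_V$ for the integrand $h_1=fv$, and the second is the diffuse boundary error $E_B$ for the integrand $gv$; following the reduction at the start of Section~\ref{sec:conv_diffuse_boundary}, the latter equals $E_V$ for $h_2={\rm div}(gv\,\nabla d_D)$. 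Thus it suffices to bound $|E_V(h_1)|$ and $|E_V(h_2)|$ by $C(\|f\|_{L^2(D_\eps;\omega^\eps)}+\|g\|_{W^{1,2}(D_\eps;\omega^\eps)})\|v\|_{W^{1,2}(D_\eps;\omega^\eps)}\,\eps^{1/(n+\alpha)}$, then divide by $\|v\|_{W^{1,2}(D_\eps;\omega^\eps)}$ and take the supremum over $v$.

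The crucial choice is $p=(n+\alpha)/(n+\alpha-1)$, for which Theorem~\ref{thm:lp_diffuse_volume} delivers $|E_V(h)|\le C\eps^{1-1/p}\|h\|_{L^p(\Gamma_\eps;\omega^\eps)} = C\eps^{1/(n+\alpha)}\|h\|_{L^p(\Gamma_\eps;\omega^\eps)}$, matching the claimed power exactly. It then remains to control the two weighted $L^p$-norms. For $h_1=fv$ I would apply the weighted H\"older inequality with exponents $2$ and $2_\alpha^*=2(n+\alpha)/(n+\alpha-2)$, noting $\tfrac12+\tfrac{1}{2_\alpha^*}=1-\tfrac{1}{n+\alpha}=\tfrac1p$, to get $\|fv\|_{L^p(\Gamma_\eps;\omega^\eps)}\le\|f\|_{L^2(D_\eps;\omega^\eps)}\|v\|_{L^{2_\alpha^*}(D_\eps;\omega^\eps)}$, and then invoke the $\eps$-uniform embedding $W^{1,2}(D_\eps;\omega^\eps)\hookrightarrow L^{2_\alpha^*}(D_\eps;\omega^\eps)$ from Theorem~\ref{thm:embdding} to replace the last factor by $C\|v\|_{W^{1,2}(D_\eps;\omega^\eps)}$.

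For the boundary term I use that $\partial D\in C^{1,1}$ makes $d_D$ of class $C^{1,1}$ near $\partial D$, so $\nabla d_D\in L^\infty$ and $\Delta d_D\in L^\infty$; the product rule gives $h_2 = (g\nabla v + v\nabla g)\cdot\nabla d_D + gv\,\Delta d_D$ and hence the pointwise bound $|h_2|\le C(|g|\,|\nabla v| + |v|\,|\nabla g| + |g|\,|v|)$. Each of the three products is estimated in $L^p(\Gamma_\eps;\omega^\eps)$ by the same recipe: pair $g\in L^{2_\alpha^*}$ with $\nabla v\in L^2$, pair $v\in L^{2_\alpha^*}$ with $\nabla g\in L^2$, and treat $gv$ using $g,v\in L^{2_\alpha^*}$ together with the finite weighted measure of $\Gamma_\eps$. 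In every case the exponents combine to $1/p$, and the embedding (with $\eps$-independent constant) yields $\|h_2\|_{L^p(\Gamma_\eps;\omega^\eps)}\le C\|g\|_{W^{1,2}(D_\eps;\omega^\eps)}\|v\|_{W^{1,2}(D_\eps;\omega^\eps)}$.

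I expect the main obstacle to be bookkeeping of the H\"older exponents rather than any deep difficulty: the rate $\eps^{1/(n+\alpha)}$ is produced precisely by the loss of $\alpha$ ``dimensions'' in the weighted conjugate $2_\alpha^*$, so the balancing is tight and presupposes $2<n+\alpha$ (if $n+\alpha\le 2$ then $2_\alpha^*=\infty$ and one selects a large finite surrogate exponent, which only improves the rate). A second point worth flagging is that one cannot simply cite Lemma~\ref{lem:lp_diffuse_boundary}, since it is stated for a single factor, whereas here the boundary integrand is the \emph{product} $gv$ with $v$ merely in $W^{1,2}$; this is exactly why the weighted embedding of Theorem~\ref{thm:embdding}, rather than an $L^\infty$ bound on $v$, is indispensable.
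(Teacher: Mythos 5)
Your proposal is correct and follows essentially the same route as the paper: the exponent $p=(n+\alpha)/(n+\alpha-1)$ you single out is exactly the paper's $q=2\cdot 2_\alpha^*/(2+2_\alpha^*)$, and both arguments combine H\"older's inequality, the $\eps$-uniform embedding $W^{1,2}(D_\eps;\omega^\eps)\hookrightarrow L^{2_\alpha^*}(D_\eps;\omega^\eps)$ of Theorem~\ref{thm:embdding}, and Theorem~\ref{thm:lp_diffuse_volume} to get the rate $\eps^{1/2-1/2_\alpha^*}=\eps^{1/(n+\alpha)}$. The one place you diverge is unnecessary: Lemma~\ref{lem:lp_diffuse_boundary} applies to any function in $W^{1,q}(D_\eps;\omega^\eps)$, so the paper simply invokes it with the product $gv$ (having first checked $gv\in W^{1,q}$) rather than re-expanding ${\rm div}(gv\,\nabla d_D)$ by hand as you do — the computation is the same either way.
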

\begin{proof}
 Let $v\in W^{1,2}(D_\eps;\omega^\eps)$. Due to the weighted Sobolev embedding \eqref{eq:weighted_sobolev} we have $v \in L^p(D_\eps,\omega^\eps)$ for $p = 2_\alpha^*$. Hence, $fv \in L^q(D_\eps,\omega^\eps)$ with $q=2p/(2+p)$ due to H\"older's inequality.
 Similarly, since $\nabla(gv)=g\nabla v + \nabla g v \in L^q(D_\eps,\omega^\eps)$ with $q$ as before, we have that $gv \in W^{1,q}(D_\eps,\omega^\eps)$.
 Using Theorem~\ref{thm:lp_diffuse_volume} and Lemma~\ref{lem:lp_diffuse_boundary} we obtain
 \begin{align*}
  |\ell^\eps(v)-\ell(v)| &\leq C\left(\|f\|_{L^2(D_\eps;\omega^\eps)}+\|g\|_{W^{1,2}(D_\eps;\omega^\eps)} \right)\|v\|_{W^{1,2}(D_\eps;\omega^\eps)} \eps^{1-\frac{1}{q}} 
 \end{align*}
 The assertion follows from $1-\frac{1}{q}=\frac{1}{2}-\frac{1}{2_\alpha^*}=\frac{1}{n+\alpha}$.
\end{proof}
In order to obtain convergence rates, we need some regularity of $u$.
\begin{lemma}\label{lem:rhs2}
 Let $u\in W^{1,p}(D_\eps;\omega^\eps)$ for some $p>2$. Then there exists a constant $C$ independent of $\eps$ such that
 \begin{align*}
  \|a^\eps(u,\cdot)-a(u,\cdot)\|_{W^{1,2}(D_\eps;\omega^\eps)'}  \leq C \|u\|_{ W^{1,p}(D_\eps;\omega^\eps) } \eps^{\frac{1}{2}-\frac{1}{p}}.
 \end{align*}
\end{lemma}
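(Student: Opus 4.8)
The plan is to estimate the functional $a^\eps(u,\cdot)-a(u,\cdot)$ term by term, matching each term of the bilinear form $a^\eps$ against the corresponding term of $a$ and recognizing the difference as a diffuse-integral error of the type analyzed in Section~\ref{sec:conv_diffuse}. Writing out the difference for a test function $v\in W^{1,2}(D_\eps;\omega^\eps)$, I obtain three contributions: the principal part $\int A\nabla u\cdot\nabla v\,\d\omega^\eps-\int_D A\nabla u\cdot\nabla v\,\d x$, the zeroth-order part $\int c\,uv\,\d\omega^\eps-\int_D c\,uv\,\d x$, and the boundary part $\int b\,uv\,|\nabla\omega^\eps|\,\d x-\int_{\partial D} b\,uv\,\d\sigma$. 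The strategy for the two volume terms is to set $h=A\nabla u\cdot\nabla v$ (respectively $h=c\,uv$) and apply Theorem~\ref{thm:lp_diffuse_volume}, while for the boundary term I apply Lemma~\ref{lem:lp_diffuse_boundary} with the product $g$-role played by $b\,uv$.

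The key point is to verify the integrability of these products in a weighted $L^q$-space with $q>1$, so that Theorem~\ref{thm:lp_diffuse_volume} yields a genuine rate $\eps^{1-1/q}$. First I would fix $q$ by $\tfrac1q=\tfrac1p+\tfrac12$, so that $q>1$ precisely because $p>2$, and $1-\tfrac1q=\tfrac12-\tfrac1p$, which is exactly the exponent claimed. For the principal part, boundedness of $A$ gives $|h|\le \kappa^{-1}|\nabla u|\,|\nabla v|$, and H\"older's inequality in $L^q(D_\eps;\omega^\eps)$ with exponents $p$ and $2$ yields $\|h\|_{L^q(\Gamma_\eps;\omega^\eps)}\le \kappa^{-1}\|\nabla u\|_{L^p(\Gamma_\eps;\omega^\eps)}\|\nabla v\|_{L^2(\Gamma_\eps;\omega^\eps)}$. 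The zeroth-order term is handled the same way using $c\in L^\infty$; here I also invoke the embedding Theorem~\ref{thm:embdding} to control $\|u\|_{L^{p}}$ and $\|v\|_{L^{2}}$ by the respective $W^{1,\cdot}$-norms. For the boundary term, since $b\in W^{1,\infty}(\Omega)$ the product $b\,uv$ lies in $W^{1,q}(D_\eps;\omega^\eps)$ with the same $q$ (one has $\nabla(buv)=uv\nabla b + bv\nabla u + bu\nabla v$, and each factor is controlled by H\"older together with the weighted embedding), so Lemma~\ref{lem:lp_diffuse_boundary} applies and produces the rate $\eps^{1-1/q}=\eps^{1/2-1/p}$ as well.

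Collecting the three estimates and dividing by $\|v\|_{W^{1,2}(D_\eps;\omega^\eps)}$ gives the claimed bound on the dual norm, with a constant depending on $\kappa$, $\|c\|_{L^\infty}$, $\|b\|_{W^{1,\infty}}$, and the $\eps$-independent embedding constant from Theorem~\ref{thm:embdding}, but \emph{not} on $\eps$. The main obstacle I anticipate is the boundary term: one must check that the full $W^{1,q}$-norm of the product $b\,uv$ (including its gradient) stays bounded by $\|u\|_{W^{1,p}}\|v\|_{W^{1,2}}$ uniformly in $\eps$, which is where the uniform weighted embedding and the product rule interact most delicately; the cross term $bv\nabla u$ needs $v\in L^{p'_\alpha{}^*}$-type control so that its pairing with $\nabla u\in L^p$ closes in $L^q$, and this is exactly the place where having the embedding constants independent of $\eps$ (Theorem~\ref{thm:embdding}) is indispensable.
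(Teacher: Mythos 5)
Your proposal is correct and follows essentially the same route as the paper: the paper's proof also fixes $q=2p/(2+p)$ (equivalently $\tfrac1q=\tfrac1p+\tfrac12$), notes that $A\nabla u\cdot\nabla v$ and $cuv$ lie in $L^q(D_\eps;\omega^\eps)$ and $buv$ in $W^{1,q}(D_\eps;\omega^\eps)$ by H\"older and the product rule, and then applies Theorem~\ref{thm:lp_diffuse_volume} and Lemma~\ref{lem:lp_diffuse_boundary} to get the rate $\eps^{1-1/q}=\eps^{1/2-1/p}$. The only cosmetic difference is that your appeal to the weighted embedding theorem is not actually needed here, since $\nabla u\in L^p$ and $\nabla v\in L^2$ come directly from the assumed $W^{1,p}$- and $W^{1,2}$-regularity and plain H\"older already closes all the products in $L^q$.
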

\begin{proof}
 Let $v\in W^{1,2}(D_\eps;\omega^\eps)$ be arbitrary. 
 Since $A$ is bounded and $u\in W^{1,p}(D_\eps;\omega^\eps)$, it is $A\nabla u\cdot \nabla v \in L^q(D_\eps;\omega^\eps)$ with $q=2p/(2+p)$. 
 Similarly $cuv \in L^q(D_\eps;\omega^\eps)$. Using $\alpha \in W^{1,\infty}(\Omega)$, we see that $\alpha u v \in W^{1,q}(D_\eps;\omega^\eps)$ with $q$ as before. 
 The result now follows by applying Theorem~\ref{thm:lp_diffuse_volume} and Lemma~\ref{lem:lp_diffuse_boundary} similar as in the proof of Lemma~\ref{lem:rhs1}.
\end{proof}

Having estimated the errors in right-hand side and bilinear form we can proceed to the main approximation results in this section:
\begin{theorem}\label{thm:rate_lp_robin}
 Let (C1)--(C2) hold. Moreover, assume that $u\in W^{1,p}(D)$ with $2\leq p \leq 2_\alpha^*$ is a solution to \eqref{eq:robin_var} and $u^\eps\in W^{1,2}(D_\eps;\omega^\eps)$ is a solution to \eqref{eq:robin_var_diffuse}. Then there exists a constant $C>0$ independent of $\eps$ such that
 \begin{align*}
  \|u-u^\eps\|_{W^{1,2}(D_\eps;\omega^\eps)} \leq C\left(\|u\|_{ W^{1,p}(D_\eps;\omega^\eps)}+\|f\|_{L^2(D_\eps;\omega^\eps)}+\|g\|_{W^{1,2}(D_\eps;\omega^\eps)} \right)\eps^{\frac{1}{2}-\frac{1}{p}}.
 \end{align*}
\end{theorem}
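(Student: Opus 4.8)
The plan is to run a Strang-type argument on the error equation \eqref{eq:error}, exploiting that the diffuse bilinear form $a^\eps$ is coercive with a constant independent of $\eps$. First I would record that the well-posedness lemma for \eqref{eq:robin_var_diffuse} supplies a coercivity constant $\gamma>0$, uniform in $\eps$, for $a^\eps$ on $W^{1,2}(D_\eps;\omega^\eps)$; the $\eps$-independence is the decisive structural ingredient and follows from (C1)--(C2) together with the Poincar\'e--Friedrichs inequality of Corollary~\ref{cor:friedrichs}, whose constant does not depend on $\eps$. Since $u$ is only given on $D$, I would also extend it to $\Omega$ preserving $W^{1,p}$-regularity, which is possible because $\partial D$ is of class $C^{1,1}$; this guarantees $\|u\|_{W^{1,p}(D_\eps;\omega^\eps)}\leq C\|u\|_{W^{1,p}(D)}<\infty$, so that the first term on the right-hand side of the claimed bound is meaningful.

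Next I would test \eqref{eq:error} with $v=u-u^\eps$. Coercivity then gives
\[
 \gamma\,\|u-u^\eps\|_{W^{1,2}(D_\eps;\omega^\eps)}^2 \leq a^\eps(u-u^\eps,u-u^\eps) = \big(a^\eps(u,\cdot)-a(u,\cdot)\big)(u-u^\eps) + \big(\ell-\ell^\eps\big)(u-u^\eps).
\]
The two summands on the right are precisely the consistency errors controlled by Lemma~\ref{lem:rhs2} and Lemma~\ref{lem:rhs1}. Applying those two lemmata bounds the right-hand side by
\[
 C\Big(\|u\|_{W^{1,p}(D_\eps;\omega^\eps)}\,\eps^{\frac12-\frac1p} + \big(\|f\|_{L^2(D_\eps;\omega^\eps)}+\|g\|_{W^{1,2}(D_\eps;\omega^\eps)}\big)\,\eps^{\frac1{n+\alpha}}\Big)\,\|u-u^\eps\|_{W^{1,2}(D_\eps;\omega^\eps)},
\]
with $C$ independent of $\eps$.

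Finally I would unify the two exponents. Because $p\leq 2_\alpha^*$ one has $\tfrac12-\tfrac1p\leq \tfrac12-\tfrac1{2_\alpha^*}=\tfrac1{n+\alpha}$, so for $\eps\leq\eps_0\leq 1$ the inequality $\eps^{1/(n+\alpha)}\leq \eps^{1/2-1/p}$ holds and the slower rate $\eps^{1/2-1/p}$ absorbs both terms. Dividing through by $\|u-u^\eps\|_{W^{1,2}(D_\eps;\omega^\eps)}$ and folding $\gamma^{-1}$ into $C$ yields the asserted estimate. The only genuinely delicate point is the $\eps$-uniformity of $\gamma$; once that is in hand, the remainder is a direct combination of the already-established consistency lemmata and the elementary comparison of exponents.
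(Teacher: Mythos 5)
Your proposal is correct and follows essentially the same route as the paper: coercivity of $a^\eps$ (uniform in $\eps$ via Corollary~\ref{cor:friedrichs}) applied to the error equation \eqref{eq:error}, combined with the consistency bounds of Lemma~\ref{lem:rhs1} and Lemma~\ref{lem:rhs2}. The only addition is that you spell out the exponent comparison $\eps^{1/(n+\alpha)}\leq\eps^{1/2-1/p}$ for $p\leq 2_\alpha^*$ and the extension of $u$ to $\Omega$, both of which the paper leaves implicit.
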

\begin{proof}
 Coercivity of $a^\eps$ and \eqref{eq:error} imply
 \begin{align*}
   \|u-u^\eps\|_{W^{1,2}(D_\eps;\omega^\eps)} \leq C (\|a^\eps(u,\cdot)-a(u,\cdot)\|_{W^{1,2}(D_\eps;\omega^\eps)'}  + \|\ell^\eps-\ell\|_{W^{1,2}(D_\eps;\omega^\eps)'}),
 \end{align*}
 and the assertion follows from Lemma~\ref{lem:rhs1} and Lemma~\ref{lem:rhs2}.
\end{proof}

\begin{remark}
 Note that, according to \cite{Groeger1989}, see also \cite{EggerSchlottbom2010}, 
 there always exists a $p>2$ such that $u\in W^{1,p}(D)$, whence $u\in W^{1,p}(D_\eps;\omega^\eps)$ by extension. 
 If $p=2_\alpha^*$, we obtain the best possible rate $O(\eps^{1/(n+\alpha)})$, i.e. $O(\eps^{1/3})$ in two space dimensions and $S$ as in Example~\ref{ex:S} (i).
\end{remark}
\begin{remark}
 (i) Assuming $g=0$ and $f\in L^2(D)$ extended by zero to $\Omega$ an inspection of the proof of Theorem~\ref{thm:lp_diffuse_volume} shows that for each $v\in W^{1,2}(D_\eps;\omega^\eps)$
 \begin{align*}
  \int_{D_\eps} fv \d\omega^\eps - \int_{D} fv \d x \leq C\eps^{\frac{1}{n}}\|v\|_{W^{1,2}(D)}\|f\|_{L^2(D)}
 \end{align*}
 which is due to the embedding $W^{1,2}(D)\hookrightarrow L^{2^*_0}(D)$ and the fact that $v_{\mid D} \in W^{1,2}(D)$. This immediately leads to a stronger result in Lemma~\ref{lem:rhs1} independent of $\omega^\eps$.
 However, for proving Lemma~\ref{lem:rhs2}, we have to estimate the term
 \begin{align*}
  \int_{D_\eps} A\nabla u\cdot\nabla v \d\omega^\eps - \int_{D} A\nabla u\cdot\nabla v \d x.
 \end{align*}
 Here, on the one hand, to preserve regularity of $u$, setting $u=0$ on $D_\eps\setminus D$ is not possible. On the other hand setting $A=0$ on $D_\eps\setminus D$ is not allowed since then \eqref{eq:ddm1} is not well-posed anymore.

 (ii) If $f\in L^\infty(\Omega)$, $g\in W^{1,\infty}(\Omega)$ and $u\in W^{1,\infty}(\Omega)$, then using the techniques from above, we would obtain the bound
 \begin{align*}
  \|u-u^\eps\|_{W^{1,2}(D_\eps;\omega^\eps)} = O(\eps^{\frac{1}{2}})
 \end{align*}
 as $\eps \to 0$ since the test function $v$ is merely $W^{1,2}(D_\eps;\omega^\eps)$.
 For smooth test functions $v$ and $f\in L^p(D_\eps;\omega^\eps)$ and $u, g\in W^{1,p}(D_\eps;\omega^\eps)$, the right-hand side of \eqref{eq:error} is bounded by
 a constant multiple (depending on $f$, $g$ and $u$) of $\eps^{1-1/p} \|v\|_{W^{1,\infty}(D_\eps)}$.
 This estimate, however, does not lead to $W^{1,2}(D_\eps;\omega^\eps)$-estimates for the error anymore. We will return to these type of estimates in the next section. We also mention that an inspection of several proofs above shows that crucial terms drop out if the involved functions are symmetric with respect to $\partial D$ (mirrored along the normal direction). Thus, using symmetric extensions of data and solutions as well as a restriction to a Sobolev space of functions symmetric with respect to $\partial D$ could give higher order rates. However, since this does not correspond to the computational practice and would extremely complicate the numerical solution, this seems not of particular practical relevance and hence we do not pursue this direction further.
\end{remark}

\subsubsection{From linear to quadratic convergence}
In literature there exist very recent formal results for the diffuse domain method that state a rate of convergence for the $L^2$-norm of $O(\eps^2)$ for the Poisson equation with Robin boundary conditions \cite{LervagLowengrub2014}. To give a precise and rigorous statement of such a better rate we need additional regularity of the domain, the data and the solutions.
Furthermore, we resort also to other functions spaces.
For a smooth function $v$ and $1\leq p\leq \infty$ we let $p'=p/(p-1)$ and define 
\begin{align*}
 \|v\|_{\X_p^\eps} = \|a^\eps(v,\cdot)\|_{W^{1,p'}(D_\eps;\omega^\eps)'}=\sup\{a^\eps(v,\phi):\ {\phi \in C^\infty(\overline{D_\eps}), \|\phi\|_{W^{1,p'}(D_\eps;\omega^\eps)}\leq 1} \}.
\end{align*}
We let $\X_p^\eps=\{v\in C^{\infty}(\overline{D_\eps}):\ \|v\|_{\X_p^\eps}<\infty\}$ denote the completion of $C^\infty(\overline{D_\eps})$ with respect to $\|\cdot\|_{\X_p^\eps}$. Then $(\X_p^\eps,\|\cdot\|_{\X_p^\eps})$ is a Banach space.
Due to the Riesz representation theorem, Corollary~\ref{cor:friedrichs}, and assumptions (C1)-(C2) on the coefficients, we see that $\X_2^\eps=W^{1,2}(D_\eps;\omega^\eps)$ with equivalent norms.
Furthermore, due to Theorem~\ref{thm:trace} we easily see that $\|u\|_{\X_p^\eps} \leq C \|u\|_{W^{1,p}(D_\eps;\omega^\eps)}$. 
For the other direction,
we need a solvability result. If for any $\ell \in W^{1,p'}(D_\eps;\omega^\eps)'$ there exists $u\in W^{1,p}(D_\eps;\omega^\eps)$ such that $a^\eps(u,v)=\ell(v)$ for all $v\in W^{1,p'}(D_\eps;\omega^\eps)$ and $\|u\|_{W^{1,p}(D_\eps;\omega^\eps)}\leq C \|\ell\|_{W^{1,p'}(D_\eps;\omega^\eps)'}$ for some constant $C$, then $\|u\|_{W^{1,p}(D_\eps;\omega^\eps)} \leq \|u\|_{\X_p^\eps}$.
Let us emphasize that such a result is not known to us for the case $p\neq 2$; we refer to \cite{EggerSchlottbom2010,Groeger1989} for a corresponding result in the unweighted case.

Let us thus start with an error estimate in $\X_p^\eps$. For simplicity, we will assume smooth data. It should become clear from the proof how to lower these regularity assumptions.

\begin{theorem}\label{thm:rate_sobolev_robin_p}
 Let $\partial D$ be of class $C^{\infty}$, and let $f,g\in C^\infty(\overline{\Omega})$ and let (C1)--(C2) hold. 
 Moreover, let $A\in C^{\infty}(\overline{\Omega})^{3\times 3}$, $c\in C^{\infty}(\overline{\Omega})$, $b\in C^{\infty}(\overline{\Omega})$, and let $u^\eps \in W^{1,2}(D_\eps;\omega^\eps)$ denote the solution to \eqref{eq:robin_var_diffuse}, and let $u\in W^{1,2}(D)$ denote the solution to \eqref{eq:robin_var}.
 Then, for $1\leq p\leq \infty$ there exists a constant $C$ independent of $\eps$ such that
 \begin{align*}
  \|u-u^\eps\|_{\X_p^\eps} \leq C\eps^{1+\frac{1}{p}}.
 \end{align*}
\end{theorem}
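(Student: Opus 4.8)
The plan is to estimate $\|u-u^\eps\|_{\X_p^\eps}$ directly from its definition by testing the error identity \eqref{eq:error} against smooth functions $\phi$ with $\|\phi\|_{W^{1,p'}(D_\eps;\omega^\eps)}\le 1$. Each such $\phi$ is bounded together with its gradient on the finite measure space $(D_\eps,\omega^\eps)$, hence lies in $W^{1,2}(D_\eps;\omega^\eps)$, so \eqref{eq:error} is available and
\begin{align*}
 \|u-u^\eps\|_{\X_p^\eps}=\sup_{\phi}\big(a^\eps(u,\phi)-a(u,\phi)+\ell(\phi)-\ell^\eps(\phi)\big).
\end{align*}
First I would invoke elliptic regularity: since $\partial D$, $A$, $b$, $c$, $f$, $g$ are smooth, the solution $u$ of \eqref{eq:robin_var} is smooth up to $\overline D$ and may be extended smoothly to $\Omega$; consequently $\sigma:=A\nabla u$, $\operatorname{div}\sigma$ and $n=\nabla d_D$ are smooth in a neighbourhood of $\partial D$.

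Next I would split the right-hand side into volume and boundary parts. The reaction and source discrepancies $\int_\Omega cu\phi\,\d\omega^\eps-\int_D cu\phi\,\d x$ and $\int_D f\phi\,\d x-\int_\Omega f\phi\,\d\omega^\eps$ have integrands $cu\phi,f\phi\in W^{1,p'}(D_\eps;\omega^\eps)$ (smooth factors times $\phi$), so Theorem~\ref{thm:sobolev_diffuse_volume} with exponent $p'$ bounds each by $C\eps^{2-1/p'}\|\phi\|_{W^{1,p'}(D_\eps;\omega^\eps)}=C\eps^{1+1/p}\|\phi\|_{W^{1,p'}(D_\eps;\omega^\eps)}$. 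The diffusion discrepancy $\int_\Omega\sigma\cdot\nabla\phi\,\d\omega^\eps-\int_D\sigma\cdot\nabla\phi\,\d x$ is the delicate one, since here the derivative sits on $\phi$ and the naive use of Theorem~\ref{thm:lp_diffuse_volume} would only give $\eps^{1/p}$. I would therefore integrate by parts in both integrals (permissible because $\sigma\omega^\eps\in W^{1,\infty}$ vanishes on $\partial D_\eps$ and $\phi_{\mid D}\in W^{1,p'}(D)$ has a trace on $\partial D$ by Lemma~\ref{lem:inclusion}). Using $\nabla\omega^\eps=-|\nabla\omega^\eps|\,n$ on $\Gamma_\eps$, this rewrites the diffusion discrepancy as $-\big(\int_\Omega\phi\operatorname{div}\sigma\,\d\omega^\eps-\int_D\phi\operatorname{div}\sigma\,\d x\big)$ plus the boundary discrepancy $\int_{\Gamma_\eps}\phi(\sigma\cdot n)|\nabla\omega^\eps|\,\d x-\int_{\partial D}\phi(\sigma\cdot n)\,\d\sigma$. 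The new volume integrand $\phi\operatorname{div}\sigma$ again lies in $W^{1,p'}(D_\eps;\omega^\eps)$, so Theorem~\ref{thm:sobolev_diffuse_volume} yields $O(\eps^{1+1/p})$ once more.

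The crux is the boundary part. Collecting the boundary discrepancy just produced together with the Robin term $\int_\Omega bu\phi|\nabla\omega^\eps|\,\d x-\int_{\partial D}bu\phi\,\d\sigma$ coming from $a^\eps-a$ and the term $-\big(\int_\Omega g\phi|\nabla\omega^\eps|\,\d x-\int_{\partial D}g\phi\,\d\sigma\big)$ coming from $\ell-\ell^\eps$, the entire boundary contribution equals
\begin{align*}
 \int_{\Gamma_\eps}\phi\,w\,|\nabla\omega^\eps|\,\d x-\int_{\partial D}\phi\,w\,\d\sigma,\qquad w:=(A\nabla u)\cdot n+bu-g.
\end{align*}
The weight $w$ is smooth and, by the boundary condition \eqref{eq:bc_robin}, vanishes identically on $\partial D$; hence the sharp boundary integral drops out and only $\int_{\Gamma_\eps}\phi\,w\,|\nabla\omega^\eps|\,\d x$ survives. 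This is exactly the configuration of Theorem~\ref{thm:sobolev_diffuse_boundary2}, with $w$ playing the role of the function vanishing on $\partial D$ and $\phi$ that of the $W^{1,p'}$-factor; choosing $q=\infty$ there gives the bound $C\big(\eps^{2}\|w\|_{W^{2,\infty}(D_\eps;\omega^\eps)}+\eps^{1+1/p}\|w\|_{W^{2,p}(D_\eps;\omega^\eps)}\big)\|\phi\|_{W^{1,p'}(D_\eps;\omega^\eps)}\le C\eps^{1+1/p}\|\phi\|_{W^{1,p'}(D_\eps;\omega^\eps)}$, using $\eps^{2}\le\eps^{1+1/p}$ for $p\ge1$ and $\eps\le\eps_0\le1$. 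Combining the three estimates and taking the supremum over $\phi$ yields $\|u-u^\eps\|_{\X_p^\eps}\le C\eps^{1+1/p}$. The main obstacle is precisely this boundary analysis: one must arrange the boundary terms so that the Robin condition forces the integrand weight to vanish on $\partial D$, and then recognise that Theorem~\ref{thm:sobolev_diffuse_boundary2}—whose very purpose is the improved rate for such vanishing integrands—supplies the extra power of $\eps$; the integration by parts upgrading the diffusion integrand from $L^{p'}$ to $W^{1,p'}$ regularity is the secondary but indispensable technical step.
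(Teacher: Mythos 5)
Your proposal is correct and follows essentially the same route as the paper: elliptic regularity to get $u\in C^\infty(\overline D)$, integration by parts to move the derivative off the test function, Theorem~\ref{thm:sobolev_diffuse_volume} for the volume discrepancies, and the key observation that the collected diffuse boundary term has weight $n\cdot A\nabla u+bu-g$ vanishing on $\partial D$, so Theorem~\ref{thm:sobolev_diffuse_boundary2} delivers the extra power of $\eps$. The only cosmetic differences are that you carry the sharp boundary integrals along and cancel them at the end via $w=0$ on $\partial D$ (the paper cancels them already when integrating by parts, using that $u$ is a classical solution), and you take $q=\infty$ in Theorem~\ref{thm:sobolev_diffuse_boundary2} where the paper takes $q=\max\{p,p'\}$; both choices give the same $O(\eps^{1+1/p})$ bound.
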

\begin{proof}
Due to \cite[Thm.~2.4.2.7, Rem.~2.5.1.2]{Grisvard85} and the smoothness of the data, we have that $u\in W^{k,2}(D)$ for any $k\in \NN$, i.e. $u\in C^{\infty}(\overline{D})$ by embedding, and $u$ is a classical solution to \eqref{eq:elliptic_pde}--\eqref{eq:bc_robin}.
Therefore, integrating by parts on the right hand side of \eqref{eq:error}, we deduce that for any $v\in W^{1,p'}(D_\eps;\omega^\eps)$ the error satisfies
\begin{align*}
 a^\eps(u-u^\eps,v) &= \int_{D} {\rm div}(A\nabla u) v \d x  -\int_{D_\eps}  {\rm div}(A\nabla u) v \d\omega^\eps+\int_{D_\eps} cuv \d\omega^\eps - \int_{D} cuv \d x\\
 & - \int_{D_\eps} A\nabla u\cdot\nabla \omega^\eps v \d x +\int_{D_\eps} b u v  |\nabla \omega^\eps| \d x- \int_{D_\eps} gv |\nabla\omega^\eps| \d x\\
 & +\int_D fv \d x - \int_{D_\eps} fv \d\omega^\eps .
\end{align*}
In view of Theorem~\ref{thm:sobolev_diffuse_volume} we have the estimates
\begin{align*}
 |\int_{D_\eps}  {\rm div}(A\nabla u) v \d\omega^\eps -\int_{D} {\rm div}(A\nabla u) v \d x | &\leq C \eps^{2-\frac{1}{p'}} \|{\rm div}(A\nabla u)\|_{W^{1,\infty}(D_\eps)} \|v\|_{W^{1,p'}(D_\eps;\omega^\eps)},\\
 |\int_{D_\eps} cuv \d\omega^\eps -\int_{D} cuv \d x| &\leq C  \eps^{2-\frac{1}{p'}} \|cu\|_{W^{1,\infty}(D_\eps)} \|v\|_{W^{1,p'}(D_\eps;\omega^\eps)},\\
 |\int_D fv \d x - \int_{D_\eps} fv \d\omega^\eps | &\leq C  \eps^{2-\frac{1}{p'}} \|f\|_{W^{1,\infty}(D_\eps)} \|v\|_{W^{1,p'}(D_\eps;\omega^\eps)}.
\end{align*}
Since $\nabla \omega^\eps = -n |\nabla \omega^\eps|$, the remaining terms can be estimated as follows
\begin{align*}
  \int_{D_\eps} (n\cdot A\nabla u + b u -g) v |\nabla \omega^\eps| \d x \leq C \eps^{1+\frac{1}{p}} \|n\cdot A\nabla u + b u -g\|_{W^{2,\max\{p,p'\}}(D_\eps;\omega^\eps)} \|v\|_{W^{1,p'}(D_\eps;\omega^\eps)}
\end{align*}
where we used $n\cdot A\nabla u + b u -g= 0$ on $\partial D$ and Theorem~\ref{thm:sobolev_diffuse_boundary2}.
Hence, taking the supremum over all $v\in W^{1,p'}(D_\eps;\omega^\eps)$ and observing that $2-\frac{1}{p'}= 1+\frac{1}{p}$ yields the assertion.
\end{proof}

\begin{corollary}\label{cor:rate_sobolev_robin_p2}
 Let the assumptions of Theorem~\ref{thm:rate_sobolev_robin_p} hold true.
 Then, there exists a constant $C$ independent of $\eps$ such that
 \begin{align*}
  \|u-u^\eps\|_{W^{1,2}(D_\eps;\omega^\eps)} \leq C\eps^{\frac{3}{2}}.
 \end{align*}
\end{corollary}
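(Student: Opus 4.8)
The plan is to obtain the result essentially for free from Theorem~\ref{thm:rate_sobolev_robin_p} by specializing to the Hilbert-space case $p=2$ and then translating the resulting $\X_2^\eps$-estimate back into the weighted Sobolev norm. First I would set $p=2$ (hence $p'=2$) in Theorem~\ref{thm:rate_sobolev_robin_p}, which immediately yields
\begin{align*}
 \|u-u^\eps\|_{\X_2^\eps} \leq C \eps^{1+\frac{1}{2}} = C\eps^{3/2},
\end{align*}
with $C$ independent of $\eps$. Thus the entire quantitative content is already encoded in the preceding theorem, and the corollary reduces to a norm-identification.

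It then remains to pass from the $\X_2^\eps$-norm to the $W^{1,2}(D_\eps;\omega^\eps)$-norm. As recorded in the paragraph preceding Theorem~\ref{thm:rate_sobolev_robin_p}, these two norms are equivalent; the only point that requires care is that the equivalence constants must be \emph{independent of $\eps$}, since otherwise the rate could degrade as $\eps\to 0$. I would make the relevant direction explicit as follows: to bound $\|v\|_{W^{1,2}(D_\eps;\omega^\eps)}\leq C\|v\|_{\X_2^\eps}$, use coercivity of $a^\eps$. By Corollary~\ref{cor:friedrichs} together with (C1)--(C2), there is $c_0>0$ independent of $\eps$ with $a^\eps(v,v)\geq c_0\|v\|_{W^{1,2}(D_\eps;\omega^\eps)}^2$, while the definition of $\|\cdot\|_{\X_2^\eps}$ as a supremum over test functions gives $a^\eps(v,v)\leq \|v\|_{\X_2^\eps}\|v\|_{W^{1,2}(D_\eps;\omega^\eps)}$; dividing yields $\|v\|_{W^{1,2}(D_\eps;\omega^\eps)}\leq c_0^{-1}\|v\|_{\X_2^\eps}$. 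Applying this with $v=u-u^\eps$ and combining with the displayed estimate completes the argument.

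The converse bound $\|v\|_{\X_2^\eps}\leq C\|v\|_{W^{1,2}(D_\eps;\omega^\eps)}$ is not actually needed here, but for completeness it follows from the $\eps$-uniform continuity of $a^\eps$ on $W^{1,2}(D_\eps;\omega^\eps)$, which in turn rests on boundedness of the coefficients and the $\eps$-independent trace bound of Theorem~\ref{thm:trace}. I do not expect any serious obstacle: the whole difficulty has already been absorbed into Theorem~\ref{thm:rate_sobolev_robin_p}, and below it into Theorem~\ref{thm:sobolev_diffuse_volume} and Theorem~\ref{thm:sobolev_diffuse_boundary2}. Consequently the corollary is simply the observation that $\X_2^\eps$ coincides with $W^{1,2}(D_\eps;\omega^\eps)$ with $\eps$-independent constants, a fact guaranteed by the Lax--Milgram structure of the coercive bilinear form $a^\eps$ on the weighted space.
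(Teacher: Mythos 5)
Your proposal is correct and follows exactly the paper's route: the paper's proof likewise just sets $p=2$ in Theorem~\ref{thm:rate_sobolev_robin_p} and invokes the norm equivalence $\X_2^\eps=W^{1,2}(D_\eps;\omega^\eps)$. Your extra step of verifying that the equivalence constant is $\eps$-independent via the uniform coercivity from Corollary~\ref{cor:friedrichs} is a useful explicit justification of what the paper leaves implicit, but it is not a different argument.
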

\begin{proof}
Set $p=2$ in Theorem~\ref{thm:rate_sobolev_robin_p}. The assertion follows from $\X_2^\eps=W^{1,2}(D_\eps;\omega^\eps)$ with equivalent norms.
\end{proof}

\begin{remark}\label{rem:rate_sobolev_robin_l2}
 Setting $p=1$ in Theorem~\ref{thm:rate_sobolev_robin_p}, we obtain $\|u-u^\eps\|_{\X_1^{\eps}} \leq C\eps^{2}$.
 Let us assume that the norms of $W^{1,1}(D_\eps;\omega^\eps)$ and $\X_1^\eps$ are equivalent (uniform with respect to $\eps$). Then continuity of the embedding $W^{1,1}(D)\hookrightarrow L^{\frac{n}{n-1}}(D)$ implies the existence of a constant $C$ independent of $\eps$ such that
 \begin{align*}
  \|u-u^\eps\|_{L^{\frac{n}{n-1}}(D)} \leq C\eps^{2}.
 \end{align*}
 In particular for $n=1$, we obtain $\|u-u^\eps\|_{L^{p}(D)}=O(\eps^{2})$ for any $1\leq p\leq \infty$, and for $n=2$ we obtain $\|u-u^\eps\|_{L^{2}(D)}=O(\eps^{2})$, thus we recover the formal results of \cite{LervagLowengrub2014}.
\end{remark}

\subsection{Dirichlet boundary conditions}\label{sec:dirichlet}
In this section we consider the diffuse domain approximation of second order elliptic equations with Dirichlet boundary conditions:
Find $u$ such that
\begin{align}
 -{\rm div}(A\nabla u) + cu = f &\quad\text{in } D,\label{eq:elliptic_pde2}\\
   u = g &\quad\text{on } \partial D.\label{eq:bc_dirichlet}
\end{align}
In order to obtain (weak) solutions to \eqref{eq:elliptic_pde2}--\eqref{eq:bc_dirichlet}, let us consider the following weak formulation:
Find $u\in W^{1,2}(D)$ such that 
\begin{align}
 a(u,v)&=\ell(v)\quad\text{for all } v\in W_0^{1,2}(D)\text { such that } u=g \ \text{ on }\partial D,\label{eq:dirichlet_var}
\end{align}
with bilinear and linear form
\begin{align*}
 a(u,v)  = \int_D A\nabla u \cdot \nabla v + c u v \d x,\qquad \ell(v) = \int_D fv \d x.
\end{align*}
Here $W^{1,2}_0(D)$ is the kernel of the trace operator on $W^{1,2}(D)$.
The weak form \eqref{eq:dirichlet_var} is well-posed under assumptions (C1)--(C2) which is shown by using the Lax-Milgram lemma.
It is well-known that the solution $u$ to \eqref{eq:dirichlet_var} is characterized as the solution of the minimization problem
\begin{align*}
 a(v,v)-\ell(v) \to \min_{v\in W^{1,2}(D)} \text{ such that } v=g \text{ in } W^{1/2,2}(\partial D).
\end{align*}
Using the Lagrange formalism this constrained optimization problem is equivalent to finding a saddle-point $(u,\lambda)\in W^{1,2}(D)\times W^{-1/2,2}(\partial D)$ of the Lagrangian
\begin{align}\label{eq:lagrangian}
 L(v,\mu) = a(v,v) - \ell(v) - \langle \mu, g-v\rangle\qquad \text{with } v\in W^{1,2}(D),\ \mu\in W^{-1/2,2}(\partial D).
\end{align}
Here, $W^{-1/2,2}(\partial D)$ is the topological dual space of the $W^{1,2}(D)$-trace space $W^{1/2,2}(\partial D)$, and $\langle \cdot,\cdot\rangle$ denotes the duality pairing between $W^{-1/2,2}(\partial D)$ and $W^{1/2,2}(\partial D)$.
The variational characterization of the saddle-point problem is the following: Find $(u,\lambda)\in W^{1,2}(D)\times W^{-1/2,2}(\partial D)$ such that
\begin{align}
 a(u,v) + \langle \lambda, v\rangle &= \ell(v)\qquad \text{for all } v\in W^{1,2}(D), \label{eq:sp_dirichlet1}\\
 \langle \mu, u\rangle &= \langle \mu,g\rangle \qquad \text{for all } \mu\in W^{-1/2,2}(\partial D).\label{eq:sp_dirichlet2}
\end{align}
We have by definition of the norm on $W^{-1/2,2}(\partial D)$ that 
\begin{align*}
  \|\mu\|_{W^{-1/2,2}(\partial D)} = \sup_{v\in W^{1/2,2}(\partial D)\setminus\{0\}}\frac{\langle \mu, v \rangle}{\|v\|_{W^{1/2,2}(\partial D)}}
\end{align*}
which asserts an inf-sup condition for the bilinear form $(\mu,v)\mapsto \langle \mu,v\rangle$.
Well-posedness of the latter saddle-point problem can then be shown by using Brezzi's splitting theorem \cite{Brezzi74}, cf.\@ \cite[Chapter III]{Braess2007}.
Next, let us introduce a penalized version of \eqref{eq:sp_dirichlet1}--\eqref{eq:sp_dirichlet2} which establishes a connection to elliptic problems with Robin boundary condition discussed in Section~\ref{sec:robin}: 
Let $\beta>0$.
Find $(u_\beta,\lambda_\beta)\in W^{1,2}(D)\times W^{-1/2,2}(\partial D)$ such that
\begin{align}
 a(u_\beta,v) + \langle \lambda_\beta, v\rangle &= \ell(v)\qquad \text{for all } v\in W^{1,2}(D), \label{eq:sp_dirichlet1_penalty}\\
 \langle \mu, u_\beta\rangle -\beta \langle \lambda_\beta ,\mu \rangle &= \langle \mu,g\rangle \qquad \text{for all } \mu\in W^{-1/2,2}(\partial D).\label{eq:sp_dirichlet2_penalty}
\end{align}
In slight abuse of notation, $\langle \lambda, \mu \rangle$ denotes the inner product on $W^{-1/2,2}(D)$, and is defined as
$\langle \lambda, \mu \rangle_{W^{-1/2,2}(\partial D)} = \langle J\lambda, J\mu \rangle_{W^{1/2,2}(\partial D)}$.
Here, $J: W^{-1/2,2}(\partial D)\to W^{1/2,2}(\partial D)$ is the Riesz isomorphism and $J\lambda$ is given as the trace of the solution to the Neumann problem
\begin{align*}
 -\Delta w + w = 0 \quad\text{in } D,\qquad \partial_n w = \lambda \quad\text{on } \partial D.
\end{align*}
We have that
\begin{align*}
 \|\mu\|_{W^{-1/2,2}(\partial D)} = \langle \mu, \mu \rangle_{W^{-1/2,2}(\partial D)}^{1/2} 
 = \sup_{v\in W^{1,2}(D)\setminus\{0\}} \frac{ \langle w, v\rangle_{W^{1,2}(D)}} {\|v\|_{W^{1,2}(D)}} = \|w\|_{W^{1,2}(D)}.
\end{align*}
Well-posedness of \eqref{eq:sp_dirichlet1_penalty}--\eqref{eq:sp_dirichlet2_penalty} can be shown with a penalty version of Brezzi's splitting theorem, cf.\@ e.g. \cite{Braess2007}. In particular $(u_\beta,\lambda_\beta)$ is bounded in $W^{1,2}(D)\times W^{-1/2,2}(\partial D)$ independent of $\beta$.

Since $u_\beta$ depends Lipschitz-continuously on $\beta$, the error between the solution to \eqref{eq:sp_dirichlet1}--\eqref{eq:sp_dirichlet2} and \eqref{eq:sp_dirichlet1_penalty}--\eqref{eq:sp_dirichlet2_penalty} is $O(\beta)$; for a proof let us refer to \cite[Ch. III, Thm~4.11, Cor.~4.15]{Braess2007}.
\begin{lemma}\label{lem:penalty}
 Let $(u,\lambda), (u_\beta,\lambda_\beta)\in W^{1,2}(D)\times W^{-1/2,2}(\partial D)$ be solutions to \eqref{eq:sp_dirichlet1}--\eqref{eq:sp_dirichlet2} and \eqref{eq:sp_dirichlet1_penalty}--\eqref{eq:sp_dirichlet2_penalty}, respectively. Then there exists a constant $C$ independent of $\beta$ such that
 \begin{align*}
  \|u-u_\beta\|_{W^{1,2}(D)} + \|\lambda-\lambda_\beta\|_{W^{-1/2,2}(D)} \leq C \beta.
 \end{align*}
\end{lemma}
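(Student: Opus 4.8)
The plan is to reduce everything to the uniform-in-$\beta$ stability of the penalized saddle-point system \eqref{eq:sp_dirichlet1_penalty}--\eqref{eq:sp_dirichlet2_penalty}, which is already available from the penalty version of Brezzi's splitting theorem invoked above. First I would form the error equations. Writing $e_u = u-u_\beta$ and $e_\lambda = \lambda-\lambda_\beta$, subtracting \eqref{eq:sp_dirichlet1_penalty} from \eqref{eq:sp_dirichlet1} gives
\begin{align*}
 a(e_u,v) + \langle e_\lambda, v\rangle = 0 \qquad\text{for all } v\in W^{1,2}(D),
\end{align*}
while subtracting \eqref{eq:sp_dirichlet2_penalty} from \eqref{eq:sp_dirichlet2} yields $\langle \mu, e_u\rangle + \beta\langle \lambda_\beta,\mu\rangle = 0$ for all $\mu\in W^{-1/2,2}(\partial D)$.

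The decisive step is to rewrite the second relation using $\lambda_\beta = \lambda - e_\lambda$, which produces
\begin{align*}
 \langle \mu, e_u\rangle - \beta\langle e_\lambda, \mu\rangle = -\beta\langle \lambda, \mu\rangle \qquad\text{for all } \mu\in W^{-1/2,2}(\partial D).
\end{align*}
Together with the first error equation this shows that $(e_u, e_\lambda)$ solves precisely the penalized saddle-point problem of the form \eqref{eq:sp_dirichlet1_penalty}--\eqref{eq:sp_dirichlet2_penalty}, but now with the right-hand side $(0,\ -\beta\langle \lambda, \cdot\rangle)$ in place of $(\ell,\ \langle\cdot,g\rangle)$. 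Since the penalty version of Brezzi's theorem (cf.\@ \cite[Ch.~III]{Braess2007}) furnishes a stability estimate with a constant independent of $\beta\in(0,\eps_0]$, I may apply it to this system to obtain
\begin{align*}
 \|e_u\|_{W^{1,2}(D)} + \|e_\lambda\|_{W^{-1/2,2}(\partial D)} \leq C\,\|\beta\langle\lambda,\cdot\rangle\|_{W^{-1/2,2}(\partial D)'} \leq C\beta\,\|\lambda\|_{W^{-1/2,2}(\partial D)}.
\end{align*}
Because $\lambda$ is the fixed multiplier solving \eqref{eq:sp_dirichlet1}--\eqref{eq:sp_dirichlet2} and the functional $\mu\mapsto\langle\lambda,\mu\rangle$ has $W^{-1/2,2}(\partial D)'$-norm equal to $\|\lambda\|_{W^{-1/2,2}(\partial D)}$ by Riesz representation, this is exactly the asserted bound.

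The main obstacle is to secure the sharp order $O(\beta)$ rather than the $O(\beta^{1/2})$ that a naive energy argument delivers: testing the error equations with $v=e_u$ and $\mu=e_\lambda$ only gives $a(e_u,e_u)+\beta\|e_\lambda\|^2 = \beta\langle\lambda,e_\lambda\rangle$, and since $a$ is coercive merely on the kernel $W^{1,2}_0(D)$ (recall $c$ may vanish under (C1)), this controls only the seminorm of $e_u$ and to order $\beta^{1/2}$. The point of recognizing the error pair as the solution of a penalized problem with $O(\beta)$ data is precisely that it transfers the entire factor $\beta$ onto the right-hand side and then exploits the $\beta$-robust inf-sup/coercivity bundle underlying the penalized Brezzi estimate, thereby recovering the full linear rate. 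This is the content of \cite[Ch.~III, Thm~4.11, Cor.~4.15]{Braess2007} specialized to the present forms $a$, $b(\mu,v)=\langle\mu,v\rangle$, and the penalty bilinear form $c(\lambda,\mu)=\langle\lambda,\mu\rangle_{W^{-1/2,2}(\partial D)}$.
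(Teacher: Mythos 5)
Your proof is correct, and it follows essentially the same route as the paper: the paper does not write out an argument but simply defers to \cite[Ch.~III, Thm~4.11, Cor.~4.15]{Braess2007}, and what you have written is precisely the standard argument behind that corollary — the error pair $(e_u,e_\lambda)$ solves the penalized saddle-point system with data $(0,-\beta\langle\lambda,\cdot\rangle)$, and the $\beta$-uniform stability estimate (already invoked in the paper for well-posedness of \eqref{eq:sp_dirichlet1_penalty}--\eqref{eq:sp_dirichlet2_penalty}) then yields the full linear rate. Your remark that a naive energy test only gives $O(\beta^{1/2})$ correctly identifies why the uniform Brezzi stability, rather than coercivity alone, is the essential ingredient.
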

%
Using $\mu=v/\beta$ with $v\in W^{1,2}(D)$ in \eqref{eq:sp_dirichlet2_penalty} and adding the resulting equation to \eqref{eq:sp_dirichlet1_penalty} yields the following reduced problem:
Find $u_\beta\in W^{1,2}(D)$ such that
\begin{align*}
 a(u_\beta,v) + \frac{1}{\beta}\int_{\partial D} u_\beta v\d\sigma &= \ell(v)+\frac{1}{\beta}\int_{\partial D} gv\d\sigma\qquad \text{for all } v\in W^{1,2}(D).
\end{align*}
This is a weak form of a Robin-type problem  with boundary condition $n\cdot A\nabla u_\beta + \frac{1}{\beta} u_\beta=\frac{1}{\beta}g$ on $\partial D$.
 This method of relaxation of the Dirichlet boundary condition is widely known as the penalty method \cite{Babuska73}.
Let $u_\beta^\eps$ denote the diffuse approximation to $u_\beta$ as defined in Section~\ref{sec:robin}, i.e. $u_\beta^\eps$ satisfies
\begin{align}\label{eq:dirichlet_var_diffuse}
 \int_\Omega A\nabla u_\beta^\eps \cdot \nabla v + c u_\beta^\eps v \d\omega^\eps + \frac{1}{\beta}\int_{\Omega} u_\beta^\eps v  |\nabla \omega^\eps| \d x
 = \int_\Omega f v  \d\omega^\eps + \frac{1}{\beta}\int_{\Omega} g v |\nabla\omega^\eps|\d x
\end{align}
for all $v\in W^{1,2}(D_\eps;\omega^\eps)$.
Combining the estimates in Lemma~\ref{lem:penalty} and Theorem~\ref{thm:rate_lp_robin}, we have
\begin{align}\label{eq:error_dirichlet}
 \|u- u_\beta^\eps\|_{W^{1,2}(D)} \leq \|u - u_\beta\|_{W^{1,2}(D)} + 2\| u_\beta - u_\beta^\eps\|_{W^{1,2}(D_\eps;\omega^\eps)}\leq C (\beta + \frac{1}{\beta}\eps^{\frac{1}{2}-\frac{1}{p}})
\end{align}
for $p\leq 2_\alpha^*$, and $u\in W^{1,p}(D)$. Choosing $\beta=\eps^\sigma$, $\sigma>0$, yields
\begin{align*}
 \|u- u_\beta^\eps\|_{W^{1,2}(D)} \leq C( \eps^\sigma + \eps^{\frac{1}{2}-\frac{1}{p}-\sigma}).
\end{align*}
Balancing the exponents on the right-hand side, we obtain the optimal choice $\sigma = \frac{1}{4}-\frac{1}{2p}$. The corresponding estimates are then given by the next theorems:

\begin{theorem}\label{thm:rate_lp_dirichlet}
 Let (C1)--(C2) hold. Moreover, assume that $u\in W^{1,p}(D)$ with $2\leq p \leq 2_\alpha^*$ is a solution to \eqref{eq:dirichlet_var} and $u_\beta^\eps\in W^{1,2}(D_\eps;\omega^\eps)$ is a solution to \eqref{eq:dirichlet_var_diffuse}. Then for $\beta=\eps^\sigma$ and $\sigma = \frac{1}{4}-\frac{1}{2p}$ there exists a constant $C>0$ independent of $\eps$ such that
 \begin{align*}
  \|u-u_\beta^\eps\|_{W^{1,2}(D)} \leq C \eps^{\frac{1}{4}-\frac{1}{2p}}.
 \end{align*}
\end{theorem}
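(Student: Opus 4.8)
The plan is to exploit the reduction, already assembled above, of the penalized Dirichlet problem to a Robin problem. The reduced equation for $u_\beta$ is precisely a Robin problem with boundary coefficient $1/\beta$ and boundary data $g/\beta$, and \eqref{eq:dirichlet_var_diffuse} is exactly its diffuse approximation in the sense of Section~\ref{sec:robin}; hence the entire apparatus of that section applies with $b$ replaced by $1/\beta$ and $g$ by $g/\beta$. I would start from the triangle inequality
\[
 \|u-u_\beta^\eps\|_{W^{1,2}(D)} \leq \|u-u_\beta\|_{W^{1,2}(D)} + \|u_\beta-u_\beta^\eps\|_{W^{1,2}(D)},
\]
which separates the $\beta$-penalty error (independent of $\eps$) from the diffuse-domain error (at fixed $\beta$). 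The first summand is immediately controlled by Lemma~\ref{lem:penalty}, giving $\|u-u_\beta\|_{W^{1,2}(D)}\leq C\beta$ with $C$ independent of $\beta$.

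Next I would estimate the diffuse-domain error. By Lemma~\ref{lem:inclusion}(i) the unweighted norm on $D=D_0$ is dominated by the weighted norm on $D_\eps$, so it suffices to bound $\|u_\beta-u_\beta^\eps\|_{W^{1,2}(D_\eps;\omega^\eps)}$ through Theorem~\ref{thm:rate_lp_robin}, applied to the data $(1/\beta,g/\beta)$. Tracing the two sources of $\beta$-dependence in that theorem's right-hand side --- the data term $\|g/\beta\|_{W^{1,2}(D_\eps;\omega^\eps)}$ entering via Lemma~\ref{lem:rhs1}, and the boundary coefficient $b=1/\beta$ entering the bilinear-form defect via Lemma~\ref{lem:rhs2} --- both contribute a clean prefactor $1/\beta$, while the remaining $\|u_\beta\|_{W^{1,p}}$ and $\|f\|_{L^2}$ terms carry no negative power of $\beta$. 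This produces $\|u_\beta-u_\beta^\eps\|_{W^{1,2}(D_\eps;\omega^\eps)}\leq \frac{C}{\beta}\eps^{1/2-1/p}$ and hence \eqref{eq:error_dirichlet}.

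Combining the two bounds gives $\|u-u_\beta^\eps\|_{W^{1,2}(D)}\leq C(\beta+\beta^{-1}\eps^{1/2-1/p})$. Inserting $\beta=\eps^\sigma$ yields the sum $\eps^\sigma+\eps^{1/2-1/p-\sigma}$, and balancing the exponents via $\sigma=1/2-1/p-\sigma$ forces $\sigma=\frac14-\frac1{2p}$ and the claimed rate $\eps^{1/4-1/(2p)}$.

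The main obstacle I anticipate is verifying that the constant $C$ in the diffuse-domain estimate is genuinely independent of $\beta$, even though $b=1/\beta\to\infty$. Two points require care. For coercivity of $a^\eps$, enlarging $b$ only strengthens the boundary contribution $\int_{D_\eps} b|v|^2|\nabla\omega^\eps|\d x$, so by the Poincar\'e--Friedrichs inequality (Corollary~\ref{cor:friedrichs}) the coercivity constant stays bounded below by $\min\{\kappa^{-1},\kappa^{-1}/C_P\}$ uniformly once $\beta\leq\kappa$; hence the Lax--Milgram prefactor does not blow up. The more delicate point is a uniform-in-$\beta$ bound on $\|u_\beta\|_{W^{1,p}(D)}$: from the penalty theory $(u_\beta,\lambda_\beta)$ is bounded in $W^{1,2}(D)\times W^{-1/2,2}(\partial D)$ independently of $\beta$, and the Meyers/Gr\"oger higher-integrability result \cite{Groeger1989,Meyers1963} should upgrade this to a $\beta$-uniform $W^{1,p}(D)$ bound for some $p>2$, the same index as in Theorem~\ref{thm:main1}. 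Only once these two uniformities are secured does the single explicit factor $1/\beta$ survive, making the optimization over $\beta$ legitimate.
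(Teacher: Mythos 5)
Your argument is exactly the paper's: the proof given in Section~\ref{sec:dirichlet} is precisely the triangle-inequality split into the penalty error (Lemma~\ref{lem:penalty}, giving $O(\beta)$) and the diffuse-domain error of the reduced Robin problem with coefficient $1/\beta$ (Theorem~\ref{thm:rate_lp_robin}, giving $\beta^{-1}\eps^{1/2-1/p}$), followed by balancing $\beta=\eps^\sigma$ to get $\sigma=\frac14-\frac1{2p}$. Your additional remarks on the $\beta$-uniformity of the coercivity constant and of the $W^{1,p}$ bound on $u_\beta$ address points the paper leaves implicit, and they are the right ones to worry about.
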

\begin{theorem}\label{thm:rate_sobolev_dirichlet_p}
 Let $\partial D$ be of class $C^{\infty}$, and let $f,g\in C^\infty(\overline{\Omega})$ and let (C1)--(C2) hold. 
 Moreover, let $A\in C^{\infty}(\overline{\Omega})^{n\times n}$, $c\in C^{\infty}(\overline{\Omega})$, and let $u_\beta^\eps \in W^{1,2}(D_\eps;\omega^\eps)$ denote the solution to \eqref{eq:dirichlet_var_diffuse}, and let $u\in W^{1,2}(D)$ denote the solution to \eqref{eq:dirichlet_var}.
  Then, for $\beta=\eps^\sigma$ and $\sigma = \frac{3}{4}$ there exists a constant $C$ independent of $\eps$ such that
  \begin{align*}
    \|u-u_\beta^\eps\|_{W^{1,2}(D)} \leq C\eps^{\frac{3}{4}}.
  \end{align*}
\end{theorem}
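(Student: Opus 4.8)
The plan is to combine the penalty estimate of Lemma~\ref{lem:penalty} with the superlinear Robin estimate of Theorem~\ref{thm:rate_sobolev_robin_p}, applied to the reduced problem with effective coefficient \(b=1/\beta\) and effective boundary datum \(g/\beta\). Writing \(u_\beta\) for the exact solution of the reduced Robin problem, I would first split, exactly as in \eqref{eq:error_dirichlet},
\[
 \|u - u_\beta^\eps\|_{W^{1,2}(D)} \leq \|u - u_\beta\|_{W^{1,2}(D)} + C\,\|u_\beta - u_\beta^\eps\|_{W^{1,2}(D_\eps;\omega^\eps)},
\]
where the constant in front of the second term comes from restricting to \(D\) and using \(\omega^\eps\geq 1/2\) there. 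By Lemma~\ref{lem:penalty} the first term is \(O(\beta)\) with a constant independent of \(\eps\) and \(\beta\), so everything reduces to estimating the diffuse Robin error in terms of both \(\eps\) and \(\beta\).

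For the second term I would re-run the argument of Theorem~\ref{thm:rate_sobolev_robin_p} with \(p=2\), now keeping track of the dependence on \(\beta\). Since the data are smooth, \(u_\beta\in C^\infty(\overline D)\) is a classical solution, and integrating by parts in the error identity \eqref{eq:error} produces the same volume contributions (controlled by Theorem~\ref{thm:sobolev_diffuse_volume} at rate \(\eps^{3/2}\) with \(\beta\)-independent constants, since \(\mathrm{div}(A\nabla u_\beta)\), \(c u_\beta\) and \(f\) are uniformly bounded) together with the boundary-concentrated term \(\int_{\Gamma_\eps} r_\beta\, v\,|\nabla\omega^\eps|\,\d x\), where \(r_\beta = n\cdot A\nabla u_\beta + \tfrac{1}{\beta}(u_\beta - g)\) is the Robin residual. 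Because \(u_\beta\) solves the reduced Robin problem exactly, \(r_\beta = 0\) on \(\partial D\), so Theorem~\ref{thm:sobolev_diffuse_boundary2} applies with \(q=p=2\) and gives
\[
 \Big|\int_{\Gamma_\eps} r_\beta\, v\,|\nabla\omega^\eps|\,\d x\Big| \leq C\,\eps^{3/2}\,\|r_\beta\|_{W^{2,2}(D_\eps;\omega^\eps)}\,\|v\|_{W^{1,2}(D_\eps;\omega^\eps)}.
\]
The bilinear form \(a^\eps\) of the penalized problem is coercive uniformly in \(\beta\le 1\) (the factor \(1/\beta\ge 1\) on the boundary term only enlarges the coercivity constant, cf.\ Corollary~\ref{cor:friedrichs}), so dividing by this constant yields \(\|u_\beta - u_\beta^\eps\|_{W^{1,2}(D_\eps;\omega^\eps)} \leq C\,\eps^{3/2}\big(1 + \|r_\beta\|_{W^{2,2}(D_\eps;\omega^\eps)}\big)\).

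The crux is to show \(\|r_\beta\|_{W^{2,2}(D_\eps;\omega^\eps)} \leq C/\beta\) with \(C\) independent of \(\eps\) and \(\beta\). The term \(n\cdot A\nabla u_\beta\) has \(W^{2,2}\)-norm controlled by \(\|u_\beta\|_{W^{3,2}}\); the delicate term is \(\tfrac1\beta(u_\beta-g)\): on \(\partial D\) the Robin condition gives \(u_\beta - g = -\beta\,n\cdot A\nabla u_\beta = O(\beta)\), but its derivatives are only \(O(1)\), so \(\|u_\beta-g\|_{W^{2,2}(D_\eps)} = O(1)\) and hence \(\tfrac1\beta\|u_\beta-g\|_{W^{2,2}(D_\eps)} = O(1/\beta)\). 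I expect the main obstacle to be making this rigorous uniformly in \(\beta\): one needs higher-order Sobolev bounds for \(u_\beta\) that do not degenerate as \(\beta\to 0\). This should follow from elliptic regularity for the Robin problem once one checks that the effective Neumann datum \(\tfrac1\beta(g-u_\beta)|_{\partial D} = n\cdot A\nabla u_\beta\) stays bounded in the relevant trace norms — equivalently, that \(u_\beta\) converges to the smooth Dirichlet solution \(u\) in \(W^{k,2}(D)\) with \(\beta\)-uniform bounds.

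Assembling the two contributions gives \(\|u - u_\beta^\eps\|_{W^{1,2}(D)} \leq C\big(\beta + \tfrac{1}{\beta}\,\eps^{3/2}\big)\), and choosing \(\beta = \eps^{3/4}\) balances the two terms and yields the claimed rate \(\|u - u_\beta^\eps\|_{W^{1,2}(D)} \leq C\,\eps^{3/4}\). Note that the half-order loss compared with the Robin rate of Corollary~\ref{cor:rate_sobolev_robin_p2} is precisely the price of the \(1/\beta\) blow-up of the residual, which is unavoidable for the penalty formulation.
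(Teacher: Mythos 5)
Your proposal is correct and follows essentially the same route the paper takes (and leaves largely implicit): split via \eqref{eq:error_dirichlet} into the penalty error $O(\beta)$ from Lemma~\ref{lem:penalty} and the diffuse Robin error for the reduced problem with $b=1/\beta$, observe that the smooth-data Robin analysis of Theorem~\ref{thm:rate_sobolev_robin_p}/Corollary~\ref{cor:rate_sobolev_robin_p2} yields $O(\eps^{3/2}/\beta)$ because the Robin residual scales like $1/\beta$, and balance with $\beta=\eps^{3/4}$. Your explicit acknowledgement that one needs $\beta$-uniform higher-order Sobolev bounds on $u_\beta$ to control $\|r_\beta\|_{W^{2,2}(D_\eps;\omega^\eps)}$ is a point the paper glosses over entirely, so you have if anything been more careful than the source.
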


\begin{remark}
 Due to the regularity of $\partial D$, we can extend $u-u_\beta$ to $\RR^n$ such that $\|u-u_\beta\|_{W^{1,2}(\RR^n)}\leq C \|u-u_\beta\|_{W^{1,2}(D)}$ \cite{Adams1975}. In view of \eqref{eq:error_dirichlet} and the following chain of inequalities
 \begin{align*}
  \|u-u_\beta\|_{W^{1,2}(D_\eps;\omega^\eps)}\leq \|u-u_\beta\|_{W^{1,2}(D_\eps)}\leq \|u-u_\beta\|_{W^{1,2}(\RR^n)}\leq C \|u-u_\beta\|_{W^{1,2}(D)}\leq C\beta
 \end{align*}
 the $W^{1,2}(D)$-norm in Theorem~\ref{thm:rate_lp_dirichlet} and Theorem~\ref{thm:rate_sobolev_dirichlet_p} can be replaced by $W^{1,2}(D_\eps;\omega^\eps)$-norm.
 Note, however, that for $v\in W^{1,2}(D_\eps;\omega^\eps)$ we have $v_{\mid D}\in W^{1,2}(D)$, but for the extension $\tilde v$ of $v_{\mid D}$ from $D$ to $\RR^n$ in general $\tilde v_{\mid D_\eps} \neq v$.
\end{remark}

\begin{remark}\label{rem:rate_sobolev_dirichlet_p2}
 In order to obtain an analogous statement of Theorem~\ref{thm:rate_sobolev_robin_p} for the Dirichlet case, we would need an analog of Lemma~\ref{lem:penalty} for the $W^{1,p}$-norm. Hence, for illustration let us assume that $\|u-u_\beta\|_{W^{1,p}(D)}\leq C\beta$ for $1\leq p\leq \infty$. Moreover, by regularity of $\partial D$, we can assume stability of the extension of $u$ and $u_\beta$ to $\Omega$, i.e. $\|u-u_\beta\|_{W^{1,p}(\Omega)}\leq C\|u-u_\beta\|_{W^{1,p}(D)}$. Then, we arrive at the estimate
 \begin{align*}
  \|u-u_\beta^\eps\|_{\X_p^\eps} &\leq \|u-u_\beta\|_{\X_p^\eps} + \|u_\beta-u_\beta^\eps\|_{\X_p^\eps}\\
  &\leq C \|u-u_\beta\|_{W^{1,p}(\Omega)} + \|u_\beta-u_\beta^\eps\|_{\X_p^\eps}\\
  &\leq C(\beta + \eps^{1+\frac{1}{p}}/\beta)
  \leq C \eps^{\frac{1}{2}+\frac{1}{2p}}
 \end{align*}
 using $\beta =  \eps^{\frac{1}{2}+\frac{1}{2p}}$ and Theorem~\ref{thm:rate_sobolev_robin_p}.
 Assuming furthermore that the norms of $W^{1,1}(D_\eps;\omega^\eps)$ and $\X_1^\eps$ are equivalent (uniform with respect to $\eps$), and using continuity of the embedding $W^{1,1}(D)\hookrightarrow L^{\frac{n}{n-1}}(D)$ we infer that
 \begin{align*}
  \|u-u^\eps_\beta\|_{L^{\frac{n}{n-1}}(D)} \leq C\eps.
 \end{align*}
 In particular for $n=1$, we obtain $\|u-u^\eps_\beta\|_{L^{p}(D)}=O(\eps)$ for any $1\leq p\leq \infty$, and for $n=2$ we obtain $\|u-u^\eps_\beta\|_{L^{2}(D)}=O(\eps)$.
 The reader should compare this to the results of \cite{FranzGaertnerRoosVoigt2012} where for $n=1$ a rate $O(\eps^{1-\delta})$ for any $\delta>0$ in the $L^\infty$-norm is shown.
 Moreover, in \cite{ReuterHillHarrison2012} an $L^2$-rate $O(\eps)$ for Poisson's equation in three dimensions has been obtained numerically. There, it is also suggested to choose $\beta=\eps$, which complies with our analysis. Let us note however that the diffuse domain method in \cite{ReuterHillHarrison2012} is somewhat different from ours.
\end{remark}

\subsection{Neumann boundary conditions}\label{sec:neumann}
Consider the following second order elliptic equation with Neumann-type boundary condition:
Find $u$ such that
\begin{align}
 -{\rm div}(A\nabla u) + cu = f &\quad\text{in } D,\label{eq:elliptic_pde3}\\
  n\cdot A\nabla u  = g &\quad\text{on } \partial D.\label{eq:bc_neumann}
\end{align}
In order to obtain (weak) solutions to \eqref{eq:elliptic_pde3}--\eqref{eq:bc_neumann}, let us consider the following weak formulation:
Find $u\in W_\diamond^{1,2}(D)=\{v\in W^{1,2}(D): \int_D v dx = 0\}$ such that 
\begin{align}\label{eq:neumann_var}
 a(u,v)=\ell(v)\quad\text{for all } v\in W_\diamond^{1,2}(D),
\end{align}
with bilinear and linear form
\begin{align*}
 a(u,v)  = \int_D A\nabla u \cdot \nabla v + c u v \d x ,\qquad
 \ell(v) = \int_D fv \d x + \int_{\partial D} g v \d\sigma.
\end{align*}
In view of the usual Poincar\'e inequality for $W^{1,2}(D)$, the weak form \eqref{eq:neumann_var} is well-posed under the assumptions (C1)-(C2).
\begin{lemma}
 Let (C1)--(C2) hold. Moreover, let $f\in L^2(D)$ and $g\in W^{1,2}(D)$. Then there exists a unique $u\in W_\diamond^{1,2}(D)$ satisfying \eqref{eq:neumann_var}, and there exists $C>0$ such that
 \begin{align*}
  \|u\|_{W^{1,2}(D)}\leq C (\|f\|_{L^2(D)} +\|g\|_{L^2(\partial D)} ).
 \end{align*}
\end{lemma}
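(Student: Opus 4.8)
The plan is to apply the Lax--Milgram lemma on the Hilbert space $W_\diamond^{1,2}(D)$, so that the work reduces to verifying continuity of $a$ and $\ell$ and coercivity of $a$ on this mean-zero subspace. The structure mirrors the well-posedness argument already given for the Robin problem, but with one essential change: in the Robin case coercivity came for free from the positive boundary term $b\geq b_0>0$, whereas here $b=0$ and $c$ may vanish identically, so coercivity must instead be extracted from the restriction to $W_\diamond^{1,2}(D)$ via the Poincar\'e inequality for functions of vanishing mean.

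First I would check continuity. Using (C2), i.e. $\xi\cdot A(x)\xi\leq \kappa|\xi|^2$, together with $c\in L^\infty(D)$ and the Cauchy--Schwarz inequality, one obtains $|a(u,v)|\leq (\kappa+\|c\|_{L^\infty(D)})\|u\|_{W^{1,2}(D)}\|v\|_{W^{1,2}(D)}$. For $\ell$, the volume term is bounded by $\|f\|_{L^2(D)}\|v\|_{L^2(D)}$, and the boundary term by the trace theorem for the fixed $C^{1,1}$-domain $D$ (cf.\@ Lemma~\ref{thm:trace_unweighted} with $t=0$): $|\int_{\partial D} g v \d\sigma|\leq \|g\|_{L^2(\partial D)}\|v\|_{L^2(\partial D)}\leq C_T\|g\|_{L^2(\partial D)}\|v\|_{W^{1,2}(D)}$, where $g_{\mid\partial D}\in L^2(\partial D)$ because $g\in W^{1,2}(D)$ admits a trace. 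Hence $\|\ell\|_{(W^{1,2}_\diamond(D))'}\leq C(\|f\|_{L^2(D)}+\|g\|_{L^2(\partial D)})$.

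The key step is coercivity. Since $A$ satisfies $\kappa^{-1}|\xi|^2\leq \xi\cdot A\xi$ and $c\geq 0$, for every $v\in W_\diamond^{1,2}(D)$ there holds $a(v,v)\geq \kappa^{-1}\|\nabla v\|_{L^2(D)}^2$. Because $v$ has vanishing mean on the connected domain $D$, the usual Poincar\'e inequality yields $\|v\|_{L^2(D)}\leq C_P\|\nabla v\|_{L^2(D)}$, and therefore $\|v\|_{W^{1,2}(D)}^2\leq (1+C_P^2)\|\nabla v\|_{L^2(D)}^2$. Combining these gives $a(v,v)\geq \frac{\kappa^{-1}}{1+C_P^2}\|v\|_{W^{1,2}(D)}^2$, the required coercivity estimate. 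The Lax--Milgram lemma then provides a unique $u\in W_\diamond^{1,2}(D)$ solving \eqref{eq:neumann_var}, and the a~priori bound $\|u\|_{W^{1,2}(D)}\leq \kappa(1+C_P^2)\,\|\ell\|_{(W^{1,2}_\diamond(D))'}$ combined with the continuity estimate for $\ell$ yields the claimed stability bound.

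I expect no genuine obstacle here; the only point deserving care is that restricting to the mean-zero subspace is exactly what salvages coercivity in the absence of a positive boundary or reaction term. In particular, the compatibility condition usually attached to pure Neumann problems is irrelevant at this stage, since the test space $W_\diamond^{1,2}(D)$ already excludes the constants.
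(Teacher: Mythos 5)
Your proposal is correct and follows exactly the route the paper intends: the paper disposes of this lemma with the one-line remark that well-posedness follows from the usual Poincar\'e inequality on $W^{1,2}_\diamond(D)$ together with Lax--Milgram, which is precisely the argument you spell out (coercivity from the mean-zero Poincar\'e inequality since $b=0$ and $c$ may vanish, continuity of $\ell$ via the trace theorem). Your closing observation that testing only against $W^{1,2}_\diamond(D)$ makes the usual Neumann compatibility condition irrelevant for this variational statement is also accurate.
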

The diffuse approximation of \eqref{eq:neumann_var} is then: Find $u^\eps\in W_\diamond^{1,2}(D_\eps;\omega^\eps)=\{v\in W^{1,2}(D_\eps;\omega^\eps): \int_{D_\eps} v d\omega^\eps = 0\}$ such that 
\begin{align}\label{eq:neumann_var_diffuse}
 a^\eps(u^\eps,v)=\ell^\eps(v) \quad\text{for all } v\in W_\diamond^{1,2}(D_\eps;\omega^\eps),
\end{align}
where the corresponding bilinear and linear form are given by
\begin{align*}
 a^\eps(u^\eps,v)=\int_\Omega A\nabla u^\eps \cdot \nabla v + c u^\eps v \d\omega^\eps ,\qquad
 \ell^\eps(v) = \int_\Omega f v  \d\omega^\eps + \int_{\Omega} g v |\nabla\omega^\eps|\d x.
\end{align*}
\begin{lemma}
 Let (C1)--(C2) hold. Moreover, let $f\in L^2(D_\eps;\omega^\eps)$ and $g\in W^{1,2}(D_\eps;\omega^\eps)$. Then there exists a unique $u^\eps\in W_\diamond^{1,2}(D_\eps;\omega^\eps)$ satisfying \eqref{eq:robin_var_diffuse}, and there exists $C>0$ independent of $\eps$ such that
 \begin{align*}
  \|u^\eps\|_{W^{1,2}(D_\eps;\omega^\eps)}\leq C (\|f\|_{L^2(D_\eps;\omega^\eps)} +\|g\|_{W^{1,2}(D_\eps;\omega^\eps)} ).
 \end{align*}
\end{lemma}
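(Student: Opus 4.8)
The plan is to apply the Lax--Milgram lemma on the Hilbert space $W^{1,2}_\diamond(D_\eps;\omega^\eps)$, following exactly the scheme used for the Robin problem above. Since the functional $v\mapsto \int_{D_\eps} v\,\d\omega^\eps$ is continuous on $W^{1,2}(D_\eps;\omega^\eps)$, its kernel $W^{1,2}_\diamond(D_\eps;\omega^\eps)$ is a closed subspace and hence itself a Hilbert space, on which Lax--Milgram is applicable. The only structural difference from the Robin case is that coercivity of $a^\eps$ can no longer be supplied by a boundary term (there is none), so it must instead be extracted from the gradient term together with the mean-value constraint; the role played by the Poincar\'e--Friedrichs inequality (Corollary~\ref{cor:friedrichs}) in the Robin analysis is here taken over by the weighted Poincar\'e inequality with $\eps$-independent constant (Corollary~\ref{cor:poincare}).

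First I would verify continuity of the two forms. For $a^\eps$ this is immediate from the boundedness of $A$ and $c$ in (C1)--(C2) together with the Cauchy--Schwarz inequality in $L^2(D_\eps;\omega^\eps)$. For $\ell^\eps$ the volume part is bounded by $\|f\|_{L^2(D_\eps;\omega^\eps)}\|v\|_{L^2(D_\eps;\omega^\eps)}$, while the diffuse boundary part is controlled by applying the Cauchy--Schwarz inequality with respect to the measure $|\nabla\omega^\eps|\,\d x$ and then invoking the trace theorem~\ref{thm:trace} with $p=2$ to both resulting factors. This yields
\[
 |\ell^\eps(v)|\le C\big(\|f\|_{L^2(D_\eps;\omega^\eps)}+\|g\|_{W^{1,2}(D_\eps;\omega^\eps)}\big)\,\|v\|_{W^{1,2}(D_\eps;\omega^\eps)}
\]
with $C$ independent of $\eps$.

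The central step is coercivity. Using positive definiteness of $A$ and $c\ge 0$ from (C1)--(C2), one obtains $a^\eps(v,v)\ge \kappa^{-1}\|\nabla v\|_{L^2(D_\eps;\omega^\eps)}^2$. For $v\in W^{1,2}_\diamond(D_\eps;\omega^\eps)$ the weighted mean value $\bar v_{D_\eps}$ vanishes, so Corollary~\ref{cor:poincare} gives $\|v\|_{L^2(D_\eps;\omega^\eps)}\le C\|\nabla v\|_{L^2(D_\eps;\omega^\eps)}$ with $C$ independent of $\eps$. Combining the two bounds upgrades the gradient control to $a^\eps(v,v)\ge c_0\,\|v\|_{W^{1,2}(D_\eps;\omega^\eps)}^2$ with a coercivity constant $c_0>0$ independent of $\eps$. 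The Lax--Milgram lemma then furnishes existence and uniqueness of $u^\eps$, and the stated stability estimate follows from the standard bound $\|u^\eps\|_{W^{1,2}(D_\eps;\omega^\eps)}\le c_0^{-1}\|\ell^\eps\|_{(W^{1,2}_\diamond(D_\eps;\omega^\eps))'}$ together with the continuity estimate for $\ell^\eps$.

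I do not anticipate a genuine obstacle: the entire difficulty has already been absorbed into the $\eps$-independence of the Poincar\'e constant guaranteed by Corollary~\ref{cor:poincare}. The one point requiring slight care is that $\ell^\eps$ is tested only against mean-zero functions, so that no compatibility condition on $f$ and $g$ is needed; coercivity holds on the whole closed subspace $W^{1,2}_\diamond(D_\eps;\omega^\eps)$, and the constants (which lie in the kernel of the gradient) have been excluded by the constraint.
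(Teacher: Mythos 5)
Your proposal is correct and follows essentially the same route as the paper: continuity of $a^\eps$ and $\ell^\eps$ (the boundary term via Theorem~\ref{thm:trace}), coercivity on $W^{1,2}_\diamond(D_\eps;\omega^\eps)$ from the positivity of $A$ together with the $\eps$-uniform weighted Poincar\'e inequality of Corollary~\ref{cor:poincare}, and then the Lax--Milgram lemma. The paper's proof is just a terser version of the same argument, so there is nothing to add.
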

\begin{proof}
 Continuity of $a^\eps$ and $\ell^\eps$ with respect to the $W^{1,2}(D_\eps;\omega^\eps)$-topology is obvious.
 Coercivity of $a^\eps$ on $W_\diamond^{1,2}(D_\eps;\omega^\eps)$ is a direct consequence of the positivity of $A$ and the Poincar\'e inequality, see Corollary~\ref{cor:poincare}. An application of the Lax-Milgram lemma yields the assertion.
\end{proof}

Having established existence of solutions to the Neumann problems, convergence results can now be derived as in the Robin case above when setting $b=0$. We leave this to the reader.
Let us mention that the restriction from $W^{1,2}(D_\eps;\omega^\eps)$ to the space $W_\diamond^{1,2}(D_\eps;\omega^\eps)$ is only necessary if $\inf_{x\in \Omega} c(x)=0$. In this case the algebraic condition $\int_{\Omega} u \d\omega^\eps=0$ has to be treated with care in a numerical implementation. We do not want to go into details here, but let us refer the reader to \cite{BochevLehoucq05}. If otherwise $\inf_{x\in \Omega} c(x)>0$, we could equally well pose \eqref{eq:neumann_var_diffuse} in the space $W^{1,2}(D_\eps;\omega^\eps)$ and the implementational details are similar to those of the Robin case. We thus will not dwell on the Neumann case in our further discussion.

\section{Numerical Results}\label{sec:numerics}

In the following we report the results of numerical tests related to the above investigations used conformal first order finite elements. Our particular interest here is not the efficient solution of realistic problems, but rather to test the sharpness of error estimates in different situations by computational experiments. In order to have an ''exact`` solution $u$ we solve the original problem with sharp interface on a very fine mesh such that the numerical error is negligible. Moreover, in the computation of the diffuse domain solution we make sure that the largest mesh parameter, i.e. $h_{\textnormal{max}}$, is for all computations smaller than $\eps^2$ such that the numerical accuracy does not pollute the experimental order of convergence. 
The error $e^\eps = u - u^\eps$ will always be measured in the relative norms
  \begin{equation}
   \frac{\|u-u^\eps\|_{W^{k,p}(D)}}{\|u\|_{W^{k,p}(D)}}, \nonumber
  \end{equation}
  where $W^{0,p} = L^p$. We provide several log-log plots of errors vs. $\eps$, which shall be comparable to the theoretical orders represented by lines in those plots, see Figure \ref{fig:logA}, \ref{fig:logB}, \ref{fig:logplotD} and \ref{fig:logE}. Since the constants in the estimates cannot be made explicit, we have to fix one value and hence decide to plot the theoretical rates in all log-log plots such that they coincide with the experimental rates for the largest value of $\eps$, see Figure \ref{fig:logA}, \ref{fig:logB}, \ref{fig:logplotD} and \ref{fig:logE}. 

For most simulations (Case A-D below) we work with the domain $D = \{(x_1,x_2): x_1^2 + x_2^2 < 0.5\}$, which obviously satisfies all regularity requirements. The mesh representation
of this domain $D$ consists of $3,336,340$ vertices. The mesh representation of the 
domain $D_\eps$ is simply a scaling of the mesh representation of $D$ with $1 + \eps$.  Finally we present an example with the domain $D = (0,1) \times (0,1)$, i.e. the unit square (Case E), which indicates that the same rates still hold for piecewise smooth domains.  
In all test cases we use the function $S$ from Example~\ref{ex:S} (i).

\subsection{Case A: Robin BC with smooth parameters}\label{subsec:caseA}
In our first simulations, we consider the boundary value problem \eqref{eq:elliptic_pde}-\eqref{eq:bc_robin},
with the smooth parameters
$$ A(x_1,x_2) = c(x_1,x_2) = 1, \quad f(x_1,x_2) = 10\sin(\pi x_1) - 5x_2^2, \quad g(x_1,x_2) = 0, \quad \alpha(x_1,x_2) = 1. $$
From Theorem \ref{thm:rate_sobolev_robin_p}, Corollary \ref{cor:rate_sobolev_robin_p2} and Remark \ref{rem:rate_sobolev_robin_l2},
we expect the error $e^\eps$ to converge with a rate $O(\eps^2)$ in $W^{1,1}(D)$ and $L^2(D)$ and a rate of $O(\eps^{3/2})$ in $W^{1,2}(D)$. Furthermore we expect rates of order  $O(\eps)$ in  $W^{1,\infty}(D)$. We mention that except the rate in  $W^{1,2}(D)$ these expectations rely on assumptions we cannot verify rigorously. 
From Table \ref{tab:epsA} and the log-log plot of Figure \ref{fig:logA} we observe that the numerical results reproduce these rates
very accurately, indicating the sharpness of our estimates and the validity of the assumptions. 
In Figure \ref{fig:comp_caseA}, the solutions $u$ and $u^\eps|_D$ for the Robin boundary problem are presented. From a 
visual perspective, these solution are almost identical.

\begin{figure}
\centerline{%
\begin{tabular}{c@{\hspace{1pc}}c}
\includegraphics[scale=.35]{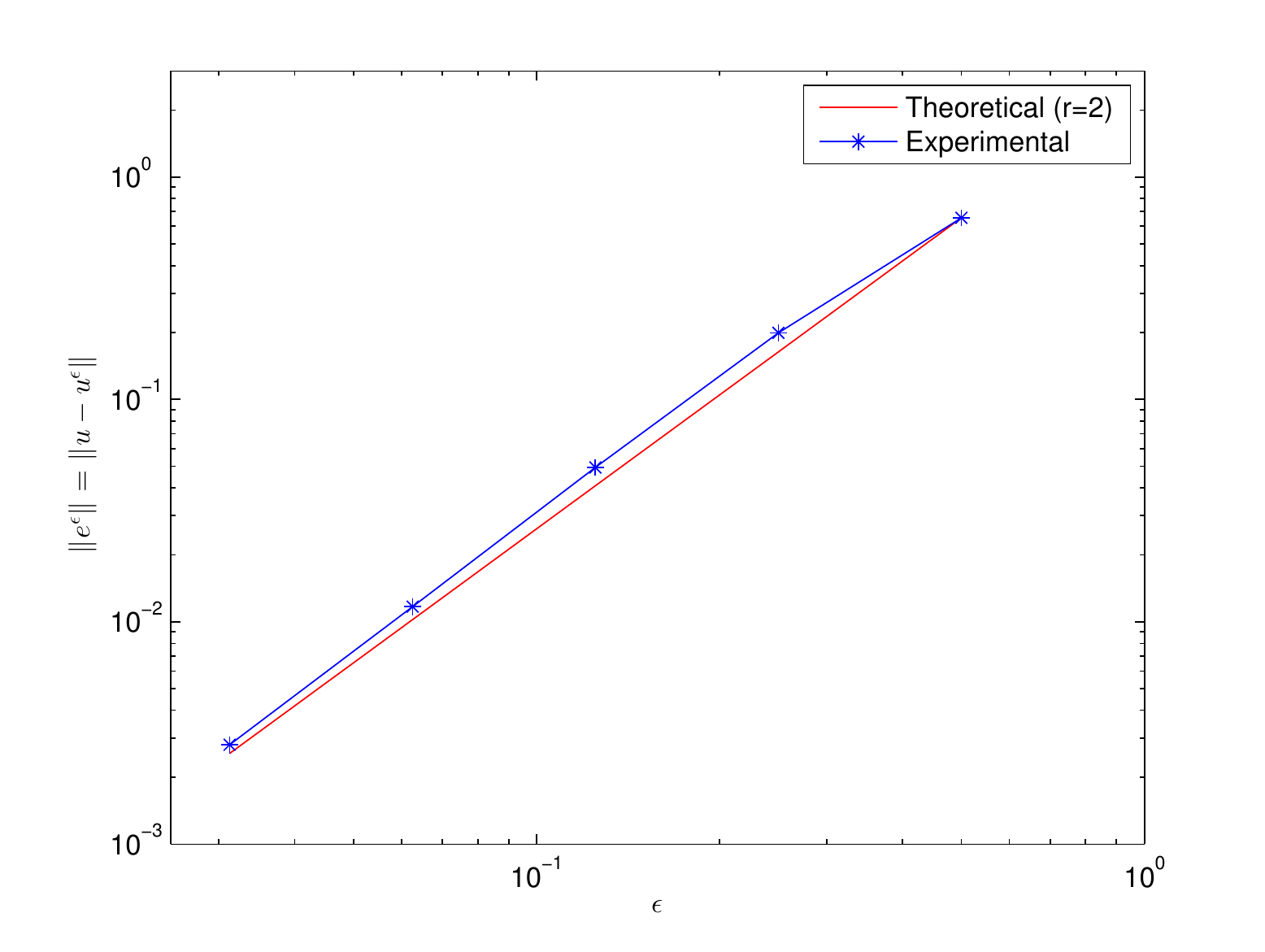} &
\includegraphics[scale=.35]{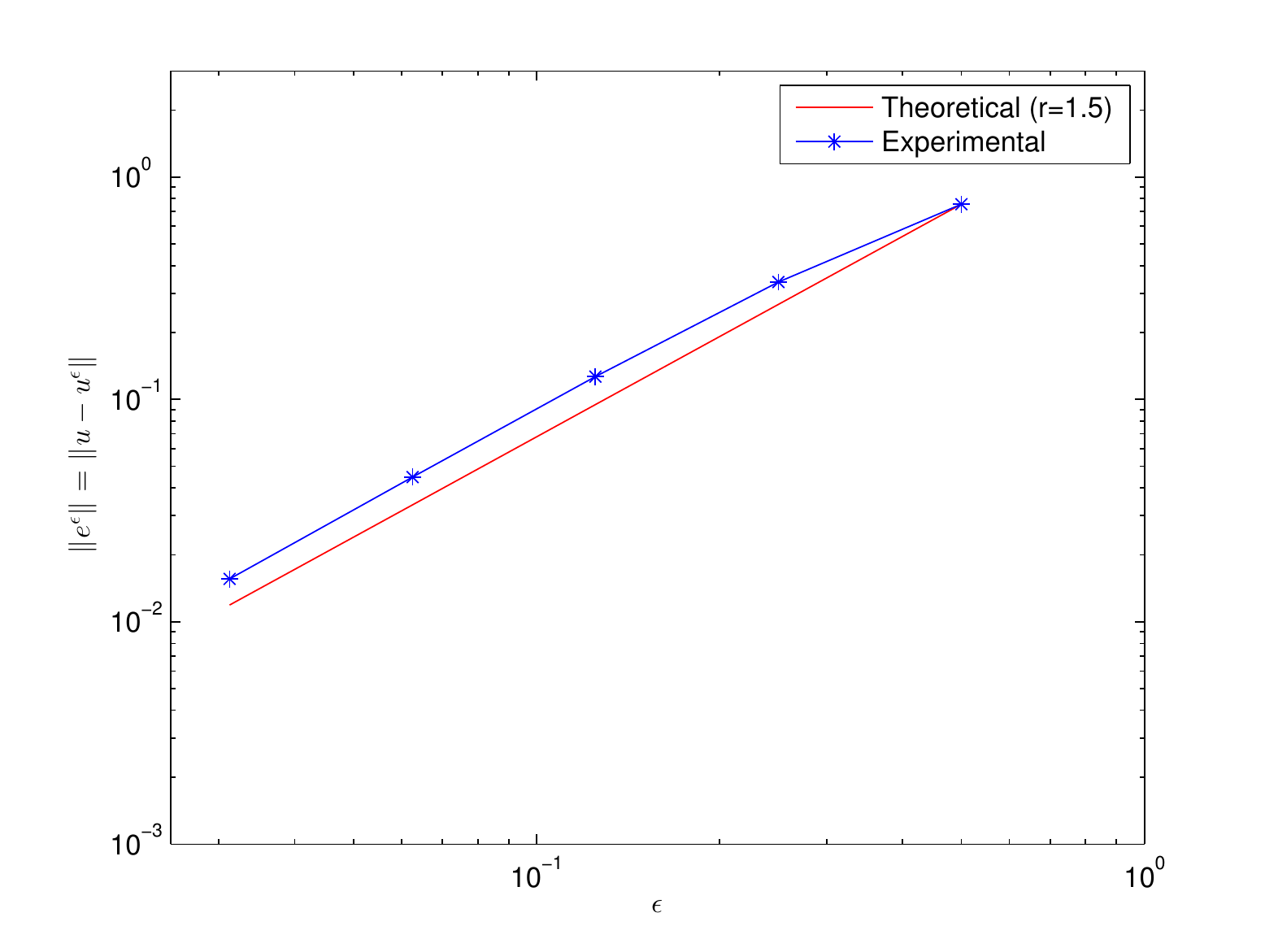} \\
(a)~~ Convergence in $L^2$-norm. & (b)~~ Convergence in $W^{1,2}$-norm. \\
\includegraphics[scale=.35]{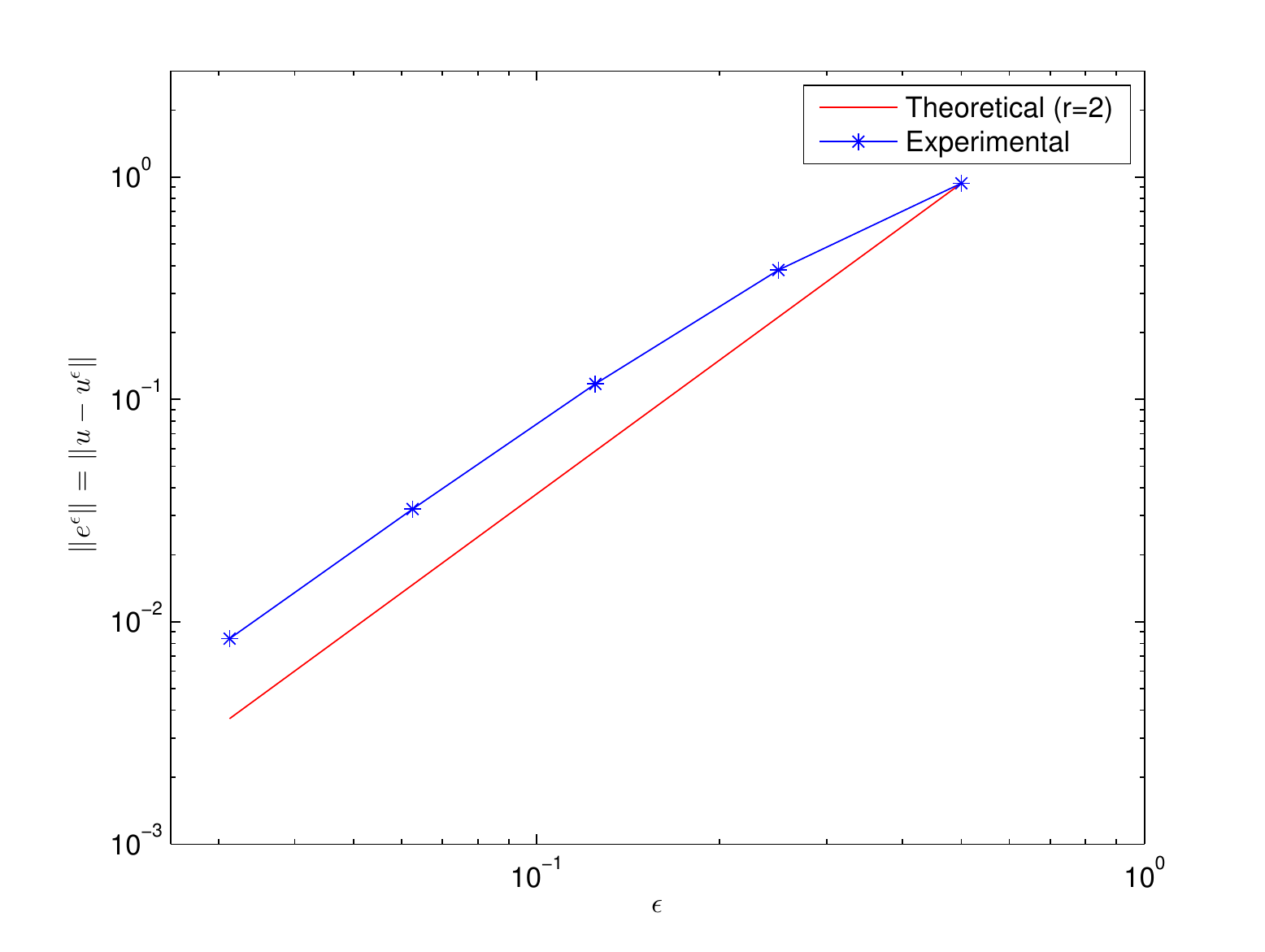} &
\includegraphics[scale=.35]{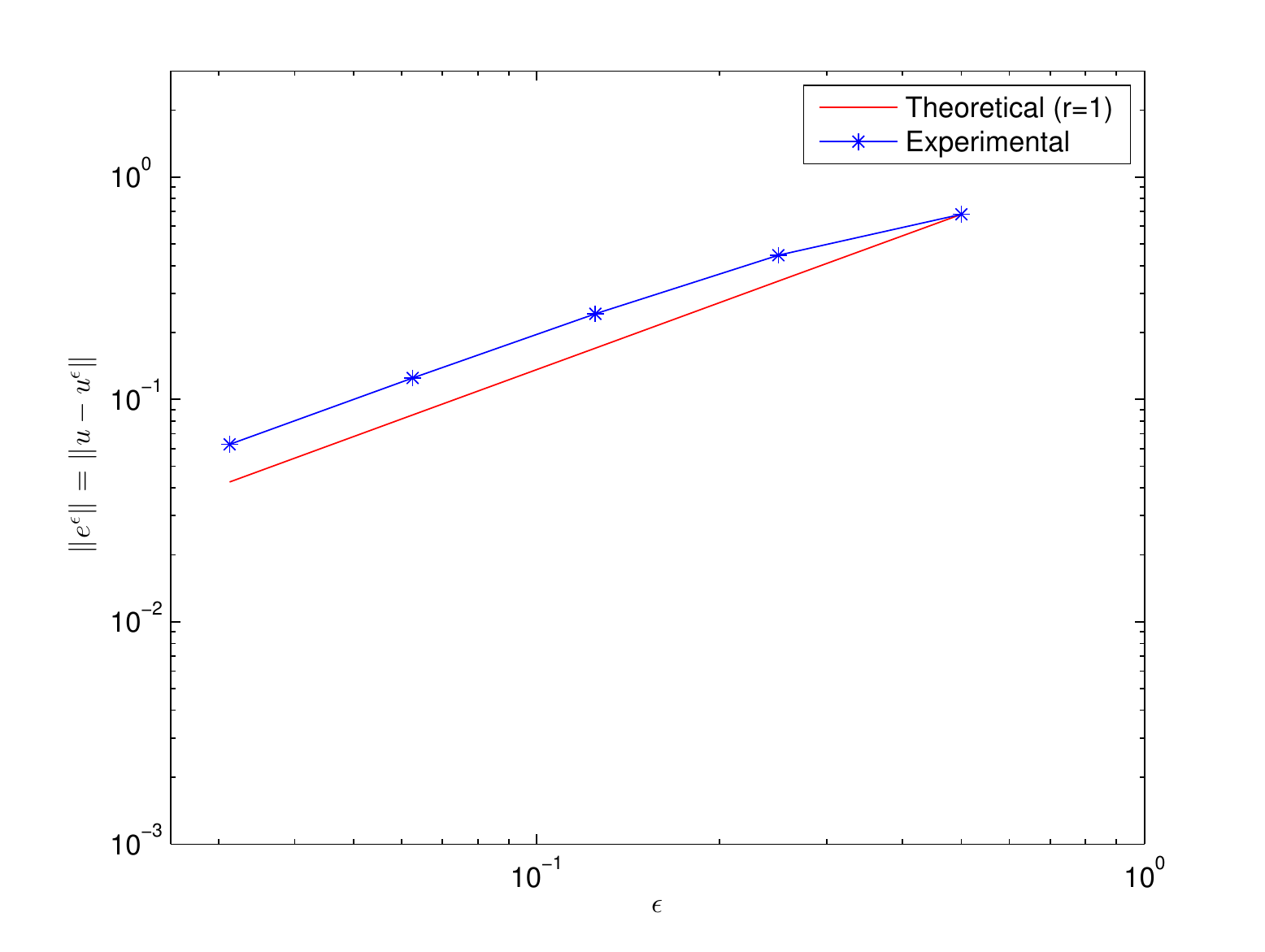} \\
(c)~~ Convergence in $W^{1,1}$-norm. & (d)~~ Convergence in $W^{1,\infty}$-norm. 
\end{tabular}}
\caption{A log-log plot of the convergence rates in Case A. In each subplot we see the actual 
convergence rate (experimental), compared to the theoretical rate of order $O(\eps^r)$. In subplots (a) and (c) $r=2$, in (b) $r=1.5$ and in (d) $r=1$.}\label{fig:logA}
\end{figure}

\begin{table}[ht] \centering
    \begin{tabular}{ | l | l | l | l | l |}
      \hline
      $\eps$  & $\|e^\eps\|_{L^2}$ $ (\log_2(\frac{e^k}{e^{k+1}}))$ & $\|e^\eps\|_{W^{1,2}}$ 
      $ (\log_2(\frac{e^k}{e^{k+1}}))$ & $\|e^\eps\|_{W^{1,1}}$ $(\log_2(\frac{e^k}{e^{k+1}}))$ 
      & $\|e^\eps\|_{W^{1,\infty}}$  \\ \hline \
      $2^{-1}$ & 0.654779        & 0.755160        &  0.938120        & 0.680350        \\ 
      $2^{-2}$ & 0.199128 (1.71) & 0.337471 (1.16) &  0.381236 (1.30) & 0.444653 (0.61) \\ 
      $2^{-3}$ & 0.049532 (2.01) & 0.126688 (1.41) &  0.117386 (1.70) & 0.242804 (0.87) \\ 
      $2^{-4}$ & 0.011767 (2.07) & 0.044757 (1.50) &  0.032170 (1.87) & 0.124745 (0.96) \\ 
      $2^{-5}$ & 0.002818 (2.06) & 0.015637 (1.52) &  0.008464 (1.93) & 0.062845 (0.99) \\ \hline
          \end{tabular}
  \caption{The error $e^\eps = u - u^\eps$ for different norms in Case A.}\label{tab:epsA}
\end{table}

\begin{figure}
\centerline{%
\begin{tabular}{c@{\hspace{1pc}}c}
\includegraphics[scale=.15]{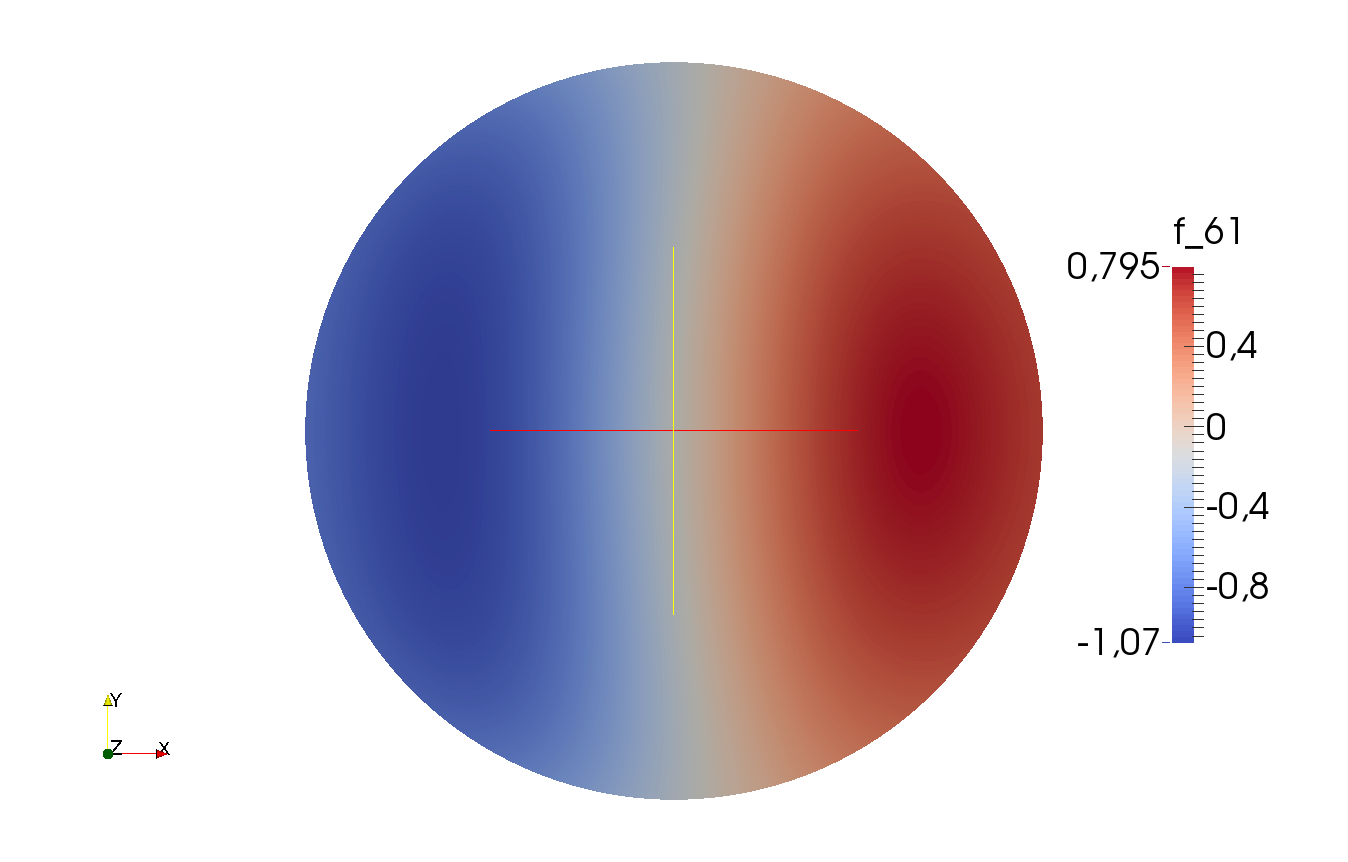} &
\includegraphics[scale=.15]{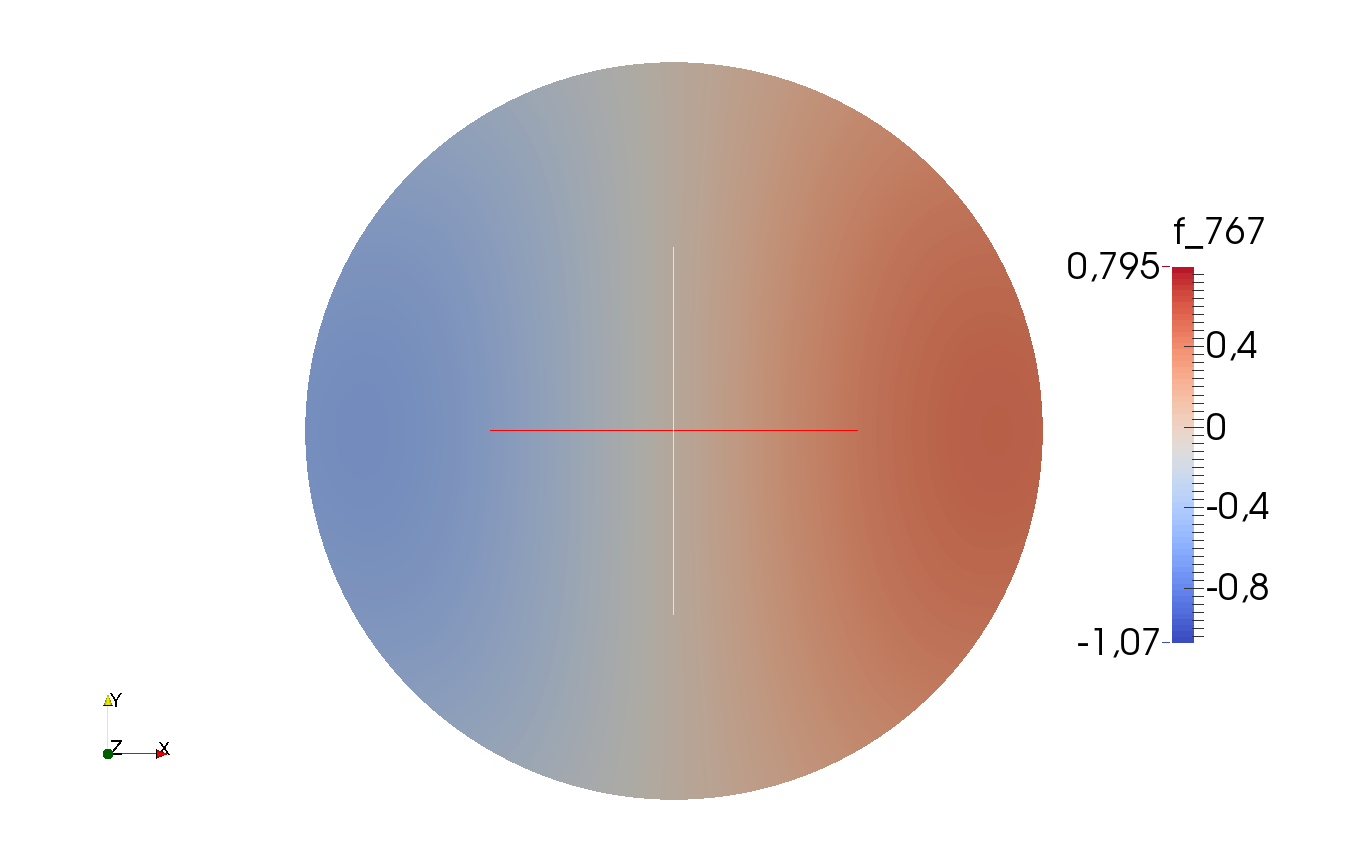} \\
(a)~~ The solution $u$ of \eqref{eq:robin_var_diffuse} for $\eps = 2^{-1}$. & (b)~~ The solution $u^\eps$ of
\eqref{eq:robin_var_diffuse} for $\eps = 2^{-5}$.  
\end{tabular}}
\caption{Comparison of two diffuse domain solutions in Case A. The solution displayed in (b) is visually
identical to the exact solution of \eqref{eq:robin_var}.}\label{fig:comp_caseA}
\end{figure}

\subsection{Case B: Robin BC with discontinuous A matrix}
If the parameter A is no longer smooth, but instead $A \in L^{\infty}(\Omega)^{2\times2}$, the assumptions
for Theorem \ref{thm:rate_sobolev_robin_p} are no longer satisfied. In the second example, we choose a discontinuous
$A\in L^{\infty}(\Omega)^{2\times 2}$ as
\begin{equation*}
 A(x_1,x_2) = \begin{bmatrix} k_1(x_1,x_2) &0 \\ 0& k_2(x_1,x_2), \end{bmatrix}
\end{equation*}
where $k_1, k_2$ are piecewise constant functions with a jump discontinuity close to $\partial D$.
All other parameters are the same as in Case A.

\begin{figure}
 \includegraphics[scale=.5]{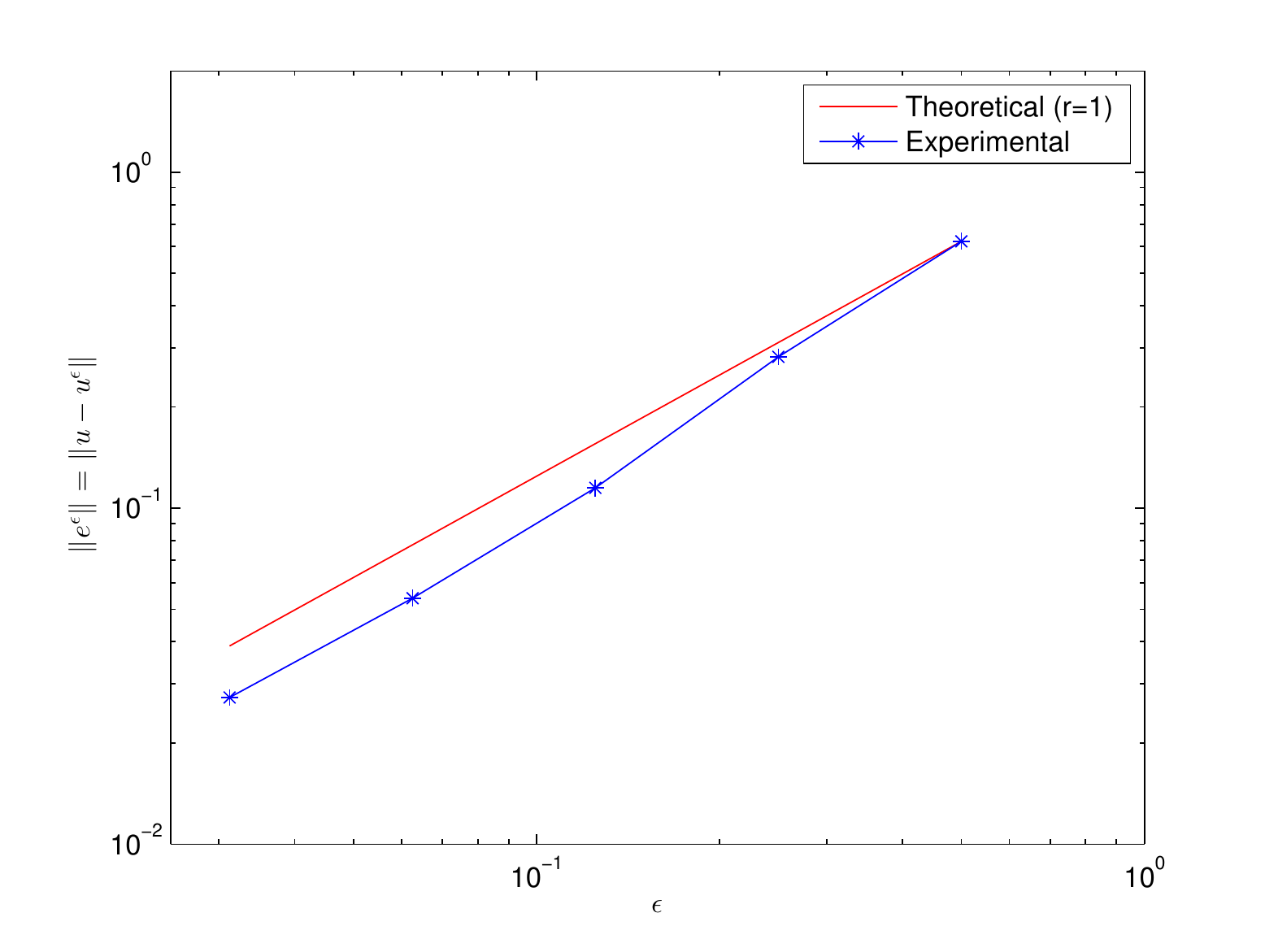}
 \caption{A log-log plot of the $W^{1,2}$-convergence in Case B. We see the actual 
convergence rate (experimental), compared to the theoretical rate of order $O(\eps)$.} \label{fig:logB}
\end{figure}

From Table \ref{tab:epsB} and the log-log plot in  Figure \ref{fig:logB}, we see that the convergence rate of the error is one order worse than in Case A.
In particular, we obtain linear convergence in $W^{1,2}$, which is still better than the theoretical result of order $\eps^{\frac{1}2}$ we obtain in the non-smooth case for $u \in W^{1,\infty}$. However we observe clearly the influence of non-smooth $A$ on the convergence rate when comparing to case A. For the $L^2$-convergence, the rate 
is more inconsistent, as it appears to jump from quadratic to linear when $\eps = 2^{-3}$.
Although the convergence rate in $W^{1,2}$ is only linear in this case, a visual inspection of the solutions shown
in Figure \ref{fig:comp_caseB} reveals that the solution by the diffuse domain method is still almost identical to the 
exact solution for $\eps = 2^{-5}$.

\begin{table}[ht] \centering
    \begin{tabular}{ | l | l | l |}
      \hline
      $\eps$  & $\|e^\eps\|_{L^2}$ $ (\log_2(\frac{e^k}{e^{k+1}}))$ & $\|e^\eps\|_{W^{1,2}}$ 
      $ (\log_2(\frac{e^k}{e^{k+1}}))$  \\ \hline \
      $2^{-1}$ & 0.465577        & 0.622542          \\ 
      $2^{-2}$ & 0.127344 (1.87) & 0.282197 (1.14)  \\ 
      $2^{-3}$ & 0.026850 (2.25) & 0.114941 (1.30)  \\ 
      $2^{-4}$ & 0.014065 (0.93) & 0.053956 (1.09)  \\ 
      $2^{-5}$ & 0.007958 (0.82) & 0.027376 (0.98)  \\ \hline 
          \end{tabular}
  \caption{The error $e^\eps = u - u^\eps$ for different norms in Case B.}\label{tab:epsB}
\end{table}

\begin{figure}
\centerline{%
\begin{tabular}{c@{\hspace{1pc}}c}
\includegraphics[scale=.13]{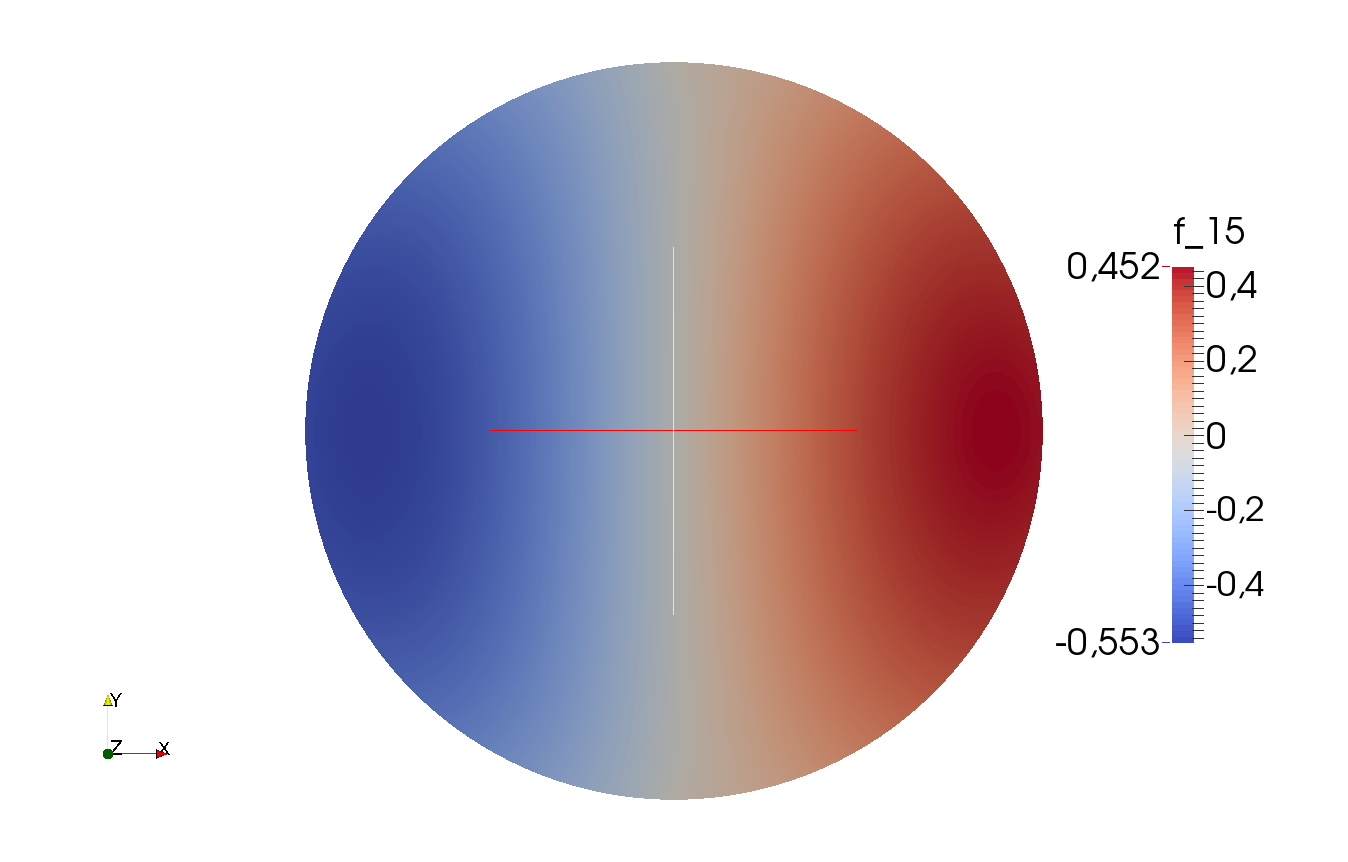} &
\includegraphics[scale=.13]{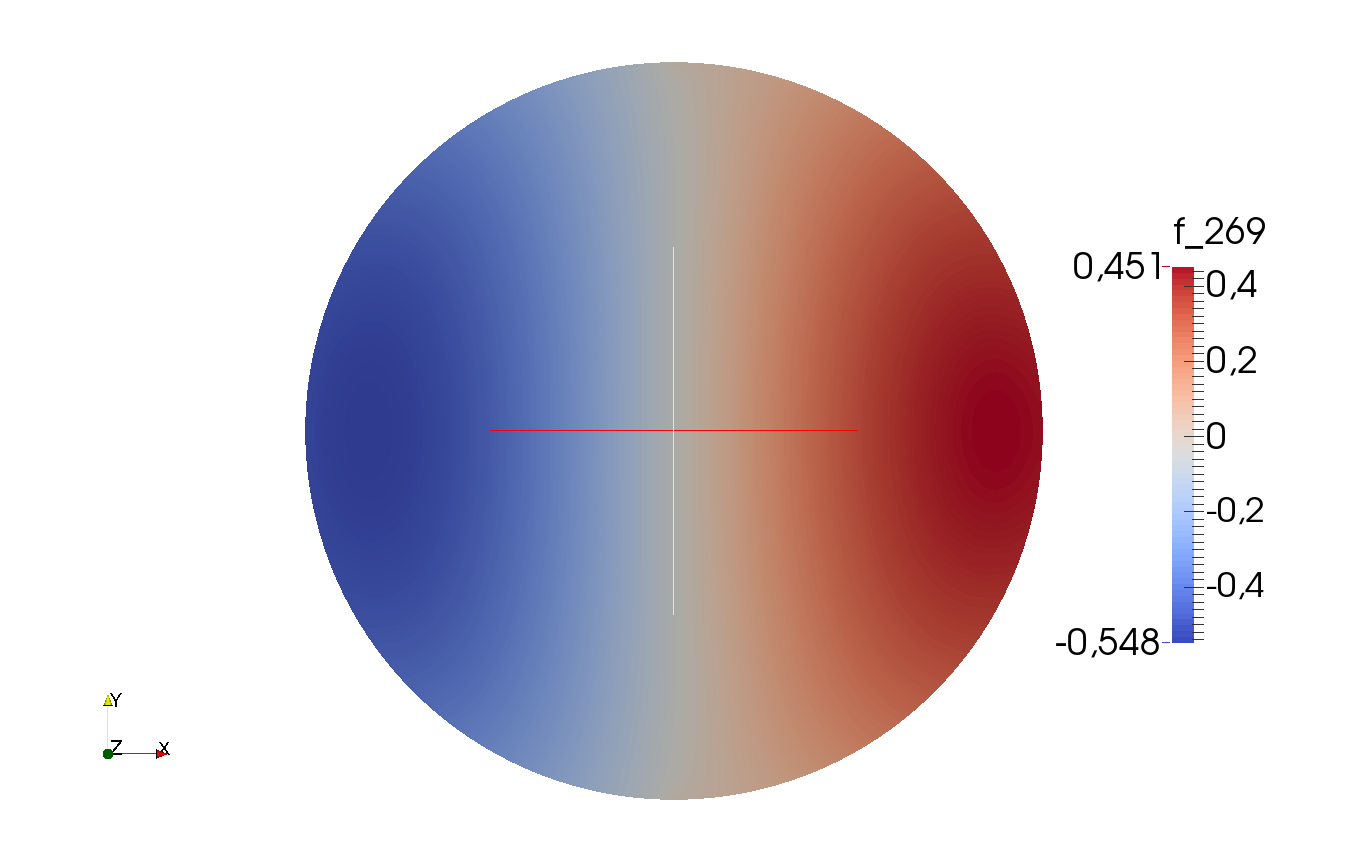} \\
(a)~~ The solution $u$ of \eqref{eq:robin_var}. & (b)~~ The solution $u^\eps$ of
\eqref{eq:robin_var_diffuse}. 
\end{tabular}}
\caption{Comparison of exact solution and diffuse domain solution in Case B. Both solution restricted to $D$.
Here $\eps = 2^{-5}$.}\label{fig:comp_caseB}
\end{figure}

\subsection{Case C: Robin BC with non-smooth parameters}\label{subsec:caseB}

Furthermore, in Case C, we also 
refine the mesh around the discontinuity to increase accuracy.

In Case A, we obtained a convergence rate in $W^{1,2}$ of order $O(\eps^{3/2})$. Here, everything were smooth.
In Case B, when working with a discontinuous matrix A, the convergence rate in $W^{1,2}$ drops to order $O(\eps)$.
If, however, the function $f$ in \eqref{eq:elliptic_pde} is in $L^2$, but not in $L^p$ for $p \gg 2$, 
Theorem \ref{thm:rate_lp_robin} yields $W^{1,2}$-convergence of, in worst case, order $O(\eps^{1/3})$.

To explore this issue numerically, we define 
\begin{align}\label{fcn:lp}
 f(x_1,x_2) = \frac{1}{|x-y|^\mu}, \text{ where } y \in \partial D \text{ is fixed.} 
\end{align}
Thus, we get that $f \in L^p(D)$ whenever $\frac{2}{\mu} > p$. All other parameters
are given as in Case A. With a similar reasoning as in Lemma~\ref{lem:rhs1}, using $n=2$ and $\alpha=1$, we expect a rate of convergence of $O(\eps^{5/6-1/p})$.

\begin{figure}
 \includegraphics[scale=.5]{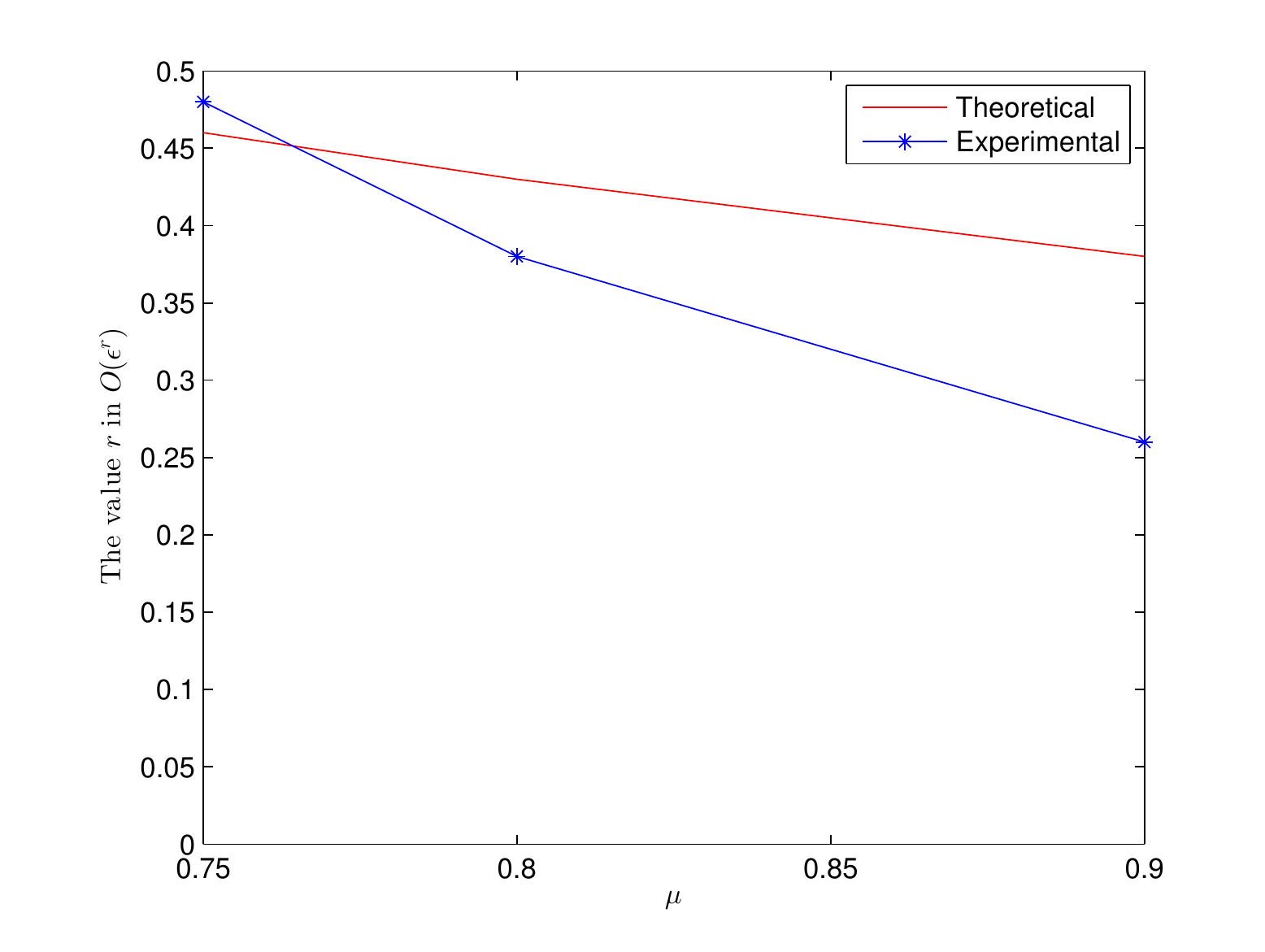}
 \caption{A plot of the $W^{1,2}$-convergence in Case C. The $x$-axis displays the parameter
 $\mu$ in \eqref{fcn:lp}, while the $y$-axis shows the theoretical and experimental convergence rate
 of corresponding function in $L^p$ . The theoretical convergence rate follows from 
 Theorem \ref{thm:rate_lp_robin}, and is of order $O(\eps^{\frac{5}{6}-\frac{1}{p}})$.}\label{fig:compare_Lp} 
\end{figure}

In Figure \ref{fig:compare_Lp}, we see the convergence rate as a function of the parameter $\mu$
in \eqref{fcn:lp}. As expected, the convergence rate becomes worse when $\mu$ increases. The 
experimental rate deteriorates more, however, than the theory suggests. We believe this to be linked to
the challenge of the numerical implementation of such a singular function. Although of practical
importance, we find that dealing with this particular implementation issue is beyond the scope of this article,
and leave it therefore to future research. 

\subsection{Case D: Dirichlet BC with smooth parameters}\label{subsec:caseD}
We will now study the diffuse domain method for a Dirichlet problem. More particularly, we will compare the solutions
$u$ and $u^\eps$ of \eqref{eq:dirichlet_var} and \eqref{eq:dirichlet_var_diffuse},
respectively. The parameters are given as in case A, with the exception
$$  \ \alpha(x_1,x_2) = \frac{1}{\eps^{\sigma}} $$
in order to realize the penalty method.
%
From Theorem \ref{thm:rate_sobolev_dirichlet_p} and Remark \ref{rem:rate_sobolev_dirichlet_p2}, 
the choice of $\beta = \eps^{-1}$ should provide a $L^2$-convergence of order $O(\eps)$, 
whereas the choice of $\beta = \eps^{-3/4}$ should yield a $W^{1,2}$-convergence of order $O(\eps^{3/4})$.

\begin{figure}
\centerline{%
\begin{tabular}{c@{\hspace{1pc}}c}
\includegraphics[scale=.5]{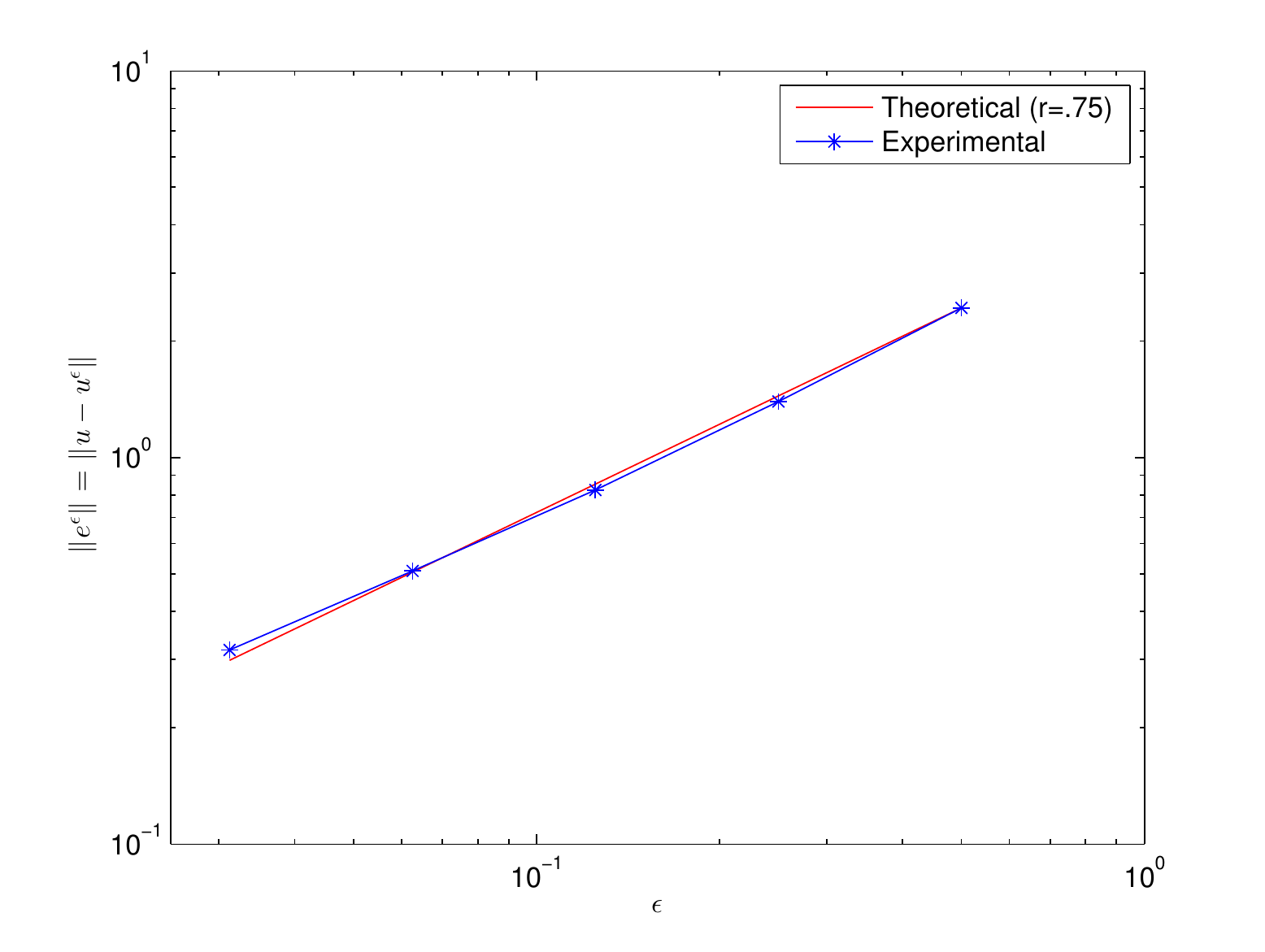} \\
(a)~~ $W^{1,2}$ convergence. $\sigma = 0.75$. \\
\includegraphics[scale=.5]{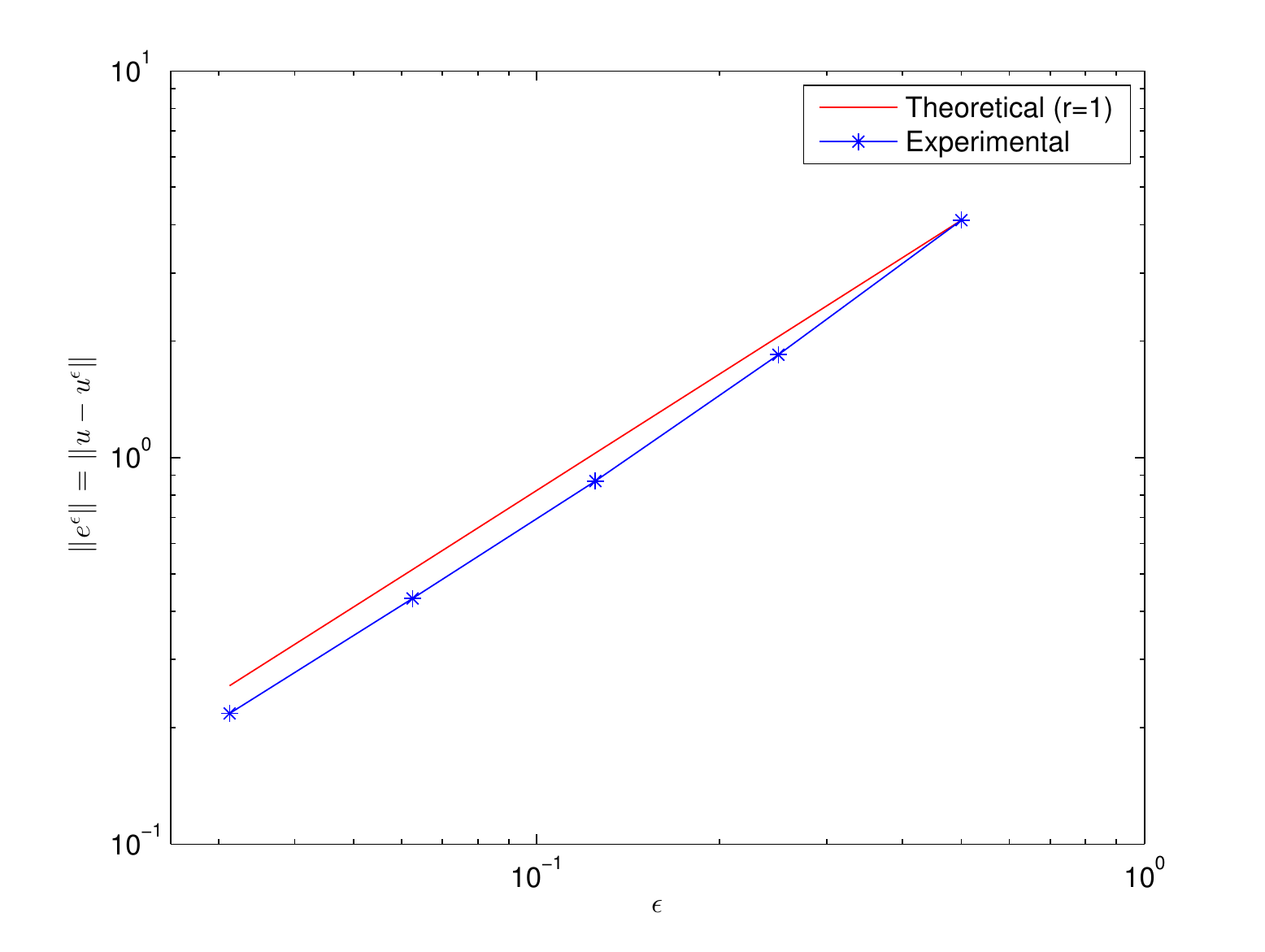} \\
(b)~~ $L^2$ convergence. $\sigma = 1.0$. 
\end{tabular}}
\caption{A log-log plot of the convergence rates in Case D. In each subplot we see the actual 
convergence rate (experimental), compared to the theoretical rate of order $O(\eps^r)$.}\label{fig:logplotD}
\end{figure}

In Table \ref{tab:epsD}(a), we see the convergence rate of the error when $\beta = \eps^{-3/4}$. As expected,
the rate is of order $O(\eps^{3/4})$ in $W^{1,2}$ norm. Furthermore, in Table \ref{tab:epsD}(b), we 
obtain the expected linear convergence in $L^2$ norm. The rates can also be seen in Figure \ref{fig:logplotD}.

In Figure \ref{fig:comp_caseD}, we see a solution of \eqref{eq:dirichlet_var} and \eqref{eq:dirichlet_var_diffuse},
respectively. Although the shape of the solution is visually similar, there is a much larger quantitative 
difference compared to Case A and B.

\begin{table}[ht]
\begin{minipage}[b]{.45\textwidth}
  \centering
    \begin{tabular}{ | l | l | l |}
      \hline
      $\eps$  & $E_{L^2}$ $ (\log_2(\frac{E^k}{E^{k+1}}))$ & $E_{W^{1,2}}$ $ (\log_2(\frac{E^k}{E^{k+1}}))$ \\ \hline \hline
      $2^{-1}$  & 4.434278          & 2.439132       \\ 
      $2^{-2}$  & 2.301163   (0.95) & 1.394967 (0.81) \\ 
      $2^{-3}$  & 1.365375   (0.75) & 0.823765 (0.76) \\ 
      $2^{-4}$  & 0.878726   (0.64) & 0.509194 (0.69) \\ 
      $2^{-5}$  & 0.566966   (0.63) & 0.318188 (0.68) \\ \hline
    \end{tabular}
    (a)~~ $\sigma = 0.75$.
 \end{minipage}\qquad
\begin{minipage}[b]{.45\textwidth}
  \centering
   \begin{tabular}{ | l | l | l |}
      \hline
      $\eps$  & $E_{L^2}$ $ (\log_2(\frac{E^k}{E^{k+1}}))$ & $E_{W^{1,2}}$ $ (\log_2(\frac{E^k}{E^{k+1}}))$ \\ \hline \hline
      $2^{-1}$  & 4.109302          & 2.294747       \\ 
      $2^{-2}$  & 1.844329   (1.16) & 1.188867 (0.95) \\ 
      $2^{-3}$  & 0.868948   (1.09) & 0.616540 (0.95) \\ 
      $2^{-4}$  & 0.432200   (1.01) & 0.331883 (0.89) \\ 
      $2^{-5}$  & 0.217938   (0.99) & 0.186374 (0.83) \\ \hline
    \end{tabular}
    (b)~~ $\sigma = 1.00$.
\end{minipage}
\caption{The error $e^\eps = u - u^\eps$ for different norms.}
\label{tab:epsD}    
\end{table}

\begin{figure}
\centerline{%
\begin{tabular}{c@{\hspace{1pc}}c}
\includegraphics[scale=.16]{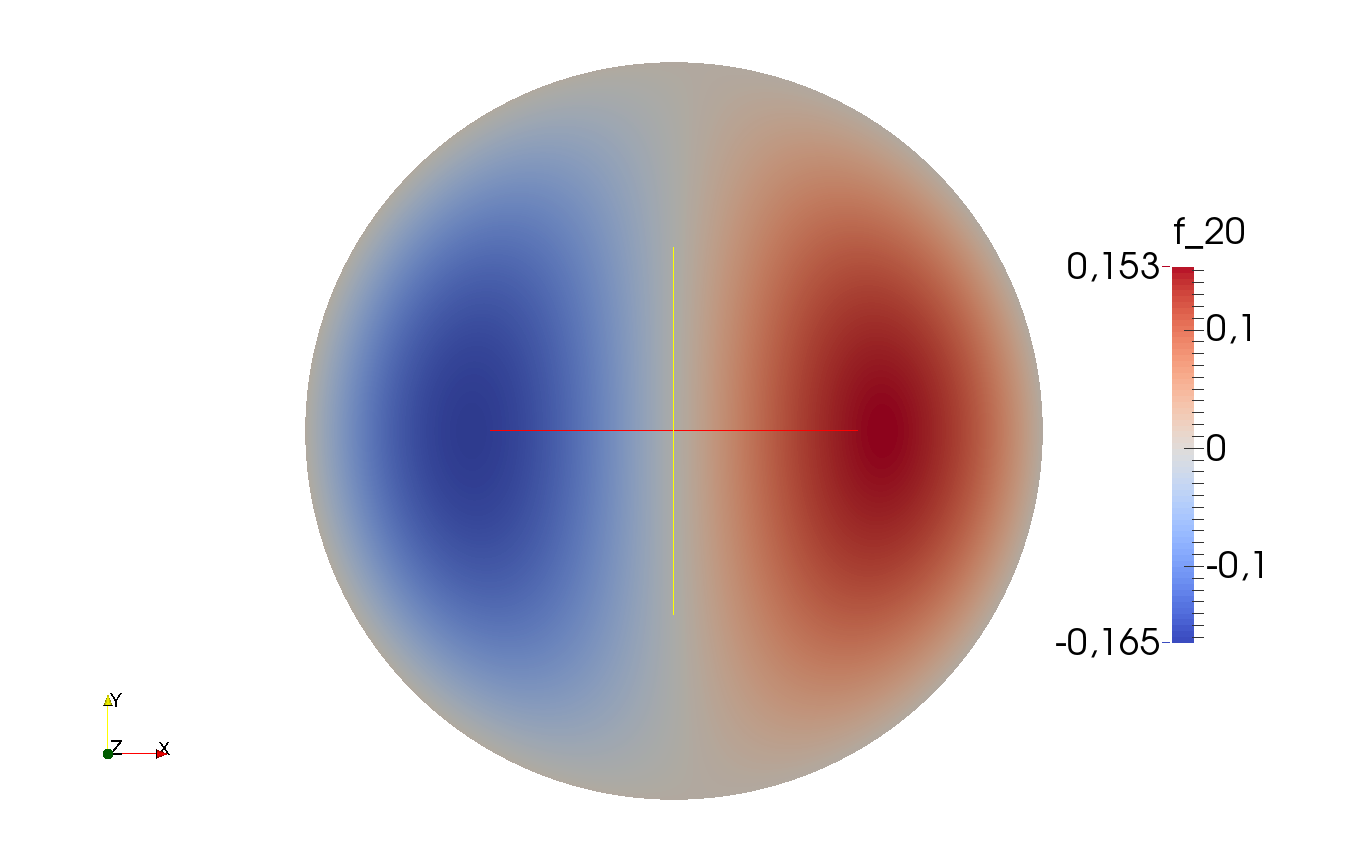} &
\includegraphics[scale=.16]{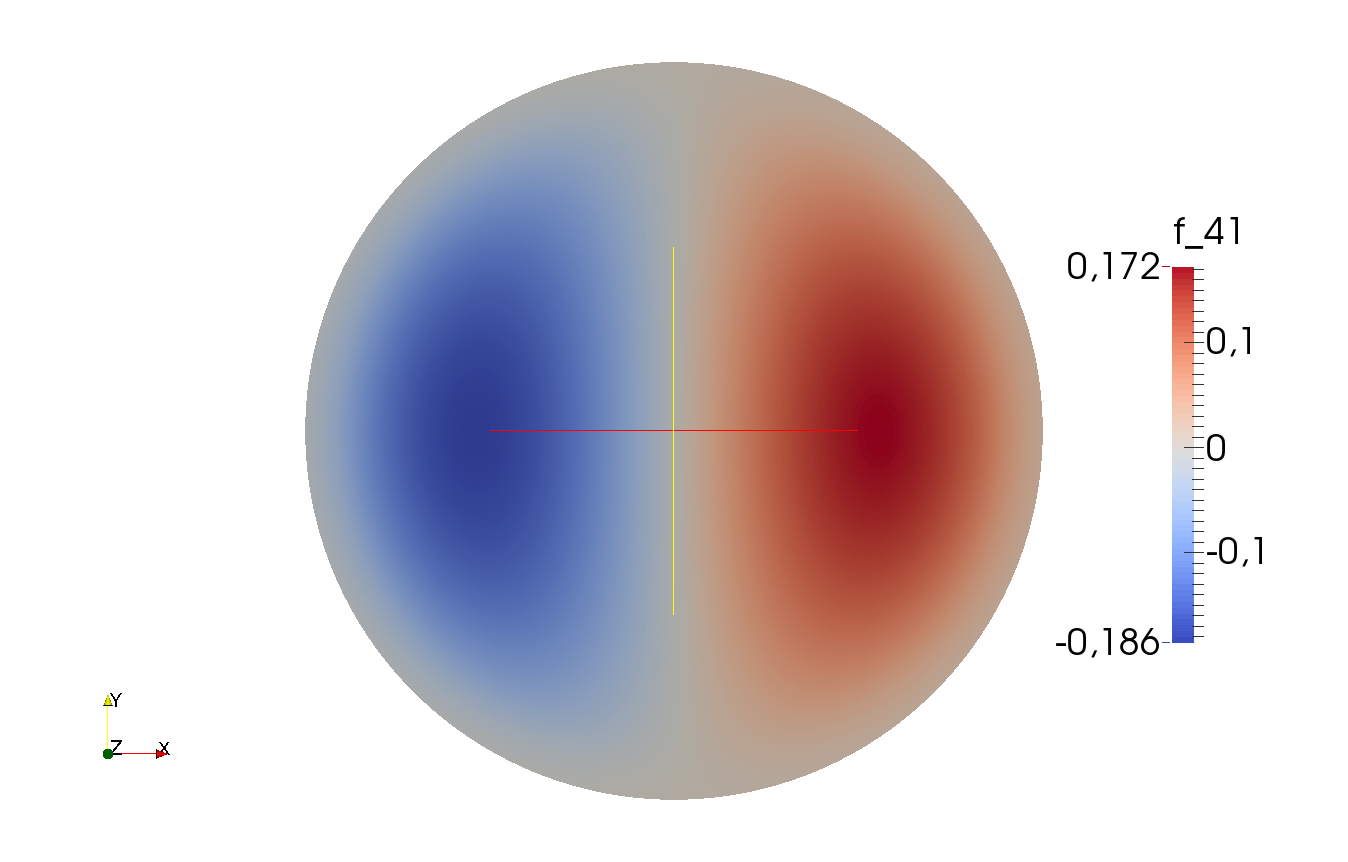} \\
(a)~~ The solution $u$ of \eqref{eq:dirichlet_var}. & (b)~~ The solution $u^\eps$ of
\eqref{eq:dirichlet_var_diffuse}. 
\end{tabular}}
\caption{Comparison of exact solution and diffuse domain solution in Case D. Both solution restricted to $D$.
Here $\eps = 2^{-5}$ and $\sigma = 1.$}\label{fig:comp_caseD}
\end{figure}

\subsection{Case E: Dirichlet BC with smooth parameters}\label{subsec:caseE}
To guarantee a quadratic convergence in $L^2$,  Theorem \ref{thm:rate_sobolev_robin_p}
 requires the domain to be $C^{1,1}$. In this final example, we work with the mesh $D = (0,1) \times (0,1)$,
which is only a Lipschitz domain. All parameters are otherwise identical to in Case A.
We see from Figure \ref{fig:logE} that the convergence rates are unchanged compared to case A despite the lower regularity of the domain. This gives some hope that our results can be extended to general Lipschitz domain or at least piecewise smooth domains, which remains an open question.

\begin{figure}
\centerline{%
\begin{tabular}{c@{\hspace{1pc}}c}
\includegraphics[scale=.35]{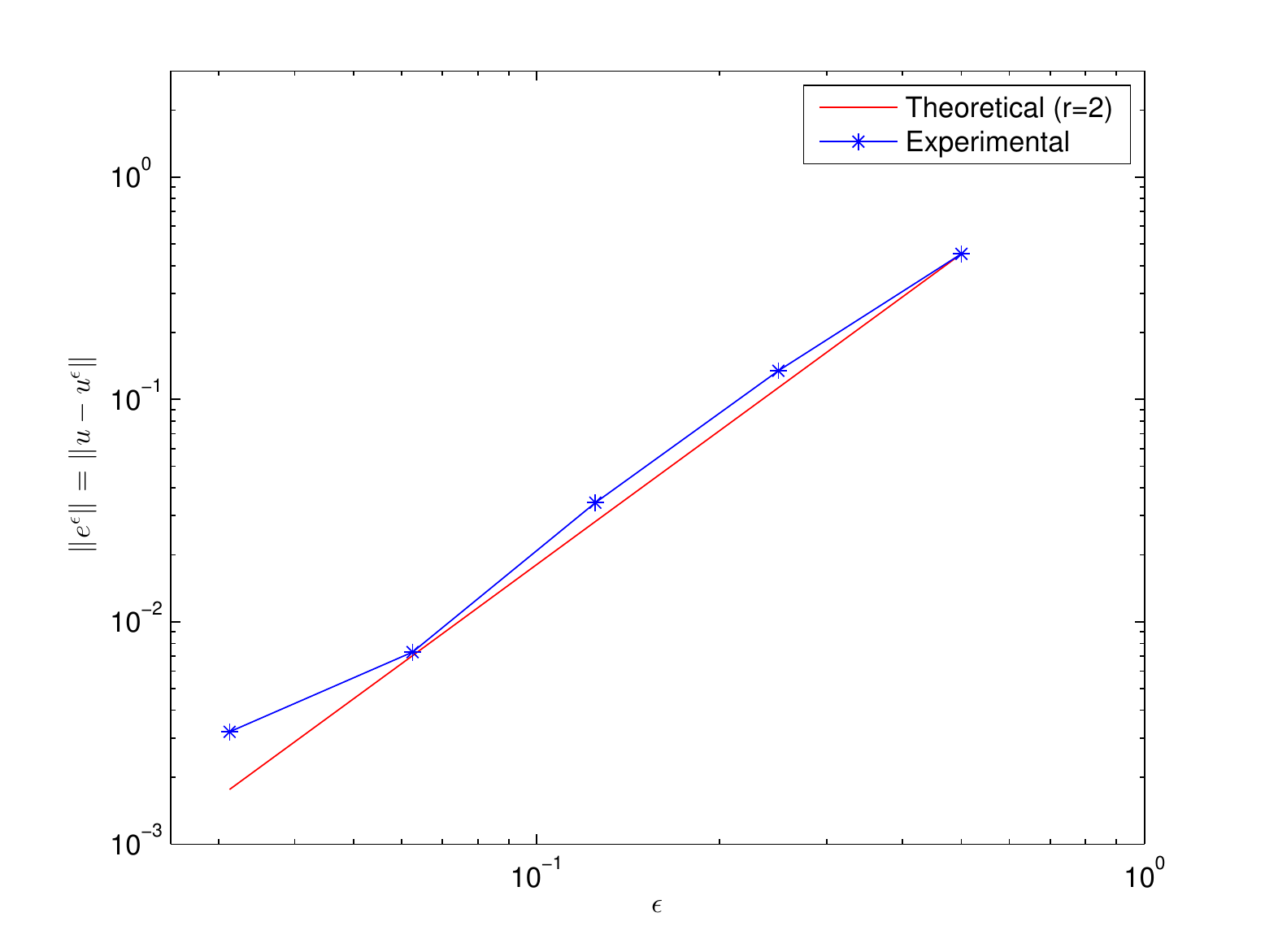} &
\includegraphics[scale=.35]{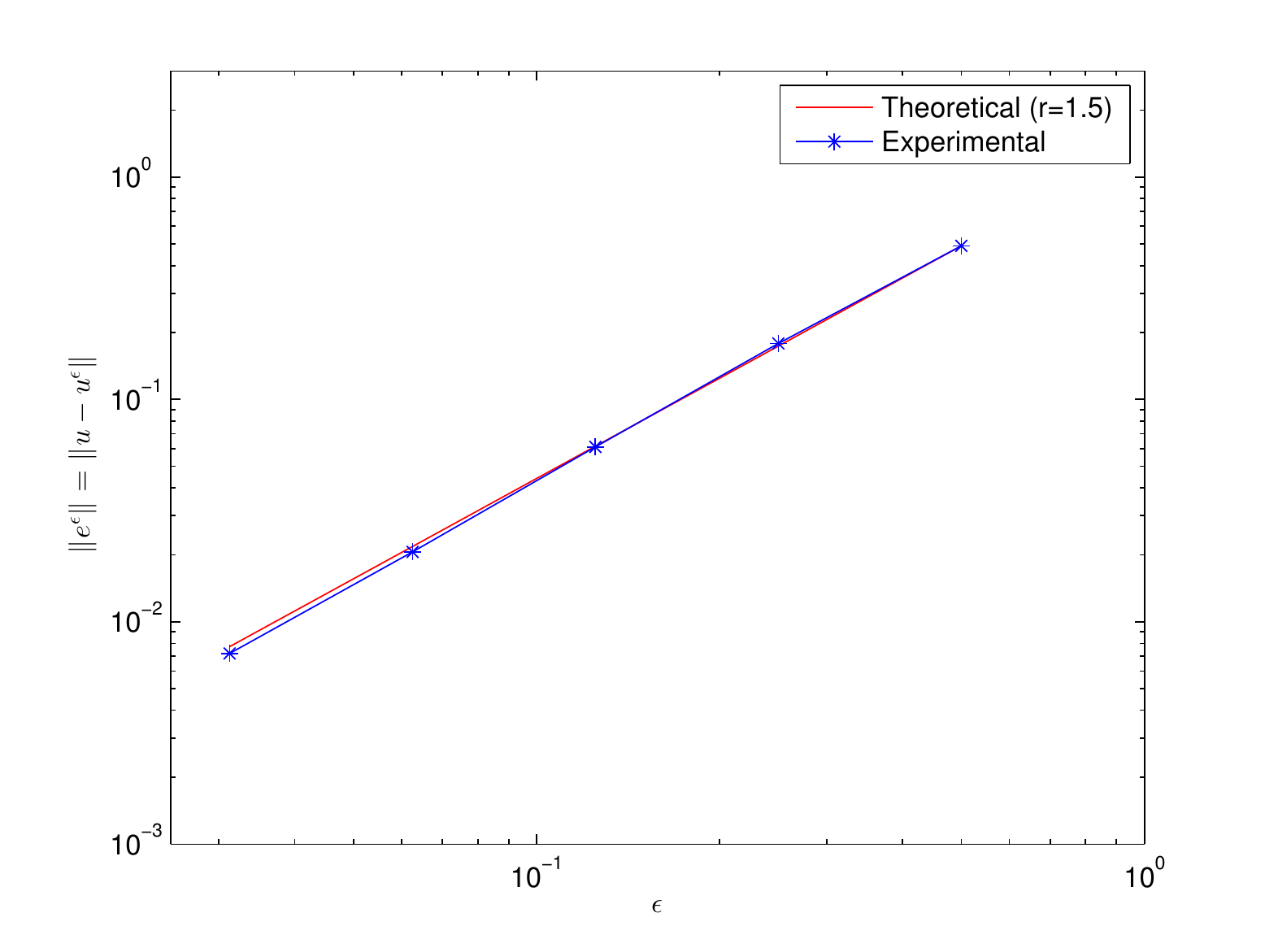} \\
(a)~~ Convergence in $L^2$-norm. & (b)~~ Convergence in $W^{1,2}$-norm. \\
\includegraphics[scale=.35]{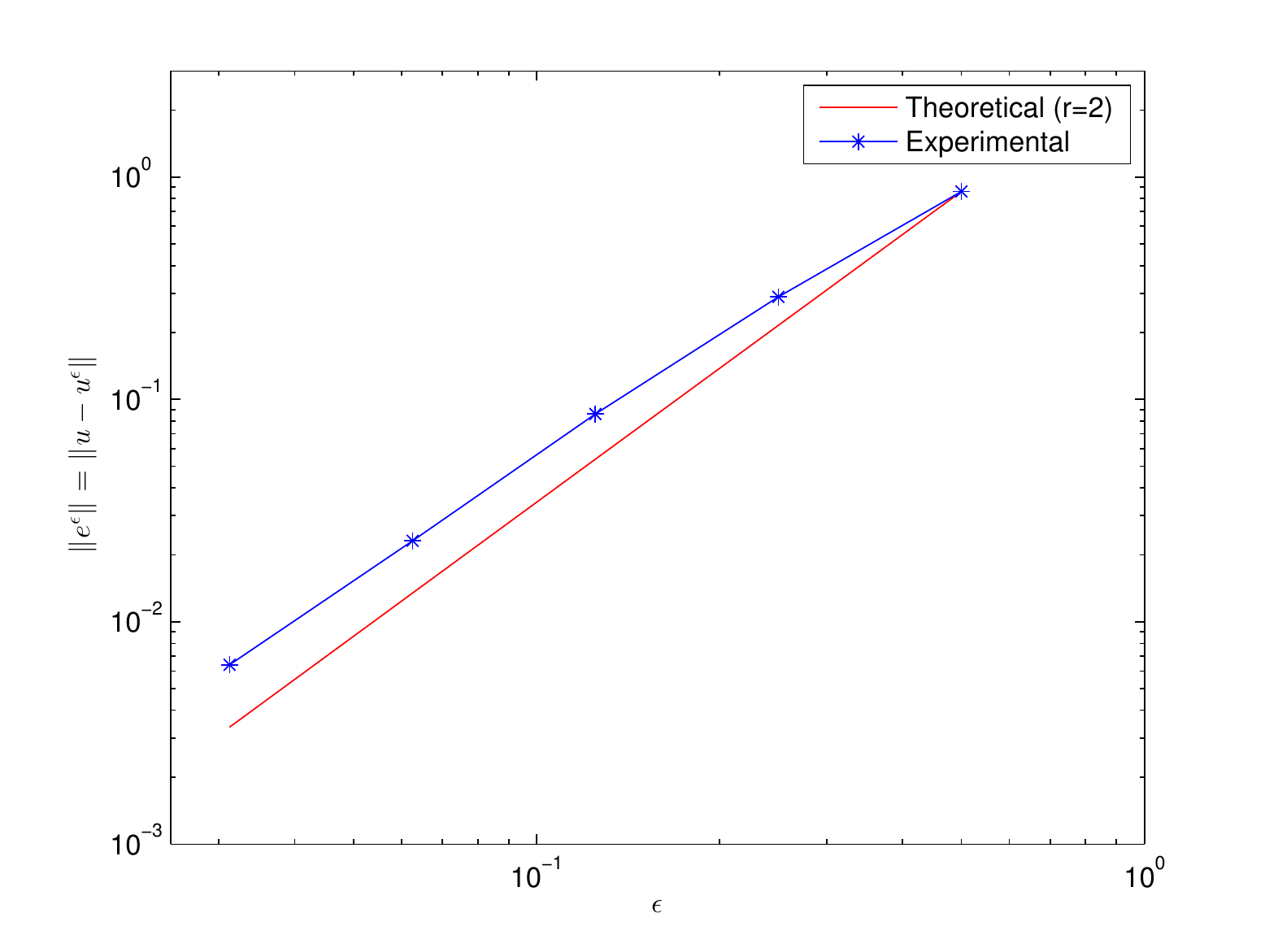} &
\includegraphics[scale=.35]{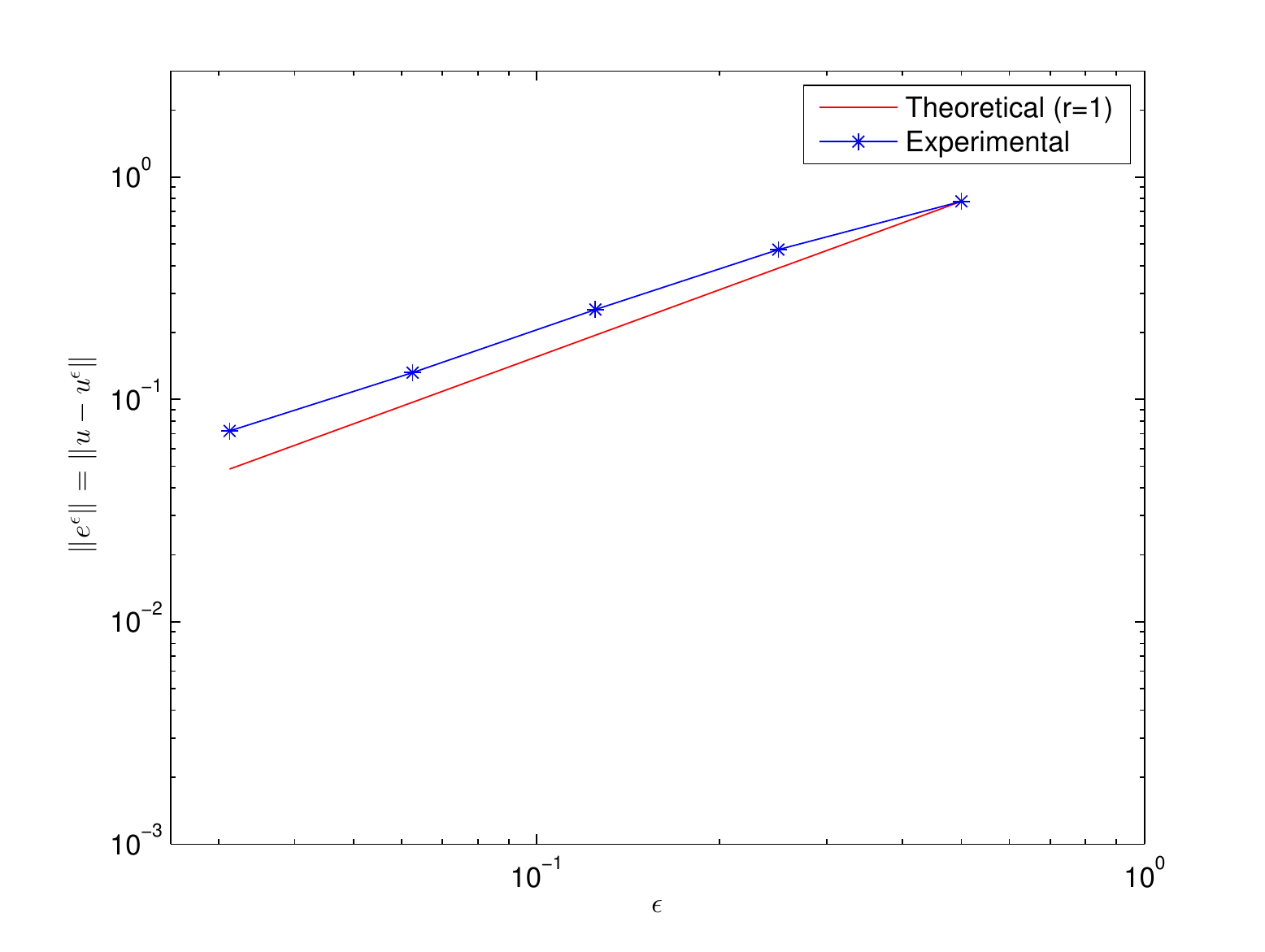} \\
(c)~~ Convergence in $W^{1,1}$-norm. & (d)~~ Convergence in $W^{1,\infty}$-norm. 
\end{tabular}}
\caption{A log-log plot of the convergence rates in Case E. In each subplot we see the actual 
convergence rate (experimental), compared to the theoretical rate of order $O(\eps^r)$.}\label{fig:logE}
\end{figure}


\section{Conclusions}\label{sec:conclusion}
In this work we presented a systematic approach for deriving diffuse domain approaches for second order elliptic problems with usual type of boundary conditions. The advantage of our method is that based on standard variational formulations it readily leads to a relaxed variational formulation, which can be implemented easily, in a straight-forward manner.
We presented a self-contained analysis of the error introduced by the diffuse domain method. Depending on the regularity of the data, we could rigorously prove convergence rates. These rates seem to be sharp as shown by numerical experiments. 
As a by-product of our analysis, we derived trace and embedding theorems as well as Poincar\'e inequalities for weighted Sobolev spaces which are stable with respect to the relaxation parameter $\eps$.
It remains open to fill a gap to transfer our quadratic convergence results to quadratic convergence results in the $L^2(D)$-norm which have been proposed in literature.
Furthermore, a thorough analysis of numerical methods for the diffuse domain method is left for future work.
We are optimistic that time-dependent problems could be treated in a similar manner, further modifications will be needed in the case of evolving surfaces.

\section*{Acknowledgements}

MB and MS acknowledge support by ERC via Grant EU FP 7 - ERC Consolidator Grant 615216 LifeInverse. MB acknowledges support by the German Science Foundation DFG via  EXC 1003 Cells in Motion Cluster of Excellence, M\"unster, Germany.
OLE acknowledges support by DAAD for his one year research stay at WWU M\"unster. 


\begin{thebibliography}{10}

\bibitem{Adams1975}
R.~A. Adams.
\newblock {\em Sobolev spaces}.
\newblock Academic Press [A subsidiary of Harcourt Brace Jovanovich,
  Publishers], New York-London, 1975.
\newblock Pure and Applied Mathematics, Vol. 65.

\bibitem{AlandLowengrubVoigt2010}
S.~Aland, J.~Lowengrub, and A.~Voigt.
\newblock Two-phase flow in complex geometries: a diffuse domain approach.
\newblock {\em CMES Comput. Model. Eng. Sci.}, 57(1):77--107, 2010.

\bibitem{Arrieta08}
J.~M. Arrieta, A.~Rodr{\'{\i}}guez-Bernal, and J.~D. Rossi.
\newblock The best {S}obolev trace constant as limit of the usual {S}obolev
  constant for small strips near the boundary.
\newblock {\em Proc. Roy. Soc. Edinburgh Sect. A}, 138(2):223--237, 2008.

\bibitem{Babuska73}
I.~Babu{\v{s}}ka.
\newblock The finite element method with penalty.
\newblock {\em Math. Comp.}, 27:221--228, 1973.

\bibitem{BarrettElliot1987}
J.~W. Barrett and C.~M. Elliott.
\newblock Fitted and unfitted finite-element methods for elliptic equations
  with smooth interfaces.
\newblock {\em IMA J. Numer. Anal.}, 7(3):283--300, 1987.

\bibitem{BastianEngwer2009}
P.~Bastian and C.~Engwer.
\newblock An unfitted finite element method using discontinuous {G}alerkin.
\newblock {\em Internat. J. Numer. Methods Engrg.}, 79(12):1557--1576, 2009.

\bibitem{BochevLehoucq05}
P.~Bochev and R.~B. Lehoucq.
\newblock On the finite element solution of the pure {N}eumann problem.
\newblock {\em SIAM Rev.}, 47(1):50--66, 2005.

\bibitem{BoulkhemairChakib2007}
A.~Boulkhemair and A.~Chakib.
\newblock On the uniform {P}oincar\'e inequality.
\newblock {\em Comm. Partial Differential Equations}, 32(7-9):1439--1447, 2007.

\bibitem{Braess2007}
D.~Braess.
\newblock {\em Finite elements}.
\newblock Cambridge University Press, Cambridge, third edition, 2007.
\newblock Theory, fast solvers, and applications in elasticity theory,
  Translated from the German by Larry L. Schumaker.

\bibitem{Brezzi74}
F.~Brezzi.
\newblock On the existence, uniqueness and approximation of saddle-point
  problems arising from {L}agrangian multipliers.
\newblock {\em Rev. Fran\c caise Automat. Informat. Recherche Op\'erationnelle
  S\'er. Rouge}, 8(R-2):129--151, 1974.

\bibitem{DelfourZolesio2011}
M.~C. Delfour and J.-P. Zol{\'e}sio.
\newblock {\em Shapes and geometries}, volume~22 of {\em Advances in Design and
  Control}.
\newblock Society for Industrial and Applied Mathematics (SIAM), Philadelphia,
  PA, second edition, 2011.
\newblock Metrics, analysis, differential calculus, and optimization.

\bibitem{EggerSchlottbom2010}
H.~Egger and M.~Schlottbom.
\newblock Analysis and regularization of problems in diffuse optical
  tomography.
\newblock {\em SIAM J. Math. Anal.}, 42(5):1934--1948, 2010.

\bibitem{EsedogluRaetzRoeger2014}
S.~Esedo{\=g}lu, A.~R{\"a}tz, and M.~R{\"o}ger.
\newblock Colliding interfaces in old and new diffuse-interface approximations
  of {W}illmore-flow.
\newblock {\em Commun. Math. Sci.}, 12(1):125--147, 2014.

\bibitem{FranzGaertnerRoosVoigt2012}
S.~Franz, R.~G{\"a}rtner, H.-G. Roos, and A.~Voigt.
\newblock A note on the convergence analysis of a diffuse-domain approach.
\newblock {\em Comput. Methods Appl. Math.}, 12(2):153--167, 2012.

\bibitem{GlowinskiPanPeriaux1994}
R.~Glowinski, T.-W. Pan, and J.~P{\'e}riaux.
\newblock A fictitious domain method for {D}irichlet problem and applications.
\newblock {\em Comput. Methods Appl. Mech. Engrg.}, 111(3-4):283--303, 1994.

\bibitem{Greer2006}
J.~B. Greer.
\newblock An improvement of a recent {E}ulerian method for solving {PDE}s on
  general geometries.
\newblock {\em J. Sci. Comput.}, 29(3):321--352, 2006.

\bibitem{Grisvard85}
P.~Grisvard.
\newblock {\em Elliptic Problems in Nonsmooth Domains}.
\newblock Pitman, Boston, 1985.

\bibitem{Groeger1989}
K.~Gr{\"o}ger.
\newblock A {$W^{1,p}$}-estimate for solutions to mixed boundary value problems
  for second order elliptic differential equations.
\newblock {\em Math. Ann.}, 283(4):679--687, 1989.

\bibitem{HackbuschSauter1997}
W.~Hackbusch and S.~A. Sauter.
\newblock Composite finite elements for the approximation of {PDE}s on domains
  with complicated micro-structures.
\newblock {\em Numer. Math.}, 75(4):447--472, 1997.

\bibitem{Horiuchi1989}
T.~Horiuchi.
\newblock The imbedding theorems for weighted {S}obolev spaces.
\newblock {\em J. Math. Kyoto Univ.}, 29(3):365--403, 1989.

\bibitem{Kufner1985}
A.~Kufner.
\newblock {\em Weighted {S}obolev spaces}.
\newblock A Wiley-Interscience Publication. John Wiley \& Sons, Inc., New York,
  1985.
\newblock Translated from the Czech.

\bibitem{LervagLowengrub2014}
K.~Y. Lervag and J.~Lowengrub.
\newblock Analysis of the diffuse-domain method for solving {PDE}s in complex
  geometries.
\newblock {\em arxiv:1407.7480v1}, 2014.

\bibitem{LeVequeLin1994}
R.~J. LeVeque and Z.~L. Li.
\newblock The immersed interface method for elliptic equations with
  discontinuous coefficients and singular sources.
\newblock {\em SIAM J. Numer. Anal.}, 31(4):1019--1044, 1994.

\bibitem{LiLowengrubRaetzVoigt2009}
X.~Li, J.~Lowengrub, A.~R{\"a}tz, and A.~Voigt.
\newblock Solving {PDE}s in complex geometries: a diffuse domain approach.
\newblock {\em Commun. Math. Sci.}, 7(1):81--107, 2009.

\bibitem{LiehrPreusserRumpfSauterSchwen2009}
F.~Liehr, T.~Preusser, M.~Rumpf, S.~Sauter, and L.~O. Schwen.
\newblock Composite finite elements for 3{D} image based computing.
\newblock {\em Comput. Vis. Sci.}, 12(4):171--188, 2009.

\bibitem{Meyers1963}
N.~G. Meyers.
\newblock An ${L}^p$-estimate for the gradient of solutions of second order
  elliptic divergence equations.
\newblock {\em Annali della Scuola Normale Superiore di Pisa - Classe di
  Scienze}, 17(3):189--206, 1963.

\bibitem{Necas2012}
J.~Ne{\v{c}}as.
\newblock {\em Direct methods in the theory of elliptic equations}.
\newblock Springer Monographs in Mathematics. Springer, Heidelberg, 2012.
\newblock Translated from the 1967 French original by Gerard Tronel and Alois
  Kufner, Editorial coordination and preface by {\v{S}}{\'a}rka
  Ne{\v{c}}asov{\'a} and a contribution by Christian G. Simader.

\bibitem{OpicKufner1990}
B.~Opic and A.~Kufner.
\newblock {\em Hardy-type inequalities}, volume 219 of {\em Pitman Research
  Notes in Mathematics Series}.
\newblock Longman Scientific \& Technical, Harlow, 1990.

\bibitem{OttoPenzlerRaetzRumpVoigt2004}
F.~Otto, P.~Penzler, A.~R{\"a}tz, T.~Rump, and A.~Voigt.
\newblock A diffuse-interface approximation for step flow in epitaxial growth.
\newblock {\em Nonlinearity}, 17(2):477--491, 2004.

\bibitem{ParvizianDuesterRank2007}
J.~Parvizian, A.~D{\"u}ster, and E.~Rank.
\newblock Finite cell method: {$h$}- and {$p$}-extension for embedded domain
  problems in solid mechanics.
\newblock {\em Comput. Mech.}, 41(1):121--133, 2007.

\bibitem{Peskin1977}
C.~S. Peskin.
\newblock Numerical analysis of blood flow in the heart.
\newblock {\em J. Computational Phys.}, 25(3):220--252, 1977.

\bibitem{Raetz2014}
A.~R\"atz.
\newblock A new diffuse-interface model for step flow in epitaxial growth.
\newblock {\em IMA Journal of Applied Mathematics}, 2014.

\bibitem{ratz2006pde}
A.~R{\"a}tz, A.~Voigt, et~al.
\newblock Pde's on surfaces---a diffuse interface approach.
\newblock {\em Communications in Mathematical Sciences}, 4(3):575--590, 2006.

\bibitem{ReuterHillHarrison2012}
M.~G. Reuter, J.~C. Hill, and R.~J. Harrison.
\newblock Solving {PDE}s in irregular geometries with multiresolution methods
  {I}: {E}mbedded {D}irichlet boundary conditions.
\newblock {\em Comput. Phys. Commun.}, 183(1):1--7, 2012.

\bibitem{TeigenSongLowengrubVoigt2011}
K.~E. Teigen, P.~Song, J.~Lowengrub, and A.~Voigt.
\newblock A diffuse-interface method for two-phase flows with soluble
  surfactants.
\newblock {\em J. Comput. Phys.}, 230(2):375--393, 2011.

\bibitem{TriebelFunctionSpaces2006}
H.~Triebel.
\newblock {\em Theory of function spaces. {III}}, volume 100 of {\em Monographs
  in Mathematics}.
\newblock Birkh\"auser Verlag, Basel, 2006.

\end{thebibliography}

\end{document}